\def\ps@pprintTitle{%
 \let\@oddhead\@empty
 \let\@evenhead\@empty
 \def\@oddfoot{\centerline{\thepage}}%
 \let\@evenfoot\@oddfoot}
\renewcommand{\@biblabel}[1]{[#1]\hfill}
\DeclareRobustCommand{\erase}{\bgroup\markoverwith{\textcolor{red}{\rule[.5ex]{2pt}{0.4pt}}}\ULon}
\newcommand{\wt}[1]{\widetilde{#1}}
\newcommand{\vv}[2]{v_{#1,#2,1},\dots,v_{#1,#2,m}}
\def\diag{\mathrm{diag}}
\def\wid{\mathrm{wid}} 
\def\sup{{\mathrm{sup}}}
\def\Kabn{{K_{[\alpha,\beta],n}}}
\def\End{{\mathrm{End}}}
\def\Im{{\mathrm{Im}}}
\def\coker{{\mathrm{coker}}}
\def\ee{e_1,\dots,e_m}
\def\Der{{\mathrm{Der}}}
\def\AA{\mathscr{A}}
\def\BB{\mathscr{B}}
\def\ZZ{{\mathbb Z}} 
\def\RR{{\mathbb R}}
\def\cc{{\mathscr{C}_\rho}}
\def\QQ{{\mathbb Q}}
\def\QP{{\mathbb{Q}_p}}
\def\Sp{{\mathscr{M}}}
\def\Spec{{\mathrm{Spec}}}
\def \ZP{{\mathbb{Z}_p}}
\def \GL{{\mathrm{GL}}}
\def \upd{{\overline{d}}}
\def \lowd{{\underline{d}}}
\def \Aut{{\mathrm{Aut}}}
\newtheorem{thm}{Theorem} [section]   
\newtheorem{cor}[thm] {Corollary}
\newtheorem{lem}[thm] {Lemma}
\newtheorem{prop}[thm] {Proposition}
\theoremstyle{definition}
\newtheorem{defn}[thm] {Definition}
\newtheorem{eg}[thm] {Example}
\theoremstyle{remark}
\newtheorem{rmk}[thm] {Remark}
\begin{document}

\begin{frontmatter}
\pagenumbering{arabic}
\title{{\large\textbf{On generalized Fuchs theorem over $p$-adic polyannuli}}}
\author{Peiduo Wang}
\cortext[corresponding author]{Address: 153-8914 Komaba 3-8-1, Meguro, Tokyo, Japan}
\ead{wang-peiduo617@g.ecc.u-tokyo.ac.jp}
\address{Graduate School of Mathematical Sciences, the University of Tokyo}
\date{}
\begin{abstract}
In this paper, we study finite projective differential modules on $p$-adic polyannuli satisfying the Robba condition. Christol and Mebkhout proved the decomposition theorem (the $p$-adic Fuchs theorem) of such differential modules on one dimensional $p$-adic annuli under certain non-Liouvilleness assumption and Gachet generalized it to higher dimensional cases. On the other hand, Kedlaya proved a generalization of the $p$-adic Fuchs theorem in one dimensional case. We prove Kedlaya's generalized version of $p$-adic Fuchs theorem in higher dimensional cases.
\end{abstract}
\begin{keyword}
$p$-adic differential equations; $p$-adic Fuchs theorem; polyannuli;
\end{keyword}

\end{frontmatter}

\section*{Introduction}
Let $K$ be a complete nonarchimedean field of mixed characteristic $(0,p)$.
Christol and Mebkhout have given an intrinsic definition of the exponents of a finite free differential module on one dimensional annuli satisfying the Robba condition in \cite{CM2}. They have also shown that if the exponent has $p$-adic non-Liouville differences (\cite{Ked1} Definition 13.2.1), then there exists a canonical decomposition of this differential module into 
the ones with exponent identically equal to a single element. This is called the $p$-adic Fuchs theorem.
However, their work was found to be difficult due to the complicated nature of the Frobenius antecedent developed in \cite{CD}, on which their work was built. Dwork gave a simplified proof of $p$-adic Fuchs theorem on one dimensional annuli, in which Frobenius antecedent no more plays an important role. This method is also written in \cite{Ked1} with a slightly different way. After Dwork's proof on one dimensional annuli, Gachet proved the $p$-adic Fuchs theorem on higher dimensional polyannuli in \cite{Gac}.
The precise statement of this theorem is as follows:

\begin{thm}[Th\'eor\`eme in page 216 of \cite{Gac}]
    \label{0}
Let $P$ be a finite free differential module on an open polyannulus over $K$ for the derivations $t_i\partial_{t_i}$, with $1\leq i\leq n$ satisfying the Robba condition and admitting an exponent on some closed subpolyannulus of positive width with $p$-adic non-Liouville differences. Then $P$ adimits a basis on which the matrix of action of $\nabla(t_i\partial_{t_i})$ has entries in $K$ and its eigenvalues represent the exponent of $P$ for all $1\leq i\leq n$. Consequently, $P$ admits a canonical decomposition 
$$P=\bigoplus_{\lambda\in(\ZP/\ZZ)^n}P_\lambda,$$ 
in which each $P_\lambda$ has exponent identically equal to $\lambda$.
\end{thm}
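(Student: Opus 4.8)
The plan is to deduce the direct-sum decomposition from the existence of a \emph{constant basis}, i.e.\ a basis of $P$ on which each $\nabla(t_i\partial_{t_i})$ acts by a matrix $N_i^{(0)}$ with entries in $K$, and then to construct such a basis. Granting the constant basis: the $N_i^{(0)}$ commute, since the derivations $t_i\partial_{t_i}$ commute and kill constants, so integrability forces $[N_i^{(0)},N_j^{(0)}]=0$. Hence over $\overline{K}$ one may pass to the simultaneous generalized eigenspace decomposition of $\overline{K}\otimes_K K^m$ indexed by the $n$-tuples of eigenvalues $(\mu_1,\dots,\mu_n)$; regrouping these eigenspaces according to the class $(\mu_1\bmod\ZZ,\dots,\mu_n\bmod\ZZ)\in(\ZP/\ZZ)^n$ gives a decomposition defined over $K$ by Galois descent, and since the $N_i^{(0)}$ have entries in $K$ the summands are differential submodules. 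This is the required $P=\bigoplus_\lambda P_\lambda$; that all eigenvalues on $P_\lambda$ reduce to $\lambda$ says $P_\lambda$ has exponent identically $\lambda$, and canonicity follows from uniqueness of the exponent.

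So the real work is to produce a constant basis. First I would restrict to a closed subpolyannulus $C$ of positive width on which $P$ admits an exponent with $p$-adic non-Liouville differences, produce a constant basis there, and afterwards propagate it to the whole open polyannulus. Over $C$ I would induct on the rank $m$. Using the higher-dimensional Christol--Mebkhout theory of exponents together with the non-Liouville hypothesis, one obtains that $P|_C$ already splits as $\bigoplus_\lambda P_\lambda$ with each $P_\lambda$ of exponent identically a single $\lambda$ --- this is the step in which non-Liouvilleness does its work, forcing the filtration that defines the exponent to split. Twisting $P_\lambda$ by the rank-one modules $\bigotimes_i(\cdot,\ t_i\partial_{t_i}\mapsto\ell_i)$ for integers $\ell_i$ lifting $\lambda_i$ reduces us to the case of exponent identically $0$; such a module satisfying the Robba condition is an iterated self-extension of the trivial module, and I would build the constant basis by Dwork-style successive approximation. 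Writing the current connection matrices as $N_i^{(0)}+E_i$ with small error, one solves the system
$$ t_i\partial_{t_i}(X_i)+[N_i^{(0)},X_i]=E_i\qquad(1\le i\le n) $$
for the next correction by expanding everything in Laurent series in $t_1,\dots,t_n$ and inverting coefficient-by-coefficient. Integrability of $\{E_i\}$ makes this system consistent; convergence of the formal solution on $C$ --- equivalently, convergence of the relevant $p$-adic ``antiderivatives'', which amounts to dividing the $k$-th Laurent coefficient by integers whose $p$-adic valuation can be as large as $\log_p|k|$ --- is exactly what the Robba condition guarantees, at worst after shrinking the width by an arbitrarily small amount and re-extending at the end.

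Finally I would propagate from $C$ to the full open polyannulus: the module with connection matrices the constant $N_i^{(0)}$ is defined over the whole polyannulus, and a gauge equivalence identifying it with $P$ over $C$ extends over the open polyannulus because both modules satisfy the Robba condition and the polyannulus carries no cohomological obstruction to such an extension --- the several-variable form of Kedlaya's propagation (``off-diagonal decomposition'') argument. The main obstacle, as in the one-dimensional case, is the convergence analysis inside the successive approximation and this propagation step; what is genuinely new is the several-variable bookkeeping --- tracking convergence radii in all $n$ directions at once, formulating and using the \emph{joint} non-Liouville condition on the $n$-tuples of exponent differences, and carrying the integrability constraints through each approximation so that the $n$ gauge equations stay compatible. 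Setting up the higher-dimensional exponent formalism rigorously, so that the splitting $P|_C=\bigoplus_\lambda P_\lambda$ is legitimate, is itself a substantial ingredient.
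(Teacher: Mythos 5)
Your outline correctly identifies the shape of the argument — split over a closed subpolyannulus, twist to exponent $0$, find a constant basis, propagate — but it defers rather than supplies the two steps that carry essentially all of the difficulty, and the one you do argue in detail (deducing the decomposition from a constant basis) is the easiest and in fact redundant once you have the splitting over $C$. First, the splitting $P|_C=\bigoplus_\lambda P_\lambda$ is invoked as ``the higher-dimensional Christol--Mebkhout theory of exponents together with the non-Liouville hypothesis,'' which is exactly the statement being proved; you flag this yourself at the end (``is itself a substantial ingredient''), but that means the proposal does not actually establish it. The paper's route to this splitting is: define exponents via $\Gamma^n$-averaged matrices $S_{k,A}=p^{-nk}\sum_{\zeta\in\Gamma_k^n}E(\zeta)\zeta^{-A}$ (Theorem~\ref{exexp}), show uniqueness up to weak equivalence (Theorem~\ref{weakeq}), then show that under a Liouville partition the projectors onto the $B$-block of the $v_{k,A}$-basis converge on a slightly shrunk polysegment because the off-diagonal blocks of $S_{k+1,A}^{-1}S_{k,A}$ are supported on high Laurent degrees (Lemma~\ref{dec}), and finally pass the decomposition from the category $\mathscr{C}_\rho$ down to $\mathscr{D}_\rho$ by taking limits of the $\lambda^*$-action for $\lambda\to 1$ (Theorem~\ref{decomposition_of_D_mod}). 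None of this is in the proposal. Second, your Dwork-style successive approximation for a constant basis when the exponent is $0$ is plausible but unsubstantiated: the paper does produce a constant basis, but by a different approximation scheme that builds constant matrices $R_k$ from the constant terms of $R_k S_{k,A}^{-1}S_{k+1,A}$ and controls the error via the $\Gamma_k^n$-invariance (which forces the non-constant part to live in degrees divisible by $p^k$ and hence to shrink under $\eta$-contraction by Lemma~\ref{simcom}); the coefficientwise inversion of $t_i\partial_{t_i}+\mathrm{ad}(N_i^{(0)})$ that you propose requires its own $p$-adic convergence estimate, and it is not ``exactly what the Robba condition guarantees'' without an argument.

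The propagation step is also off the mark. The paper does not extend a gauge equivalence from $C$ to the open polyannulus by ``absence of cohomological obstruction''; it works germwise over $R_\rho$, decomposes there, and then glues the local decompositions over an open cover using the uniqueness part of Theorem~\ref{decomposition_of_D_mod} (see Theorem~\ref{decom_int} and Remark~\ref{above4}). In particular, freeness of the summands (needed to even phrase ``constant basis'' on the full polyannulus, and nontrivial for a projective module on a polyannulus) is proved separately via Proposition~\ref{freeness}, using the local horizontal sections $V_{I_\rho}=\bigcap_j\ker(D_j^m)$ and the fact that the polysegment $I$ is contractible; your proposal has no substitute for this. In short, the proposal restates the program but leaves open precisely the three points where the proof has to do work: the non-Liouville-driven splitting, the construction of the constant basis, and the globalization over the polyannulus.
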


Meanwhile, Kedlaya proved a generalized version of one dimensional $p$-adic Fuchs theorem, by loosing the condition on exponents from having $p$-adic non-Liouville differences to a weaker one, namely, having Liouville partition, and yet still gives a decomposition of such differential module. 
\begin{thm}[Theorem 3.4.22 in \cite{Ked2}]
\label{1}
Let $P$ be a finite free differential module satisfying the Robba condition over one dimensional annulus over $K$ associated to an open interval $I$. Let $J\subset I$ be a closed subinterval of positive width, and 
suppose that $P$ has an exponent $A$ over $J$ admitting a Liouville partition $\AA_1,\dots,\AA_k$(for definition see Definition 3.4.4 in\cite{Ked2}). Then there exists a unique direct sum decomposition $P_J=P_1\oplus\dots\oplus P_k$ such that for $g=1,\dots,k$, $P_g$ admits an exponent over $J$ weakly equivalent to $\AA_g$.
\end{thm}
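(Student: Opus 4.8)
The plan is to follow Dwork's treatment of the classical $p$-adic Fuchs theorem, but to eliminate from the matrix of $\nabla(t\partial_t)$ only the entries that connect different ``blocks'' of the Liouville partition, rather than all off-diagonal entries. So I would (i) construct a basis of $P_J$ in which $\nabla(t\partial_t)$ is block diagonal with blocks of sizes $|\AA_1|,\dots,|\AA_k|$, the $g$-th block having exponent over $J$ weakly equivalent to $\AA_g$; (ii) read off $P_J=P_1\oplus\cdots\oplus P_k$; and (iii) deduce uniqueness from the fact that there is no nonzero horizontal morphism between differential modules whose exponents share no value modulo $\ZZ$. The hypothesis enters only in (i), through a family of linear-algebra problems, and this is precisely where the non-Liouvilleness of the cross-block differences is needed.

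For (i), let $\OO$ be the ring of analytic functions on $J$ (a closed annulus of positive width, so $\OO$ is a Banach ring in which the coefficients of analytic functions decay at a rate governed by the radii of $J$) and represent $\nabla(t\partial_t)$ on a basis by $N=\sum_{i\in\ZZ}N_it^i\in M_n(\OO)$. After passing, via the intrinsic description of the exponent, to a basis in which the eigenvalues of $N_0$ represent $A$, a constant base change puts $N_0$ in block-diagonal form, the $g$-th block having eigenvalues representing $\AA_g$; this is legitimate because for $g\neq h$ any $\lambda\in\AA_g$, $\mu\in\AA_h$ have $\lambda-\mu$ non-Liouville, in particular $\lambda-\mu\notin\ZZ$, so the generalized eigenspaces of $N_0$ partition unambiguously into ``$\AA_g$-parts.'' Write $N=N^{\mathrm d}+N^{\mathrm c}$ for the block-diagonal and cross-block parts and look for $U=I+V$ with $V$ purely cross-block such that $U^{-1}NU+U^{-1}(t\partial_t U)$ is block diagonal. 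Linearizing, the cross-block part of the transform is $N^{\mathrm c}+[N^{\mathrm d},V]+t\partial_t V$ up to terms quadratic in $V$, so in degree $i$ one must solve $\bigl(\mathrm{ad}(N_0^{\mathrm d})+i\bigr)V_i=-N^{\mathrm c}_i-\sum_{j\neq 0}[N_j^{\mathrm d},V_{i-j}]$. On cross-block matrices $\mathrm{ad}(N_0^{\mathrm d})+i$ is a block Sylvester operator whose eigenvalues are the integer translates $\lambda-\mu+i$ of the cross-block differences ($\lambda\in\AA_g$, $\mu\in\AA_h$, $g\neq h$); it is invertible ($\lambda-\mu\notin\ZZ$), and non-Liouvilleness of these differences forces $\lVert(\mathrm{ad}(N_0^{\mathrm d})+i)^{-1}\rVert$ to grow subexponentially in $|i|$. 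Running a successive approximation — preferably a Newton-type scheme keeping the linearized operator fixed at the initial $N_0^{\mathrm d}$ so that this one estimate suffices throughout — the solutions $V$ lie in $M_n(\OO)$ (the subexponential growth of the inverse operators being beaten by the decay of the coefficients of analytic functions on $J$), and the iteration converges to $U\in\GL_n(\OO)$ block-diagonalizing $N$.

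The resulting basis yields $P_J=P_1\oplus\cdots\oplus P_k$ as differential modules, each $P_g$ inheriting the Robba condition as a direct summand. Additivity of exponents gives $\bigsqcup_g(\text{exponent of }P_g)\sim\bigsqcup_g\AA_g$; since $P_g$ has rank $|\AA_g|$ and the construction keeps the reduction modulo $\ZZ$ of the spectrum of the $g$-th diagonal block inside the class of $\AA_g$, while the $\AA_g$ are mutually incongruent modulo $\ZZ$, one concludes the exponent of $P_g$ over $J$ is weakly equivalent to $\AA_g$. For uniqueness, let $P_J=\bigoplus_g P_g=\bigoplus_g P'_g$ be two such decompositions; for $g\neq h$ the composite $\psi\colon P'_g\hookrightarrow P_J\twoheadrightarrow P_h$, if nonzero, would have image a nonzero differential module whose exponent over $J$ is a sub-multiset (up to weak equivalence) of both $\AA_g$ and $\AA_h$ — impossible, since $\AA_g$ and $\AA_h$ share no value modulo $\ZZ$ (their cross differences being non-integral). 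Hence $\psi=0$, so $P'_g\subseteq\bigcap_{h\neq g}\ker(P_J\to P_h)=P_g$, and the symmetric argument gives $P'_g=P_g$.

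The step I expect to be the main obstacle is the convergence in (i): one must keep the Liouville-type estimate on $(\mathrm{ad}(N_0^{\mathrm d})+i)^{-1}$ effective throughout the iteration even though each base change perturbs $N$ — this is the reason for fixing the linearized operator, or, failing that, for first carrying out a crude reduction that makes the initial cross-block error small — and one must verify that this perturbation never moves the reduction modulo $\ZZ$ of the diagonal blocks' spectra off the classes $\AA_g$, so that the exponent bookkeeping is valid; supplying the preliminary basis in which $N_0$ displays the exponent is a comparatively minor point. In the polyannular setting that is the actual aim of this paper, this scheme must in addition be run compatibly for the $n$ commuting derivations $t_i\partial_{t_i}$, which is where the work genuinely beyond the one dimensional case lies.
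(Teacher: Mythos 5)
Your approach (block-diagonalize the matrix of $\nabla(t\partial_t)$ by a Newton-style elimination of cross-block terms) is genuinely different from what the paper, following Kedlaya in \cite{Ked2} and \cite{Ked1} Chapter 13, actually does. The paper never touches the connection matrix $N=N_0+\sum_{i\neq 0}N_it^i$ directly. Instead it constructs, via $\Gamma$-averaging, the matrices $S_{k,A}=p^{-nk}\sum_{\zeta\in\Gamma_k^n}E(\zeta)\zeta^{-A}$ (cf.\ Theorem \ref{exexp}), takes the projectors $\Pi_k$ onto the span of those $\Gamma$-approximate eigenvectors $v_{k,A,i}$ with $A_i$ in the chosen part $B$ of the partition, and shows (Lemma \ref{dec}) that the $\Pi_k$ form a Cauchy sequence once the radii are shrunk by a fixed ratio $\eta$: the off-block entries of $\Pi_k-\Pi_{k+1}$ only involve $t^h$ with $|h_r|\geq dk$, and Lemma \ref{simcom} converts that support restriction into the estimate $|\Pi_k-\Pi_{k+1}|\leq p^{4mlk}\eta^{-dk}\leq\lambda^k$. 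The whole argument is carried out at the level of the $\Gamma$-structure, not the residue matrix.

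The substantive gap in your proposal is step (i), which you dismiss as ``a comparatively minor point.'' The exponent $A$ is \emph{defined} (Definition \ref{exp}, i.e.\ \cite{Ked2} Definition 3.4.11) as the diagonal data appearing in the $\Gamma$-equivariance $\zeta^*(v_{k,A})=(v_{k,A})\zeta^A$ of the averaged bases, together with norm controls on $S_{k,A}$. It is \emph{not} defined as the spectrum modulo $\ZZ$ of $N_0$ in some basis, and there is no a priori reason such a basis exists; establishing the existence of a basis in which $N_0$ (or, in the non-Liouville case, all of $N$) has constant $\ZZ_p$-entries representing $A$ is essentially the content of the $p$-adic Fuchs theorem, not an input to it. What one does have cheaply (\cite{Ked1} Proposition 13.3.2, used near the end of Section 3 of the paper) is that the eigenvalues of $N_0$ lie in $\ZZ_p$; matching them up with the weak-equivalence class of the intrinsically defined exponent is precisely the nontrivial step your Newton scheme is meant to avoid. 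So as written, your argument is circular: the preliminary normalization it relies on is roughly equivalent in strength to the target. (Your uniqueness argument, by contrast, is correct and is the same bookkeeping the paper performs in Lemma \ref{decom_in_crho}, with exponents of the kernel, image and cokernel of the cross morphisms forcing those morphisms to vanish.)

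If you want to pursue the Dwork-style route anyway, you would need to first run the $\Gamma$-averaging to show that some normalization of the $S_{k,A}$ converges to a basis in which the constant term of $N$ does represent $A$ --- but at that point you have already produced the desired projector, and the subsequent block-diagonalization iteration adds nothing. That is exactly why Kedlaya's argument works with the $\Gamma$-averaged bases $v_{k,A,i}$ throughout: they are approximate eigenvectors of the group action (something the definition of exponent hands you for free), not of $N_0$ (which is what you would have to construct). The other points you flag --- keeping the linearized Sylvester operator fixed, shrinking $[\alpha,\beta]$ to $[\alpha',\beta']$ to gain geometric decay that beats the subexponential bound from non-Liouvilleness, and running things compatibly for the $n$ commuting derivations in the polyannular case --- are real but secondary; the paper handles all three within the single projector-convergence computation of Lemma \ref{dec}, since $\Gamma^n$-averaging is automatically symmetric in the directions.
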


Moreover, it is realized that the generalized $p$-adic Fuchs theorem implies the the original $p$-adic Fuchs theorem in one dimensional case.

In this paper, we prove a generalized version of higher dimensional $p$-adic Fuchs theorem. We define the notion of exponent $A$ for a finite projective differential module $P$  satisfying the Robba condition on higher dimensional polyannuli over $K$, and prove a decomposition theorem for $P$ with respect to a Liouville partition of $A$, which is similar to Theorem \ref{1}. It is worth mentioning that our result implies Theorem \ref{0}, and since our generalized $p$-adic Fuchs theorem work not only for finite free but also for finite projective differential modules, our result is possibly stronger than the result in \cite{Gac}. Also, though we basically follow the strategy developed by Kedlaya in \cite{Ked2}, there are new ingredients applied to get the decomposition from local ones, because of the lack of Quillen-Suslin theorem for arbitrary polyannuli over $K$. 

Let us explain the content of this paper. In Section $1$, we introduce several notations and basic properties that are frequently used later. Also we prove Quillen-Suslin theorem for polyannuli of width $0$ and of small positive width, which is essential for us to construct the category $\mathscr{C}_\rho$ in Section $2$. We also prove properties of abstract $p$-adic exponents in $\ZZ_p^n$, most of which are generalizations of those of abstract $p$-adic exponents in $\ZZ_p$ treated in Section 3.4 of \cite{Ked2}. We give the definition and prove several properties of $p$-adic non-Liouvilleness and Liouville partition, which are important for understanding our main theorem. 

In Section $2$, we introduce two categories $\mathscr{C}_\rho$ and $\mathscr{D}_\rho$ whose objects are modules over a certain direct limit ring with additional structures, and on which most of our work is based. We will prove that $\mathscr{D}_\rho$ is a subcategory of $\mathscr{C}_\rho$, and that $\mathscr{C}_\rho$ is an abelian category. More interestingly, all objects in $\mathscr{C}_\rho$ (and hence all objects in $\mathscr{D}_\rho$) are free.

In Section $3$, we define the notion of exponent for objects in $\mathscr{C}_\rho$, and prove the existence of it. Moreover, we prove that the exponent for an object in $\mathscr{C}_\rho$ is unique up to weak equivalence. Finally, we prove the generalized $p$-adic Fuchs theorem, which is our main goal.

Finally, the author would like to thank his supervisor Atsushi Shiho for introducing this problem, for offering helpful ideas, and for spending time discussing about drafts. The author is partially supported by FoPM(Forefront Physics and Mathematics Program to Drive Transformation) of the University of Tokyo.

\subsection*{Conventions}
\begin{enumerate}[(1)]
    \item Throughout the paper, we fix a complete nonarchimedean field $K$ of mixed characteristic $(0,p)$. $K^{\mathrm{alg}}$ denotes a fixed algebraic closure of $K$ and $k$ denotes the residue field of $K$. $|\cdot|$ denotes the norm on $K$ with $|p|=p^{-1}$, and let $\sqrt{|K^\times|}$ be the divisible closure of $|K^\times|\subset \RR_{>0}$. $\omega$ denotes the constant $p^{-1/(p-1)}$ in $\RR$. We fix a positive integer $n$ which will be the dimension of our polyannuli. $K[[t,t^{-1}]]$ denotes the set
$$ \{ f=\sum_{i_1,\dots,i_n\in \ZZ}f_{i_1 i_2\dots i_n}t_1^{i_1}t_2^{i_2}\dots t_n^{i_n}: f_{i_1 i_2\dots i_n}\in K \} $$
of formal Laurent series of $n$ variables over $K$.

\item For two $n$-tuples $\alpha=(\alpha_1,\dots,\alpha_n)$, $\beta=(\beta_1,\dots,\beta_n)$ in $\RR^n $, we write $\alpha<\beta$(resp. $\alpha>\beta$, $\alpha\leq \beta$, $\alpha\geq\beta$) if $\alpha_i<\beta_i$(resp. $\alpha_i>\beta_i$, $\alpha_i\leq \beta_i$, $\alpha_i\geq\beta_i$) for all $1\leq i\leq n$.
For an $n$-tuple of variables $t=(t_1,\dots,t_n)$ and $\alpha=(\alpha_1,\dots,\alpha_n)$, we denote $t_1^{\alpha_1}\dots t_n^{\alpha_n}$ by $t^\alpha$. For $i=(i_1,\dots,i_n)\in \ZZ^n$, $|i|$ denotes the sum of the absolute value of entries of $i$, namely, $|i|=|i_1|+\dots+|i_n|$.

\item A polysegment in $\RR^n_{>0}$ is a subset of the form $I=\prod_{i=1}^nI_i$, where each $I_i$ is a non-empty interval in $\RR_{>0}$. It is called open(resp. closed) if $I$ is an open(resp. closed) subset of $\RR_{>0}^n$. For $\alpha=(\alpha_1,\dots,\alpha_n)$, $\beta=(\beta_1,\dots,\beta_n)$ in $\RR^n_{>0}$, we denote the polysegment $\prod_{i=1}^n[\alpha_i,\beta_i]$ by $[\alpha,\beta]$.

\item For a positive integer $s$, $\Gamma_s$ denotes the group of $p^s$-th roots of unity in $K^{\mathrm{alg}}$, and $\Gamma=\bigcup_{s>0}\Gamma_s$. Also $\Gamma^n_s$ and $\Gamma^n$ denote the product of $n$ copies of $\Gamma_s$ and $\Gamma$, respectively. 

\item For a ring $R$ and a positive integer $r$, $\mathrm{M}_r(R)$ denotes the set of $r\times r$ matrices with entries in $R$.

\item For a commutative Banach algebra $A$, $\mathscr{M}(A)$ denotes the spectrum of $A$ in the sense of Berkovich.
\end{enumerate}

\section{Preliminaries}
\subsection{Notations and basic facts}

We denote the Berkovich affine $n$-space $(\Spec K[t_1,\dots,t_n])^{\mathrm{an}}$ over $K$ by $\mathbb{A}_K^n$  .
\begin{defn}
For a polysegment $I=\prod_{i=1}^n I_i\subset\mathbb{R}_{> 0}^n$, the polyannulus with radius $I$ over $K$ is the subspace of $\mathbb{A}_K^n$ defined by
$$\left\{ x\in \mathbb{A}_K^n: t_i(x)\in I_i ,\ 1\leq i\leq n \right\},$$
and we call such a subspace an open (resp. closed) polyannulus if $I$ is open (resp. closed). Moreover, we say that it is of positive width (resp. of width $0$) if each $I_i$ is not a point (resp. $I$ degenerates to a point).
The coordinate ring of this polyannulus is
$$\left \{ f=\sum_{i\in{\ZZ^n}}f_it^i\in K[[t,t^{-1}]]: \lim_{|i|\to \infty}|f_i|\rho^i=0\quad \forall \rho\in I\right \},$$ 
and we denote this ring by $K_{I,n}$. Here we put the subscript $n$ in the notation to emphasize the dimension of the associated polyannulus.
For $\rho\in I$, we define the $\rho$-Gauss norm of $f=\sum_{i\in \ZZ^n}f_i t^i\in K_{I,n}$ to be $|f|_\rho:=\max_{i\in\ZZ^n} |f_i|\rho^i$.
\end{defn}

When $I$ is closed, $K_{I,n}$ is a $K$-affinoid algebra in the sense of Berkovich, and the supremum norm (which is power multiplicative but not necessarily multiplicative) on $K_{I,n}$ is defined by $|f|_{I}:=\max_{\rho\in I}\{|f|_\rho\}$. For polysegments $J\subset I$ in $\RR^n_{>0}$ and a $K_{I,n}$-module $P$, we denote the module $K_{J,n}\otimes_{K_{I,n}} P$ by $P_J$. Also for $\rho\in I$, we denote the module $K_{[\rho,\rho],n}\otimes_{K_{I,n}} P$ by $P_{0,\rho}$.

\begin{defn}\label{above7}
Let $I$ be a polysegment in $\RR^n_{>0}$ and $\rho\in I$. The $\rho$-width vector of $f=\sum_{i\in \ZZ^n}f_it^i\in K_{I,n}$ is defined to be the difference of the maximal and the minimal element(with respect to the lexicographic order) in the set $\{ i\in \ZZ^n:|f_i|\rho^i=|f|_\rho \}$. We denote the $\rho$-width vector of $f$ by $\wid_\rho(f)$.
    
\end{defn}
    
\begin{lem}
    Let $I,\rho$ be as in Definition \ref{above7} and let $P,Q\in K_{I,n}$. Then $\wid_\rho(PQ)=\wid_\rho(P)+\wid_\rho(Q)$.
\end{lem}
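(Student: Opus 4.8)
The plan is to reduce the statement about products in the Laurent-series ring $K_{I,n}$ to a statement about a single Gauss norm and then exploit the multiplicativity of $|\cdot|_\rho$ together with the lexicographic order. Fix $\rho\in I$. Write $P=\sum_i p_it^i$ and $Q=\sum_j q_jt^j$, and set
\[
S_P=\{i\in\ZZ^n:|p_i|\rho^i=|P|_\rho\},\qquad S_Q=\{j\in\ZZ^n:|q_j|\rho^j=|Q|_\rho\}.
\]
Let $i_{\max},i_{\min}$ (resp. $j_{\max},j_{\min}$) be the lexicographically maximal and minimal elements of $S_P$ (resp. $S_Q$), so $\wid_\rho(P)=i_{\max}-i_{\min}$ and $\wid_\rho(Q)=j_{\max}-j_{\min}$. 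The coefficient of $t^m$ in $PQ$ is $\sum_{i+j=m}p_iq_j$, and every term satisfies $|p_iq_j|\rho^{i+j}\le |P|_\rho|Q|_\rho$, so $|PQ|_\rho\le|P|_\rho|Q|_\rho$; the reverse inequality (hence equality) is the power-multiplicativity/multiplicativity of the $\rho$-Gauss norm, which is standard for Laurent series over a nonarchimedean field. Thus $S_{PQ}\subseteq\{m:$ the ultrametric sum $\sum_{i+j=m}p_iq_j$ has norm exactly $|P|_\rho|Q|_\rho\}$, and such $m$ must be expressible as $i+j$ with $i\in S_P$, $j\in S_Q$.

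The core of the argument is to pin down the lexicographic extremes of $S_{PQ}$. I claim $i_{\max}+j_{\max}\in S_{PQ}$ and it is the lexicographically largest element of $S_{PQ}$; symmetrically for $i_{\min}+j_{\min}$. For the "largest" claim: consider the coefficient of $t^{i_{\max}+j_{\max}}$. Any decomposition $i+j=i_{\max}+j_{\max}$ with $|p_iq_j|\rho^{i+j}=|P|_\rho|Q|_\rho$ forces $i\in S_P$ and $j\in S_Q$; if $i\ne i_{\max}$ then $i<_{\mathrm{lex}}i_{\max}$, whence $j=i_{\max}+j_{\max}-i>_{\mathrm{lex}}j_{\max}$, contradicting maximality of $j_{\max}$ in $S_Q$. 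Hence the only contributing term is $p_{i_{\max}}q_{j_{\max}}$, so the coefficient has norm exactly $|P|_\rho|Q|_\rho$ and $i_{\max}+j_{\max}\in S_{PQ}$. Moreover no $m>_{\mathrm{lex}}i_{\max}+j_{\max}$ can lie in $S_{PQ}$: writing $m=i+j$ with $i\in S_P,j\in S_Q$ gives $i\le_{\mathrm{lex}}i_{\max}$ and $j\le_{\mathrm{lex}}j_{\max}$, so $m\le_{\mathrm{lex}}i_{\max}+j_{\max}$ (here one uses that the lexicographic order on $\ZZ^n$ is a translation-invariant total order, so it is additive with respect to coordinatewise sums of bounded elements — more carefully, $i\le_{\mathrm{lex}}i_{\max}$ and $j\le_{\mathrm{lex}}j_{\max}$ indeed imply $i+j\le_{\mathrm{lex}}i_{\max}+j_{\max}$). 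The minimal element is handled by the same reasoning with inequalities reversed.

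Combining, $\wid_\rho(PQ)=(i_{\max}+j_{\max})-(i_{\min}+j_{\min})=(i_{\max}-i_{\min})+(j_{\max}-j_{\min})=\wid_\rho(P)+\wid_\rho(Q)$, as desired.

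The main obstacle, and the only genuinely delicate point, is the cancellation issue: a priori the coefficient of some $t^m$ could be a sum of several terms each of norm $|P|_\rho|Q|_\rho$ that cancel, so that $m\notin S_{PQ}$ even though $m=i+j$ for some $i\in S_P,j\in S_Q$. This does not affect the argument, because I only need (a) that the specific extremal indices $i_{\max}+j_{\max}$ and $i_{\min}+j_{\min}$ have no such competing terms (shown above by the lexicographic extremality, which makes their decomposition unique), and (b) that every element of $S_{PQ}$ is $\le_{\mathrm{lex}} i_{\max}+j_{\max}$ and $\ge_{\mathrm{lex}} i_{\min}+j_{\min}$, which only requires the easy direction. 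So the potential cancellation never bites. The other point to state carefully is the additivity of $\le_{\mathrm{lex}}$ under sums, which follows since $\le_{\mathrm{lex}}$ is the order induced by a group homomorphism $\ZZ^n\to(\RR,+)$ in general position (or directly by comparing first differing coordinates).
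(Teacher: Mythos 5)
Your proof is correct and follows essentially the same route as the paper's: write out the coefficient formula for $PQ$, observe that a term $p_iq_j$ can only attain the bound $|P|_\rho|Q|_\rho$ when $i\in S_P$ and $j\in S_Q$, use lexicographic extremality to see that the decompositions of $i_{\max}+j_{\max}$ and $i_{\min}+j_{\min}$ into such pairs are unique (so no cancellation can occur there), and use additivity of the lexicographic order to bound $S_{PQ}$ between these extremes. You are somewhat more explicit than the paper about the cancellation issue and the order-additivity step, but there is no difference in strategy.
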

    
\begin{proof}
    Write $P=\sum_{i\in\ZZ^n}P_it^i$, $Q=\sum_{i\in\ZZ^n}Q_it^i$, $PQ=\sum_{i\in\ZZ^n}(PQ)_{i}t^i$. Then we have
    $$ (PQ)_i=\sum_{j+k=i}P_jQ_k .$$
    For $S\in\{P,Q\}$, let $M_S$ and $m_S$ be the maximal and the minimal index $i=(i_1,\dots,i_n)$(with respect to the lexicographical order), respectively, such that $|S_i|\rho^i=|S|_\rho$.
    Then, for $j,k\in\ZZ^n$ with $j+k=i$, we have the inequality $|P_jQ_k|\rho^i\leq|P|_\rho|Q|_\rho$, and the equality can occur only when $m_P\leq j\leq M_P$ and $m_Q\leq k\leq M_Q$.

    Then, if $i>M_P+M_Q$ or $i< m_P+m_Q$, we have
    $$ |(PQ)_i|_\rho\rho^i\leq \max_{j+k=i}|P_jQ_k|\rho^i<|P|_\rho |Q|_\rho. $$

    Also, if $i=M_P+M_Q$(resp. $i=m_P+m_Q$), there is a unique summand $P_{M_P}Q_{M_Q}$(resp. $P_{m_P}Q_{m_Q}$) of $(PQ)_i$ such that $|P_{M_P}Q_{M_Q}|\rho^i=|P|_\rho|Q|_\rho$(resp. $P_{m_P}Q_{m_Q}\rho^i=|P|_\rho|Q|_\rho$). These imply the desired assertion. 
\end{proof}
    
\begin{cor}{\label{invt}}
    Let $f=\sum_{i\in \ZZ^n} f_it^i\in \Kabn$. Then $f$ is a unit in $\Kabn$ if and only if $wid_\rho(f)=0\in \ZZ^n$ for all $\rho \in [\alpha,\beta]$, that is, for all $\rho \in [\alpha,\beta]$, there exists $i\in \ZZ^n$ such that $|f-f_it^i|_\rho<|f|_\rho$.
\end{cor}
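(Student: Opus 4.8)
The plan is to read the hypothesis as the statement that for every $\rho$ the $\rho$-Gauss norm of $f$ is attained at exactly one monomial (this is precisely what $\wid_\rho(f)=0$ means, hence also what the reformulation ``there is $i$ with $|f-f_it^i|_\rho<|f|_\rho$'' means), then to derive the forward implication from the preceding Lemma and the converse from a Neumann series argument.

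For the ``only if'' direction: if $f$ is a unit with inverse $g\in\Kabn$, apply the preceding Lemma to $fg=1$ to get $\wid_\rho(f)+\wid_\rho(g)=\wid_\rho(1)=0$ for every $\rho\in[\alpha,\beta]$. A width vector is by construction the difference of the lexicographically largest and smallest index of a nonempty set, hence lexicographically $\geq 0$; since the lexicographic order makes $\ZZ^n$ an ordered abelian group with pointed positive cone, a sum of two such vectors vanishes only if each one does. Thus $\wid_\rho(f)=0$ for all $\rho$.

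For the ``if'' direction, assume $\wid_\rho(f)=0$ for every $\rho\in[\alpha,\beta]$, so in particular $f\neq 0$ and for each $\rho$ there is a unique index $i(\rho)$ with $|f_{i(\rho)}|\rho^{i(\rho)}=|f|_\rho$. I would first show $\rho\mapsto i(\rho)$ is locally constant: near a fixed $\rho_0$ only finitely many indices have $|f_i|\rho^i$ comparable to $|f|_\rho$ (because $|f_i|\rho^i\to 0$ as $|i|\to\infty$, uniformly on a neighbourhood of $\rho_0$), each $\rho\mapsto|f_i|\rho^i$ is continuous, and at $\rho_0$ there is a strict gap between $|f_{i_0}|\rho_0^{i_0}$ and $\max_{i\neq i_0}|f_i|\rho_0^{i}$; hence this gap persists for $\rho$ near $\rho_0$. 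Connectedness of $[\alpha,\beta]$ then yields a single index $i_0$ with $|f_{i_0}|\rho^{i_0}=|f|_\rho>|f_i|\rho^i$ for all $\rho\in[\alpha,\beta]$ and all $i\neq i_0$. Now put $u:=f_{i_0}^{-1}t^{-i_0}f-1=\sum_{i\neq i_0}(f_i/f_{i_0})t^{i-i_0}\in\Kabn$. We have $|u|_\rho<1$ for every $\rho$; since $\rho\mapsto|u|_\rho$ is continuous on the compact set $[\alpha,\beta]$, its supremum $|u|_{[\alpha,\beta]}$ is attained and equals some real number $c<1$. By power-multiplicativity of the supremum norm, $|u^k|_{[\alpha,\beta]}\le c^k\to 0$, so $\sum_{k\ge 0}(-u)^k$ converges in the affinoid algebra $\Kabn$ to an inverse of $1+u$; multiplying by the unit $f_{i_0}t^{i_0}$ shows that $f$ is a unit.

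The forward direction is routine once the Lemma is in hand. The real content of the converse is the two topological inputs: upgrading the pointwise statements ``$|f|_\rho$ is attained at a unique monomial'' to the global statement ``that monomial is the same for all $\rho\in[\alpha,\beta]$'' (which uses connectedness of $[\alpha,\beta]$ together with the uniform decay of the coefficients), and upgrading the pointwise strict inequalities $|u|_\rho<1$ to the uniform one $|u|_{[\alpha,\beta]}<1$ (which uses compactness of $[\alpha,\beta]$ and continuity of the Gauss norm $\rho\mapsto|u|_\rho$); I expect these to be the only places requiring care.
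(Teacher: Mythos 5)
Your proof is correct and follows essentially the same approach as the paper: the only-if direction via the multiplicativity-of-width lemma applied to $ff^{-1}=1$, and the if direction via a geometric (Neumann) series after factoring out the dominant monomial. In fact your write-up is more careful than the paper's, which tacitly assumes a single index $i$ works uniformly over $[\alpha,\beta]$ and that the series converges in the Banach norm $|\cdot|_{[\alpha,\beta]}$; your local-constancy/connectedness argument for $\rho\mapsto i(\rho)$ and your compactness argument giving $|u|_{[\alpha,\beta]}<1$ supply exactly the details the paper glosses over.
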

    
\begin{proof}
    If $|f-f_it^i|_\rho<|f|_\rho$, then we can write $f=f_it^{i}(1-(1-f_i^{-1}t^{-i}f))$, thus
    $$ f^{-1}=f_i^{-1}t^{-i}\sum_{n=0}^\infty(1-f_i^{-1}t^{-i}f)^n, $$
    with the series on the right hand side converges for all $\rho$-Gauss norms($\rho\in[\alpha,\beta]$).
    
    If $f$ is a unit in $\Kabn$, then we have
    $$ 0=\wid_\rho(1)=\wid_\rho(ff^{-1})=\wid_\rho(f)+\wid_\rho(f^{-1})\geq 0, $$
    and this implies that $\wid_\rho(f)=0$ for all $\rho$-Gauss norms($\rho\in[\alpha,\beta]$).
\end{proof}

\begin{cor}
    Let $f\in\Kabn$ . If $f$ is a unit in $K_{[\rho,\rho],n}$ for some $\rho\in (\alpha,\beta)$, then there exists $\delta$ and $\gamma$ with $\alpha<\delta<\rho<\gamma<\beta$ such that  $f$ is a unit in $K_{[\delta,\gamma],n}$.
\end{cor}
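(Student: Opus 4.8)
The plan is to reduce the hypothesis to a statement about $\rho$-Gauss norms via Corollary~\ref{invt}, and then to promote control at the single radius $\rho$ to control on a small closed neighbourhood, using crucially that $\rho$ lies in the open polysegment $(\alpha,\beta)$, so that $f$ already converges on a strictly larger closed polyannulus.

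First I would apply Corollary~\ref{invt} to the degenerate polysegment $[\rho,\rho]$: since $f$ is a unit in $K_{[\rho,\rho],n}$, there is a unique index $i_0\in\ZZ^n$ with $|f_{i_0}|\rho^{i_0}=|f|_\rho$ and $|f-f_{i_0}t^{i_0}|_\rho<|f|_\rho$. Multiplying $f$ by the unit $(f_{i_0}t^{i_0})^{-1}$ --- which is a unit in $K_{[\alpha',\beta'],n}$ for every polysegment $[\alpha',\beta']\subset[\alpha,\beta]$ --- I may assume $i_0=0$ and $f_0=1$. Writing $f=1+h$ with $h=\sum_{i\neq0}f_it^i$, the hypothesis becomes $a:=|h|_\rho=\sup_{i\neq0}|f_i|\rho^i<1$; note that this supremum is attained, because $|f_i|\rho^i\to0$ as $|i|\to\infty$ by the defining condition of $K_{[\alpha,\beta],n}$. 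If $h=0$ then $f$ is a monomial, hence a unit everywhere and there is nothing to prove, so assume $h\neq0$ and thus $0<a<1$.

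Next I would fix $\epsilon_0>0$ small enough that $[\rho e^{-\epsilon_0},\rho e^{\epsilon_0}]\subset[\alpha,\beta]$, where $\rho e^{\pm\epsilon_0}$ denotes the $n$-tuple $(\rho_je^{\pm\epsilon_0})_{1\le j\le n}$; this is possible since $\alpha<\rho<\beta$. For $0<\epsilon\le\epsilon_0$, computing the supremum norm of $h$ over the box $[\rho e^{-\epsilon},\rho e^{\epsilon}]$ from the definition --- using $\max_{\rho'\in[\rho e^{-\epsilon},\rho e^{\epsilon}]}\rho'^i=\rho^ie^{\epsilon|i|}$ and exchanging the supremum over $i$ with the maximum over the (compact, product) box --- gives
$$\bigl|h\bigr|_{[\rho e^{-\epsilon},\rho e^{\epsilon}]}=\sup_{i\neq0}|f_i|\rho^ie^{\epsilon|i|}.$$
Since $f\in K_{[\rho e^{-\epsilon_0},\rho e^{\epsilon_0}],n}$, we have $|f_i|\rho^ie^{\epsilon_0|i|}\to0$ as $|i|\to\infty$; picking $\eta$ with $a<\eta<1$, there is $N$ with $|f_i|\rho^ie^{\epsilon_0|i|}<\eta$ for $|i|>N$, and a fortiori $|f_i|\rho^ie^{\epsilon|i|}<\eta$ for $|i|>N$ and all $\epsilon\le\epsilon_0$. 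For the finitely many $i$ with $1\le|i|\le N$ one has $a_N:=\max_{1\le|i|\le N}|f_i|\rho^i\le a<1$, hence $|f_i|\rho^ie^{\epsilon|i|}\le a_Ne^{\epsilon N}$, which is $<1$ once $\epsilon$ is sufficiently small. Shrinking $\epsilon\le\epsilon_0$ accordingly yields $\bigl|h\bigr|_{[\rho e^{-\epsilon},\rho e^{\epsilon}]}<1$.

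Finally, set $\delta:=(\rho_je^{-\epsilon})_j$ and $\gamma:=(\rho_je^{\epsilon})_j$; shrinking $\epsilon$ once more if needed we obtain $\alpha<\delta<\rho<\gamma<\beta$. For every $\rho'\in[\delta,\gamma]$ we have $|f-1\cdot t^{0}|_{\rho'}=|h|_{\rho'}\le\bigl|h\bigr|_{[\delta,\gamma]}<1=|f|_{\rho'}$, so $\wid_{\rho'}(f)=0$, and Corollary~\ref{invt} applied to $K_{[\delta,\gamma],n}$ shows $f$ is a unit in $K_{[\delta,\gamma],n}$. (Alternatively, $\bigl|h\bigr|_{[\delta,\gamma]}<1$ and $K_{[\delta,\gamma],n}$ is complete for the submultiplicative supremum norm, so $f^{-1}=\sum_{m\ge0}(-h)^m$ converges.) I expect the main obstacle to be the third paragraph: the hypothesis $\wid_\rho(f)=0$ is information at a single radius, and even there the step from ``every non-leading coefficient contributes norm $<1$'' to ``their supremum is $<1$'' already relies on $|f_i|\rho^i\to0$; upgrading to a whole neighbourhood forces one to tame the factor $e^{\epsilon|i|}$, which is precisely where convergence of $f$ on a strictly larger polyannulus (equivalently, the interiority of $\rho$) enters. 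The remaining steps are routine bookkeeping with Gauss norms and Corollary~\ref{invt}.
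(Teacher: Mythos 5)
Your proof is correct and follows essentially the same approach as the paper: apply Corollary~\ref{invt} at $\rho$ to isolate a dominant monomial, then promote the strict inequality to a small closed polysegment around $\rho$. The paper does this in one line by invoking continuity of the functions $\sigma\mapsto|f|_\sigma$ and $\sigma\mapsto|f-f_it^i|_\sigma$, whereas you normalize to $f=1+h$ and prove the needed uniform estimate $|h|_{[\rho e^{-\epsilon},\rho e^{\epsilon}]}<1$ directly from the coefficient decay; this is a more self-contained unpacking of the same idea rather than a different method.
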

\begin{proof}
    Write $f=\sum_{i\in\ZZ^n}f_it^i$. By Corollary \ref{invt}, there exists an index $i$ such that $|f-f_it^i|_\rho<|f|_\rho$. Since functions $\sigma\mapsto|f-f_it^i|_{\sigma}$, $\sigma\mapsto|f|_\sigma$ are continuous, we can take $\alpha<\gamma<\rho<\delta<\beta$ such that $|f-f_it^i|_\sigma<|f|_\sigma$ for all $\sigma\in[\gamma,\delta]$. So $f$ is a unit in $K_{[\gamma,\delta],n}$.
\end{proof}

\begin{defn}[cf. \cite{Ked1}, Definition 8.1.5]
Let $R$ be a Banach $K$-algebra with norm $|\cdot|$, and let $\alpha,\beta\in\RR_{>0}$ with $\alpha<\beta$. We define the ring $R[[_0 \alpha/t,t/\beta\rangle$ (the ring of Laurent series which converges and take bounded values on the semi-open polyannulus $\alpha<|t|\leq \beta$) as follows:
$$ R[[_0 \alpha/t,t/\beta\rangle := \left\{ \sum_{i\in\ZZ}c_it^i: c_i\in R,\ \sup_i \{|c_i|\alpha^i\}<\infty,\ \lim_{i\to\infty}|c_i|\beta^i=0  \right\}. $$ 
\end{defn}

\subsection{Quillen-Suslin theorem over polyannuli of width $0$}
It was shown in Satz 3 in p.131 of \cite{Lut} that all finite projective modules over the polyannulus with inner and outer radii $1$ are free. We generalize this result to any polyannuli of width $0$, basically following the argument in \cite{KedF}, Section 6.

From now on in this subsection, unless otherwise specified, let \break$\rho=(\rho_1,\dots,\rho_n)\in \sqrt{|K^\times|}^n$, and let $R=K_{[\hat{\rho},\hat{\rho}],n-1}$, where $\hat{\rho}=(\rho_1,\dots,\rho_{n-1})$. Then $R$ admits the $\hat{\rho}$-Gauss norm(denoted by $|\cdot|$) and $g=\sum_{i\in\ZZ} g_i t_n^i\in R[[_0 \rho/t,t/\rho\rangle$ admits a natural norm $|g|=\sup_i\{ |g_i| \rho_n^i \}$.
\begin{defn}
An element $g=\sum_{i\in\ZZ} g_i t_n^i$ in $ R[[_0 \rho_n/t_n,t_n/\rho_n\rangle$ is called $t_n$-bidistinguished of (upper and lower) degree $(\lowd,\upd)$ if the following conditions are satisfied:
\begin{enumerate}[(1)]
    \item  Both $g_\lowd$ and $g_\upd$ are units in $R$.
    \item $|g_\lowd|\rho_n^\lowd=|g_\upd|\rho_n^\upd=|g|$, $|g_\lowd|\rho_n^\lowd>|g_\nu|\rho_n^\nu$ for all $\nu<\lowd$  and $|g_\upd|\rho_n^\upd>|g_\mu|\rho_n^\mu$ for all $\mu>\upd$.
\end{enumerate}
\end{defn}

Then we apply Theorem 2.2.2 of \cite{Ked1} to deduce the following argument:

\begin{prop}
\label{Factorization1}
Let $f=\sum_{i\in\ZZ}f_it_n^i\in  R[[_0 \rho_n/t_n,t_n/\rho_n\rangle$($f_i\in R$) be a $t_n$-bidistinguished element of degree $(\lowd,\upd)$. Then there exists a unique factorization $f=f_\upd t_n^\upd gh$ with 
$$ g\in  R[[_0 \rho_n/t_n,t_n/\rho_n\rangle\cap R[[t_n]],\ \ h\in R[[_0 \rho_n/t_n,t_n/\rho_n\rangle \cap R[[t_n^{-1}]] $$
and $|h|=|h_0|=1$, $|g-1|<1$, where $h_0\in R$ is the constant term of $h$.
\end{prop}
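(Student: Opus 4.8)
The plan is to reduce this to a Weierstrass-type factorization argument, using Theorem 2.2.2 of \cite{Ked1} as the engine, just as the phrasing before the statement suggests. First I would isolate the "essential" part of $f$: since $f$ is $t_n$-bidistinguished of degree $(\lowd,\upd)$, dividing through by the monomial $f_\upd t_n^\upd$ normalizes the problem. Set $\tilde f = f_\upd^{-1} t_n^{-\upd} f$, which lies in $R[[_0 \rho_n/t_n,t_n/\rho_n\rangle$ and has $|\tilde f| = 1$ with the coefficient of $t_n^0$ (namely $f_\upd^{-1}f_\upd = 1$) a unit realizing the norm uniquely from above, and the coefficient of $t_n^{\lowd-\upd}$ a unit realizing the norm uniquely from below. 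So after this reduction I may as well assume $\upd = 0$, $f_0 = 1$, and I want $f = gh$ with $g \in R[[_0\rho_n/t_n,t_n/\rho_n\rangle \cap R[[t_n]]$, $|g-1|<1$, and $h \in R[[_0\rho_n/t_n,t_n/\rho_n\rangle \cap R[[t_n^{-1}]]$, $|h| = |h_0| = 1$.

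Next I would set up the ring to which Kedlaya's factorization theorem applies. The ring $R[[_0\rho_n/t_n,t_n/\rho_n\rangle$, with its norm $|g| = \sup_i |g_i|\rho_n^i$, is (a version of) the ring of analytic functions bounded on a half-open annulus; its elements decompose as a "plus part" in $R[[t_n]]$ and a "minus part" (bounded Laurent tails in $t_n^{-1}$), and Theorem 2.2.2 of \cite{Ked1} provides, for an element whose reduction is suitably "distinguished," a factorization into a unit times a monic-type factor concentrated on one side. Concretely I would feed $\tilde f$ into that theorem: the bidistinguishedness hypothesis, that $\tilde f_0$ and $\tilde f_{\lowd-\upd}$ are units dominating strictly from the two sides, is exactly the "distinguished of degree $d$" condition needed (after passing to the residue ring $R/\mathfrak{m}_R$ or, more precisely, using that the relevant leading coefficients are units, so one can invert and apply the theorem over $R$ directly). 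This yields $\tilde f = u \cdot P$ where $u$ is a unit of $R[[_0\rho_n/t_n,t_n/\rho_n\rangle$ with $|u-1|<1$ and $P$ is concentrated in nonpositive powers of $t_n$ (a "distinguished" polynomial in $t_n^{-1}$) with leading behavior normalized so that $|P| = |P_0| = 1$. Then undoing the normalization, $f = (f_\upd t_n^\upd) u P$, and I would read off $g := u$ (after checking $|g-1|<1$ and $g \in R[[t_n]]$, which holds since $u$ is a unit with $|u-1|<1$, hence $u^{-1}$ and $u$ both have "plus part" nature once combined appropriately — this is the point that needs care) and $h := f_\upd t_n^\upd P$, lying in the minus-part ring with $|h| = |h_0| = |f_\upd|\rho_n^\upd \cdot 1$; but since the statement demands $|h| = |h_0| = 1$ exactly, I must be more careful about where the scalar $f_\upd$ and monomial $t_n^\upd$ are absorbed — likely the intended reading is that the $f_\upd t_n^\upd$ factor is split off explicitly as written ($f = f_\upd t_n^\upd g h$), so $h$ is genuinely norm-$1$ with unit constant term $1$ after the same normalization that made $\tilde f$ have constant term $1$.

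For uniqueness, I would argue in the standard way: suppose $f_\upd t_n^\upd gh = f_\upd t_n^\upd g'h'$ with both pairs satisfying the constraints. Cancelling $f_\upd t_n^\upd$ gives $gh = g'h'$, hence $g^{-1}g' = h (h')^{-1}$ (using that $g, g'$ are units, being $1 + (\text{norm} < 1)$, and $h,h'$ are units since their constant terms $h_0, h'_0$ are units with $|h| = |h_0|$, so $h = h_0(1 + \text{nilpotent-like small part})$). The left side lies in the "plus" completion with norm-distance $<1$ from a unit of $R$, the right side lies in the "minus" completion; an element in the intersection of these two subrings of $R[[_0\rho_n/t_n,t_n/\rho_n\rangle$ must lie in $R$ itself, and the normalizations $|g-1|<1$, $|g'-1|<1$ on one side and $|h_0| = |h'_0| = 1$ on the other force this common element to be $1$. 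This is where I would invoke, or re-prove inline, the elementary fact that $R[[t_n]] \cap R[[t_n^{-1}]] = R$ inside the big ring, together with the norm bookkeeping.

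**The main obstacle** I anticipate is not the existence but the precise matching of normalizations — making sure the factor $g$ really lands in $R[[t_n]]$ (not just in the full ring) with $|g-1|<1$, and that $h$ genuinely has $|h| = |h_0| = 1$ rather than $|h| = |h_0| = |f_\upd|\rho_n^\upd$. This forces a careful tracking of which monomial/scalar gets absorbed into which factor when invoking Theorem 2.2.2 of \cite{Ked1}, whose output normalization may differ from the one stated here by exactly such a monomial twist; reconciling the two conventions, and verifying that the "unit part" produced by Kedlaya's theorem is actually a power series in $t_n$ and not merely an element of the bounded-Laurent ring, is the delicate bookkeeping step. Everything else — the reduction by $f_\upd t_n^\upd$, the uniqueness via the intersection-of-subrings argument — is routine once the hypotheses are correctly translated into the input format of the cited theorem.
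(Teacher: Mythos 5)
Your high-level plan — normalize by $f_{\upd}t_n^{\upd}$ so that $\upd=0$ and $f_0=1$, then invoke Theorem 2.2.2 of \cite{Ked1} — is the same route the paper takes. But there is a genuine gap: you treat Theorem 2.2.2 as if it were itself a Weierstrass-preparation statement (``a factorization into a unit times a monic-type factor concentrated on one side,'' with talk of passing to $R/\mathfrak m_R$), when in fact it is an abstract ``master factorization'' lemma. Its input is not a distinguished element; it is a choice of subgroups $U,V,W$ of the ambient Banach group, elements $a,b,c$, and a constant $\lambda$, together with verification of four conditions (completeness/multiplicativity, surjectivity of $(u,v)\mapsto av+ub$, the lower bound $|av+ub|\geq\lambda\max\{|a||v|,|b||u|\}$, and the closeness estimate $|ab-c|<\lambda^2|c|$). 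The entire content of the proof of this proposition is precisely the choice
$U=\{g\in R[[_0\rho_n/t_n,t_n/\rho_n\rangle\cap R[[t_n]]: |g|\le 1,\ g_0=0\}$,
$V=\{h\in R[[_0\rho_n/t_n,t_n/\rho_n\rangle\cap R[[t_n^{-1}]]: |h|\le 1\}$,
$W=\{\varphi: |\varphi|\le 1\}$, $a=1$, $b=f_{\le 0}$, $c=f$, $\lambda=1$, and the verification of the four conditions (with the bidistinguishedness of degree $(\lowd,0)$ supplying both the invertibility of $b=f_{\le 0}$ needed for surjectivity in (b) and the strict inequality $|ab-c|=|f_{>0}|<1$ in (d)). Your proposal never specifies this data, which is where the actual argument lives.

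Two further consequences of that omission. First, the concerns you flag but do not resolve — whether $g$ really lies in $R[[t_n]]$ with $|g-1|<1$, and whether $|h|=|h_0|=1$ — disappear automatically once $U$ and $V$ are chosen correctly: $g=1+u$ with $u\in U$ is by construction a power series with no constant term and small norm, and $h=b+v=f_{\le 0}+v$ lies in $R[[t_n^{-1}]]$ with $h_0$ a unit of norm $1$ (since $b_0=f_0=1$ and $v$ is small), so there is no delicate reconciliation of conventions to do. Second, you propose a separate uniqueness argument via the intersection $R[[t_n]]\cap R[[t_n^{-1}]]=R$; this can be made to work, but it is unnecessary, since the uniqueness clause of Theorem 2.2.2 already gives uniqueness of $(u,v)$, hence of $(g,h)$, once the data $(U,V,W,a,b,c,\lambda)$ is in place. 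In short, your outline points in the right direction, but without the explicit setup of the factorization lemma's inputs and the verification of its hypotheses, the proof is not complete.
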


\begin{proof}
Without loss of generality, we may consider $f/f_\upd t_n^\upd$ instead of $f$, and thus we can assume that $|f|=|f_0|=1$. let
\begin{align*}
U &= \left\{ g\in  R[[_0 \rho_n/t_n,t_n/\rho_n\rangle\cap R[[t_n]]:|g|\leq 1 \text{ and $g$ has no constant term} \right\},\\
V &= \left\{ h\in  R[[_0 \rho_n/t_n,t_n/\rho_n\rangle\cap R[[t_n^{-1}]]:|h|\leq 1 \right\},\\
W &= \left\{ \varphi\in  R[[_0 \rho_n/t_n,t_n/\rho_n\rangle: |\varphi|\leq 1 \right\}.
\end{align*}

Also, for $\varphi=\sum_{i\in \ZZ}\varphi_i t_n^i\in R[[_0 \rho_n/t_n,t_n/\rho_n\rangle$, define $\varphi_{>0}=\sum_{i=1}^\infty \varphi_i t_n^i$, $\varphi_{\geq 0}=\sum_{i=0}^\infty \varphi_i t_n^i$, $\varphi_{\leq 0}=\sum_{i=-\infty}^0 \varphi_i t_n^i$, and let $a=1$, $b=f_{\leq 0}$, $c=f$, $\lambda=1$. With these notations, we check the following conditions (a),(b),(c),(d) in Theorem 2.2.2 of \cite{Ked1}:
\begin{enumerate}[(a)]
    \item $U,V$ are complete and $UV\subset W$.
    \item The map $U\times V\to W$: $(u,v)\mapsto av+ub$ is surjective.
    \item $|av+ub|\geq \lambda\max\{ |a||v|,|b||u| \}$.
    \item $ab-c\in W$ and $|ab-c|<\lambda^2|c|$.
\end{enumerate}

The condition (a) is obvious. To prove (b), note that, since $|f|=1$ and $f$ is $t_n$-bidistinguished of upper degree $0$, $b=f_{\leq 0}$ is a unit with $|b|=1$. Then, for $\varphi\in W$, if we put $u=(b^{-1}\varphi)_{>0}\in U$ and $v=b(b^{-1}\varphi)_{\leq 0}\in V$, we have 
\begin{align*}
    \varphi & = b(b^{-1}\varphi)=b(b^{-1}\varphi)_{\leq 0}+b(b^{-1}\varphi)_{>0} \\
    & =v+ub=av+ub
\end{align*}
So the condition (b) is proved.

Next, for $\varphi=v+ub$ with $u\in U$, $v\in V$, $b^{-1}\varphi=b^{-1}v+u$ with $b^{-1}v\in V$. Then, by definition of $U,V$, we have 
$$ |\varphi|=|b^{-1}\varphi|=\max\{ |b^{-1}v|,|u| \}=\max\{ |v|,|b||u| \}. $$
So the condition (c) is proved.

Finally, since $f$ is $t_n$-bidistinguished of upper degree $0$,
$$|ab-c|=|f_{\leq 0}-f|=|f_{>0}|<|f|=|c|,$$
and so the condition (d) is proved. Then by Theorem 2.2.2 of \cite{Ked1}, we have the desired conclusion.
\end{proof}

\begin{lem}
    \label{multiply_small_x}
Let $f\in K_{[\rho,\rho],n} $ be $t_n$-bidistintuished of degree $(\lowd,\upd)$, and $x\in K_{[\rho,\rho],n}$ be an element with $|x|<1$. Then $(1+x)f$ is also $t_n$-bidistinguished of degree $(\lowd,\upd)$.
\end{lem}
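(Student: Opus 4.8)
The plan is to set $g:=(1+x)f=f+xf$ and to treat $xf$ as a uniformly small perturbation of $f$, comparing the two coefficient by coefficient in $t_n$. Write $f=\sum_{i\in\ZZ}f_it_n^i$, $x=\sum_{i\in\ZZ}x_it_n^i$ and $xf=\sum_{i\in\ZZ}(xf)_it_n^i$ with coefficients in $R$, and recall that $|x_j|\rho_n^j\le|x|<1$ and $|f_k|\rho_n^k\le|f|$ for all $j,k$ by definition of the $\rho$-Gauss norm. Since $(xf)_i=\sum_{j+k=i}x_jf_k$ and the norm on $R$ is submultiplicative, this yields the key uniform estimate
\[
|(xf)_i|\,\rho_n^i\le\max_{j+k=i}|x_j|\rho_n^j\,|f_k|\rho_n^k\le|x|\,|f|<|f|\qquad\text{for every }i\in\ZZ .
\]

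First I would verify condition (2) of the definition. Applying the ultrametric inequality at $i=\upd$ and $i=\lowd$, where by the estimate above $|(xf)_i|\rho_n^i<|f|=|f_i|\rho_n^i$, gives $|g_\upd|\rho_n^\upd=|f_\upd|\rho_n^\upd=|f|$ and $|g_\lowd|\rho_n^\lowd=|f_\lowd|\rho_n^\lowd=|f|$. For an arbitrary index $i$ we have $|g_i|\rho_n^i\le\max\{|f_i|\rho_n^i,\ |(xf)_i|\rho_n^i\}\le|f|$, hence $|g|=\max_i|g_i|\rho_n^i=|f|$; in particular $|g_\upd|\rho_n^\upd=|g_\lowd|\rho_n^\lowd=|g|$. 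Finally, for $\mu>\upd$ both $|f_\mu|\rho_n^\mu$ and $|(xf)_\mu|\rho_n^\mu$ are strictly less than $|f|=|g_\upd|\rho_n^\upd$, so $|g_\mu|\rho_n^\mu<|g_\upd|\rho_n^\upd$; symmetrically $|g_\nu|\rho_n^\nu<|g_\lowd|\rho_n^\lowd$ for all $\nu<\lowd$.

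Next I would verify condition (1), that $g_\upd$ and $g_\lowd$ are units in $R$. From the estimate above together with $|f_\upd|\rho_n^\upd=|f|$ one gets $|(xf)_\upd|\le|x|\,|f_\upd|<|f_\upd|$. Since $f_\upd$ is a unit in $R$, Corollary~\ref{invt} produces a monomial of $f_\upd$ strictly dominating all others in the $\hat\rho$-Gauss norm; adding $(xf)_\upd$, whose norm is strictly smaller than $|f_\upd|$, neither changes that dominant monomial nor the norm, so $g_\upd=f_\upd+(xf)_\upd$ retains the property and is again a unit in $R$ by Corollary~\ref{invt}. (Alternatively, writing $g_\upd=f_\upd\bigl(1+f_\upd^{-1}(xf)_\upd\bigr)$ and using that the $\hat\rho$-Gauss norm on $R$ is multiplicative, one has $|f_\upd^{-1}(xf)_\upd|\le|x|<1$, so $1+f_\upd^{-1}(xf)_\upd$ is a unit via the series $\sum_{m\ge0}\bigl(-f_\upd^{-1}(xf)_\upd\bigr)^m$, which converges since $R$ is complete.) The same reasoning applies to $g_\lowd$. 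Combined with the previous paragraph, this shows $g=(1+x)f$ is $t_n$-bidistinguished of degree $(\lowd,\upd)$.

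No real obstacle is expected here: the whole argument is driven by the single uniform estimate $|(xf)_i|\rho_n^i\le|x|\,|f|$, after which the ultrametric inequality controls all the norms and either Corollary~\ref{invt} or completeness of $R$ handles the invertibility statements. The only point to state with care is the strict comparison $|(xf)_\upd|<|f_\upd|$ (and $|(xf)_\lowd|<|f_\lowd|$), which is precisely where the hypothesis $|x|<1$ enters for condition (1).
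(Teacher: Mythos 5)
Your proof is correct and takes essentially the same route as the paper: both expand $(1+x)f$ coefficient by coefficient in $t_n$, use the uniform bound $|(xf)_i|\rho_n^i\le|x|\,|f|<|f|$ from $|x|<1$, and invoke the ultrametric inequality to see the dominant coefficients are unchanged up to a small perturbation which preserves invertibility. The only cosmetic difference is that you spell out $|g|=|f|$ separately and offer the geometric-series alternative for the unit step, but the underlying argument is identical.
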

\begin{proof}
Write $x=\sum_{i\in\ZZ}x_it_n^i$ with $x_i\in R$. Then $|x_i|\rho_n^i<1$ for all $i$, and we have
\begin{align*}
(1+x)f & = \sum_{r\in\ZZ} f_r t_n^r +\sum_{i\in\ZZ} x_it_n^i\sum_{j\in\ZZ} f_jt_n^j\\
& = \sum_{r\in\ZZ}f_r t_n^r+\sum_{r\in\ZZ}(\sum_{i+j=r}x_if_j)t_n^r \\
&= \sum_{r\in \ZZ}(f_r+\sum_{i+j=r}x_if_j)t_n^r.    
\end{align*}
Since $f$ is $t_n$-bidistinguished of degree $(\lowd,\upd)$, we have $|f_\lowd|=|f|\rho_n^{-\lowd}$ and $|f_\upd|=|f|\rho_n^{-\upd}$. Since $|x_i|<\rho_n^{-i}$ and $|f_j|\leq |f|\rho_n^{-j}$,
$$ |\sum_{i+j=d}x_i f_j|<\max_{i+j=r}\{|f|\rho_n^{-(i+j)}\}=|f|\rho_n^{-r} .$$
If $r\in \{ \lowd,\upd \}$, since $f_r$ is a unit and $|f_r|=|f|\rho_n^{-r}>|\sum_{i+j=r}x_i f_j|$, we conclude that $f_r+\sum_{i+j=r}x_i f_j$ is also a unit whose norm is equal to $|f_r|$. For $r<\lowd$ or $r>\upd$,
$$ |f_r+\sum_{i+j=r}x_i f_j|\leq \max\{ |f_r|,\max_{i+j=r}|x_if_j|\}< |f|\rho_n^{-r}. $$
Thus $(1+x)f$ is $t_n$-bidistinguished of degree $(\lowd,\upd)$.
\end{proof}

Now we can prove the following factorization result in $K_{[\rho,\rho],n}$.

\begin{prop}\label{weierstrass}
For a $t_n$-bidistinguished element $f\in K_{[\rho,\rho],n}$ of degree $(\lowd,\upd)$, there exists a monic polynomial $P\in R[t_n]$ and a unit $u\in K_{[\rho,\rho],n}^\times$ such that $f=Pu$.
\end{prop}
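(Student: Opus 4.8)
The plan is to extract the monic polynomial from the factorization provided by Proposition~\ref{Factorization1}, which I will use twice: once for $f$ in the variable $t_n$, and then --- this is the point --- for the resulting ``inner'' factor but in the reciprocal variable $s:=t_n^{-1}$.

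First, apply Proposition~\ref{Factorization1} to $f$ to write $f=f_\upd t_n^\upd gh$, where $g$ is a convergent power series in $t_n$ over $R$ with $|g-1|<1$ and $h$ is a convergent power series in $t_n^{-1}$ over $R$ with $|h|=|h_0|=1$. Since $f_\upd\in R^\times$ (definition of bidistinguished), $t_n\in K_{[\rho,\rho],n}^\times$, and $|g-1|<1$ forces $g\in K_{[\rho,\rho],n}^\times$ (geometric series, or Corollary~\ref{invt}; note $g$ lies in $K_{[\rho,\rho],n}$ as it has no negative powers of $t_n$ and its coefficients decay), the factor $f_\upd t_n^\upd g$ is a unit of $K_{[\rho,\rho],n}$. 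Hence it suffices to factor $h$ as $P\cdot(\text{unit})$ with $P\in R[t_n]$ monic.

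To do this I first transfer the bidistinguished structure to $h$. Put $d:=\upd-\lowd\ge0$ and $\tilde f:=f_\upd^{-1}t_n^{-\upd}f$; one checks at once that $\tilde f$ is bidistinguished of degree $(-d,0)$ with $|\tilde f|=1$. From $f=f_\upd t_n^\upd gh$ we get $h=g^{-1}\tilde f=(1+\epsilon)\tilde f$ with $\epsilon:=g^{-1}-1$ of norm $<1$, so Lemma~\ref{multiply_small_x} shows $h$ is bidistinguished of degree $(-d,0)$ too (in particular $h_0,h_{-d}\in R^\times$). Together with $h\in R[[t_n^{-1}]]$, this means that, as a power series in $s=t_n^{-1}$ (so $|s|=\rho_n^{-1}\in\sqrt{|K^\times|}$), $h$ lies in $S:=R[[t_n^{-1}]]\cap K_{[\rho,\rho],n}$ and is $s$-bidistinguished of degree $(0,d)$: its $s^0$- and $s^d$-coefficients, $h_0$ and $h_{-d}$, lie in $R^\times$ and have Gauss norm $|h|$, while the $s^j$-coefficients for $j>d$ are strictly smaller in norm.

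Now apply Proposition~\ref{Factorization1} a second time, to $h$ regarded as a series in $s$ (with $\rho_n$ replaced by $\rho_n^{-1}$): $h=h_{-d}s^{d}\tilde g\tilde h$ with $\tilde g$ a convergent power series in $s$ over $R$, $|\tilde g-1|<1$, and $\tilde h$ a convergent power series in $s^{-1}=t_n$ over $R$. The crux is that $R_1(s):=\tilde g^{-1}h=h_{-d}s^{d}\tilde h$ lies in $R[[s]]$ (a product of two power series in $s$) \emph{and} in $s^{d}R[[s^{-1}]]$, so it is a polynomial in $s$ of degree $\le d$; moreover $R_1=(1+\tilde\epsilon)h$ with $\tilde\epsilon:=\tilde g^{-1}-1$ of norm $<1$, so by Lemma~\ref{multiply_small_x} $R_1$ is $s$-bidistinguished of degree $(0,d)$, whence its $s^d$-coefficient is a unit of $R$ and $\deg R_1=d$. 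Thus $P'(s):=(\text{leading coefficient of }R_1)^{-1}R_1(s)\in R[s]$ is monic of degree $d$ and $h=\tilde gR_1=U_1P'(s)$ with $U_1:=\tilde g\cdot(\text{leading coefficient of }R_1)\in S^\times$. The constant term $c_0:=P'(0)$ is a unit of $R$: applying the ring homomorphism $s\mapsto0\colon S\to R$ to $h=U_1P'(s)$ gives $h_0=(U_1|_{s=0})\,c_0$ with $h_0\in R^\times$ and $U_1|_{s=0}\in R^\times$. Writing $P'(s)=s^d+c_{d-1}s^{d-1}+\dots+c_0$ and substituting $s=t_n^{-1}$ yields $P'(t_n^{-1})=t_n^{-d}Q(t_n)$ with $Q(t_n):=c_0t_n^d+c_1t_n^{d-1}+\dots+c_{d-1}t_n+1\in R[t_n]$ of degree $d$ and leading coefficient $c_0\in R^\times$; hence $P:=c_0^{-1}Q$ is monic, $h=U_1t_n^{-d}Q(t_n)=P(t_n)\cdot(c_0t_n^{-d}U_1)$ with $c_0t_n^{-d}U_1\in K_{[\rho,\rho],n}^\times$, and finally $f=(f_\upd t_n^\upd g)h=P(t_n)\cdot u$ with $u:=f_\upd t_n^\upd g\,c_0t_n^{-d}U_1\in K_{[\rho,\rho],n}^\times$, as required. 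I expect the only genuinely delicate step to be the transfer of the two-sided bidistinguished property from $f$ to $h$ and its reading off in the variable $s$ --- precisely where the full hypothesis on $f$ and Lemma~\ref{multiply_small_x} are used; everything else is formal bookkeeping with Gauss norms. (One could instead replace the second application of Proposition~\ref{Factorization1} by the classical Weierstrass preparation theorem for the relative Tate algebra $S=R\{s/\rho_n^{-1}\}$ over the $K$-affinoid $R$, but the route above stays inside the framework already developed.)
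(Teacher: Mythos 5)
Your proof is correct, and it takes a genuinely different route from the one in the paper. The paper applies Proposition~\ref{Factorization1} \emph{once}, but in the reciprocal variable $t_n^{-1}$: this produces $f=f_{\lowd}t_n^{\lowd}gh$ with $g\in R[[t_n^{-1}]]$ a unit near $1$ and $h\in R[[t_n]]$, so that $g^{-1}f/t_n^{\lowd}$ lands in the one-sided Tate algebra $R\langle t_n/\rho_n\rangle$ and is $t_n$-bidistinguished of degree $(0,\upd-\lowd)$ by Lemma~\ref{multiply_small_x}; the paper then finishes by citing the one-sided Weierstrass preparation over a relative Tate algebra, Lemma~6.1 of \cite{KedF}. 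You instead apply Proposition~\ref{Factorization1} \emph{twice}: first to $f$ in the variable $t_n$ to peel off the power-series-in-$t_n$ factor and a unit $f_{\upd}t_n^{\upd}g$, leaving the bidistinguished factor $h\in R[[t_n^{-1}]]\cap K_{[\rho,\rho],n}$; then to $h$ in the reciprocal variable $s=t_n^{-1}$, where the product $\tilde g^{-1}h = h_{-d}s^{d}\tilde h$ lies simultaneously in $R[[s]]$ and in $s^{d}R[[s^{-1}]]$ and is therefore a polynomial in $s$ of degree $d$, with unit leading (and, after a second use of Lemma~\ref{multiply_small_x} plus the evaluation $s\mapsto 0$, unit constant) coefficient. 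Flipping back to $t_n$ gives the monic $P$ and the unit $u$. What each approach buys: yours stays entirely within the framework already built in the paper --- Proposition~\ref{Factorization1} and Lemma~\ref{multiply_small_x} --- and never needs the external one-sided preparation from \cite{KedF}, at the cost of a second (reciprocal) application of Proposition~\ref{Factorization1} and some coefficient bookkeeping; the paper's route is a few lines shorter but hinges on \cite{KedF}, Lemma~6.1 as a black box. One small point worth making explicit if you write this out: Lemma~\ref{multiply_small_x} is stated for $t_n$-bidistinguished elements, but an $s$-bidistinguished element of degree $(0,d)$ is precisely a $t_n$-bidistinguished element of degree $(-d,0)$, so the lemma applies as you use it; similarly, Proposition~\ref{Factorization1} holds verbatim with $\rho_n$ replaced by $\rho_n^{-1}$ and $t_n$ by $t_n^{-1}$, since its proof via \cite{Ked1}, Theorem~2.2.2 uses nothing about the specific value of $\rho_n$.
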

\begin{proof}
By Proposition\ref{Factorization1}, we can factor $f$ in $ R[[_0 \rho_n^{-1}/t_n^{-1},t_n^{-1}/\rho_n^{-1}\rangle$ as $f=f_\lowd t_n^\lowd gh$ with 
$$ f_\lowd\in R^\times,\ g\in R[[t_n^{-1}]]\cap R[[_0 \rho_n^{-1}/t_n^{-1},t_n^{-1}/\rho_n^{-1}\rangle,\ h\in R[[t_n]]\cap R[[_0 \rho_n^{-1}/t_n^{-1},t_n^{-1}/\rho_n^{-1}\rangle, $$
$|g-1|<1$ and $|h|=|h_0|=1$, where $h_0\in R$ is the constant term of $h$. In particular, $g$ is a unit in $R\langle \rho_n/t_n \rangle$ and hence 
$$ g^{-1}f\in R\langle \rho_n/t_n,t_n/\rho_n\rangle\cap t_n^\lowd R[[t_n]] \subset t_n^\lowd R\langle t_n/\rho_n \rangle .$$
Since $|g-1|<1$, $|g^{-1}-1|<1$ and so $g^{-1}=1+y$ with $y\in R\langle \rho_n/t_n \rangle$ and $|y|<1$. 
Thus, by Lemma \ref{multiply_small_x}, $g^{-1}f=(1+y)f$ is also $t_n$-bidistinguished of degree $(\lowd,\upd)$. So $g^{-1}f/t_n^\lowd\in R\langle t_n/\rho_n \rangle$ is $t_n$-bidistinguished of degree $(0,\upd-\lowd)$. Then by Lemma 6.1 of \cite{KedF}, we can write $g^{-1}f$ as $t_n^\lowd e P$, where $e$ is a unit in $R\langle t_n/\rho_n \rangle$ (hence a unit in $K_{[\rho,\rho],n}$) and $P\in R[t_n]$ is a monic polynomial of degree $\upd-\lowd$. Thus $f=(get_n^\lowd)P$ gives the desired factorization.
\end{proof}

\begin{lem}
    \label{change_to_bid}
For any non-zero element $f\in K_{[\rho,\rho],n}$, there exists an isometric isomorphism $\sigma\in \Aut(K_{[\rho,\rho],n})$ such that $\sigma(f)$ is $t_n$-bidistinguished.
\end{lem}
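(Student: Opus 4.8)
The plan is to translate $t_n$-bidistinguishedness into a purely combinatorial condition on the exponents of the norm-dominant monomials of $f$, and then to exhibit $\sigma$ as an explicit monomial substitution. Write $f=\sum_{j\in\ZZ^n}f_jt^j$ and set $S=\{j\in\ZZ^n:|f_j|\rho^j=|f|_\rho\}$; since $f\in K_{[\rho,\rho],n}$ we have $|f_j|\rho^j\to 0$ as $|j|\to\infty$, so $S$ is finite, and it is nonempty because $f\neq 0$. Regrouping $f=\sum_kg_kt_n^k$ with $g_k\in R=K_{[\hat\rho,\hat\rho],n-1}$, an elementary computation gives $\{k:|g_k|\rho_n^k=|f|_\rho\}=\pi_n(S)$, where $\pi_n$ denotes projection to the last coordinate, and shows that for such $k$ the element $g_k$ has a unique monomial attaining its $\hat\rho$-Gauss norm if and only if $\pi_n^{-1}(k)\cap S$ is a singleton. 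Since by Corollary~\ref{invt}, applied to the $(n-1)$-dimensional ring $R$, a nonzero element of $R$ is a unit precisely when its width vector vanishes, i.e.\ when it has a unique dominant monomial, this yields the reformulation I want: \emph{$f$ is $t_n$-bidistinguished if and only if $S$ has a unique element of minimal last coordinate and a unique element of maximal last coordinate}, in which case the degree is $(\lowd,\upd)=(\min\pi_n(S),\max\pi_n(S))$ and the remaining inequalities in the definition are automatic.

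Next I would construct $\sigma$. Using $\rho_n\in\sqrt{|K^\times|}$, fix $N\ge 1$ with $\rho_n^{N}\in|K^\times|$, and after replacing $N$ by a suitable multiple assume in addition that $N>\max\{|j_n-j'_n|:j,j'\in S\}$; fix $\pi\in K^\times$ with $|\pi|=\rho_n^{N}$. If $n=1$, take $\sigma=\mathrm{id}$ (then $S\subset\ZZ$ trivially has a unique minimum and maximum, so $f$ is already $t_n$-bidistinguished). If $n\ge 2$, choose $(a_1,\dots,a_{n-1})\in\ZZ^{n-1}$ so that $\ell(x):=\sum_{i<n}a_ix_i$ does not vanish on any of the finitely many nonzero vectors $(j_1-j'_1,\dots,j_{n-1}-j'_{n-1})$ arising from pairs $j,j'\in S$; a generic integer vector works, since the excluded choices form a finite union of proper subgroups of $\ZZ^{n-1}$. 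Then set $\sigma(t_i)=\pi^{-a_i}t_it_n^{a_iN}$ for $1\le i\le n-1$ and $\sigma(t_n)=t_n$.

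It remains to check two things. First, that $\sigma$ is a well-defined isometric automorphism: it is the monomial substitution attached to the unimodular upper-triangular integer matrix with rows $e_i+a_iNe_n$ (for $i<n$) and $e_n$, one computes $\sigma(t^j)=\pi^{-\ell(\hat j)}t_1^{j_1}\cdots t_{n-1}^{j_{n-1}}t_n^{\,j_n+N\ell(\hat j)}$ with $\hat j=(j_1,\dots,j_{n-1})$, and the $\rho$-Gauss norm of this monomial equals $\rho_n^{-N\ell(\hat j)}\cdot\rho^{\hat j}\rho_n^{j_n}\cdot\rho_n^{N\ell(\hat j)}=\rho^j$; hence $\sigma$ maps $K_{[\rho,\rho],n}$ into itself, is isometric, and is invertible, the inverse being the substitution obtained by replacing each $a_i$ by $-a_i$. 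Second, that $\sigma(f)$ is $t_n$-bidistinguished: the exponent map $\phi\colon j\mapsto(j_1,\dots,j_{n-1},\,j_n+N\ell(\hat j))$ is a bijection of $\ZZ^n$, and $\sigma(f)=\sum_jf_j\pi^{-\ell(\hat j)}t^{\phi(j)}$, so the set of norm-dominant exponents of $\sigma(f)$ is $\phi(S)$; moreover the last coordinate $j_n+N\ell(\hat j)$ of $\phi(j)$ is injective on $S$, because if two such values agree then $N(\ell(\hat j)-\ell(\hat{j'}))=j'_n-j_n$ is a multiple of $N$ of absolute value $<N$, hence $0$, forcing $j_n=j'_n$ and $\ell(\hat j-\hat{j'})=0$, whence $\hat j=\hat{j'}$ by the choice of the $a_i$. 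Therefore $\phi(S)$ has a unique element of minimal last coordinate and a unique element of maximal last coordinate, and by the reformulation of the first paragraph $\sigma(f)$ is $t_n$-bidistinguished.

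The step I expect to demand the most care is the Gauss-norm computation in the verification: a substitution of the bare form $t_i\mapsto t_it_n^{a_iN}$ shifts $t_n$-degrees — which is exactly what is needed to spread out the extreme dominant monomials — but it is not isometric on its own, and it is the scalar $\pi^{-a_i}\in K^\times$, available precisely because $\rho_n^{N}\in|K^\times|$, that restores isometry; this is the only point at which the hypothesis $\rho\in\sqrt{|K^\times|}^n$ is used. The other mildly delicate point is the bookkeeping in the first paragraph, where the definition of bidistinguishedness, which couples a unit condition in $R$ with Gauss-norm inequalities, has to be converted into the clean combinatorial condition on $S$ via Corollary~\ref{invt}. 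Everything else is formal.
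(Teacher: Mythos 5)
Your proof is correct, and at its core it is the same strategy as the paper's: produce a monomial shearing automorphism $\sigma(t_i)=c_i\,t_i t_n^{b_i}$ for $i<n$ (with $\sigma(t_n)=t_n$), where the scalars $c_i\in K^\times$ are chosen so that $|c_i|\rho_n^{b_i}=1$ --- which is where $\rho\in\sqrt{|K^\times|}^n$ enters --- making $\sigma$ an isometry, and where the integers $b_i$ are chosen so that the $t_n$-degree map $j\mapsto j_n+\sum_{i<n}b_ij_i$ singles out a unique top and bottom dominant exponent. The one place you genuinely diverge is in the choice of the $b_i$: the paper takes $b_i=sj^{n-i}$ for $j$ sufficiently large, a Kronecker-type weighting that forces the lexicographic maximum and minimum of the dominant set to become the $t_n$-degree extremes (so the degree of bidistinguishedness is explicitly computed in terms of the lexicographic extremes), whereas you take a generic linear form $\ell$ that simply avoids the finitely many hyperplanes $\ell(\hat j-\hat j')=0$, together with a scale $N$ exceeding the $t_n$-spread of the dominant set, to make the $t_n$-degree map injective on the whole dominant set. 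Your version is cleaner to state and verify, and it trivially yields the ``uniform $\sigma$ for finitely many $f$'' refinement recorded in Remark~\ref{uniform_of_j} (just enlarge the excluded hyperplane set and $N$); the paper's version is more explicit about which original monomials become extremal after the substitution. Your preparatory reformulation of $t_n$-bidistinguishedness --- that it is equivalent to the dominant-exponent set $S$ having a unique element of minimal and a unique element of maximal last coordinate, via Corollary~\ref{invt} applied to $R$ --- is accurate and is essentially what the paper uses implicitly; it is a nice piece of bookkeeping to have made explicit.
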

\begin{proof}
Let $f=\sum_{i\in \ZZ^n} f_it^i$ and $u=(u_1,\dots,u_n), v=(v_1,\dots,v_n)$ be the largest and least index $\mu$ with respect to the lexicographical order in $\ZZ^n$ such that $|f_\mu|\rho^\mu=|f|$. Let $L$ be a positive integer with $L\geq \max_{1\leq i\leq n}|\mu_i|$ for any $\mu=(\mu_1,\dots,\mu_n)$ satisfying $|f_\mu|\rho^\mu=|f|$. Take $w\in K$ such that $|w|\rho_n^s=1$ for some $s\in\ZZ_{>0}$. Let 
\begin{align*}
\sigma_j: K_{[\rho,\rho],n} & \longrightarrow K_{[\rho,\rho],n}\ \ \text{ be the homomorphism defined by}\\
t_i & \longmapsto t_i(wt_n^s)^{j^{n-i}}\ \text{for}\ 1\leq i\leq n-1,\\
t_n & \longmapsto t_n.
\end{align*}
$\sigma_j$ is obviously an isometric automorphism. We claim that $\sigma_j(f)$ is $t_n$-bidistinguished of degree $(\lowd,\upd)$ where $\lowd=\sum_{i=1}^{n-1}sj^{n-i}v_i+v_n$ and $\upd=\sum_{i=1}^{n-1}sj^{n-i}u_i+u_n$, if $j>3L$.

For all $\mu=(\mu_1,\dots,\mu_n)$ with $|f_\mu|\rho^\mu=|f|$ and $\mu\neq u$, $\mu\neq v$, the degree of $\sigma_j(f_\mu t^\mu)$ with respect to $t_n$ is 
$$ \sum_{i=1}^{n-1}sj^{n-i}\mu_i+\mu_n. $$
By definition of $u$ and $v$, there exist $1\leq p,q\leq n$ such that 
$$\mu_1=u_1,\ \dots,\ \mu_{p-1}=u_{p-1},\ \mu_p>u_p,$$
and 
$$ \mu_1=v_1,\ \dots,\ \mu_{q-1}=v_{q-1},\ \mu_q<v_q. $$
Thus, if $j>3L$, we have
\begin{align*}
\sum_{i=1}^{n-1}sj^{n-i}\mu_i+\mu_n & \geq \sum_{i=1}^{p-1} sj^{n-i}u_i+sj^{n-p}(u_p+1)+\sum_{i=p+1}^{n-1} sj^{n-1}\mu_i +\mu_n\\
& =\sum_{i=1}^{n-1}sj^{n-i}u_i+u_n+sj^{n-p}+\sum_{i=p+1}^{n-1}sj^{n-i}(\mu_i-u_i)+
(\mu_n-u_n)\\
& \geq \sum_{i=1}^{n-1}sj^{n-i}u_i+u_n+s( j^{n-p}- 2L\sum_{i=p+1}^{n-1}j^{n-i} -2L )\\
& > \sum_{i=1}^{n-1}sj^{n-i}u_i+u_n
\end{align*}
Here we use the inequality
$$ j^{n-p}- 2L\sum_{i=p+1}^{n-1}j^{n-i} -2L=\frac{(j-1-2L)j^{n-p}+2L}{j-1}\geq 0 $$
when $j>3L$.
A similar computation yields that $\sum_{i=1}^{n-1}sj^{n-i}\mu_i+\mu_n< \sum_{i=1}^{n-1}sj^{n-i}v_i+v_n$. 

Now write $\sigma_j(f)=g=\sum_{i\in\ZZ}g_it_n^i$, Then, by the computation above, the term $\sigma_j(f_\mu t^\mu)$ with $|f_\mu|\rho^\mu=|f|$, $\mu\neq u$ does not contribute to the term $g_\upd t_n^\upd$. So we can write 
$$g_\upd=f_u t_1^{u_1}\dots t_{n-1}^{u_{n-1}}+g'_\upd$$ 
with $g'_\upd \in R$ and $|g'_\upd|<|f_u|\rho_1^{u_1}\cdots\rho_{n-1}^{u_{n-1}}$. Thus we conclude that $g_\upd$ is a unit and $|g_\upd|\rho_n^\upd=|f|$. 
By the same way we can conclude that this is also true for $g_\lowd$. By a similar argument, for $\mu<\lowd$ or $\mu>\upd$, we have $|g_\mu|\rho_n^\mu<|f|$.
\end{proof}
\begin{rmk}\label{uniform_of_j}
The proof of Lemma \ref{change_to_bid} implies that, for finitely many elements $f_1,\dots,f_m\in K_{[\rho,\rho],n}$, we can choose a uniform $j$ such that $\sigma_j(f_i)$ are all $t_n$-bidistinguished. 
\end{rmk}
\begin{defn}
An $r$-tuple $f=(f_1,\dots,f_r)$ of elements of a ring $R$ is called unimodular if these elements generate the unit ideal. Two $r$-tuples $f$ and $g$ are called equivalent and denoted by $f\sim g$ if there is a matrix $M\in\GL_r(R)$ such that $Mf^T=g^T$.
\end{defn}

\begin{prop}[cf. \cite{KedF}, Proposition 6.4]\label{unimodular}
Let $f=(f_1,\dots,f_r)$ be a unimodular tuple in $K_{[\rho,\rho],n}$. Then $f\sim e_1=(1,0,\dots,0)$.
\end{prop}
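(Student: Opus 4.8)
The plan is to argue by induction on the dimension $n$. When there are no variables the ring is the field $K$ and the claim is trivial; for $n=1$ the ring $K_{[\rho,\rho],1}$ is a B\'ezout domain (in fact a PID), over which any unimodular tuple is reduced to $e_1$ by iterated $\gcd$ operations. So assume $n\ge 1$, set $S:=K_{[\rho,\rho],n}$ and $R:=K_{[\hat\rho,\hat\rho],n-1}$, so that $S=R\langle t_n/\rho_n,\rho_n/t_n\rangle$, and assume inductively that every unimodular tuple over $R$ --- equivalently every finitely generated projective $R$-module --- is trivial.

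The first step is to normalize the first entry to a monic polynomial in $t_n$. If $r=1$ the single entry is a unit and we are done. If $r\ge 2$ we may, after permuting coordinates, assume $f_1\ne 0$; applying to the whole tuple one of the isometric automorphisms $\sigma_j\in\Aut(S)$ of Lemma \ref{change_to_bid} (with $j$ chosen uniformly as in Remark \ref{uniform_of_j}) --- which preserves unimodularity and the relation $\sim$, and for which it is enough to prove $\sigma_j(f)\sim e_1$ --- we may assume $f_1$ is $t_n$-bidistinguished. Proposition \ref{weierstrass} then writes $f_1=Pu$ with $P\in R[t_n]$ monic and $u\in S^\times$, and multiplying the first coordinate by $u^{-1}$ (a $\GL_r(S)$ operation) we may take $f_1=P$ monic in $R[t_n]$, say of degree $d$.

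With $f_1$ monic I would finish by the Quillen--Suslin argument over $S$ in its analytic form, following \cite{KedF}, Section 6. The natural framework is the decomposition $S=S^+ +S^-$ with $S^+:=R\langle t_n/\rho_n\rangle$, $S^-:=R\langle\rho_n/t_n\rangle$, $S^+\cap S^-=R$ and $t_n\in S^\times$: one arranges the induction so that triviality of unimodular tuples is already known over $R$, over $S^+$ and over $S^-$ (the latter two being reduced, again by Weierstrass preparation against a monic polynomial --- Proposition \ref{weierstrass} and the analogous factorization over the corresponding rings of Laurent series on semi-open polyannuli --- to a statement over the $(n-1)$-dimensional base with one extra polynomial variable). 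Since $f_1$ is monic, Weierstrass division realizes $S^\pm/(f_1)$ as finitely generated projective, hence free, $R$-modules; one then splits the given unimodular tuple separately over $S^+$ and over $S^-$, notes that the two splittings differ over $R$ by a matrix of $\GL_r(R)$ that factors into a product of matrices coming from $\GL_r(S^+)$ and $\GL_r(S^-)$ (elementary-matrix reduction over the base $R$), and patches the two local trivializations to get $f\sim e_1$ over $S$.

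The main obstacle is this last step --- transferring triviality of unimodular tuples from the base $R$ up to the analytic annulus $S$. One cannot simply pass to the polynomial ring $R[t_n]$ and invoke the classical Quillen--Suslin theorem: $S$ is not faithfully flat over $R[t_n]$ (since $t_n$ becomes a unit), and $S/(f_1S)$ is in general a proper quotient of the rank-$d$ free module $R[t_n]/(f_1)$ --- only the ``circular'' part of a monic polynomial survives in $S$ --- so unimodularity over $S$ need not descend to $R[t_n]$. The patching therefore has to be run inside the analytic category, which forces careful bookkeeping of which rings in the tower (the base PID, the one-variable Tate and annulus rings built from it, and $S$ itself) the conclusion has already been established for, and of the availability of Weierstrass division against monic polynomials in each --- so that the quotients by $f_1$ are finitely generated projective over $R$. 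Those division checks are routine given the norm estimates behind Propositions \ref{Factorization1} and \ref{weierstrass}, but they are exactly what makes the gluing go through.
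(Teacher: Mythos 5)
Your route diverges from the paper's at the normalization step, and the divergence leaves a genuine gap. You apply the automorphism $\sigma_j$ of Lemma \ref{change_to_bid} to the whole tuple, and even cite Remark \ref{uniform_of_j} on the uniform choice of $j$, yet you extract from it only that $f_1$ becomes $t_n$-bidistinguished and normalize only $f_1$ to a monic polynomial. With a single monic entry you are then forced into the analytic Quillen--Suslin patching over $S=S^++S^-$, and you correctly identify that this is where the trouble sits: $S$ is not faithfully flat over $R[t_n]$, so unimodularity over $S$ does not descend to $R[t_n]$ in the presence of arbitrary entries $f_2,\dots,f_r\in S$, and the gluing you gesture at is never actually carried out. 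The proposal stops precisely at the hard step.

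The paper's trick is to use Remark \ref{uniform_of_j} for what it actually delivers: a single $\sigma$ making \emph{all} of $f_1,\dots,f_r$ simultaneously $t_n$-bidistinguished, so that Proposition \ref{weierstrass} writes every $\sigma(f_i)=e_iP_i$ with $e_i\in S^\times$ and $P_i\in R[t_n]$ monic. Stripping off $\diag(e_1,\dots,e_r)$ gives $\sigma(f)\sim P:=(P_1,\dots,P_r)^T$, a tuple \emph{all} of whose entries are monic polynomials. Precisely because every companion of $P_1$ is now a polynomial, and $P_1$ is a Weierstrass monic (its roots lie on the circle $|t_n|=\rho_n$, so Weierstrass division identifies $R[t_n]/(P_1)$ with $S/(P_1)$), unimodularity over $S$ descends to unimodularity over $R[t_n]$. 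Then \cite{La}, XXI, Theorem 3.4 applies verbatim to give $P\sim P(0)$, a tuple with entries in $K_{[\hat\rho,\hat\rho],n-1}$, and the induction hypothesis in dimension $n-1$ finishes; undoing $\sigma$ returns $f\sim e_1$. The missing idea in your proposal is therefore not technical bookkeeping but this simultaneous monicization of all entries --- it is exactly what removes any need for analytic patching.
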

\begin{proof}
We prove it by induction on $n$. The case $n=0$ is clear. Suppose that $n>0$. By Lemma \ref{change_to_bid} and Remark \ref{uniform_of_j}, we can choose $\sigma\in \Aut(K_{[\rho,\rho],n})$ which is also an isometry such that $f_1,\dots,f_r$ are all $t_n$-bidistinguished, and hence by Proposition \ref{weierstrass}, each $\sigma(f_i)$ can be factorized as $e_iP_i$ where $e_i\in K_{[\rho,\rho],n}^\times$ and $P_i$ is a monic polynomial in $R[t_n]$. Since 
$$\sigma(f)=\diag(e_1,\dots,e_r)(P_1,\dots,P_r)^T,$$
$\sigma(f)\sim P:=(P_1,\dots,P_r)^T$. By \cite{La}, XXI, Theorem 3.4, $P\sim P(0)$, which is an $r$-tuple with entries in $K_{[\hat{\rho},\hat{\rho}],n-1}$, and by induction hypothesis, $P(0)\sim e_1$. Thus there exists $M\in\GL_r(K_{[\rho,\rho],n})$ such that $\sigma(f)=Me_1$. By applying $\sigma^{-1}$ on both sides, we deduce that $f\sim e_1$.
\end{proof}

\begin{prop}[cf. \cite{KedF}, Proposition 6.5]\label{f.f_resolution}
Every finite module over $R=K_{[\rho,\rho],n}$ has a finite free resolution.
\end{prop}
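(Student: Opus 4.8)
The plan is to deduce the existence of finite free resolutions from three properties of the ring $R:=K_{[\rho,\rho],n}$: that it is Noetherian, that it is regular of Krull dimension $n$ (hence of global dimension $n$), and that its finitely generated projective modules are free. Granting these, one argues by the standard syzygy trick. Given a finite $R$-module $M$, choose any resolution $\cdots\to R^{a_1}\to R^{a_0}\to M\to 0$ by finite free modules --- possible since $R$ is Noetherian --- and let $N$ be the $n$-th syzygy of $M$ in it. Then $N$ is finitely generated, being a submodule of a finite module over a Noetherian ring, and it is projective because $\mathrm{pd}_R M\le\mathrm{gl.dim}\,R=n$; by the third property it is therefore \emph{free}, and splicing gives a finite free resolution $0\to N\to R^{a_{n-1}}\to\cdots\to R^{a_0}\to M\to 0$ of $M$.

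The two structural inputs are quick. Since $[\rho,\rho]$ is a closed polysegment, $R$ is a $K$-affinoid algebra, hence Noetherian. And $R$ is the algebra of functions on a width-$0$ polyannulus, which is smooth over $K$ (recall $\mathrm{char}\,K=0$), so $R$ is regular of Krull dimension $n$, and Serre's theorem then gives $\mathrm{gl.dim}\,R=n$. (Alternatively, Noetherianness follows by exhibiting $R$ as a finite free module over a Laurent--Tate algebra $K\langle u_1^{\pm},\dots,u_n^{\pm}\rangle$, with $u_i=t_i^{m_i}/\lambda_i$ for $\rho_i^{m_i}=|\lambda_i|\in|K^\times|$.)

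The delicate step --- and the one I expect to be the real obstacle --- is the freeness of the finitely generated projective module $N$. For $\rho=(1,\dots,1)$ this is \cite{Lut}; the purpose of the present subsection is precisely to reduce the general case to that one, Proposition~\ref{unimodular} being the key ingredient, together with the Weierstrass-type factorization results already established above, following \cite{KedF}, Section~6. One must be slightly careful here: Proposition~\ref{unimodular} yields immediately only that \emph{stably free} $R$-modules are free, via the usual inductive trivialization of unimodular rows, so to apply it to $N$ one needs in addition that finitely generated projective $R$-modules are stably free, i.e.\ that $\widetilde{K}_0(R)=0$ --- which again holds for $\rho=(1,\dots,1)$ by \cite{Lut} and descends to the general case. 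Everything else is routine homological algebra.
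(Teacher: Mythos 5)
Your approach differs fundamentally from the paper's. You deduce finite free resolutions from three properties of $R$: Noetherianness, finite global dimension, and freeness of finite projectives, and then invoke the syzygy trick. The paper instead proceeds by induction on $n$: it filters an arbitrary finite module by modules of the form $R/\mathfrak{p}$, applies an automorphism so that generators of $\mathfrak{p}$ become $t_n$-bidistinguished, factors them via Proposition~\ref{weierstrass} into units times monic polynomials in $K_{[\hat\rho,\hat\rho],n-1}[t_n]$, and then quotes Lang's syzygy theorem for polynomial rings over the ring $K_{[\hat\rho,\hat\rho],n-1}$, which satisfies the finite-free-resolution property by induction. Crucially, the paper never needs to know anything about projective $R$-modules to prove this proposition.

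This is where your proof has a genuine gap. You correctly identify that the sticking point is showing the $n$-th syzygy $N$ is free, and you correctly observe that Proposition~\ref{unimodular} alone only gives freeness of \emph{stably free} modules, so one additionally needs $\widetilde{K}_0(R)=0$. You then assert that this holds for $\rho=(1,\dots,1)$ by Lütkebohmert and ``descends to the general case.'' But the descent is neither obvious nor supplied. Writing $R$ as a finite free module over a Laurent--Tate algebra $S=K\langle u_1^{\pm},\dots,u_n^{\pm}\rangle$ does not transfer the vanishing of $\widetilde{K}_0$: the transfer (restriction of scalars) followed by base change from $\widetilde{K}_0(R)$ to $\widetilde{K}_0(S)$ and back is multiplication by the degree $[R:S]$, so knowing $\widetilde{K}_0(S)=0$ at best shows $\widetilde{K}_0(R)$ is killed by $[R:S]$, not that it vanishes. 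More to the point, in the paper's logical development, stable freeness of finite projective $R$-modules is a \emph{corollary} of Proposition~\ref{f.f_resolution} (via finite free resolution $\Rightarrow$ stably free, Lang XXI.2.1), not an input to it. By taking $\widetilde{K}_0(R)=0$ as given you are, in effect, assuming a consequence of the very proposition you are trying to prove; the inductive Weierstrass argument in the paper's proof is precisely the work needed to avoid this circularity.
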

\begin{proof}
We proceed by induction on $n$. There is nothing to prove if $n=0$. Let $M$ be a finite module over $R$. Since $R$ is Noetherian (by \cite{BGR}, 5.2.6.1), $M$ is also Noetherian. By \cite{La}, X, Corollary 2.8, there exists a finite filtration
$$ M=M_1\supset M_2\supset\dots\supset M_n=0, $$
such that $M_i/M_{i+1}\simeq R/\mathfrak{p_i} $ for some $\mathfrak{p_i}\in \Spec R$. Meanwhile, by \cite{La}, XXI, Theorem 2.7, to prove that $M$ has a finite free resolution, it suffices to prove that for an exact sequence of $R$-modules
$$0\to N\to M\to P\to 0,$$
both $N$ and $P$ have finite free resolutions. Because we have the exact sequences 
$$ 0\to M_{i+1}\to M_i\to R/\mathfrak{p}_i\to 0,\ \quad i=1,\dots,n-1,  $$
we are reduced to proving that $R/\mathfrak{p}$ admits a finite free resolution for all $\mathfrak{p}\in\Spec R$. Moreover, by the exact sequence
$$ 0\to \mathfrak{p}\to R\to R/\mathfrak{p}\to 0, $$
it suffices to prove that $\mathfrak{p}$ admits a finite free resolution for all $\mathfrak{p}\in \Spec R$.

Suppose that $f_1,\dots,f_m\in R$ generates $\mathfrak{p}$. By Lemma \ref{change_to_bid} and Remark \ref{uniform_of_j}, there exists an automorphism $\sigma$ such that $\sigma(f_i)$ is $t_n$-bidistinguished for $1\leq i\leq m$. Then, by Proposition \ref{weierstrass}, we can write $\sigma(f_i)=e_iP_i$, where $e_i\in K_{[\rho,\rho],n}^\times$ and $P_i$ is a monic polynomial in $R[t_n]$. So we have a $\sigma$-linear isomorphism $\mathfrak{p}\cong J\otimes_{R[t_n]} K_{[\rho,\rho],n}$ with $J=(P_1,\dots,P_m)$. By \cite{La}, XXI, Theorem 2.8, $J$ (and hence $\mathfrak{p}$) has finite free resolution and by induction hypothesis, $M$ admits a finite free resolution.
\end{proof}

\begin{thm}[cf. \cite{KedF}, Proposition 6.6]
    \label{QS0}
For $\rho\in \sqrt{|K^\times|}^n$, every finite projective module over $K_{[\rho,\rho],n}$ is free.
\end{thm}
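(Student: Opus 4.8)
The plan is to derive the theorem purely formally from two results already established for $R:=K_{[\rho,\rho],n}$: every finite $R$-module has a finite free resolution (Proposition \ref{f.f_resolution}), and every unimodular tuple over $R$ is equivalent to $(1,0,\dots,0)$ (Proposition \ref{unimodular}). The second statement is exactly the assertion that a unimodular row over $R$ can be completed to an invertible matrix, which is the classical device for passing from \emph{stably free} to free.

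\textbf{Step 1: every finite projective $R$-module $P$ is stably free}, i.e.\ $P\oplus R^a\cong R^b$ for some $a,b\geq 0$. More generally I claim that any projective module $M$ admitting a finite free resolution $0\to F_\ell\to\cdots\to F_0\to M\to 0$ is stably free, by induction on $\ell$. If $\ell=0$ then $M\cong F_0$ is free. If $\ell\geq 1$, projectivity of $M$ splits the surjection $F_0\to M$, giving $F_0\cong M\oplus N$ with $N:=\ker(F_0\to M)$; then $N$ is projective (a direct summand of $F_0$) and $0\to F_\ell\to\cdots\to F_1\to N\to 0$ is a finite free resolution of length $\ell-1$, so by the inductive hypothesis $N\oplus R^a\cong R^c$ for some $a,c$. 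Hence $M\oplus R^c\cong M\oplus N\oplus R^a\cong F_0\oplus R^a$ is free. Applying this to $M=P$, whose finite free resolution exists by Proposition \ref{f.f_resolution}, proves Step 1.

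\textbf{Step 2: a stably free $P$ is free.} Suppose $P\oplus R^a\cong R^b$ and induct on $a$; the case $a=0$ is trivial. If $a\geq 1$, the image in $R^b$ of the last standard basis vector of $R^a$ spans a free rank-one direct summand $Rv\subset R^b$, so the corresponding $R$-linear projection $R^b\to Rv\cong R$ is given by a tuple $(\phi_1,\dots,\phi_b)$ with $\sum_i\phi_iv_i=1$; thus $v=(v_1,\dots,v_b)$ is unimodular. By Proposition \ref{unimodular} there is $M\in\GL_b(R)$ with $Mv^T=(1,0,\dots,0)^T$, so after applying the automorphism $M$ of $R^b$ the summand $Rv$ becomes $Re_1$, with complement isomorphic to $R^b/Re_1\cong R^{b-1}$. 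Since that complement is (isomorphic to) $P\oplus R^{a-1}$, we get $P\oplus R^{a-1}\cong R^{b-1}$, and induction finishes the proof.

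The genuinely substantial work has already been done in Propositions \ref{f.f_resolution} and \ref{unimodular}, so the remaining obstacle is only bookkeeping: one must check that a rank-one free direct summand of $R^b$ is spanned by a unimodular tuple and, once it is normalized to $Re_1$, that every complement is free of rank $b-1$ (because $R^b/Re_1\cong R^{b-1}$ and complements of a fixed direct summand are mutually isomorphic). Note in particular that the hypothesis $\rho\in\sqrt{|K^\times|}^n$ enters only through the two input propositions and plays no further role here.
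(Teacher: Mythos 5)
Your proof is correct and follows essentially the same route as the paper: both reduce the theorem to Propositions \ref{f.f_resolution} and \ref{unimodular}, pass from finite free resolution to stably free, and then use unimodular-tuple completion to go from stably free to free. The only difference is that the paper cites \cite{La}, XXI, Theorems 2.1 and 3.6 for these two steps, whereas you unpack those references into short self-contained inductions, which is a harmless (and arguably clarifying) expansion of the same argument.
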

\begin{proof}
Let $P$ be a finite projective module over $R$. By Proposition \ref{f.f_resolution}, $P$ admits a finite free resolution and hence is stably free by \cite{La}, XXI, Theorem 2.1. On the other hand, Proposition \ref{unimodular} implies that $P$ has the unimodular column extension property(see p.849, p.846 of \cite{La} for definition). 
Then by \cite{La}, XXI, Theorem 3.6, we conclude that $P$ is free.
\end{proof}

Now let $\rho=(\rho_1,\dots,\rho_n)\in \RR^n_{>0}$. Let $V$ be the $\QQ$-linear subspace of $\RR/\log\sqrt{|K^\times|}$ generated by $\log\rho_1,\dots,\log\rho_n$. By properly changing the indices, we may assume that $\log\rho_1,\dots,\log\rho_l$ is a basis of $V$. Put $\rho'=(\rho_1,\dots,\rho_l)$ and $\rho''=(\rho_{l+1},\dots,\rho_n)$. Then $K'=K_{[\rho',\rho'],l}$ is a field and the norm on $K'$ extends that on $K$. Thus $\rho''\in \sqrt{|K'^\times|}^{n-l}$ and $K_{[\rho,\rho],n}=K'_{[\rho'',\rho''],n-l}$. So, by Theorem \ref{QS0}, we obtain the following result:

\begin{cor}\label{QS00}
For $\rho\in \RR^n_{>0}$, every finite projective module over $K_{[\rho,\rho],n}$ is free.
\end{cor}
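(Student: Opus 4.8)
The plan is to deduce Corollary~\ref{QS00} from Theorem~\ref{QS0} by enlarging the base field so as to absorb the ``rationally generic'' part of the radius vector, along the reduction already set up just before the statement. After permuting coordinates we may assume $\log\rho_1,\dots,\log\rho_l$ is a $\QQ$-basis of the image $V$ of $\QQ\log\rho_1+\cdots+\QQ\log\rho_n$ in $\RR/\log\sqrt{|K^\times|}$; write $\rho'=(\rho_1,\dots,\rho_l)$, $\rho''=(\rho_{l+1},\dots,\rho_n)$, and $K'=K_{[\rho',\rho'],l}$.

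\textbf{Step 1: $K'$ is a complete nonarchimedean field of mixed characteristic $(0,p)$ whose norm extends that of $K$.} Completeness is clear since $K'$ is a $K$-affinoid algebra; the $\rho'$-Gauss norm restricts to $|\cdot|$ on $K$, the residue ring of $K'$ contains the residue field $k$ of $K$, and $|p|=p^{-1}$, so $K'$ has mixed characteristic $(0,p)$ once it is known to be a field. For the field property, let $f=\sum_{i\in\ZZ^l}f_it^i\in K'$ be nonzero. If the $\rho'$-Gauss norm of $f$ were attained at two distinct indices $i\ne j$, then $(\rho')^{i-j}=|f_j|/|f_i|\in|K^\times|$, hence $\sum_{k=1}^l(i_k-j_k)\log\rho_k\in\log\sqrt{|K^\times|}$ with $i-j\ne 0$, contradicting the $\QQ$-linear independence of $\log\rho_1,\dots,\log\rho_l$ in $\RR/\log\sqrt{|K^\times|}$. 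Thus $\wid_{\rho'}(f)=0$, and the argument of Corollary~\ref{invt} shows $f$ is a unit.

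\textbf{Step 2: the two identifications.} First, $\rho''\in\sqrt{|K'^\times|}^{\,n-l}$: each $t_i$ with $i\le l$ is a unit of $K'$ of norm $\rho_i$, so $\rho_i\in|K'^\times|$, while for $j>l$ the membership $\log\rho_j\in V$ gives a relation $\rho_j^N=c\prod_{i\le l}\rho_i^{m_i}$ with $N\in\ZZ_{>0}$, $m_i\in\ZZ$ and $c\in\sqrt{|K^\times|}\subseteq\sqrt{|K'^\times|}$, whence $\rho_j\in\sqrt{|K'^\times|}$ by divisibility. Second, $K_{[\rho,\rho],n}=K'_{[\rho'',\rho''],n-l}$: writing an element of $K_{[\rho,\rho],n}$ as a Laurent series in $t_{l+1},\dots,t_n$ whose coefficients are Laurent series in $t_1,\dots,t_l$, and using that the $\rho'$-Gauss norm of such a coefficient is the maximum of $|f_i|\rho^i$ over the corresponding indices, one checks that the convergence condition $\lim_{|i|\to\infty}|f_i|\rho^i=0$ for the full series is equivalent to the one defining $K'_{[\rho'',\rho''],n-l}$, and that the two norms coincide.

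\textbf{Step 3: conclude.} Apply Theorem~\ref{QS0} over the field $K'$ with $n$ replaced by $n-l$, which is legitimate by Steps~1 and 2: every finite projective $K'_{[\rho'',\rho''],n-l}$-module is free, i.e., every finite projective $K_{[\rho,\rho],n}$-module is free. The only genuinely non-formal point I expect is the ``Fubini for Gauss norms'' identification in Step~2; the remainder is bookkeeping about value groups together with the verification, via Corollary~\ref{invt}, that $K'$ is a field.
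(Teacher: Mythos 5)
Your proof is correct and follows exactly the route the paper takes just above the statement of Corollary~\ref{QS00}: permute the radii so that $\log\rho_1,\dots,\log\rho_l$ form a $\QQ$-basis of $V$, enlarge the base field to $K'=K_{[\rho',\rho'],l}$, and invoke Theorem~\ref{QS0} over $K'$ in $n-l$ variables. The paper merely asserts that $K'$ is a field, that $\rho''\in\sqrt{|K'^\times|}^{n-l}$, and that $K_{[\rho,\rho],n}=K'_{[\rho'',\rho''],n-l}$; your Steps 1 and 2 supply the verifications the paper leaves to the reader, and all of them check out.
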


\subsection{Quillen-Suslin theorem over polyannuli of small positive width }
Now we generalize Corollary \ref{QS00} to polyannuli of small positive width, and obtain the following theorem.

\begin{thm} 
Let $\alpha$, $\beta\in \RR_{>0}^n$ with $\alpha<\beta$, and let $P$ be a finite projective $\Kabn$ module. Then, for any $\rho\in(\alpha,\beta)$, there exist $\alpha'$ and $\beta'$ with $\alpha<\alpha'<\rho<\beta'<\beta$ such that $P_{[\alpha',\beta']}$ is free.
\label{QS}
\end{thm}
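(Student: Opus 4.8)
The plan is to produce an $A$-linear map from a finite free module to $P$ which is an isomorphism on the width-$0$ fibre at $\rho$, and then to spread this isomorphism out to a sub-polyannulus of small positive width around $\rho$. Write $A=\Kabn$ and $B=K_{[\rho,\rho],n}$; recall that $B$ is an affinoid algebra (in particular complete and Noetherian), that $A$ is Noetherian, and that the Laurent polynomials are dense in $B$. Since $P$ is finite projective over $A$, its base change $P_{0,\rho}=B\otimes_A P$ is finite projective over $B$, hence free by Corollary \ref{QS00}; say $P_{0,\rho}$ is free of rank $r$ with basis $\epsilon_1,\dots,\epsilon_r$. Fix generators $v_1,\dots,v_s$ of $P$ over $A$; their images $\bar v_i$ generate $P_{0,\rho}$, so we may write $\epsilon_j=\sum_i a_{ij}\bar v_i$ with $a_{ij}\in B$. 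Choosing $a'_{ij}\in A$ whose images in $B$ approximate the $a_{ij}$ and setting $w_j=\sum_i a'_{ij}v_i\in P$, the images $\bar w_j\in P_{0,\rho}$ can be made arbitrarily close to $\epsilon_j$, so the transition matrix $M$ determined by $\bar w_j=\sum_l M_{lj}\epsilon_l$ satisfies $|M-I|<1$ and is therefore invertible over $B$ by a Neumann series argument. Hence the $A$-linear map $\phi\colon A^r\to P$ with $\phi(e_j)=w_j$ induces an isomorphism $\phi_{0,\rho}\colon B^r\xrightarrow{\ \sim\ }P_{0,\rho}$.

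Next I would spread out surjectivity of $\phi$. The module $C:=\coker\phi$ is finite over $A$ (a quotient of $P$), and $C\otimes_A B=\coker(\phi_{0,\rho})=0$. Fixing a finite presentation $A^a\xrightarrow{D}A^b\to C\to 0$ (possible since $A$ is Noetherian), the vanishing $C\otimes_A B=0$ is equivalent to surjectivity of $D$ over $B$, i.e.\ to the maximal minors of $D$ generating the unit ideal in $B$. These minors $m_1,\dots,m_N$ lie in $A$ and annihilate $C$ (Fitting ideal $\subseteq$ annihilator); writing $1=\sum_k m_k h_k$ in $B$ and approximating the $h_k$ by elements $h'_k\in A$, the element $g:=\sum_k m_k h'_k\in A$ annihilates $C$ and has image within distance $<1$ of $1$ in $B$, hence is a unit of $B$. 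By the corollary following Corollary \ref{invt}, $g$ is then a unit in $K_{[\alpha_1,\beta_1],n}$ for some $\alpha<\alpha_1<\rho<\beta_1<\beta$, and since $gC=0$ this forces $C_{[\alpha_1,\beta_1]}=0$. Thus $\phi_{[\alpha_1,\beta_1]}\colon K_{[\alpha_1,\beta_1],n}^r\to P_{[\alpha_1,\beta_1]}$ is surjective.

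Finally I would spread out injectivity. Because $P_{[\alpha_1,\beta_1]}$ is projective over $K_{[\alpha_1,\beta_1],n}$, the exact sequence $0\to N\to K_{[\alpha_1,\beta_1],n}^r\to P_{[\alpha_1,\beta_1]}\to 0$, with $N:=\ker\phi_{[\alpha_1,\beta_1]}$, splits; in particular $N$ is finite (a direct summand of a free module) and the whole sequence remains exact under arbitrary base change. Base-changing to $B$ and using that $\phi_{0,\rho}$ is an isomorphism yields $N_{0,\rho}=0$, so the argument of the previous paragraph, applied now over $K_{[\alpha_1,\beta_1],n}$, produces $\alpha_1<\alpha'<\rho<\beta'<\beta_1$ with $N_{[\alpha',\beta']}=0$. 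Base-changing the split sequence to $[\alpha',\beta']$ then gives $K_{[\alpha',\beta'],n}^r\cong P_{[\alpha',\beta']}$, which is the assertion.

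The only genuinely non-formal ingredient is the passage from the width-$0$ fibre $B=K_{[\rho,\rho],n}$ to a sub-polyannulus of positive width, and this is exactly what the corollary following Corollary \ref{invt} supplies; the remainder is the standard spreading-out of a fibrewise isomorphism over a Noetherian base, combined with Quillen--Suslin in width $0$ (Corollary \ref{QS00}). The point that must be respected is the order of operations: injectivity can be spread out only after surjectivity has been, since it is only then that the relevant short exact sequence is split and hence survives the base-change functors $-\otimes_{K_{[\alpha_1,\beta_1],n}}K_{[\alpha',\beta'],n}$, which are not exact in general.
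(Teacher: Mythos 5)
Your argument is correct, and it takes a genuinely different route from the paper's. Both proofs begin the same way: use Corollary~\ref{QS00} to get a basis of $P_{0,\rho}$, then use density of $\Kabn$ in $K_{[\rho,\rho],n}$ to lift it to elements of $P$ whose transition matrix at $\rho$ is close to the identity (hence invertible). The paper then takes a geometric route: it fixes a finite affinoid covering $\Sp(\Kabn)=\bigcup_l\Sp(A_l)$ on which $P$ is free, expresses the would-be basis in terms of a local trivialization, and runs a compactness argument (descending induction over the coordinates, using continuity of $|t_i|$ and Berkovich compactness) to show the transition determinant $b_l$ stays invertible on a shrunk polyannulus around $\rho$. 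You instead take an algebraic spreading-out route: package the candidate basis as a map $\phi\colon A^r\to P$ which is an isomorphism after $\otimes_A K_{[\rho,\rho],n}$, then observe that both $\coker\phi$ and (after surjectivity is secured, so that the sequence splits and base change is exact) $\ker\phi$ are finite modules that vanish at $\rho$, and that for a finite module $M$ over $A$ the vanishing of $M\otimes_A K_{[\rho,\rho],n}$ propagates to a subpolyannulus because $\mathrm{Fitt}_0(M)\subseteq\operatorname{Ann}(M)$ contains an element $g\in A$ with $|g-1|_\rho<1$, which by the corollary to Corollary~\ref{invt} is a unit on some $[\alpha',\beta']$. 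Your argument cleanly replaces the paper's pointwise compactness analysis with Noetherianity plus Fitting-ideal bookkeeping; the one delicate point, which you handle correctly, is the order of operations: you need surjectivity on a subpolyannulus first so that the kernel sequence splits (projectivity of $P$) and therefore survives the non-exact base-change functors, before the same Fitting-ideal argument can be reused to kill the kernel. Both approaches are valid; yours is a bit more module-theoretic and avoids invoking the existence of a finite affinoid trivializing cover, while the paper's is more hands-on with the geometry of the polyannulus.
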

\begin{proof}
$P_{0,\rho}$ is free by Corollary \ref{QS00}, and we can take a free basis $e_1,\dots,e_m$ of it.
Take $\epsilon\in (0,1)$, and let $$D_j:=\left\{ \sum_{i=1}^m a_ie_i:{|a_j-1|}_{\rho}<\epsilon, {|a_k|}_{\rho}\leq\epsilon\ \text{for all}\ k\neq j \right\}.$$
 Since $P$ is dense in $P_{0,\rho}$ with respect to the supremum norm associated to the $\rho$-Gauss norm and the basis $e_1,\dots,e_m$, there exists an element $e'_j\in P\cap D_j$ for each $1\leq j\leq m$. Then the representation matrix of the map $e_j\mapsto e'_j$ with respect to the basis $e_1,\dots,e_m$ can be written as $I+X$ with $|X|_\rho<\epsilon$ and hence an element in $\GL_r(K_{[\rho,\rho],n})$. This implies that $e'_1,\dots,e'_m$ is also a basis of $P_{0,\rho}$. 

 We prove that $e'_1,\dots,e'_m$ is also a basis for $P_{[\alpha',\beta']}$ for some $\alpha<\alpha'<\rho<\beta'<\beta$. Take a finite affinoid covering $\Sp(\Kabn)=\bigcup_{l=1}^N \Sp (A_l)$ such that $P\otimes_\Kabn A_l$ is free. For a fixed $l$, take a basis $f_1,\dots,f_m$ of $P\otimes_\Kabn A_l$, let $B_l$ be the representation matrix of the map $f_i\mapsto e_i'$ with respect to the basis $f_1,\dots, f_m$ and put $b_l=\det B_l\in A_l$.
 Since $e'_1,\dots,e'_m$ is a basis of $P_{0,\rho}$, $b_l$ is invertible in the coordinate ring of affinoid subdomain $ \Sp (A_l)\cap\bigcap_{j=1}^n\{|t_j|=\rho_j\}$ and so
  $$ \{x\in \Sp (A_l):b_l(x)=0 \}\cap\bigcap_{j=1}^n\{|t_j|=\rho_j\}=\emptyset. $$

 We prove the claim that, for $i$ with $1\leq i\leq n$, there exist $\alpha'_j$ and $\beta'_j$ for $j\geq i$ such that 
 $$\{x\in \Sp (A_l):b_l(x)=0 \}\cap\bigcap_{j=1}^{i-1}\{|t_j|=\rho_j\}\cap\bigcap_{j=i}^n\{\alpha'_j\leq|t_i|\leq\beta'_j\}=\emptyset $$
by descending induction on $i$. By induction hypothesis, we may assume that $$\{x\in \Sp (A_l):b_l(x)=0 \}\cap\bigcap_{j=1}^{i}\{|t_j|=\rho_j\}\cap\bigcap_{j=i+1}^n\{\alpha'_j\leq|t_i|\leq\beta'_j\}=\emptyset .$$
Since $t_i$ is a continuous function, it attains the maximum $\alpha''_i$ on the affinoid subdomain
$$\{x\in \Sp (A_l):b_l(x)=0 \}\cap\bigcap_{j=1}^{i-1}\{|t_j|=\rho_j\}\cap\{|t_i|\leq \rho_i\}\cap\bigcap_{j=i+1}^n\{\alpha'_j\leq|t_i|\leq\beta'_j\} ,$$
which is Hausdorff and compact by \cite{Ber} Theorem 1.2.1 unless it is empty(when it is empty, put $\alpha''_i=\alpha_i$). Then, $\alpha''_i\neq \rho_i$ because otherwise it will contradict the induction hypothesis.
By the same reason we may choose $\beta''_i>\rho_i$ as the minimum of the function $t_i$ on the affinoid subdomain
$$\{x\in \Sp (A_l):b_l(x)=0 \}\cap\bigcap_{j=1}^{i-1}\{|t_j|=\rho_j\}\cap\{|t_i|\geq \rho_i\}\cap\bigcap_{j=i+1}^n\{\alpha'_j\leq|t_i|\leq\beta'_j\} $$
unless it is empty(when it is empty, put $\beta''_i=\beta_i$). Then we take $\alpha'_i$ and $\beta'_i$ such that $\alpha''_i<\alpha'_i<\rho_i$ and $\rho_i<\beta'_i<\beta''_i$ and finish the induction.

By this claim, we conclude that 
$$\{x\in \Sp (A_l):b_l(x)=0 \}\cap\bigcap_{j=1}^n\{\alpha'_j\leq|t_i|\leq\beta'_j\}=\emptyset ,$$
and so $b_l$ is invertible in the coordinate ring of the affinoid subdomain 
$$ \Sp(A_l)\cap\bigcap_{i=1}^n\{ \alpha'_i\leq |t_i|\leq \beta'_i \}. $$
Consider the homomorphism $K_{[\alpha',\beta'],n}^{\oplus r}\to P_{[\alpha',\beta']} $ which sends the canonical basis of $K_{[\alpha',\beta'],n}^{\oplus r}$ to $e'_1,\dots,e'_m$. Since this is an isomorphism on each member of the affinoid covering 
$$ \bigcap_{i=1}^n\{ \alpha'_i\leq |t_i|\leq \beta'_i \}= \bigcup_{l=1}^N\left( \Sp (A_l)\cap\bigcap_{i=1}^n\{\alpha'_i\leq|t_i|\leq\beta'_i\} \right),$$ this is itself an isomorphism.
\end{proof}

\subsection{Abstract $p$-adic exponents}
For $x\in \QP$, we denote by $\langle x\rangle$ the smallest positive rational number $a$ such that one of $x-a$ and $x+a$ is a $p$-adic integer.

We state some basic facts about the symbol $\langle\ \cdot\ \rangle$.

\begin{lem}
\begin{enumerate}[(1)]
\item For any $x$,$y\in\QP$, $\langle x+y\rangle\leq \langle x\rangle+\langle y\rangle$.
\item $\langle x\rangle =\langle -x\rangle $.
\item $p\langle x\rangle\geq \langle px\rangle$.
\end{enumerate}
\end{lem}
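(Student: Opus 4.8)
The plan is to unwind the definition of $\langle x\rangle$ in each part. Recall that $\langle x\rangle$ is the smallest positive rational $a$ with $x\equiv a$ or $x\equiv -a$ modulo $\ZP$; equivalently, writing the image of $x$ in $\QP/\ZP$ and choosing the representative $\bar x\in\QQ\cap[0,1)$ of the class of $x$ (note $\QP/\ZP\cong\QQ/\ZZ$ has a unique such representative since $\QP=\QQ+\ZP$), one has $\langle x\rangle=\min\{\bar x,\,1-\bar x\}$ when $\bar x\neq 0$, and $\langle x\rangle$ is defined via the infimum (which is not attained) when $\bar x=0$. Actually it is cleaner to phrase everything through the quantity $\bar x\in[0,1)\cap\QQ$: one checks directly that $\langle x\rangle=\min\{\bar x,1-\bar x\}$ for all $x$ with $\bar x\neq0$, and $\langle x\rangle$ can be taken to be any value as the statements degenerate; I would simply observe that all three inequalities hold trivially when any of the relevant quantities vanishes in $\QP/\ZP$, and otherwise argue with the representatives.

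For part (2), since $\overline{-x}=1-\bar x$ (when $\bar x\neq0$), we get $\langle -x\rangle=\min\{1-\bar x,\bar x\}=\langle x\rangle$; the case $\bar x=0$ is immediate. For part (1), I would use that passing to $\QP/\ZP$ is additive, so $\overline{x+y}$ is the representative in $[0,1)$ of $\bar x+\bar y$, i.e. either $\bar x+\bar y$ or $\bar x+\bar y-1$. In the first case $\langle x+y\rangle=\min\{\bar x+\bar y,\,1-\bar x-\bar y\}\leq\bar x+\bar y$, and since $\langle x\rangle+\langle y\rangle\geq$ each of the ``min'' ingredients, a short case analysis (splitting on whether $\langle x\rangle=\bar x$ or $1-\bar x$, similarly for $y$) gives $\langle x+y\rangle\leq\langle x\rangle+\langle y\rangle$; the subcase where $\langle x\rangle+\langle y\rangle\geq 1/2$ is trivial since $\langle x+y\rangle\leq 1/2$ always. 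For part (3), note $\overline{px}$ is the representative of $p\bar x$ in $[0,1)$. If $\langle x\rangle=\bar x$, then $p\langle x\rangle=p\bar x\geq\langle px\rangle$ because reducing $p\bar x$ into $[0,1)$ only decreases it (subtracting a nonnegative integer) and then taking $\min$ with $1-(\cdot)$ decreases it further. If instead $\langle x\rangle=1-\bar x$, apply the previous subcase to $-x$ using part (2): $p\langle x\rangle=p\langle -x\rangle=p\,\overline{(-x)}\geq\langle -px\rangle=\langle px\rangle$.

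I do not expect any serious obstacle here; the only mild subtlety is bookkeeping around the degenerate case $\bar x=0$ (where $\langle x\rangle$ is defined by a non-attained infimum equal to $0$) and making the reduction ``$p\bar x\bmod 1$'' rigorous, which amounts to the elementary fact that for $r\in[0,1)$ one has $\min\{r-\lfloor r\rfloor,\,1-(r-\lfloor r\rfloor)\}\le r$. The main ``work'' is the finite case check in part (1), which is entirely routine.
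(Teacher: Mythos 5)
Your argument is correct but takes a genuinely different route from the paper's. The paper works directly from the definition: it picks witnesses $a=\langle x\rangle$ and $b=\langle y\rangle$ with some choice of $x\pm a\in\ZP$ and $y\pm b\in\ZP$, assumes $a\le b$, and notes that $(x+y)\pm(a+b)\in\ZP$ when the two signs agree and $(x+y)\pm(b-a)\in\ZP$ when they disagree, giving $\langle x+y\rangle\le a+b$ in either case; parts (2) and (3) are one-line sign and scaling observations in the same spirit. You instead pass to the fractional-part representative $\bar x$, identify $\langle x\rangle=\min(\bar x,\,1-\bar x)$, and argue by real-number arithmetic with a finite case split. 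Both are valid; the paper's route is shorter and avoids the bookkeeping, while yours makes $\langle\cdot\rangle$ concrete and closer to how it is actually used later in the paper. One correction to your setup is needed, though: since $\ZP\cap\QQ=\ZZ_{(p)}$ (the localization, not $\ZZ$), one has $\QP=\ZZ[1/p]+\ZP$ and $\QP/\ZP\cong\ZZ[1/p]/\ZZ$, not $\QQ/\ZZ$, and the representative $\bar x$ must be taken in $\ZZ[1/p]\cap[0,1)$, where it is unique. In $\QQ\cap[0,1)$ the class of $x$ has many representatives, and the formula $\langle x\rangle=\min(\bar x,1-\bar x)$ would then fail; for instance $x=1/2\in\ZZ_{(p)}\subset\ZP$ for odd $p$ has $\langle x\rangle=0$, not $1/2$. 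With $\bar x$ taken in $\ZZ[1/p]\cap[0,1)$ the rest of your argument goes through unchanged.
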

\begin{proof}
For (1), take $a$ (resp.$b$)$\in \QQ_{>0}$ such that one of $x\pm a$ (resp. $y\pm b$) belongs to $\ZP$. We may assume $a\leq b$. If $x+a$, $y+b$ or $x-a$, $y-b$ are in $\ZP$,
$$(x+y)\pm(a+b)=(x\pm a)+(y\pm b)\in \ZP.$$
And if $x-a$, $y+b$ or $x-a$, $y+b$ are in $\ZP$,
$$(x+y)\pm(b-a)=(x\mp a)+(y\pm b)\in \ZP.$$

(2) is simply because $x\pm b\in \ZP$ if and only if $-x\mp b\in \ZP$.

For (3), let $b\in \QQ_{>0}$ such that one of $x\pm b$ belongs to $\ZP$, then one of $px\pm pb$ belongs to $\ZP$.
\end{proof}

\begin{defn}
We say that $a\in \ZP$ is a $p$-adic Liouville number if $a\notin\ZZ$ and 
$$ \liminf_{m\to \infty} \frac{p^m}{m}\left\langle\frac{a}{p^m}\right\rangle<\infty.$$
If $a$ is not $p$-adic Liouville, we say that it is a $p$-adic non-Liouville number.
\end{defn}

In the following, for a multisubset $A=\{ A_i,\dots,A_m \}$ of $\ZZ_p^n$, we denote the $i$-th entry of $A_j$ by $A_j^i$, and denote the multisubset $\{ A_1^i,\dots,A_m^i \}$ of $\ZZ_p$ by $A^i$.

\begin{defn}[D\'efinitions in p.194 of \cite{Gac}, cf. \cite{Ked2}, Definition 3.4.2] 
Let $A=\{A_1,\dots,A_m\}$ be a multisubset of $\ZZ_p^n$. We say that $A$ is $p$-adic non-Liouville in the $r$-th direction if $A_{j}^r$ is a $p$-adic non-Liouville number for any $1\leq j\leq m$, and we say that $A$ is $p$-adic non-Liouville if it is $p$-adic non-Liouville in every direction. We say that $A$ has $p$-adic non-Liouville differences in the $r$-th direction if the difference multisubset $A-A:=\{A_i-A_j:1\leq i,j\leq m\}$ is $p$-adic non-Liouville in the $r$-th direction, and we say that $A$ has $p$-adic non-Liouville differences if it has $p$-adic non-Liouville differences in every direction.
\end{defn}

\begin{defn}[D\'efinitions in p.189 of \cite{Gac}, cf.\cite{Ked2} Definition 3.4.3 ]
    \label{equivalence}
For two multisubsets $A=(A_1,\dots,A_m)$ and $B=(B_1,\dots,B_m)$ of $\mathbb{Z}_p^n$, we say that $A$ is weakly equivalent to $B$ if there exists a constant $c>0$ and a sequence of permutations $\sigma_h$ $(h\in \ZZ_{>0})$ of $\{1,2,\dots,m \}$ such that, for all $1\leq i \leq n$ and $1\leq j\leq m$, 
$$ p^h\left< \frac{A^i_{\sigma_h(j)}-B^i_j}{p^h} \right>\leq ch. $$
We say that $A$ is equivalent to $B$ if there exists a permutation $\sigma$ of $\{1,2,\dots,m\}$ such that for all $1\leq i\leq n$ and $1\leq j\leq m$,
$$A_{\sigma(j)}^i-B^i_j\in \ZZ.$$
\end{defn}
These two relations of multisubsets are equivalent relations. Also, it is easy to see that if $A$ and $B$ are equivalent, then they are weakly equivalent. Also, we have the following:
\begin{lem}[cf. \cite{Ked1} Lemma 13.4.3]
For $a,b\in\ZZ_p^n$, if the singleton multisets $\{a\},\{b\}$ are weakly equivalent, they are equivalent.
\label{above6}
\end{lem}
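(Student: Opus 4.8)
The statement is the $n$-dimensional analogue of \cite{Ked1} Lemma 13.4.3, and the natural strategy is to reduce it to the one-dimensional case component by component. Write $a=(a^1,\dots,a^n)$ and $b=(b^1,\dots,b^n)$ in $\ZZ_p^n$. By Definition \ref{equivalence}, weak equivalence of the singletons $\{a\}$ and $\{b\}$ means there is a constant $c>0$ and a sequence $\sigma_h$ of permutations of $\{1\}$ (necessarily all the identity) such that $p^h\langle (a^i-b^i)/p^h\rangle\leq ch$ for every $1\leq i\leq n$ and every $h\in\ZZ_{>0}$. Thus for each fixed direction $i$ the element $a^i-b^i\in\ZP$ satisfies $p^h\langle(a^i-b^i)/p^h\rangle\leq ch$ for all $h$; equivalently, the singletons $\{a^i\},\{b^i\}$ in $\ZP$ are weakly equivalent. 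First I would extract exactly this reduction, so that the problem becomes: if $x:=a^i-b^i\in\ZP$ satisfies $\liminf_{h\to\infty}\frac{p^h}{h}\langle x/p^h\rangle=0$ with the $\liminf$ actually a limit bounded by $c$, then $x\in\ZZ$.

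The heart of the matter is therefore the one-dimensional statement, which is \cite{Ked1} Lemma 13.4.3 and may be cited directly; but if a self-contained argument is wanted, the point is this. Suppose $x\in\ZP\setminus\ZZ$. By definition $\langle x/p^h\rangle$ is the distance from $x/p^h$ to the nearest integer (in the sense of the smallest positive rational $a$ with $x/p^h\pm a\in\ZP$), so $p^h\langle x/p^h\rangle$ measures how close $x$ is to $p^h\ZZ$. If $x\notin\ZZ$, then writing $x$ in terms of its expansion one shows $p^h\langle x/p^h\rangle$ grows at least linearly in $h$ only when $x$ is non-Liouville, but in any case $p^h\langle x/p^h\rangle\to\infty$: indeed the only way $p^h\langle x/p^h\rangle$ can fail to go to infinity is if $x$ lies in $\ZZ$, because $\langle x/p^h\rangle$ is bounded below by a fixed positive quantity once $h$ is large (the "integer part truncations" of $x$ stabilize only if $x\in\ZZ$). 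Hence the hypothesis $p^h\langle x/p^h\rangle\leq ch$ forces $x\in\ZZ$. Combining over all $i$, we get $a^i-b^i\in\ZZ$ for every $i$, i.e.\ (taking $\sigma=\mathrm{id}$ in Definition \ref{equivalence}) $\{a\}$ is equivalent to $\{b\}$.

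The main obstacle — really the only nontrivial input — is the one-dimensional claim that $x\in\ZP\setminus\ZZ$ implies $p^h\langle x/p^h\rangle$ cannot be bounded by a linear function of $h$. This is precisely the content of \cite{Ked1} Lemma 13.4.3, so in the write-up I would either invoke it as a black box or reproduce its short proof: if $x\notin\ZZ$ then $x\notin p^{h_0}\ZZ$ is false is not the issue — rather, one uses that $\ZZ$ is discrete and closed in $\ZP$ away from... more carefully, one observes that if $x\notin\ZZ$ then the sequence of nearest integers to $x/p^h$, scaled back by $p^h$, does not converge to $x$, so $\langle x/p^h\rangle\geq p^{-h}\cdot\delta$ for some fixed $\delta>0$ and all $h$... which only gives $p^h\langle x/p^h\rangle\geq\delta$, not linear growth. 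The correct argument is subtler and is exactly why non-Liouvilleness enters the original lemma; for the present lemma it suffices to know $p^h\langle x/p^h\rangle\to\infty$ when $x\notin\ZZ$, which does follow from the discreteness of $\ZZ$ in $\QP$ together with the fact that $x\notin\ZZ$ means $x$ has a nonzero "fractional part" in a suitable sense. I would lean on \cite{Ked1} Lemma 13.4.3 here rather than re-deriving it, since the excerpt explicitly flags this as a "cf." reference.
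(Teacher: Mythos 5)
Your reduction — projecting to each coordinate $i$, observing that weak equivalence of $\{a\}$ and $\{b\}$ in $\ZZ_p^n$ gives weak equivalence of the singletons $\{a^i\},\{b^i\}$ in $\ZZ_p$, and invoking \cite{Ked1} Lemma 13.4.3 to conclude $a^i-b^i\in\ZZ$ for every $i$ — is exactly the paper's proof. (The digression attempting a self-contained re-derivation of the one-dimensional lemma is not correct and not needed, as you yourself acknowledge; since you ultimately fall back on citing \cite{Ked1} Lemma 13.4.3, the argument as delivered is sound.)
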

\begin{proof}
    If we write $a=(a^1,\dots,a^n)$, $b=(b^1,\dots,b^n)$, then the singleton multisets $\{a^i\},\{b^i\}$ are weakly equivalent. So they are equivalent by \cite{Ked1} Lemma 13.4.3, and this implies that $\{a\}$ and $\{b\}$ are equivalent.
\end{proof}

\begin{defn}
[cf.  \cite{Ked2}, Definition 3.4.4] Let $A,\AA_1,\dots,\AA_k$ be multisubsets of $\ZZ_p^n$ such that $A=\bigcup_{i=1}^k \AA_i$ as multisets. We say that $\AA_1,\dots,\AA_k$ form a Liouville partition of $A$ in the $r$-th direction if $\AA_1^r,\dots,\AA_k^r$ is a Liouville partition of $A^r$, namely, for any $1\leq l<m\leq k$ and $a_l\in \AA_l^r, a_m\in\AA_m^r$, $a_l-a_m$ is a $p$-adic non-Liouville number which is not an integer.
\end{defn}

\begin{prop}[cf. \cite{Ked2}, Proposition 3.4.5]
Let $A$ be a finite multisubset of $\ZZ_p^n$ and let $\AA_1,\dots,\AA_k$ be a Liouville partition of $A$ in the $r$-th direction.
\begin{enumerate}[(1)]

\item Let $\BB_1,\dots,\BB_k$ be multisubsets of $\ZZ_p^n$ such that $\BB_i^r$ is weakly equivalent to $\AA_j^r$ for $1\leq j\leq k$. Then $\BB_1,\dots,\BB_k$ form a Liouville partition in the $r$-th direction of $B=\bigcup_{j=1}^k \BB_k$.

\item Suppose that $B$ is a multisubset of $\ZZ_p^n$ weakly equivalent to $A$. Then $B$ admits a Liouville partition $\BB_1,\dots,\BB_k$ in the $r$-th direction such that $\BB_j$ is weakly equivalent to $\AA_j$ for $1\leq j\leq k$.

\end{enumerate}
\label{weexp}
\end{prop}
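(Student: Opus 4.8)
The plan is to reduce everything to the one-variable statement \cite{Ked2}, Proposition 3.4.5, applied in the $r$-th coordinate, and then repackage the data back into $\ZZ_p^n$-multisets. Note first that the hypotheses and conclusions of both parts only constrain the $r$-th coordinate of the multisets involved, so I expect this reduction to be essentially bookkeeping.

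\medskip

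For part (1): by definition, $\AA_1^r,\dots,\AA_k^r$ form a Liouville partition of $A^r$ in the sense of \cite{Ked2}, and by hypothesis each $\BB_j^r$ is weakly equivalent to $\AA_j^r$. Applying \cite{Ked2}, Proposition 3.4.5(1) to the one-variable multisets $A^r,\AA_1^r,\dots,\AA_k^r,\BB_1^r,\dots,\BB_k^r$, we get that $\BB_1^r,\dots,\BB_k^r$ form a Liouville partition of $B^r=\bigcup_{j=1}^k\BB_j^r$. Since $B^r$ is exactly the $r$-th coordinate multiset of $B=\bigcup_{j=1}^k\BB_j$, this says by definition that $\BB_1,\dots,\BB_k$ form a Liouville partition of $B$ in the $r$-th direction, which is the claim. (One should double-check that the multiset union commutes with taking the $r$-th coordinate, which is immediate from the definition $A^r=\{A_1^r,\dots,A_m^r\}$.)

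\medskip

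For part (2): the $r$-th coordinate multiset $B^r$ is weakly equivalent to $A^r$ — this follows directly from the definition of weak equivalence of multisubsets of $\ZZ_p^n$, since the displayed inequality there holds for all $1\leq i\leq n$, in particular for $i=r$, using the same sequence of permutations $\sigma_h$. Now apply \cite{Ked2}, Proposition 3.4.5(2) to $A^r$ and $B^r$: this produces a Liouville partition $\mathcal{B}^r_1,\dots,\mathcal{B}^r_k$ of $B^r$ in one variable, with $\mathcal{B}^r_j$ weakly equivalent to $\AA_j^r$, given by a partition of the index set $\{1,\dots,m\}$ into blocks $S_1,\dots,S_k$. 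The key point is that this comes with a \emph{matching of indices}: there is a permutation $\tau$ of $\{1,\dots,m\}$ and a decomposition into blocks so that $\mathcal{B}^r_j=\{B^r_{\tau(l)}: l\in S_j\}$ corresponds to $\AA_j^r$. I then define $\BB_j:=\{B_{\tau(l)}: l\in S_j\}$ as a multisubset of $\ZZ_p^n$ using \emph{this same} index partition. By construction $B=\bigcup_{j=1}^k\BB_j$ as multisets, $\BB_j^r=\mathcal{B}^r_j$ is weakly equivalent to $\AA_j^r$, so $\BB_1,\dots,\BB_k$ is a Liouville partition of $B$ in the $r$-th direction. It remains to see that $\BB_j$ is weakly equivalent to $\AA_j$ as multisubsets of $\ZZ_p^n$, not merely in the $r$-th coordinate.

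\medskip

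\textbf{The main obstacle} is precisely this last point: weak equivalence of $\BB_j$ and $\AA_j$ in $\ZZ_p^n$ requires a single sequence of permutations $\sigma_h$ working simultaneously in \emph{all} $n$ coordinates, whereas Proposition 3.4.5(2) of \cite{Ked2} only controls the $r$-th coordinate. To handle this I would use the hypothesis that $B$ is weakly equivalent to $A$ \emph{as multisubsets of $\ZZ_p^n$}: fix a sequence of permutations $\pi_h$ of $\{1,\dots,m\}$ and constant $c$ realizing this $n$-dimensional weak equivalence. The idea is that, up to finitely many $h$, the permutation $\pi_h$ must respect the block structure determined by the Liouville partition — because across distinct blocks the $r$-th coordinates differ by non-Liouville non-integers, so $p^h\langle(A_i^r - B_j^r)/p^h\rangle$ cannot be $\leq ch$ for all $h$ unless $i$ and $j$ lie in corresponding blocks; this is exactly the mechanism used in the proof of the one-variable Proposition 3.4.5, so I would either invoke that proof's internal structure or reprove the short separation lemma here. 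Once $\pi_h$ is known to preserve blocks (for $h$ large), restricting $\pi_h$ to the block $S_j$ gives a sequence of permutations witnessing weak equivalence of $\BB_j$ and $\AA_j$ in all $n$ coordinates at once. Care is needed to align the block labelling coming from $B\sim A$ with the one coming from Proposition 3.4.5(2); since both are governed by weak equivalence of the $r$-th coordinates to the same reference multisets $\AA_j^r$, and weak equivalence is transitive (stated after Definition \ref{equivalence}), the labellings agree after a permutation of $\{1,\dots,k\}$, which we absorb.
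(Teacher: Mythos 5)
Your part (1) follows exactly the paper's argument: restrict to the $r$-th coordinate, apply Kedlaya's one-variable Proposition 3.4.5(a), and note that taking $r$-th coordinates commutes with multiset union.

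For part (2), the basic reduction is the same as the paper's --- pass to $A^r$, $B^r$ and apply Kedlaya's Proposition 3.4.5(b) --- but you correctly flag a point that the paper's written proof passes over in silence: Kedlaya's result only gives weak equivalence of $\BB_j^r$ and $\AA_j^r$ in $\ZZ_p$, whereas the conclusion requires $\BB_j$ weakly equivalent to $\AA_j$ as multisubsets of $\ZZ_p^n$, and this stronger statement is genuinely used later (e.g.\ in Lemma \ref{above5} and Lemma \ref{decom_in_crho}). The paper's proof stops after setting $\BB_j := \{B_g : B_g^r \in \BB_j^r\}$ and checking the Liouville-partition property in the $r$-th direction. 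Your supplied argument --- that the permutations $\pi_h$ witnessing $A\sim B$ in $\ZZ_p^n$ must, for large $h$, carry the indices of $\BB_j$ into those of $\AA_j$, since a cross-block match would combine via the subadditivity of $\langle\ \cdot\ \rangle$ with the weak equivalence $\BB_j^r\sim\AA_j^r$ to bound $p^h\langle(a-a')/p^h\rangle$ linearly in $h$ for some non-Liouville non-integer difference $a-a'$ of elements of $A^r$, a contradiction; the finitely many small $h$ are absorbed into the constant since $\langle x\rangle\leq 1/2$ --- is exactly the missing piece and is correct. So your proof is sound and, for part (2), is more careful than the paper's, which leaves this verification implicit.
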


\begin{proof}
(1) By definition, $\AA_1^r,\dots,\AA_k^r$ is a Liouville partition of $A^r$. Then, by \cite{Ked2}, Proposition 3.4.5(a), $\BB_1^r,\dots,\BB_k^r$ is a Liouville partition of $B^r$, and thus $\BB_1,\dots,\BB_k$ is a Liouville partition of $B$ in the $r$-th direction.

(2) Let $A=(A_1,\dots,A_m) $ and $B=(B_1,\dots, B_m)$. Since $A$ is weakly equivalent to $B$, $A^r$ is weakly equivalent to $B^r$. By \cite{Ked2}, Proposition 3.4.5(b), $B^r$ admits a Liouville partition $\BB_1^r,\dots,\BB_k^r$. If we put $\BB_i=:\{ B_g\in B: B_g^r\in \BB_i^r \}$, $\BB_1,\dots,\BB_k$ forms a Liouville partition of $B$ in the $r$-th direction.
\end{proof}

\begin{lem}[cf. \cite{Ked2}, Corollary 3.4.9]\label{above1}
    Let $A,B$ be two finite multisubsets in $\ZZ_p^n$ which are weakly equivalent and suppose that $A$ is $p$-adic non-Liouville. Then $B$ is also $p$-adic non-Liouville.
\end{lem}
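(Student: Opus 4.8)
The statement reduces immediately to the one-dimensional case. Writing $A = \{A_1,\dots,A_m\}$ and $B = \{B_1,\dots,B_m\}$ as multisubsets of $\ZZ_p^n$, the hypothesis that $A$ is $p$-adic non-Liouville means that each coordinate $A_j^i$ is a $p$-adic non-Liouville number, for every $1\le i\le n$ and $1\le j\le m$; equivalently, each of the one-dimensional multisubsets $A^i = \{A_1^i,\dots,A_m^i\}$ of $\ZZ_p$ is $p$-adic non-Liouville. Similarly, the conclusion we want is that each $B^i$ is $p$-adic non-Liouville. So the plan is: fix a direction $i$, observe that $A^i$ and $B^i$ are weakly equivalent multisubsets of $\ZZ_p$ (this follows from Definition \ref{equivalence}, since a sequence of permutations $\sigma_h$ witnessing the weak equivalence of $A$ and $B$ witnesses that of $A^i$ and $B^i$, the bound $p^h\langle (A^i_{\sigma_h(j)}-B^i_j)/p^h\rangle \le ch$ being exactly one of the coordinate inequalities), and then invoke the one-dimensional result \cite{Ked2}, Corollary 3.4.9, which says precisely that a finite multisubset of $\ZZ_p$ weakly equivalent to a $p$-adic non-Liouville one is itself $p$-adic non-Liouville.

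In slightly more detail: given $1\le i\le n$, we must show $B_j^i$ is $p$-adic non-Liouville for each $j$. Suppose not; then by definition $B_j^i\notin\ZZ$ and $\liminf_{h\to\infty}\frac{p^h}{h}\langle B_j^i/p^h\rangle<\infty$. Using the triangle inequality for $\langle\ \cdot\ \rangle$ (part (1) of the first Lemma of this subsection) together with the weak-equivalence bound, one gets
$$ p^h\left\langle \frac{A^i_{\sigma_h(j)}}{p^h} \right\rangle \le p^h\left\langle \frac{B^i_j}{p^h} \right\rangle + p^h\left\langle \frac{A^i_{\sigma_h(j)}-B^i_j}{p^h} \right\rangle \le p^h\left\langle \frac{B^i_j}{p^h} \right\rangle + ch, $$
so along a subsequence on which $\frac{p^h}{h}\langle B_j^i/p^h\rangle$ stays bounded, $\frac{p^h}{h}\langle A^i_{\sigma_h(j)}/p^h\rangle$ also stays bounded; since there are only finitely many indices, by pigeonhole one value $A^i_{j_0}$ occurs infinitely often as $\sigma_h(j)$, and then $\liminf_h \frac{p^h}{h}\langle A^i_{j_0}/p^h\rangle<\infty$. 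This would force $A^i_{j_0}$ to be $p$-adic Liouville or an integer, contradicting the non-Liouvilleness of $A$ (and an integer coordinate is excluded by a parallel elementary argument, or one simply notes that $A^i$ non-Liouville together with weak equivalence preserves non-integrality in the relevant sense — this is exactly what Corollary 3.4.9 of \cite{Ked2} packages). Rather than reproving this, I will simply cite \cite{Ked2}, Corollary 3.4.9 for the one-dimensional statement and deduce the $n$-dimensional one coordinatewise.

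The proof is therefore short, and there is no real obstacle: the only thing to be careful about is the bookkeeping of permutations — that a single sequence $(\sigma_h)$ works simultaneously for all coordinates is built into Definition \ref{equivalence}, so restricting attention to one coordinate loses nothing. I expect the write-up to be three or four sentences: reduce to one dimension coordinatewise, note weak equivalence passes to each coordinate, apply \cite{Ked2}, Corollary 3.4.9.
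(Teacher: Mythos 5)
Your proposal matches the paper's proof exactly: both observe that weak equivalence of $A$ and $B$ in $\ZZ_p^n$ restricts coordinatewise to weak equivalence of $A^r$ and $B^r$ in $\ZZ_p$, and then invoke \cite{Ked2}, Corollary 3.4.9 in each direction. The extra sketch you give of the one-dimensional argument is fine but unnecessary, since the paper, like you, simply cites the one-dimensional result.
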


\begin{proof}
    By assumption, $A^r$ is $p$-adic non-Liouville for all $1\leq r\leq n$, and $A^r$ and $B^r$ are weakly equivalent. So by \cite{Ked2}, Corollary 3.4.9, $B^r$ is $p$-adic non-Liouville and so is $B$.
\end{proof}

\begin{cor}[cf. \cite{Ked2}, Corollary 3.4.10]
    \label{nexp}
Let $A$ be a finite multisubset of $\ZZ_p^n$ such that $A-A$ is weakly equivalent to a $p$-adic non-Liouville multiset. Then $A$ has $p$-adic non-Liouville differences.
\end{cor}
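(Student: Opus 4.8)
The plan is to deduce this immediately from Lemma \ref{above1}. Let $C$ be a $p$-adic non-Liouville multisubset of $\ZZ_p^n$ to which $A-A$ is weakly equivalent; note that $A-A$ is finite (since $A$ is) and that $C$ has the same cardinality $m^2$, where $m$ denotes the cardinality of $A$, so that weak equivalence makes sense. Since weak equivalence is an equivalence relation, as recorded just after Definition \ref{equivalence}, it is in particular symmetric, so $C$ and $A-A$ are weakly equivalent with $C$ the $p$-adic non-Liouville one.

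First I would apply Lemma \ref{above1} to the pair consisting of $C$, playing the role of the multiset ``$A$'' in the statement of that lemma, and $A-A$, playing the role of ``$B$''. Since $C$ is $p$-adic non-Liouville and weakly equivalent to the finite multisubset $A-A$, the lemma yields that $A-A$ is $p$-adic non-Liouville as well.

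Finally I would unwind the definitions: by definition, ``$A$ has $p$-adic non-Liouville differences'' means precisely that the difference multiset $A-A$ is $p$-adic non-Liouville in every direction, which is exactly what the previous step established, so the proof is complete. There is no real obstacle here; the only point worth making explicit is the use of symmetry of weak equivalence. If one prefers a direction-by-direction argument, one could alternatively pass to each coordinate $r$, use that $(A-A)^r = A^r - A^r$ is weakly equivalent to a $p$-adic non-Liouville multisubset of $\ZZ_p$, and invoke \cite{Ked2}, Corollary 3.4.10, directly; but invoking the $n$-dimensional Lemma \ref{above1} is shorter.
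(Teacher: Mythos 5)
Your proposal is correct and matches the paper's proof exactly: the paper also deduces the corollary in one line by applying Lemma \ref{above1} to conclude that $A-A$ is $p$-adic non-Liouville, which is by definition the meaning of ``$A$ has $p$-adic non-Liouville differences.'' The only thing you add is the explicit remark about symmetry of weak equivalence, which is a harmless elaboration.
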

\begin{proof}
By lemma \ref{above1}, $A-A$ is $p$-adic non-Liouville.
\end{proof}

\begin{defn}
Let $k\geq 1$. We define the notion of Liouville partition of a multisubset $A$ of $\ZZ_p^n$ by $k$ multisubsets $\AA_1,\dots,\AA_k$ of $\ZZ_p^n$ inductively on $k$ as follows:
\begin{enumerate}[(1)]

\item When $k=1$, $\AA_1$ is a Liouville partition of $A$ if $\AA_1=A$ as multisets.

\item For general $k$, $\AA_1,\dots,\AA_k$ is a Liouville partition of $A$ if there exists a partition 
$$ \{1,\dots,k\}=\bigcup_{i=1}^l I_i $$
as sets for some $l\geq 2$ with each $I_i$ nonempty such that $\bigcup_{j\in I_1}\AA_j,\dots,\bigcup_{j\in I_l}\AA_j$ is a Liouville partition in the $r$-th direction of $A$ for some $1\leq r\leq n$ and that $\AA_j\ (j\in I_i)$ is a Liouville partition of $\bigcup_{j\in I_i} \AA_j$, which is defined by induction hypothesis.
\end{enumerate}
\end{defn}

\begin{lem}[cf. \cite{Ked2}, Corollary 3.4.8]
    Let $A,B$ be two finite multisubsets in $\ZZ_p^n$ which are weakly equivalent, and suppose that $A$ has $p$-adic non Liouville differences. Then $A$ and $B$ are equivalent.
    \label{above5}
\end{lem}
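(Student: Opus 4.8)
\noindent\emph{Proof proposal.}
The plan is to induct on the dimension $n$, taking \cite{Ked2}, Corollary 3.4.8 as the base case $n=1$ (this also covers $m=1$, i.e.\ the singleton statement of Lemma \ref{above6}). Throughout, fix a constant $c>0$ and permutations $\sigma_h$ $(h>0)$ realizing the weak equivalence $A\approx B$, write $A=(A_1,\dots,A_m)$, $B=(B_1,\dots,B_m)$, and for a multisubset $X$ of $\ZZ_p^n$ let $X'$ denote the multisubset of $\ZZ_p^{n-1}$ obtained by deleting the last coordinate (keeping the same indexing), so that $X=(X',X^n)$ and $X'-X'=(X-X)'$, $X^n-X^n=(X-X)^n$. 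First I would apply the inductive hypothesis to $A'$ and $B'$: they are weakly equivalent via the same data $(c,\sigma_h)$, and $A'$ has $p$-adic non-Liouville differences since $A'-A'=(A-A)'$. Hence $A'$ and $B'$ are equivalent, and after replacing $A$ by $A\circ\tau$ for a suitable permutation $\tau$ — which changes neither the multiset $A$ (so not its non-Liouville differences) nor the weak equivalence with $B$ (it only replaces $\sigma_h$ by $\tau^{-1}\sigma_h$) — I may assume $A'_j\equiv B'_j\pmod{\ZZ^{n-1}}$ for all $j$.

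Next, introduce the equivalence relation $j\sim j'$ on $\{1,\dots,m\}$ defined by $A'_j\equiv A'_{j'}\pmod{\ZZ^{n-1}}$ (equivalently $B'_j\equiv B'_{j'}$), with classes $C_1,\dots,C_r$. The technical heart is to show that $\sigma_h$ eventually respects this partition: there is $N$ with $\sigma_h(j)\sim j$ for all $j$ and all $h\ge N$. For $1\le k\le n-1$ the triangle inequality for $\langle\,\cdot\,\rangle$ gives
$$p^h\left\langle\frac{A^k_{\sigma_h(j)}-A^k_j}{p^h}\right\rangle\ \le\ p^h\left\langle\frac{A^k_{\sigma_h(j)}-B^k_j}{p^h}\right\rangle+p^h\left\langle\frac{B^k_j-A^k_j}{p^h}\right\rangle,$$
where the first term on the right is $\le ch$ by weak equivalence, and the second is $o(h)$ because $z:=B^k_j-A^k_j\in\ZZ$ (for an integer $z$ one has $\langle z/p^h\rangle\le|z|/p^h$, so $\tfrac{p^h}{h}\langle z/p^h\rangle\to 0$; this is exactly why integers are counted as $p$-adic non-Liouville). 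Thus $\tfrac{p^h}{h}\langle(A^k_{\sigma_h(j)}-A^k_j)/p^h\rangle\le 2c$ for $h$ large. But $A^k_{\sigma_h(j)}-A^k_j$ always lies in the finite multiset $(A-A)^k$ of $p$-adic non-Liouville numbers, and any non-integer element $\delta$ of it satisfies $\tfrac{p^h}{h}\langle\delta/p^h\rangle\to\infty$; hence for all large $h$ this difference must be an integer, for every $k\le n-1$ and every $j$, i.e.\ $\sigma_h(j)\sim j$. Consequently each $\sigma_h$ $(h\ge N)$ restricts to a permutation of each class $C_l$.

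Finally I would run the one-dimensional result inside each class. Fix $C_l$. On $C_l$ the multiset $A'$ is constant modulo $\ZZ^{n-1}$, hence (by the relabeling) $A'_s\equiv B'_t\pmod{\ZZ^{n-1}}$ for all $s,t\in C_l$, so in the first $n-1$ directions any matching within $C_l$ is automatically valid. The one-dimensional multisets $A^n|_{C_l}$ and $B^n|_{C_l}$ are weakly equivalent (take the sequence of permutations of $C_l$ equal to $\sigma_h|_{C_l}$ for $h\ge N$ and to the identity for $h<N$, absorbing the finitely many small-$h$ values into a larger constant), and $A^n|_{C_l}$ has $p$-adic non-Liouville differences since its difference multiset sits inside $(A-A)^n$. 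By \cite{Ked2}, Corollary 3.4.8 there is a permutation $\rho_l$ of $C_l$ with $A^n_{\rho_l(j)}\equiv B^n_j\pmod{\ZZ}$ for $j\in C_l$. Gluing the $\rho_l$ into a single permutation $\rho$ of $\{1,\dots,m\}$ and combining with the previous sentence for the first $n-1$ coordinates yields $A_{\rho(j)}\equiv B_j\pmod{\ZZ^n}$ for all $j$, so the relabeled $A$ — hence the original $A$ — is equivalent to $B$.

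The main obstacle, and the only place where the non-Liouville hypothesis does real work beyond the one-dimensional input, is the second paragraph: a naive directionwise application of the one-dimensional theorem produces a different matching permutation for each coordinate, so one must show that a single permutation can be used in all $n$ directions at once. This reduces to the asymptotic rigidity of the blocks $C_l$ under the weak-equivalence permutations, which rests on the dichotomy "$\tfrac{p^h}{h}\langle\delta/p^h\rangle$ stays bounded when $\delta\in\ZZ$ and tends to $\infty$ when $\delta$ is a non-Liouville non-integer."
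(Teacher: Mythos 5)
Your proof is correct, and it takes a genuinely different route from the paper's. The paper's argument is short and abstract: it forms the partition of $A$ into $\ZZ^n$-cosets (a Liouville partition, obtained by iterating the $\ZZ$-coset partition coordinate by coordinate), applies Proposition \ref{weexp}(2) iteratively to transport this to a matching Liouville partition of $B$ with weakly equivalent pieces, and then observes that within each piece $A$ is equivalent to a constant multiset $\{0,\dots,0\}$, whence $B$ lies in $\ZZ^n$ by Lemma \ref{above6}; since the $A$-piece is constant, no permutation-compatibility issue ever arises. Your argument instead inducts on the dimension $n$ with Kedlaya's Corollary 3.4.8 as base case, and does the permutation bookkeeping by hand: the key new step is the ``asymptotic rigidity'' observation that, because the difference multiset $(A-A)^k$ is finite and consists of non-Liouville numbers, the dichotomy $\frac{p^h}{h}\langle\delta/p^h\rangle\to 0$ for $\delta\in\ZZ$ versus $\to\infty$ for non-integer non-Liouville $\delta$ forces the weak-equivalence permutations $\sigma_h$ to eventually preserve the $\ZZ^{n-1}$-coset blocks, after which the one-dimensional result can be applied blockwise in the last coordinate. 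What the paper buys with Proposition \ref{weexp}(2) is exactly what your rigidity lemma buys by hand: a single globally consistent matching of cosets rather than $n$ uncoordinated ones. Your approach is longer but more self-contained and makes the role of the non-Liouville hypothesis quantitatively explicit; the paper's is shorter because it reuses the transport lemma that was already needed elsewhere.
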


\begin{proof}
    Take the partition of $A$ into $\ZZ^n$-cosets. Then it is a Liouville partition because it is obtained as the iteration of the partition into $\ZZ$-cosets in the $r$-th component for $1\leq i\leq n$, which is a Liouville partition in the $r$-th direction since $A^r$ has $p$-adic non-Liouville differences. Then, Proposition \ref{weexp}(2) implies that we have a corresponding Liouville partition of $B$. Then, to prove the Lemma, we are reduced to the case where $A$ is a single $\ZZ^n$-coset, and then to the case where $A$ is a multisubset of $\ZZ^n$. Then, $A$ is equivalent to the multiset consisting only of $(0,\dots,0)\in\ZZ_p^n$ and so we may assume that $A$ itself is of this form. In this case, for any $b\in B$, $\{b\}$ is weakly equivalent to $\{0\}$ and so $b\in \ZZ^n$ by lemma \ref{above6}. So $B$ is a multisubset of $\ZZ^n$ and hence equivalent to $A$.
\end{proof}

\section{Categories $\mathscr{C}_\rho$ and $\mathscr{D}_\rho$}

For a polysegment $I\subset \RR_{>0}^n$, let $\Der(K_{I,n}/K)$ be the module of continuous $K$-derivations on $K_{I,n}$, where the topology on $K_{I,n}$ is induced by $\rho$-Gauss norms for all $\rho\in I$. It is a finite free module generated by derivations with respect to each $t_i$, which are denoted by $\partial_{t_i}$, for $1\leq i\leq n$.

\begin{defn}
Let $P$ be a finite projective $K_{I,n}$-module. A connection over $P$ is a $K$-linear homomorphism $\nabla:\Der(K_{I,n}/K)\to \End_K(P)$ satisfying the Leibniz rule:
$$ \nabla(\partial)(fa)=\partial(f)a+f\nabla(\partial)(a),\ \text{for all}\ \ \partial\in \Der(K_{I,n}/K),\ f\in K_{I,n}, \ a\in P.$$

Moreover, a connection is called integrable if for $\partial$, $\partial'\in \Der(K_{I,n}/K)$, $\nabla([\partial,\partial'])=[\nabla(\partial),\nabla(\partial')]$, where $[\cdot,\cdot]$ is the Lie bracket.
\end{defn}

A (finite projective) differential module over $K_{I,n}$ is a (finite projective) $K_{I,n}$-module $P$ with an integrable connection $\nabla_P$, which we denote simply by $\nabla$ if no ambiguity arises. A horizontal homomorphism between differential modules $P$ and $Q$ over $K_{I,n}$ is a module homomorphism $f:P\to Q$ satisfying $\nabla_Q(\partial)(f(x))=f(\nabla_P(\partial)(x)) $ for all $\partial \in \Der(K_{I,n}/K)$ and for all $x\in P$. In the rest of the paper, we often say a differential module over $K_{I,n}$ a differential module over $I$, by abuse of terminology.

For any $\rho\in \RR_{>0}^n$, there is a direct system of rings $((K_{I,n})_I,(\varphi_{IJ}:K_{I,n}\to K_{J,n})_{I\leq J})$ with the index set being all closed polysegments of $\RR_{>0}^n$ containing $\rho$ in its interior and partially ordered by inverse inclusion, and homomorphisms are given by the canonical inclusion. We denote the direct limit of this direct system by $R_{\rho,n}$ or simply by $R_\rho$. $R_\rho$ is canonically a differential ring with respect to $\partial_{t_i}$, for $1\leq i\leq n$.

Since any finite projective module over $R_\rho$ is extended from a finite projective module over $\Kabn$ for some $\alpha<\rho<\beta$, it is extended from a finite free $\Kabn$-module for some $\alpha<\rho<\beta$ by Theorem \ref{QS}. In particular, all finite projective modules over $R_\rho$ are finite free.

Let $\Der(R_\rho/K)$ be the module of $K$-derivations $\partial$ on $R_\rho $ such that $\partial|_{K_{I,n}}:K_{I,n}\to R_\rho$ is induced by a continuous $K$-derivation on $K_{I,n}$ for any closed polysegment $I$ containing $\rho$ in its interior. It is again a finite free module generated by $\partial_{t_i}$, for $1\leq i\leq n$. Using $\Der(R_\rho/K)$, we can define the notion of (finite free) differential modules over $R_\rho$ and horizontal homomorphisms between them in the same way as above. It is easy to see that any finite free differential module over $R_\rho$ is extended from some finite free differential module over $K_{I,n}$ for some closed interval $I$ containing $\rho$ in its interior.

\begin{defn}[cf. \cite{KX1}, Definition 1.5.2, \cite{Ked1}, Definition 9.4.7, \cite{Ked1}, Definition 13.3.1] Let $I\subset \RR_{>0}^n$ be a polysegment and let $P$ be a finite projective differential module over $K_{I,n}$. Take $\rho\in I$, let $F_\rho$ be the completion of $K(t_1,\dots,t_n)$ with respect to the $\rho$-Gauss norm, and put $V_\rho=P\otimes_{K_{I,n}}F_\rho$. The intrinsic radius of $P$ at $\rho$ is defined as
$$IR(V_\rho)=\min_{1\leq i\leq n}IR_{\partial_{t_i}}(V_\rho)=\min_{1\leq i\leq n}\frac{|\partial_{t_i}|_{\mathrm{sp},F_\rho}}{|\nabla(\partial_{t_i})|_{\mathrm{sp},V_\rho}}\in (0,1].$$
We say that $P$ satisfies the Robba condition if $IR(V_\rho)=1$ for all $\rho\in I$. Also, we say that a finite free differential module over $R_\rho$ satisfies the Robba condition if it is extended from a finite projective differential module over $K_{J,n}$ satisfying the Robba condition for some closed polysegment $J$ containing $\rho$ in its interior.
\label{Robba}
\end{defn}
Let $\mathscr{D}_\rho$ be the category in which objects are finite free differential modules over $R_\rho$ satisfying the Robba condition, and morphisms are horizontal homomorphisms. For a polysegment $J$ containing $\rho$ in its interior, we say that $P$ is an object in $\mathscr{D}_\rho$ defined(by $P'$) over $J$ if $P'$ is a finite free differential module over $K_{J,n}$ and $P$ is extended from $P'$.

For $\zeta=(\zeta_1,\dots,\zeta_n)\in \Gamma^n_s$, an $n$-tuple of variables $t=(t_1,\dots,t_n)$ and an $n$-tuple of $m\times m$ diagonal matrices $$A=(A^1,\dots,A^n)=(\diag(a_{11},\dots,a_{1m}),\dots,\diag(a_{n1},\dots,a_{nm})),$$ we use the following conventions:

\begin{enumerate}[(1)]
 \item $\zeta t:=(\zeta_1 t_1,\dots,\zeta_n t_n)$.

\item $\zeta^A:=\zeta_1^{A^1}\dots\zeta_n^{A^n}$, with $\zeta_i^{A^i}:=\diag(\zeta_i^{ a_{i1}},\dots,\zeta_i^{ a_{im}})$.
\end{enumerate}

There is a natural group action of $\Gamma^n$ on $K_{J,n}\otimes_K K(\Gamma)$: For $\zeta\in\Gamma^n$ and $f\in K_{J,n}\otimes_K K(\Gamma)$, the action is defined as
$$ \zeta^*(f(t))=f(\zeta t). $$
It naturally induces the action of $\Gamma^n$ on $R_\rho\otimes_K K(\Gamma)$.

\begin{defn}
\label{crho}
The category $\cc$ is defined as follows:

The objects are finite free $R_\rho$ modules $P$ endowed with a semilinear group action of $\Gamma^n$ on $P\otimes_KK(\Gamma)$ satisfying the following conditions:
\begin{enumerate}[(1)]

\item $P$ is extended from a finite free $K_{J,n}$-module $P'$ for some closed polysegment $J$  contains $\rho$ in its interior, and the action of $\Gamma^n$ on $P\otimes_KK(\Gamma)$ is induced from some semilinear group action of $\Gamma^n$ on $P'\otimes_KK(\Gamma)$.

\item The action of $\Gamma^n$ is equivariant with respect to the action of $\mathrm{Gal}(K(\Gamma)/K) $ on both $\Gamma^n$ and $P\otimes_K K(\Gamma)$. That is, for $\sigma\in \mathrm{Gal}(K(\Gamma)/K)$, $\zeta \in \Gamma^n$ and $x\in P\otimes_KK(\Gamma)$, we have $$ \sigma(\zeta^*(x))=\sigma(\zeta)^*(\sigma(x)). $$

\item For some basis $e_1,\dots,e_m$ of $P'$ in (1), there exists $l>0$ such that, for each positive integer $k$ and $\zeta \in \Gamma_k^n$, the representation matrix $E(\zeta)$ of $\zeta^*$ with respect to this basis satisfies the inequality $|E(\zeta)|_J\leq p^{lk}$.

\end{enumerate}

The morphisms $f:P\to Q$ of objects in $\mathscr{C}_\rho$ are module homomorphisms satisfying $\zeta^*((f\otimes \mathrm{id})(x))=(f\otimes \mathrm{id})(\zeta^*(x))$ for all $\zeta\in \Gamma^n$ and $x\in P\otimes_K K(\Gamma)$.
\end{defn}
For a polysegment $J$ containing $\rho$ in its interior, we say that $P$ is an object in $\mathscr{C}_\rho$ defined(by $P'$) over $J$ if $P$ admits a $K_{J,n}$-module $P'$ as in Definition \ref{crho}.

For a finite projective differential module $P$ satisfying the Robba condition over $K_{J,n}$, we define the action of $\zeta\in \Gamma^n$ on $P\otimes_KK(\Gamma)$ by $$\zeta^*(x)=\sum_{\alpha\in \ZZ^n_{\geq 0}}(\zeta-1)^\alpha\binom{tD}{\alpha} (x),$$
where $1=(1,\dots,1)$ and $\binom{tD}{\alpha}=\binom{t_1D_1}{\alpha_1}\dots\binom{t_nD_n}{\alpha_n}  $ with $D_i=\nabla(\partial_{t_i})$. We can write the above series also as
$$ \zeta^*(x)=\sum_{\alpha\in \ZZ^n_{\geq 0}}(\zeta-1)^\alpha t^\alpha\frac{D^\alpha }{\alpha !}(x).$$
Note that, to prove the above series converges, it suffices to prove that this series converges in $(F_\rho\otimes_{K_{J,n}} P)\otimes_KK(\Gamma)$ for all $\rho \in J$.
Fix $\rho\in J$. For $\zeta\in \Gamma^n$, we can choose $k\in \ZZ_{>0}$ such that $\zeta\in \Gamma_k^n$. By \cite{Ked1}, Example 2.1.6, we have the estimate $|\zeta_i-1|\leq p^{-\frac{p^{-k+1}}{p-1}}<1$. Choose $\lambda<1$ such that $|\zeta_i-1||\partial|_{\mathrm{sp},F_\rho}<\lambda |\partial|_{\mathrm{sp},F_\rho}$, and then choose $\epsilon$ such that 
$$ |\zeta_i-1|(|\partial_{t_i}|_{\mathrm{sp},F_\rho}+\epsilon) \leq  \lambda |\partial_{t_i}|_{\mathrm{sp},F_\rho}.$$
Then by \cite{Ked1}, Lemma 6.1.8, we can choose $c=c(\epsilon)$ such that 
$$ |D_i^{\alpha_i}|_{F_\rho\otimes P}\leq c(|D_i|_{\mathrm{sp},F_\rho\otimes P}+\epsilon)^{\alpha_i}= c(|\partial_{t_i}|_{\mathrm{sp},F_\rho}+\epsilon)^{\alpha_i}. $$
Then we have 
\begin{align*}
\left|(\zeta-1)^\alpha t^\alpha \frac{D^\alpha}{\alpha !}\right|_{(F_\rho\otimes P)\otimes_K K(\Gamma)} & \leq
| \zeta-1 |^\alpha c^n \prod_{i=1}^n (|\partial_{t_i}|_{\mathrm{sp},F_\rho}+\epsilon)^{\alpha_i} \rho^\alpha \left|\frac{1}{\alpha !}\right| \\
& < c^n \prod_{i=1}^n |\partial_{t_i}|_{\mathrm{sp},F_\rho}^{\alpha_i}\rho^\alpha\left|\frac{1}{\alpha !}\right |\lambda^{|\alpha|}\\
& = c^n \prod_{i=1}^n (\omega \left| \frac{1}{\alpha_i !} \right|^{\frac{1}{\alpha_i}})^{\alpha_i} \lambda^{|\alpha|},
\end{align*}
which tends to $0$ as $|\alpha|\to \infty$, as required. Here we used the fact that $|\partial_{t_i}|_{\mathrm{sp},F_\rho}=\omega\rho_i^{-1}$(\cite{Ked1}, Definition 9.4.1).
Moreover, the action of $\Gamma^n$ is a group action. Indeed, noting that 
\begin{align*}
D^\alpha t^\beta& =\sum_\gamma\binom{\alpha}{\gamma}d^{\alpha-\gamma}(t^\beta)D^\gamma\\
& =\sum_{0\leq \gamma\leq \alpha, \alpha-\beta\leq\gamma\leq \alpha} \frac{\alpha!}{\gamma!(\alpha-\gamma)!}\frac{\beta!}{(\beta-\alpha+\gamma)!}t^{\beta-\alpha+\gamma}D^\gamma,
\end{align*}
we have for $\zeta,\mu\in \Gamma^n$,
\begin{align*}
\zeta^*\mu^*& =\sum_{\alpha\in \ZZ^n_{\geq 0}} (\zeta-1)^\alpha \frac{t^\alpha D^\alpha}{\alpha!}\sum_{\beta\in\ZZ^n_{\geq 0}} (\mu-1)^\beta \frac{t^\beta D^\beta}{\beta!}\\
& = \sum_{\alpha,\beta}(\zeta-1)^\alpha(\mu-1)^\beta \frac{1}{\alpha!\beta!}t^\alpha D^\alpha t^\beta D^\beta\\
& = \sum_{\overset{\alpha,\beta,\gamma}{0\leq \gamma\leq \alpha, \alpha-\beta\leq\gamma\leq \alpha}}(\zeta-1)^\alpha(\mu-1)^\beta\frac{1}{(\beta-\alpha+\gamma)!\gamma!(\alpha-\gamma)!}t^{\beta+\gamma}D^{\beta+\gamma}\\
& \overset{\delta=\beta+\gamma}{=}\sum_{0\leq\gamma\leq\alpha\leq\delta}(\zeta-1)^\alpha(\mu-1)^{\delta-\gamma}\frac{\delta!}{(\delta-\alpha)!\gamma!(\alpha-\gamma)!}\frac{t^\delta D^\delta}{\delta!}\\
& =\sum_{\delta}\left(\sum_{0\leq\gamma\leq\alpha\leq\delta}\left(((\zeta-1)(\mu-1))^{\alpha-\gamma}(\zeta-1)^\gamma(\mu-1)^{\delta-\alpha}\frac{\delta!}{(\delta-\alpha)!\gamma!(\alpha-\gamma)!}\right)\frac{t^\delta D^\delta}{\delta!} \right)\\
& =\sum_{\delta}((\zeta-1)(\mu-1)+(\zeta-1)+(\mu-1))^\delta \frac{t^\delta D^\delta}{\delta!} =(\zeta\mu)^*.
\end{align*}

By the argument above, we can equip an object in $\mathscr{D}_\rho$ with an action of $\Gamma^n$. 
\begin{rmk}
Actually, from the argument above we can also conclude the following. 
\begin{enumerate}[(1)]
    \item For any $\lambda\in K(\Gamma)^n$ such that $|\lambda-1|<1$, the action $\lambda^*=\sum_{\alpha\in\ZZ_{\geq 0}^n}(\lambda-1)^\alpha\binom{tD}{\alpha} (x)$ is always well-defined.
    \item For $\lambda_1,\lambda_2$ such that $|\lambda_i-1|<1$ for $i=1,2$, we always have $\lambda_1^*\lambda_2^*=(\lambda_1\lambda_2)^*$.
\end{enumerate}
\end{rmk}

\begin{lem}
With notations above, the action of $\zeta\in \Gamma^n$ on $K_{J,n}\otimes_K K(\Gamma)$ induced from the canonical differential module structure on $K_{J,n}$ is given by the natural action
$$\zeta^*(f(t))=f(\zeta t).$$
\end{lem}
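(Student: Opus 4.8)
The statement asserts that the abstract action of $\zeta \in \Gamma^n$ defined by the differential-module series $\zeta^*(x) = \sum_{\alpha \in \ZZ_{\geq 0}^n}(\zeta-1)^\alpha t^\alpha \frac{D^\alpha}{\alpha!}(x)$, when applied to the canonical differential module $K_{J,n}$ (so $D_i = \partial_{t_i}$), reduces to the geometric substitution $f(t) \mapsto f(\zeta t)$. My plan is to reduce immediately to monomials and then recognize the resulting series as a product of one-variable Taylor expansions. First I would note that both $\zeta^*$ and the substitution map $f(t)\mapsto f(\zeta t)$ are continuous $K(\Gamma)$-linear (in fact the substitution is an algebra automorphism, and $\zeta^*$ was shown to be a well-defined continuous operator in the preceding computation), so it suffices to check equality on the topological generators $t^\beta = t_1^{\beta_1}\cdots t_n^{\beta_n}$ for $\beta \in \ZZ^n$.

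The core computation is then the following: for $D_i = \partial_{t_i}$ one has $D_i^{\alpha_i}(t_i^{\beta_i}) = \beta_i(\beta_i-1)\cdots(\beta_i-\alpha_i+1)\, t_i^{\beta_i-\alpha_i}$, hence $\binom{t_i D_i}{\alpha_i}(t_i^{\beta_i}) = \binom{\beta_i}{\alpha_i} t_i^{\beta_i}$, where $\binom{\beta_i}{\alpha_i}$ is the generalized binomial coefficient (valid for $\beta_i \in \ZZ$, possibly negative). Multiplying over $i$ gives $\binom{tD}{\alpha}(t^\beta) = \prod_{i=1}^n \binom{\beta_i}{\alpha_i}\, t^\beta$, so
\[
\zeta^*(t^\beta) = \sum_{\alpha \in \ZZ_{\geq 0}^n}(\zeta-1)^\alpha \prod_{i=1}^n\binom{\beta_i}{\alpha_i}\, t^\beta
= \prod_{i=1}^n\left(\sum_{\alpha_i \geq 0}\binom{\beta_i}{\alpha_i}(\zeta_i-1)^{\alpha_i}\right) t^\beta.
\]
Since $|\zeta_i - 1| < 1$, each inner sum is the convergent binomial series for $(1 + (\zeta_i - 1))^{\beta_i} = \zeta_i^{\beta_i}$ (the binomial theorem for integer exponents, interpreting $\beta_i < 0$ via the power series $\sum_{\alpha_i}\binom{\beta_i}{\alpha_i}x^{\alpha_i} = (1+x)^{\beta_i}$, which converges for $|x|<1$). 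Therefore $\zeta^*(t^\beta) = \prod_i \zeta_i^{\beta_i}\, t^\beta = (\zeta t)^\beta$, which is exactly the substitution applied to $t^\beta$.

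To make the linearity reduction rigorous I would argue as follows: both maps are $K(\Gamma)$-linear; both are continuous for the $\rho$-Gauss-norm topology for every $\rho \in J$ (for the substitution map this is clear since $|\zeta_i| = 1$; for $\zeta^*$ it follows from the estimate already established, which shows the series converges and the operator is bounded); and the $K(\Gamma)$-span of $\{t^\beta : \beta \in \ZZ^n\}$ is dense in $K_{J,n}\otimes_K K(\Gamma)$. Hence agreement on monomials forces agreement everywhere. I should also check that the term-by-term manipulation above is legitimate — i.e. that one may reorganize the $n$-fold sum $\sum_\alpha$ into the product of $n$ one-variable sums — but this is just the standard Fubini-type rearrangement for an absolutely convergent series in a nonarchimedean Banach space, where convergence of the double series (established in the preceding paragraph of the paper) justifies any regrouping.

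The main (and only real) obstacle is bookkeeping with negative exponents: one must confirm that $\binom{t_i D_i}{\alpha_i}$ acts on $t_i^{\beta_i}$ by the generalized binomial coefficient even when $\beta_i < 0$, and that the identity $\sum_{\alpha_i \geq 0}\binom{\beta_i}{\alpha_i} x^{\alpha_i} = (1+x)^{\beta_i}$ holds $p$-adically for $|x| < 1$ and all $\beta_i \in \ZZ$ — both of which are elementary but need to be stated carefully rather than quietly assumed. Everything else is a direct, short calculation.
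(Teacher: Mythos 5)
Your proof is correct and follows essentially the same route as the paper's: both expand $f$ into its Laurent series, use $\binom{tD}{\alpha}(t^i)=\prod_k\binom{i_k}{\alpha_k}t^i$, factor the resulting sum as a product of one-variable binomial series, and sum each to $\zeta_k^{i_k}$. The only difference is that you spell out the continuity/density reduction to monomials and the Fubini rearrangement, which the paper leaves implicit.
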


\begin{proof}

\begin{align*}
 \zeta^*(f(t)) 
=&  \sum_{\alpha\in\ZZ^n_{\geq 0}}(\zeta-1)^\alpha\binom{t_1D_1}{\alpha_1}\dots\binom{t_nD_n}{\alpha_n}\sum_{i\in \ZZ^n} f_it^i \\
=&  \sum_{i\in\ZZ^n}\left(\sum_{\alpha\in\ZZ^n_{\geq 0}} (\zeta-1)^\alpha \binom{i_1}{\alpha_1}\dots\binom{i_n}{\alpha_n}\right)f_it^i \\
= & \sum_{i\in \ZZ^n}\left( \prod_{k=1}^n\left( \sum_{\alpha_k=0}^\infty (\zeta_k-1)^{\alpha_k}\binom{i_k}{\alpha_k}  \right)  \right)f_it^i\\
= & \sum_{i\in \ZZ^n}\left( \prod_{k=1}^n\left((\zeta_k-1)+1\right)^{i_k} \right)f_it^i\\
= & \sum_{i\in \ZZ^n} f_i\zeta^it^i=f(\zeta t).
\end{align*}
\end{proof}

\begin{lem}\label{above2}
An object in $\mathscr{D}_\rho$ endowed with the action of $\zeta\in\Gamma^n$ explained above defines an object in $\mathscr{C}_\rho$.
\end{lem}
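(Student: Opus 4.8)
The plan is simply to verify, for an object $P$ of $\mathscr{D}_\rho$ equipped with the $\Gamma^n$-action $\zeta^*(x)=\sum_{\alpha\in\ZZ^n_{\geq 0}}(\zeta-1)^\alpha t^\alpha\tfrac{D^\alpha}{\alpha!}(x)$, the three conditions in Definition \ref{crho} together with the semilinearity built into the definition of $\mathscr{C}_\rho$. First I would fix a model: by Definition \ref{Robba} and Theorem \ref{QS}, after shrinking to a closed polysegment $J$ with $\rho$ in its interior, $P$ is extended from a finite \emph{free} differential module $P'$ over $K_{J,n}$ satisfying the Robba condition; fix a basis $e_1,\dots,e_m$ of $P'$.

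For condition (1) the nontrivial point is that the series defining $\zeta^*$ must converge on $P'\otimes_K K(\Gamma)$ for the supremum (affinoid) norm $|\cdot|_J$, not merely in each Gauss-norm completion. The term estimate already obtained in the text bounds the $\alpha$-th summand of $\zeta^*$ on $(F_\rho\otimes P)\otimes_K K(\Gamma)$ by $c^n\prod_{i=1}^n\big(\omega|1/\alpha_i!|^{1/\alpha_i}\big)^{\alpha_i}\lambda^{|\alpha|}$; since $\big(\omega|1/\alpha_i!|^{1/\alpha_i}\big)^{\alpha_i}=\omega^{s_p(\alpha_i)}\leq 1$ (where $s_p$ is the $p$-adic digit sum) and $\lambda<1$, the summand is $\leq c^n\lambda^{|\alpha|}$. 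Using $|x|_J=\sup_{\rho\in J}|x|_\rho$, if the constants $\epsilon$, $c=c(\epsilon)$ and the basis norms $\max_j|e_j|_\rho$ are taken uniform over the compact $J$, then $\sup_{\rho\in J}$ of the term bound still decays like $\lambda^{|\alpha|}$, so the series converges for $|\cdot|_J$. Hence $\zeta^*$ preserves $P'\otimes_K K(\Gamma)$; as $(\zeta^{-1})^*$ does likewise, we get a $\Gamma^n$-action on $P'\otimes_K K(\Gamma)$ inducing the given one on $P\otimes_K K(\Gamma)$. Semilinearity $\zeta^*(fx)=\zeta^*(f)\zeta^*(x)$ for $f\in K_{J,n}\otimes_K K(\Gamma)$ is checked exactly as the lemma computing $\zeta^*(f(t))=f(\zeta t)$: expand $D^\alpha(fx)$ by the Leibniz rule and reorganize as in the verification $\zeta^*\mu^*=(\zeta\mu)^*$.

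For condition (2), since $t_i$ and $D_i=\nabla(\partial_{t_i})$ are defined over $K$, they commute with the $\Gal(K(\Gamma)/K)$-action on $P'\otimes_K K(\Gamma)$; as $\sigma$ is isometric it passes through the convergent series termwise, giving
$$\sigma(\zeta^*(x))=\sum_{\alpha}\sigma\big((\zeta-1)^\alpha\big)t^\alpha\frac{D^\alpha}{\alpha!}(\sigma(x))=\sum_{\alpha}(\sigma(\zeta)-1)^\alpha t^\alpha\frac{D^\alpha}{\alpha!}(\sigma(x))=\sigma(\zeta)^*(\sigma(x)).$$
For condition (3), for $\zeta\in\Gamma_k^n$ one has $|\zeta_i-1|\leq p^{-p^{-k+1}/(p-1)}<1$ by \cite{Ked1}, Example 2.1.6, so the representation matrix $E(\zeta)$ of $\zeta^*$ on $e_1,\dots,e_m$ satisfies $|E(\zeta)|_J\leq\sup_\alpha C\lambda^{|\alpha|}\omega^{\sum_i s_p(\alpha_i)}\leq C$ for a constant $C$ independent of $\zeta$, coming from the uniform $c$ of the previous paragraph and $\max_j|e_j|_J$. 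Choosing $l>0$ with $p^l\geq\max(C,1)$ then yields $|E(\zeta)|_J\leq C\leq p^l\leq p^{lk}$ for every $k\geq 1$, as required.

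The main obstacle is the uniformity over $J$ invoked in conditions (1) and (3): the convergence argument in the text is carried out at a single Gauss point $\rho$ and produces constants $\epsilon,c$ depending on $\rho$, whereas one needs control over the whole compact polysegment $J$. I expect to handle this either by shrinking $J$ slightly and using continuity over $J$ of the spectral radii $|D_i|_{\mathrm{sp},V_\rho}=\omega\rho_i^{-1}$ (a consequence of the Robba condition), or by covering $J$ by finitely many small closed subpolysegments on each of which a single choice of $(\epsilon,c)$ works and taking the maximum; everything else is routine bookkeeping with the estimate already displayed before the lemma.
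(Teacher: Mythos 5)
Your verifications of conditions (1) and (2) are essentially the same as the paper's (the semilinearity computation via the Leibniz rule, and the observation that $\sigma$ commutes with $t^\alpha D^\alpha/\alpha!$), and your remark that convergence over all of $J$ reduces to convergence at the corner points of $J$ by the maximum-modulus principle is precisely what makes the paper's remark ``it suffices to prove convergence in $(F_\rho\otimes P)\otimes_K K(\Gamma)$ for all $\rho\in J$'' legitimate, so that part is fine.

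However, your verification of condition (3) contains a genuine error. You claim that $|E(\zeta)|_J\leq C$ for a constant $C$ independent of $\zeta$, deduced by taking the supremum over $\alpha$ of the term bound $c^n\lambda^{|\alpha|}\omega^{\sum_i s_p(\alpha_i)}$. But the constants $\lambda$ and $c=c(\epsilon)$ in that estimate depend on $\zeta$ (equivalently, on $k$ where $\zeta\in\Gamma_k^n$): $\lambda$ is chosen so that $|\zeta_i-1|<\lambda$, and as $k\to\infty$ the value $|\zeta_i-1|\to 1^-$, forcing $\lambda\to 1^-$, $\epsilon\to 0$, and $c(\epsilon)\to\infty$. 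So your $C$ is not independent of $\zeta$; in fact, the whole point of allowing the growth $p^{lk}$ in Definition \ref{crho}(3) is that $|E(\zeta)|_J$ does grow with $k$. A uniform bound independent of $\zeta$ would be too strong and is false in general.

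The paper handles (3) quite differently: it first notes that $|E(\zeta)|_J$ is attained at one of the $2^n$ corners $\sigma\in\prod_i\{\alpha_i,\beta_i\}$, then factors $\zeta^*$ as a composite of one-variable substitutions $\zeta_i'=(1,\dots,1,\zeta_i,1,\dots,1)$, so that
$$E(\zeta)=E(\zeta_1')\,(\zeta_1')^*\bigl(E(\zeta_2')\bigr)\cdots(\zeta_1'\cdots\zeta_{n-1}')^*\bigl(E(\zeta_n')\bigr),$$
and since the $\Gamma^n$-action is an isometry for $|\cdot|_\sigma$, it suffices to bound each $|E(\zeta_i')|_\sigma$. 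Each factor is the representation matrix of a one-dimensional $\Gamma$-action on a rank-$m$ differential module over the one-variable annulus ring $(L_{i,\sigma})_{[\alpha_i,\beta_i],1}$, and the needed $p^{lk}$-type bound is exactly the content of the (proof of) \cite{Ked1}, Theorem~13.5.5. You would need to import that one-dimensional estimate; a direct term-by-term argument as you sketch does not give the required growth rate because the constants blow up with $k$.
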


\begin{proof}
We check the conditions (1), (2), (3) in Definition \ref{crho}.
(1) We prove that the action of $\Gamma^n$ is a semilinear group action. For $P\in\mathscr{D}_\rho$ over $K_{J,n}$, $x\in P$, $f\in K_{J,n}$ and $\zeta\in \Gamma^n$, we have
\begin{align*}
\zeta^*(fx) = & \sum_{\alpha\in\ZZ^n_{\geq 0}}(\zeta-1)^\alpha t^\alpha \frac{D^\alpha}{\alpha!}(fx)\\
= & \sum_\alpha\sum_{0\leq \beta\leq \alpha} (\zeta-1)^\alpha t^\alpha \frac{1}{\alpha!}\binom{\alpha}{\beta}D^\beta(f) D^{\alpha-\beta}(x)\\
= & \sum_\alpha\sum_{0\leq \beta\leq \alpha} (\zeta-1)^\alpha t^\alpha \frac{D^\beta}{\beta!}(f)\frac{D^{\alpha-\beta}}{(\alpha-\beta)!}(x)\\
\overset{\gamma=\alpha-\beta}{=} & \left( \sum_{\beta}(\zeta-1)^\beta t^\beta \frac{D^\beta}{\beta!} \right)\left( \sum_{\gamma}(\zeta-1)^\gamma t^\gamma \frac{D^\gamma}{\gamma!} \right)=\zeta^*(f)\zeta^*(x).
\end{align*}
Also we have already shown that $(\zeta\eta)^*=\zeta^*\eta^*$. Thus we conclude that this is a semilinear group action.

(2) It suffices to check for elements of the form $x=y\otimes a$ for $y\in P$ and $a\in K(\Gamma)$. In this case, we have
\begin{align*}
\sigma(\zeta^*(y\otimes a))
= & \sigma\left(\sum_{\gamma\in\ZZ^n_{\geq 0}}(\zeta-1)^\gamma\binom{tD}{\gamma}(y\otimes a) \right)\\
= & \left(\sum_{\gamma\in\ZZ^n_{\geq 0}}(\sigma(\zeta)-1)^\gamma\binom{tD}{\gamma}(y\otimes \sigma(a)) \right)
=\sigma(\zeta)^*(\sigma(x)).
\end{align*}

(3) Let $P$ be an object in $\mathscr{D}_\rho$ defined by $P'$ over some closed polysegment $J=\prod_{i=1}^n [\alpha_i,\beta_i]$. Fix a basis $e_1,\dots, e_m$ of $P'$ and define the matrices $E(\zeta)$ for $\zeta\in\Gamma^n$ as in Definition \ref{crho} (3), using this basis. Because $|E(\zeta)|_J$ is equal to the maximaum of $|E(\zeta)|_\sigma$ for 
$$\sigma=(\sigma_1,\dots,\sigma_n)\in\prod_{i=1}^n\{ \alpha_i,\beta_i \},$$
it suffices to prove the required estimate for each $|E(\zeta)|_\sigma$. For $1\leq i\leq n$, let $L_{i,\sigma}$ denote the completion of $K(t_1,\dots,t_{i-1},t_{i+1},\dots,t_n)$ with respect to the $\hat{\sigma}=(\sigma_1,\dots,\sigma_{i-1},\sigma_{i+1},\dots,\sigma_n)$-Gauss norm. Then $P_i:=(L_{i,\sigma})_{[\alpha_i,\beta_i],1}\otimes_{K_{J,n}} P'$ is a finite free differential module of rank $m$ over $(L_{i,\sigma})_{[\alpha_i,\beta_i],1}$ for the derivation $\partial_{t_i}$. Since 
\begin{align*}
    (\zeta\eta)^*(\ee)
    = & \zeta^*\eta^*(\ee)=  \zeta^*((\ee) E(\eta))\\
    = & (\ee)E(\zeta)\zeta^*(E(\eta)),
\end{align*}
for any $\zeta=(\zeta_1,\dots,\zeta_n)\in \Gamma^n$, we can write
$$E(\zeta)=E(\zeta'_1)(\zeta_1')^*(E(\zeta_2'))\cdots(\zeta_1'\cdots\zeta_{i-1}')^*(E(\zeta_n')),$$
where $\zeta'_i=(1,\dots,1,\zeta_i,1,\dots,1)$ with $i$-th entry being $\zeta_i$. Because the action of $\Gamma^n$ on $K_{J,n}$ is an isometry with respect to $|\cdot|_\sigma$, it suffices to prove the required estimate for each $|E(\zeta_i')|_\sigma$, and it is equal to $|E(\zeta_i)|_{\sigma_i}$, where $E(\zeta_i)$ is the representation matrix of the action of $\zeta_i\in\Gamma$ on $P_i$ induced from its differential module structure over $(L_{i,\sigma})_{[\alpha_i,\beta_i],1}$ and $|\cdot|_{\sigma_i}$ is the $\sigma_i$-Gauss norm on $(L_{i,\sigma})_{[\alpha_i,\beta_i],1}$. Thus we can reduce the proof to one dimensional case by putting $K=L_{i,\sigma}$, which is already shown in the proof of \cite{Ked1}, Theorem 13.5.5.
\end{proof}
By Lemma \ref{above2}, we have a functor $\mathscr{D}_\rho\to\mathscr{C}_\rho$.

\begin{rmk}\label{above 4}
\begin{enumerate}[(1)]
    \item For $(P_1,\nabla_1),(P_2,\nabla_2)\in \mathscr{D}_\rho$, we can define the connection $\nabla$ on the tensor product $P_1\otimes P_2$ by $$\nabla(\partial)(x_1\otimes x_2)=\nabla_1(\partial)(x_1)\otimes x_2+x_1\otimes\nabla_2(\partial)(x_2)\quad (\partial\in\Der(R_\rho/K)), $$
    and we can check that $(P_1\otimes P_2,\nabla)$ is an object in $\mathscr{D}_\rho$(The Robba condition follows from Lemma 6.2.8(b) in \cite{Ked1}). On the other hand. for $P_1,P_2\in\mathscr{C}_\rho$, we can define the action of $\Gamma^n$ on the tensor product
    $$ (P_1\otimes P_2)\otimes_K K(\Gamma)=(P_1\otimes_K K(\Gamma))\otimes(P_2\otimes_K K(\Gamma)), $$
    by $\zeta^*(x_1\otimes x_2)=\zeta^*(x_1)\otimes\zeta^*(x_2)$ and we can check that $P_1\otimes P_2\in\mathscr{C}_\rho$.
    Then the functor $\mathscr{D}_\rho\to\mathscr{C}_\rho$ defined above is compatible with tensor product: Indeed, for $P_1,P_2\in\mathscr{D}_\rho$, $x_1\in P_1, x_2\in P_2$ and $\zeta\in\Gamma^n$,
    \begin{align*}
        \zeta^*(x_1\otimes x_2)
        & =\sum_{\alpha}\frac{(\zeta-1)^\alpha}{\alpha !}t^\alpha D^\alpha(x_1\otimes x_2)\\
        & =\sum_{\alpha}\frac{(\zeta-1)^\alpha}{\alpha !}t^\alpha\left( \sum_{\beta+\gamma=\alpha} \frac{\alpha !}{\beta !\gamma !}D^\beta(x_1)\otimes D^\gamma(x_2)\right)\\
        & = \left(\sum_{\beta}\frac{(\zeta-1)^\beta}{\beta !}t^\beta D^\beta(x_1)\right) \otimes \left(\sum_{\gamma}\frac{(\zeta-1)^\gamma}{\gamma !}t^\gamma D^\gamma(x_2)\right)\\
        & =\zeta^*(x_1)\otimes \zeta^*(x_2)
    \end{align*}

    \item For $(P,\nabla)\in\mathscr{D}_\rho$, we can define the connection $\nabla^\vee$ on the dual $P^\vee$ by
    $$ \nabla^\vee(\partial)(\varphi)(x)=\partial(\varphi(x))-\varphi(\nabla(\partial)(x))\quad (\partial\in\Der(R_\rho/K)), $$
    and we can check that $(P^\vee,\nabla^\vee)$ is an object in $\mathscr{D}_\rho$(The Robba condition follows from Lemma 6.2.8(b) in \cite{Ked1}). On the other hand, for $P$ in $\mathscr{C}_\rho$, we can define the action of $\Gamma^n$ on the  dual
    $$ P^\vee\otimes_K K(\Gamma)= \mathrm{Hom}_{K(\Gamma)}(P\otimes_K K(\Gamma), K(\Gamma)) $$
    by $(\zeta^*\varphi)(x)=\zeta^*(\varphi((\zeta^{-1})^*x))$ and we can check that $P^\vee\in\mathscr{C}_\rho$.

    Then the functor $\mathscr{D}_\rho\to\mathscr{C}_\rho$ defined above is compatible with dual: For $P\in\mathscr{D}_\rho,\varphi\in P^\vee,x\in P$ and $\zeta\in\Gamma^n$,
    \begin{align*}
        (\zeta^*\varphi)(x)
        & = \sum_\alpha(\zeta-1)^\alpha\frac{t^\alpha}{\alpha !}(D^\alpha \varphi)(x)\\
        & = \sum_\alpha(\zeta-1)^\alpha\frac{t^\alpha}{\alpha !}\sum_{\beta+\gamma=\alpha}\frac{\alpha!}{\beta !\gamma !}(-1)^\gamma d^\beta(\varphi(D^\gamma(x)))\\
        & = \sum_{\beta,\gamma}(-1)^\gamma(\zeta-1)^{\beta+\gamma}\frac{t^{\beta+\gamma}}{\beta!\gamma!}d^\beta(\varphi(D^\gamma(x))).
    \end{align*}
    On the other hand,
    \begin{align*}
        \zeta^*(\varphi((\zeta^{-1})^*))
        & = \sum_\alpha(\zeta-1)^\alpha \frac{t^\alpha}{\alpha !}d^\alpha\left(\varphi\left(\sum_\beta(\zeta^{-1}-1)^\beta\frac{t^\beta}{\beta!}D^\beta(x)\right)\right)\\
        & = \sum_{\alpha,\beta}\frac{(\zeta-1)^\alpha(\zeta^{-1}-1)^\beta}{\alpha!\beta!}t^\alpha d^\alpha(t^\beta\varphi(D^\beta(x))),
    \end{align*}
    and using the equality
    \begin{align*}
        d^\alpha t^\beta 
        & = \sum_\gamma\binom{\alpha}{\gamma}d^{\alpha-\gamma}(t^\beta)d^\gamma\\
        & = \sum_{0\leq \gamma\leq\alpha \atop \alpha-\beta\leq\gamma\leq\alpha}\frac{\alpha!}{\gamma!(\alpha-\gamma)!}\frac{\beta!}{(\beta-\alpha+\gamma)!}t^{\beta-\alpha+\gamma}d^r,
    \end{align*}
    it is further rewritten as
    \begin{align*}
        & =\sum_{\alpha,\beta} \sum_{0\leq \gamma\leq\alpha \atop \alpha-\beta\leq\gamma\leq\alpha}\frac{(\zeta-1)^\alpha(\zeta^{-1}-1)^\beta}{\alpha!\beta!}t^\alpha \frac{\alpha!}{\gamma!(\alpha-\gamma)!}\frac{\beta!}{(\beta-\alpha+\gamma)!}t^{\beta-\alpha+\gamma}d^r(\varphi(D^\beta(x)))\\
        & = \sum_{\alpha,\beta}\sum_{0\leq \gamma\leq\alpha \atop \alpha-\beta\leq\gamma\leq\alpha} \frac{(\zeta-1)^\alpha(\zeta^{-1}-1)^\beta}{\gamma!(\alpha-\gamma)!(\beta-\alpha+\gamma)!}t^{\beta+\gamma}d^r(\varphi(D^\beta(x)))\\
        & =\sum_{\beta,\gamma} \sum_{\gamma\leq\alpha\leq\beta+\gamma} \frac{(\zeta-1)^\alpha(\zeta^{-1}-1)^\beta}{\gamma!(\alpha-\gamma)!(\beta-\alpha+\gamma)!}t^{\beta+\gamma}d^r(\varphi(D^\beta(x)))\\
        & = \sum_{\beta,\gamma} \sum_{0\leq\alpha\leq \beta}\frac{(\zeta-1)^\gamma(\zeta^{-1}-1)^\beta}{\gamma!}\frac{(\zeta-1)^\alpha}{\alpha!(\beta-\alpha)!}t^{\beta+\gamma}d^r(\varphi(D^\beta(x)))\\
        & = \sum_{\beta,\gamma}\frac{(\zeta-1)^\gamma(\zeta^{-1}-1)^\beta}{\gamma!\beta!}\zeta^\beta t^{\beta+\gamma}d^r(\varphi(D^\beta(x)))\\
        & = \sum_{\beta,\gamma}(-1)^\beta\frac{(\zeta-1)^{\beta+\gamma}}{\beta!\gamma!}t^{\beta+\gamma}d^r(\varphi(D^\beta(x))).
    \end{align*}
    So $(\zeta^*\varphi)(x)=\zeta^*(\varphi((\zeta^{-1})^*x))$, as required.
\end{enumerate}
\end{rmk}

From now on, we will use the symbol $D_i$ to denote $\nabla(t_i\partial_{t_i})$, not $\nabla(\partial_{t_i})$, unless otherwise stated.
To prove that $\mathscr{C}_\rho$ is an abelian category, we prove several lemmas.

\begin{lem}
\label{st}
For a monomial $t^s=t_1^{s_1}\dots t_n^{s_n}$ with $s\neq 0$, there exists $\zeta\in\Gamma^n$ such that $\zeta^*(t^s)=(\zeta t)^s\neq t^s$.
\end{lem}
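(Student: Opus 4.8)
The plan is to exhibit an explicit $\zeta\in\Gamma^n$ witnessing $(\zeta t)^s\neq t^s$. Since $(\zeta t)^s=\zeta_1^{s_1}\cdots\zeta_n^{s_n}\,t^s$, the assertion $\zeta^*(t^s)=(\zeta t)^s\neq t^s$ is equivalent to the statement that the root of unity $\zeta^s:=\zeta_1^{s_1}\cdots\zeta_n^{s_n}\in\Gamma$ is not equal to $1$. So everything reduces to finding $\zeta\in\Gamma^n$ with $\zeta_1^{s_1}\cdots\zeta_n^{s_n}\neq 1$.

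First I would fix an index $r$ with $s_r\neq 0$, which exists since $s\neq 0$. Write $s_r=p^a m$ with $p\nmid m$, $a\geq 0$. I would then choose $\zeta_i=1$ for all $i\neq r$, and choose $\zeta_r$ to be a primitive $p^{a+1}$-th root of unity in $K^{\mathrm{alg}}$; such an element lies in $\Gamma_{a+1}\subset\Gamma$, so $\zeta=(\zeta_1,\dots,\zeta_n)\in\Gamma^n$. With this choice $\zeta^s=\zeta_r^{s_r}=\zeta_r^{p^a m}$. Since $\zeta_r$ has exact order $p^{a+1}$ and $p\nmid m$, the element $\zeta_r^{p^a m}$ has exact order $p$, in particular it is $\neq 1$. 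Hence $(\zeta t)^s=\zeta_r^{s_r}t^s\neq t^s$, as desired.

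There is essentially no obstacle here: the only point requiring minimal care is the order computation, namely that raising a primitive $p^{a+1}$-th root of unity to the power $p^a m$ with $p\nmid m$ yields an element of exact order $p$ (one uses that $\gcd(p^a m,p^{a+1})=p^a$, so the order of $\zeta_r^{p^a m}$ is $p^{a+1}/p^a=p$). One should also note that $\Gamma=\bigcup_{s>0}\Gamma_s$ contains primitive $p^{a+1}$-th roots of unity because $K$ has mixed characteristic $(0,p)$, so $K^{\mathrm{alg}}$ contains all $p$-power roots of unity, and $\Gamma_{a+1}$ is by definition the full group of $p^{a+1}$-th roots of unity in $K^{\mathrm{alg}}$; picking a generator of this cyclic group gives the required primitive root. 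This completes the argument, and the identity $\zeta^*(t^s)=(\zeta t)^s$ is exactly the formula established in the lemma preceding this statement for the natural action of $\Gamma^n$ on Laurent series.
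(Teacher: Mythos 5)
Your proof is correct and follows essentially the same route as the paper: fix a coordinate $r$ with $s_r\neq 0$, set all other components of $\zeta$ to $1$, and pick $\zeta_r$ a primitive $p$-power root of unity of large enough order so that $\zeta_r^{s_r}\neq 1$. The paper simply chooses any $m$ with $|s_r|<p^m$ and takes $\zeta_r$ a primitive $p^m$-th root of unity, whereas you pin down the exact order via the $p$-adic valuation of $s_r$; the two choices are interchangeable.
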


\begin{proof}
Take an index $i$ with $s_i \not=0$ and take a positive integer $m$ with 
$|s_i| < p^m$. Then, if we define $\zeta = (\zeta_1, \dots, \zeta_n) \in \Gamma^n$ to be the element such that $\zeta_i$ is a primitive $p^m$-th root of unity and that $\zeta_j = 1$ for all $j \not= i$, we see that 
$\zeta^*(t^s)=\zeta_i^{s_i} t^s\neq t^s$.
\end{proof}

\begin{lem}\label{above 3}
Every finitely generated $\Gamma^n$-stable ideal $I$ of $\Kabn\otimes_K K(\Gamma)$ is either zero or the unit ideal.
\end{lem}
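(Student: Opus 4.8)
The plan is to show that a nonzero finitely generated $\Gamma^n$-stable ideal $I$ contains a unit. First I would pick a nonzero element $f = \sum_{i} f_i t^i \in I$ whose "support spread'' is minimal in a suitable sense. Concretely, using Corollary \ref{invt}, $f$ fails to be a unit exactly when its Gauss-norm-dominant term is non-unique for some $\rho \in [\alpha,\beta]$; so I want to average $f$ over a cyclic subgroup of $\Gamma^n$ to kill off the extra dominant terms. The key mechanism is Lemma \ref{st}: if $t^s$ occurs with $s \neq 0$ (i.e.\ comparing two distinct monomials appearing in $f$), there is $\zeta \in \Gamma^n$ with $\zeta^*(t^s) \neq t^s$, and in fact $\zeta^*$ multiplies the $t^i$-coefficient of $f$ by a root of unity $\zeta^i$ that genuinely depends on $i$ modulo a suitable $p$-power.

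The main step is an averaging/projection argument. Suppose $f \in I$ is not a unit. By Corollary \ref{invt} there is some $\rho \in [\alpha,\beta]$ and (at least) two distinct indices $i \neq i'$ with $|f_i|\rho^i = |f_{i'}|\rho^{i'} = |f|_\rho$. Set $s = i - i' \neq 0$. Choose, as in the proof of Lemma \ref{st}, a coordinate $r$ with $s_r \neq 0$ and an integer $m$ with $|s_r| < p^m$, and let $\zeta \in \Gamma_m^n$ be the element that is a primitive $p^m$-th root of unity in the $r$-th slot and $1$ elsewhere. For each root of unity $\mu \in \Gamma_m$ generated by $\zeta$, consider the $\Gamma^n$-stable combination $\sum_{j=0}^{p^m-1}\zeta^{-j}_r{}^{\!a}\,(\zeta^j)^*(f) \in I$, a "Fourier component'' that isolates the part of $f$ supported on indices with $r$-th coordinate $\equiv a \pmod{p^m}$. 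Since $i_r \not\equiv i'_r \pmod{p^m}$, the indices $i$ and $i'$ land in different components; hence at least one such component is nonzero but has strictly fewer indices realizing the $\rho$-Gauss norm than $f$ did (one can arrange the bookkeeping so that the relevant Newton-polygon data, e.g.\ $\wid_\rho$, strictly decreases, or at worst the number of dominant monomials in some fixed box strictly decreases). Iterating over all coordinates $r$ and all $\rho$ — there are only finitely many constraints to resolve because each element of $I$ has finite width and $I$ is finitely generated, so one can localize the argument to a box — produces a nonzero element of $I$ whose $\rho$-Gauss-dominant term is unique for every $\rho \in [\alpha,\beta]$. By Corollary \ref{invt} that element is a unit, so $I$ is the unit ideal.

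I expect the main obstacle to be making the induction/termination rigorous: a single averaging step isolates one residue class modulo $p^m$ in one coordinate, but I need a monovariant that provably strictly decreases and cannot decrease forever. The cleanest route is probably to fix attention on one $\rho$ first, let $N_\rho(g)$ denote the cardinality of $\{i : |g_i|\rho^i = |g|_\rho\}$ (finite since $g \in K_{[\alpha,\beta],n}$), show each nontrivial Fourier projection step keeps some component with $N_\rho$ strictly smaller, drive $N_\rho$ down to $1$, and then argue that having passed to a unit "at $\rho$'' is an open condition (by the continuity corollary after Corollary \ref{invt}) so only finitely many $\rho$ need separate treatment; a compactness argument on $[\alpha,\beta]$ then finishes. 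A secondary technical point is checking that the Fourier-projected elements genuinely lie in $\Kabn \otimes_K K(\Gamma)$ and remain in $I$ — this is immediate from $\Gamma^n$-stability and from $\Gamma_m \subset K(\Gamma)$, so it should not cause real trouble.
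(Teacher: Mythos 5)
Your core mechanism — averaging a nonunit $f\in I$ over a cyclic subgroup of $\Gamma^n$ supplied by Lemma \ref{st}, so that coefficients at two offending indices get separated — is exactly the paper's. The weak point is precisely the one you flag: your monovariant $N_\rho$ is tied to a single Gauss norm, so after a projection step you have to worry about $N_{\rho'}$ for other $\rho'$, and you never establish that the bookkeeping terminates. In fact the projection $g$ only satisfies $|g_i|\leq|f_i|$; since $|f|_{\rho'}$ can drop while the dominant index set at $\rho'$ shifts, $N_{\rho'}(g)$ need not decrease, so iterating over $\rho$'s in a box is not obviously a finite process, and your compactness sketch does not address the possibility of undoing earlier progress.

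The paper sidesteps this by choosing a better monovariant. Since $t_1,\dots,t_n$ are units, one may first multiply $f$ by a monomial so that the constant term $f_0$ is nonzero, then define
$$ S(f)=\{\,i\in\ZZ^n\setminus\{0\}\ :\ |f_it^i|_{[\alpha,\beta]}\geq|f_0|\,\}, $$
using the \emph{sup-norm over the whole polysegment}, not a single Gauss norm. This set is finite, and one picks $f\in I$ with $f_0\neq 0$ and $|S(f)|$ minimal. For $s\in S(f)$, the averaged element $g=p^{-m}\sum_k(\zeta^k)^*(f)$ (the trivial-character projection in your Fourier language) has $g_0=f_0\neq 0$, $g_s=0$, and $|g_i|\leq|f_i|$, so $S(g)\subsetneq S(f)$ — a genuine strict decrease of a global invariant that controls all $\rho\in[\alpha,\beta]$ at once. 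Minimality then forces $S(f)=\emptyset$, i.e.\ $f$ is a unit by Corollary \ref{invt}. So: same averaging idea, but you want to pin down a distinguished monomial (the constant term) and measure against the sup-norm, rather than trying to control the dominant-term counts at each $\rho$ separately. If you rewrite your argument with that monovariant, the termination worry disappears.
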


\begin{proof}
For $f = \sum_{i \in \ZZ^n} f_i t^i\in \Kabn$ with $f_0 \not= 0$, define the finite set 
$S(f)$ by 
$$ S(f) := \{i \in \ZZ^n-\{0\} \,:\, |f_it^i|_{[\alpha,\beta]} \geq |f_0|\}. $$
Suppose $I \not= 0$ and take an element $f \in I$ with nonzero constant term 
such that the cardinality of $S(f)$ is minimal, which is possible because 
$t_1,\dots,t_n$ are invertible in $\Kabn \otimes_K K(\Gamma)$. It suffices to prove that 
$S(f)$ is empty, because it implies that $f$ is invertible and so $I$ is the unit ideal.

Suppose that $S(f)$ is not empty and take $s \in S(f)$. Then there exists 
an element $\zeta$ in $\Gamma^n$ with $\zeta^*(t^s) = \zeta^st^s \not= t^s$ by Lemma \ref{st}. 
Define $g = \sum_{i \in \ZZ^n}g_it^i$ by 
$$ g := p^{-m}\sum_{k=1}^{p^m} (\zeta^k)^*(f) = 
\sum_{i \in \ZZ^n} \left( p^{-m} \sum_{k=1}^{p^m} \zeta^{ik} f_i \right) 
t^i, $$
where $p^m$ is the order of $\zeta \in \Gamma^n$. 
Then $g_i$ is equal to $f_i$ if $\zeta^i=1$ and $0$ otherwise. 
In particular, we have $g_0 = f_0, g_s = 0$ and $|g_i| \leq |f_i|$ in general. 
So the constant term of $g$ is nonzero and 
$S(g)$ is strictly contained in $S(f)$. On the other hand, $g$ belongs to $I$ because $I$ is a $\Gamma^n$-stable ideal. This contradicts the definition of $f$ and so $S(f)$ is empty, as required.
\end{proof}

\begin{lem}
\label{proj}
Let $Q$ be a finite module over $\Kabn$ with a semilinear group action of $\Gamma^n$ on $Q\otimes_K K(\Gamma)$. Then $Q$ is projective.
\end{lem}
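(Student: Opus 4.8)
The plan is to show that $Q$ becomes free after a faithfully flat base change that is detected by the $\Gamma^n$-action, and then descend. First I would localize the situation: since $\Kabn$ is Noetherian, it suffices to prove that $Q_\mathfrak{m}$ is free over $(\Kabn)_\mathfrak{m}$ for every maximal ideal $\mathfrak{m}$, or — better for exploiting the group action — to reduce to a statement at the ``center'' where Quillen–Suslin is available. Concretely, I would consider the base change to $K_{[\rho,\rho],n}$ for a suitable $\rho\in[\alpha,\beta]$ (for instance $\rho$ in $\sqrt{|K^\times|}^n$, or after the reduction of Corollary \ref{QS00}), where $Q_{0,\rho}$ is automatically free by Corollary \ref{QS00}. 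The point of the $\Gamma^n$-action is to propagate this freeness back: the action gives enough ``rigidity'' that a basis of $Q$ at one Gauss norm, after averaging over $\Gamma^n$ as in the proof of Lemma \ref{above 3}, can be adjusted to a set of generators whose relations form a $\Gamma^n$-stable submodule.

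The key steps, in order. (1) Use that $Q$ finite over the Noetherian ring $\Kabn$ admits a presentation $(\Kabn)^{\oplus a}\xrightarrow{M}(\Kabn)^{\oplus b}\to Q\to 0$, and let $N=\ker$ of the surjection, a finite module. (2) Base change everything to $\Kabn\otimes_K K(\Gamma)$, where the $\Gamma^n$-action lives; the kernel and cokernel of the presentation map inherit $\Gamma^n$-semilinear actions because the action on $Q\otimes_K K(\Gamma)$ is semilinear and compatible with the module structure. (3) The crucial input is Lemma \ref{above 3}: any finitely generated $\Gamma^n$-stable ideal of $\Kabn\otimes_K K(\Gamma)$ is $0$ or the unit ideal. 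I would upgrade this to the statement that the Fitting ideals $\mathrm{Fitt}_i(Q\otimes_K K(\Gamma))$ are $\Gamma^n$-stable (they are intrinsic to the module, hence preserved by any semilinear automorphism), hence each is $0$ or $(1)$. This forces the Fitting ideals of $Q\otimes_K K(\Gamma)$ to be those of a free module of some rank $m$: there is an $m$ with $\mathrm{Fitt}_i=0$ for $i<m$ and $\mathrm{Fitt}_i=(1)$ for $i\ge m$. (4) A finite module over a Noetherian ring whose Fitting ideals are those of a free module, and which is locally free of constant rank (the Fitting condition gives local freeness of rank $m$ at every prime where $Q$ is flat — but in fact $\mathrm{Fitt}_m=(1)$ already forces $Q\otimes_K K(\Gamma)$ to be generated by $m$ elements everywhere and $\mathrm{Fitt}_{m-1}=0$ forces the rank to be exactly $m$ at every prime), is finite projective of rank $m$. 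Then descend along the faithfully flat extension $\Kabn\to \Kabn\otimes_K K(\Gamma)$ (faithful flatness since $K\to K(\Gamma)$ is, being a union of finite separable extensions) to conclude $Q$ itself is finite projective over $\Kabn$.

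The main obstacle I expect is step (3)–(4): passing from ``finitely generated $\Gamma^n$-stable \emph{ideals} are trivial'' to a statement about a \emph{module}. One clean route is exactly via Fitting ideals as above, since they are functorial and intrinsic so automatically $\Gamma^n$-stable; but one must be careful that Lemma \ref{above 3} is about ideals of $\Kabn\otimes_K K(\Gamma)$, and Fitting ideals of $Q\otimes_K K(\Gamma)$ are indeed ideals of that ring, so this goes through. An alternative, closer to the averaging argument in Lemma \ref{above 3}, is to pick a minimal generating set of $Q\otimes_K K(\Gamma)$ at the central Gauss norm, lift to $Q$, average over $\Gamma^n$ to make the generating relations span a $\Gamma^n$-stable submodule of some $(\Kabn\otimes_K K(\Gamma))^{\oplus b}$, and argue that its Fitting-type invariants are trivial; this is more hands-on but morally the same. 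Either way, once $Q\otimes_K K(\Gamma)$ is seen to be projective, faithfully flat descent for projectivity (equivalently, for the property of a finitely presented module being flat) finishes the proof.
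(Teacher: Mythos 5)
Your argument via Fitting ideals is correct, but it takes a genuinely different route from the paper's. The paper's proof exploits that $\Kabn$ is a domain: it extends $Q$ to a free module over $\mathrm{Frac}(\Kabn)$, spreads out to a free module over a principal localization $\Kabn[1/g]$, observes that $Q\otimes_K K(\Gamma)$ is therefore locally free on the union of the opens $\{\zeta^*(g)\neq 0\}$, and then notes the complement is cut out by the nonzero $\Gamma^n$-stable ideal $(\zeta^*(g):\zeta\in\Gamma^n)$, which Lemma \ref{above 3} forces to be the unit ideal; then descend along $\Kabn\to\Kabn\otimes_K K(\Gamma)$. Your approach instead applies Lemma \ref{above 3} directly to the Fitting ideals $\mathrm{Fitt}_i(Q\otimes_K K(\Gamma))$, which are finitely generated (since $Q$ is finitely presented over the Noetherian ring $\Kabn$) and $\Gamma^n$-stable by functoriality, hence each equal to $0$ or $(1)$; since they form an increasing chain stabilizing at $(1)$, the pattern is exactly that of a projective module of constant rank, giving projectivity of $Q\otimes_K K(\Gamma)$ and then of $Q$ by the same faithfully flat descent. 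The Fitting-ideal route is arguably cleaner and does not need $\Kabn$ to be a domain, whereas the paper's generic-freeness argument is more elementary. Two small caveats on your writeup: the opening detour through localization at $K_{[\rho,\rho],n}$ and Corollary \ref{QS00} is a red herring that plays no role in the argument you actually use (nor in the paper's); and the parenthetical ``at every prime where $Q$ is flat'' misstates the logic, since flatness is the conclusion, not a hypothesis -- the Fitting-ideal criterion $\mathrm{Fitt}_{m-1}=0$, $\mathrm{Fitt}_m=(1)$ directly yields projectivity of rank $m$ for a finitely presented module, no prior flatness needed.
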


\begin{proof}
Let $F=\mathrm{Frac} \Kabn$. Then there exsits an isomorphism $Q\otimes_\Kabn F\to F^r$ for some $r$. 
Then we can choose $g\neq 0$ such that the above isomorphism comes from an isomorphism $Q\otimes_\Kabn \Kabn[\frac{1}{g}]\to (\Kabn[\frac{1}{g}])^r  $. Then $Q$ is free on $\{x\in \Spec(\Kabn):g(x)\neq 0\}\subset \Spec(\Kabn)$, and $Q\otimes_K K(\Gamma)$ is hence locally free on $\bigcup_{\zeta\in \Gamma^n}\{\zeta^*(g)\neq 0\}\subset \Spec(\Kabn\otimes_K K(\Gamma))$. The complement of this set is defined by a nonzero ideal $I=\{\zeta(g):\zeta\in \Gamma^n\}$, which is a nonzero $\Gamma^n$-stable ideal of $\Kabn\otimes_K K(\Gamma)$, and so it is the unit ideal by Lemma \ref{above 3}. So $Q\otimes_K K(\Gamma)$ and hence $Q$ is projective.
\end{proof}

\begin{prop}
$\mathscr{C}_\rho$ is an Abelian category.
\end{prop}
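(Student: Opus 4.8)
The plan is to present $\mathscr{C}_\rho$ as a full additive subcategory of a manifestly Abelian category and then to verify that it is closed under kernels and cokernels. First I would introduce the category $\mathscr{A}$ whose objects are $R_\rho$-modules $M$ equipped with a semilinear group action of $\Gamma^n$ on $M\otimes_K K(\Gamma)$ and whose morphisms are the $R_\rho$-linear maps that commute with these actions after applying $-\otimes_K K(\Gamma)$. Since $R_\rho$-modules form an Abelian category and $-\otimes_K K(\Gamma)$ is exact, one checks readily that $\mathscr{A}$ is Abelian, with kernels, cokernels and finite biproducts computed on the underlying $R_\rho$-modules and the $\Gamma^n$-structure carried along automatically. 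By Definition \ref{crho}, $\mathscr{C}_\rho$ is a full subcategory of $\mathscr{A}$; it contains the zero object and is closed under finite direct sums — for $P,Q\in\mathscr{C}_\rho$ one gives $P\oplus Q$ the diagonal action and a concatenated basis, and conditions (1)--(3) of Definition \ref{crho} are then immediate — so it is a full additive subcategory. Using the standard fact that a full additive subcategory of an Abelian category which is closed under kernels and cokernels (formed in the ambient category) is itself Abelian, the whole problem reduces to showing that for every morphism $f\colon P\to Q$ of $\mathscr{C}_\rho$, the kernel and cokernel of $f$ computed in $\mathscr{A}$ lie in $\mathscr{C}_\rho$.

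To prove this, fix $f$ and choose a closed polysegment $J$ containing $\rho$ in its interior over which $P$, $Q$ and $f$ are all defined, say $f$ is extended from $f'\colon P'\to Q'$ with $P'$, $Q'$ finite free over $K_{J,n}$. Since $K_{J,n}$ is Noetherian (\cite{BGR}, 5.2.6.1), the modules $\ker f'$, $\mathrm{Im}\,f'$ and $\coker f'$ are finite over $K_{J,n}$, and since $-\otimes_K K(\Gamma)$ is exact they inherit semilinear $\Gamma^n$-actions from those on $P'\otimes_K K(\Gamma)$ and $Q'\otimes_K K(\Gamma)$; hence by Lemma \ref{proj} all three are projective over $K_{J,n}$. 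Projectivity of $\mathrm{Im}\,f'$ and $\coker f'$ splits the exact sequences
$$0\to \ker f'\to P'\to \mathrm{Im}\,f'\to 0,\qquad 0\to \mathrm{Im}\,f'\to Q'\to \coker f'\to 0,$$
so $\ker f'$ is a direct summand of $P'$ and $\coker f'$ is a direct summand of $Q'$. By finitely many applications of Theorem \ref{QS}, shrink $J$ to a closed polysegment $J'=[\alpha',\beta']$ with $\alpha'<\rho<\beta'$ over which $\ker f'$ and $\coker f'$ (as well as $P'$, $Q'$) are free; then $\ker f:=\ker f'\otimes_{K_{J',n}}R_\rho$ and $\coker f:=\coker f'\otimes_{K_{J',n}}R_\rho$ are finite free $R_\rho$-modules, defined over $J'$, and by flatness of $R_\rho$ over $K_{J',n}$ they are identified with the kernel and cokernel of $f$.

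It then remains to check conditions (1)--(3) of Definition \ref{crho} for $N\in\{\ker f,\coker f\}$. Condition (1) holds by construction, the $\Gamma^n$-action on $N\otimes_K K(\Gamma)$ being induced over $J'$ from that on $P'\otimes_K K(\Gamma)$ or $Q'\otimes_K K(\Gamma)$; condition (2) is inherited, since this action is a restriction (for $\ker f$) or a quotient (for $\coker f$) of one satisfying (2). For condition (3), realize the relevant free $K_{J',n}$-module $N'$ as a direct summand of the ambient free module (namely $P'$ for $\ker f'$, $Q'$ for $\coker f'$): let $A$ be the matrix expressing a free basis of $N'$ in terms of a basis of the ambient module witnessing (3), and let $B$ be the matrix of the associated projection, so that $BA=I$ and $|A|_{J'},|B|_{J'}<\infty$. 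Unwinding the semilinearity relation $\zeta^*\big((e_1,\dots,e_m)M\big)=(e_1,\dots,e_m)E(\zeta)\,\zeta^*(M)$ from the proof of Lemma \ref{above2}, the representation matrix of $\zeta^*$ on $N'$ in this basis comes out to $E_{N'}(\zeta)=B\,E(\zeta)\,\zeta^*(A)$, with $E(\zeta)$ the matrix for the ambient module. As $|\zeta_i|=1$, the action of $\zeta$ preserves every $\rho$-Gauss norm, hence $|\zeta^*(A)|_{J'}=|A|_{J'}$, and for $\zeta\in\Gamma_k^n$
$$|E_{N'}(\zeta)|_{J'}\leq |B|_{J'}\,|A|_{J'}\,|E(\zeta)|_{J'}\leq C\,p^{lk},$$
where $C=|A|_{J'}|B|_{J'}$ and $l$ is a bound for the ambient module as in (3); replacing $l$ by $l+\lceil\log_p C\rceil$ yields (3) for $N'$, so $\ker f,\coker f\in\mathscr{C}_\rho$.

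The step I expect to be the main obstacle is precisely this last verification that $\ker f$ and $\coker f$ stay inside $\mathscr{C}_\rho$: finiteness and the induced $\Gamma^n$-action are formal, projectivity is exactly Lemma \ref{proj}, and freeness over a slightly smaller polyannulus is exactly Theorem \ref{QS}, so the only genuinely non-formal point is the growth condition (3), which is why one must arrange $\ker f'$ and $\coker f'$ to be \emph{direct summands} rather than merely sub- and quotient objects, in order to transfer the operator norm estimate through the bounded matrices $A$ and $B$. Once closure under kernels, cokernels and finite direct sums is established, the remaining Abelian-category axioms — every monomorphism is the kernel of its cokernel, every epimorphism the cokernel of its kernel — follow formally, since they hold in $\mathscr{A}$ and a morphism of $\mathscr{C}_\rho$ is monic (resp. epic) in $\mathscr{C}_\rho$ if and only if it is so in $\mathscr{A}$, the non-obvious implication using the closure under kernels (resp. cokernels) just proved.
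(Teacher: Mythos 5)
Your proof is correct and follows the same overall strategy as the paper's --- embed $\mathscr{C}_\rho$ in an ambient Abelian category of $R_\rho$-modules with semilinear $\Gamma^n$-action, use Lemma \ref{proj} to conclude that kernels and cokernels of descended morphisms $f'\colon P'\to Q'$ over $K_{J,n}$ are projective, and then deduce freeness over $R_\rho$ --- but you supply a meaningful detail that the paper's proof leaves implicit. The paper stops at ``both kernel and cokernel of $f$ are free'' and never verifies conditions (2) and (3) of Definition \ref{crho} for those objects. Condition (2) is indeed routine, but condition (3), the growth bound $|E(\zeta)|_J\leq p^{lk}$, is not automatically inherited by arbitrary submodules or quotients. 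Your observation that $\ker f'$ and $\coker f'$ are \emph{direct summands} (because $\mathrm{Im}\,f'$ and $\coker f'$ are themselves projective by Lemma \ref{proj}, splitting both short exact sequences), and that one can therefore transfer the norm bound through the change-of-basis matrix $A$ and a left inverse $B$ via $E_{N'}(\zeta)=B\,E(\zeta)\,\zeta^*(A)$ together with the fact that $\zeta^*$ is a Gauss-norm isometry, is exactly the missing ingredient. This makes your argument a genuine completion rather than merely a rephrasing, even though the overall route (through Lemma \ref{proj}, Theorem \ref{QS}, and the standard normality axioms) is the same as the author's.
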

\begin{proof}
Let $f: P\to Q$ be a morphism in $\mathscr{C}_\rho$. Then it comes from some homomorphism $f':P'\to Q'$, where $P'$ (resp. $Q'$) is a finite projective module over $K_{J,n}$ for some closed polysegment $J$ containing $\rho$ in its interior with a semilinear group action of $\Gamma^n$ on $P'\otimes_KK(\Gamma)$ (resp. $Q'\otimes_KK(\Gamma)$), and $f'\otimes \mathrm{id}: P'\otimes_K K(\Gamma)\to Q'\otimes_K K(\Gamma)$ is compatible with $\Gamma^n$-action. Both kernel and cokernel of $f'\otimes \mathrm{id}:P'\otimes_K K(\Gamma)\to Q'\otimes_K K(\Gamma)$(as module homomorphism) admit an action of $\Gamma^n$ and are projective by Lemma \ref{proj}. Note that the kernel and cokernel of $f$ are induced from that of $f'$, thus both kernel and cokernel of $f$ are free. A monomorphism (resp. epimorphism) $f:P\to Q$ is the kernel (resp. cokernel) of $Q\to Q/P$ (resp. $\ker(f)\to P$). 
\label{cab}
\end{proof}

\begin{rmk}
    It is easy to see that the functor $\mathscr{D}_\rho\to \mathscr{C}_\rho$ is exact. So, by Remark \ref{above 4}, it is an exact tensor functor of abelian categories.
\end{rmk}

\section{Generalized $p$-adic Fuchs theorem}
\subsection{$p$-adic exponents for objects in $\mathscr{C}_\rho$}

Now we define $p$-adic exponents for objects in $\mathscr{C}_\rho$ and prove basic properties of them.
\begin{defn}[cf. \cite{Ked1}, Definition 13.5.1  and \cite{Ked2}, Definition 3.4.11]
\label{exp}
    Let $P$ be an object in $\mathscr{C}_\rho$ free of rank $m$. Take $\alpha<\rho<\beta$ such that $P$ is defined by $P'$ over $[\alpha,\beta]$, and take a basis $\ee$ of $P'$. An exponent of $P$(admitted by $P'$) is an $n$-tuple of $m\times m$ diagonal matrices of $A=(A^1,\dots,A^n)$ with entries in $\ZP$ for which there exists a sequence $\{S_{k,A}\}_{k=1}^\infty$ in $\mathrm{M}_n(\Kabn)$ satisfying the following conditions.
\begin{enumerate}[(1)]
\item  If we put 
$(v_{k,A,1},\dots,v_{k,A,m})=(e_1,\dots,e_m)S_{k,A},$
then for all $\zeta\in \Gamma_k^n$
$$\zeta^*(v_{k,A,1},\dots,v_{k,A,m})=(v_{k,A,1},\dots,v_{k,A,m})\zeta^A.$$

\item There exists $l>0$ such that $|S_{k,A}|_{[\alpha,\beta]}\leq p^{lk}$ for all $k$.

\item We have $|\det(S_{k,A})|_{[\alpha,\beta]}\geq 1$ for all $k$.
\end{enumerate}

For an object $P$ in $\mathscr{D}_\rho$, we say $A$ is an exponent of $P$, if  when considered as an object in $\mathscr{C}_\rho$, $A$ is an exponent of $P$.
\end{defn}

An exponent $A$ defined above can be regarded as a multisubset $A$ of $\ZZ_p^n$ by identifying $A_j$ with $(A^1_j,\dots,A^n_j)$(where $A_j^i$ is the $(j,j)$-th entry of $A^i$) and putting $A=(A_1,\dots,A_n)$. So we will use these two notations interchangeably (for the convenience of using conclusions in Section 1) as long as no ambiguity arises.  Moreover, once we fix $P'$, the definition of exponent does not depend on the choice of basis, and to define the exponent, it is enough to find matrices $S_{k,A}$ satisfying (1), (2) and (3) for sufficitently large $k$, because the matrix $S_{k,A}$ satisfies the conditions required for the matrices $S_{k',A}$ for any $k'<k$.

By \cite{Gac}, page $188$, Lemma 4, for matrices $S_{k,A}$ defined above, there exists $k_0>0$ such that $S_{k,A}$ is invertible for $k>k_0$. Then, from the description of the inverse matrix using cofactors, we have $|S_{k,A}^{-1}|_{[\alpha,\beta]}\leq p^{(m-1)kl}$. 

Also, for the given basis, the representation matrix of action of $\zeta\in \Gamma^n$ and $S_{k,A}$ has the following relation:
$$ S_{k,A}(\zeta t)=E(\zeta)^{-1}S_{k,A}(t) \zeta^A .$$
Here $E(\zeta)$ is the representation matrix of the action of $\zeta$ with respect to the basis $\ee$ fixed above.

\begin{thm} [cf. \cite{Ked1}, Theorem 13.5.5,\cite{Gac}, Th\'eor\`eme in p.173] 
    Let $P$ be an object in $\mathscr{C}_\rho$. Then there exists an exponent $A$ for $P$.
\label{exexp}
\end{thm}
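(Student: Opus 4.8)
The plan is to reduce the existence of an exponent for a general object $P$ in $\mathscr{C}_\rho$ to the one-dimensional case treated in \cite{Ked1}, Theorem 13.5.5 (combined with the higher-dimensional construction of \cite{Gac}), by peeling off one variable at a time. Concretely, I would fix $\alpha<\rho<\beta$ and a basis $\ee$ of $P'$ defining $P$ over $[\alpha,\beta]$, and for each $k$ attempt to construct the matrix $S_{k,A}\in\mathrm{M}_m(\Kabn)$ directly by an averaging/eigenspace decomposition with respect to the finite group $\Gamma_k^n$. The representation matrices $E(\zeta)$ for $\zeta\in\Gamma_k^n$ form a cocycle (since the $\Gamma^n$-action is a semilinear group action), so the action of the finite abelian group $\Gamma_k^n$ on $P'\otimes_K K(\Gamma)$ is, after extending scalars enough, diagonalizable; the characters that occur are of the form $\zeta\mapsto\zeta^A$ for a well-defined multisubset $A\subset(\tfrac1{p^k}\ZZ/\ZZ)^n$, and lifting these mod $\ZZ$ in a coherent way as $k$ varies produces the candidate $A\in\ZZ_p^n$. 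This is exactly the one-dimensional argument of \cite{Ked1} applied coordinate by coordinate: the decomposition of $E(\zeta_i')$ (notation as in the proof of Lemma \ref{above2}) for $\zeta_i'$ supported in the $i$-th coordinate gives an exponent $A^i$ in the $i$-th direction, and one checks that these are simultaneously realizable because the $E(\zeta_i')$ for different $i$ commute up to the twist recorded in the identity $E(\zeta)=E(\zeta'_1)(\zeta_1')^*(E(\zeta_2'))\cdots$.

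The key steps, in order, would be: (1) record that condition (3) in Definition \ref{crho} gives the uniform bound $|E(\zeta)|_{[\alpha,\beta]}\le p^{lk}$ for $\zeta\in\Gamma_k^n$, which is what will feed the bounds (2) and (3) in Definition \ref{exp}; (2) for a single direction $i$, apply the one-dimensional existence theorem \cite{Ked1}, Theorem 13.5.5 — reducing to one variable by base-changing to the field $L_{i,\sigma}$ as in the proof of Lemma \ref{above2} — to obtain diagonal matrices $A^i$ and auxiliary matrices witnessing the eigenvector condition in the $i$-th direction with the required norm estimates; (3) patch these together: since the witnessing matrices in direction $i$ only need to satisfy the twisted transformation law for $\zeta\in\Gamma_k$ in the $i$-th slot, take $S_{k,A}$ to be an appropriate product (ordered as in the decomposition $E(\zeta)=E(\zeta'_1)(\zeta_1')^*(E(\zeta_2'))\cdots(\zeta_1'\cdots\zeta_{n-1}')^*(E(\zeta_n'))$) of the one-variable witnessing matrices, and verify directly that $\zeta^*((\ee)S_{k,A})=(\ee)S_{k,A}\zeta^A$ for all $\zeta\in\Gamma_k^n$ using semilinearity; (4) deduce the norm bound $|S_{k,A}|_{[\alpha,\beta]}\le p^{lk}$ (possibly with a larger $l$, absorbing finitely many factors) from the one-variable bounds together with the fact that the $\Gamma^n$-action is isometric on $\Kabn$, and deduce $|\det S_{k,A}|_{[\alpha,\beta]}\ge 1$ from the corresponding one-variable determinant bounds (using that $\wid$ and hence norm behaves multiplicatively, Corollary \ref{invt}); (5) finally invoke the remark after Definition \ref{exp} that it suffices to produce $S_{k,A}$ for $k$ large, and Gachet's Lemma 4 (\cite{Gac}, p.188) to guarantee eventual invertibility.

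I expect step (3)–(4), the patching, to be the main obstacle. The subtlety is that the one-dimensional theorem applied in direction $i$ produces witnessing matrices over the \emph{enlarged} coefficient field $L_{i,\sigma}$ at a single radius $\sigma_i\in\{\alpha_i,\beta_i\}$, and one must descend these back to honest matrices over $\Kabn$ and make the choices in the different directions compatible — i.e.\ one needs the eigenvector frame for direction $i$ to still transform correctly under $\Gamma_k$ in direction $j\ne i$, which is not automatic and requires using the equivariance condition (2) of Definition \ref{crho} together with the explicit twisting in the cocycle identity for $E$. A clean way around this is to do the argument inductively on $n$: treat $P$ as a differential/$\Gamma$-module over the ring $(K'')_{[\alpha_n,\beta_n],1}$ in the single variable $t_n$, where $K''$ is built from the remaining variables, apply the $n=1$ case to get $A^n$ together with a $\Gamma_k$-eigenbasis, show this eigenbasis carries a residual $\mathscr{C}_\rho$-structure in the remaining $n-1$ variables (this is where condition (3) of Definition \ref{crho} and the norm estimates must be checked to propagate), and then apply the induction hypothesis. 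Either way, the only genuinely new content beyond \cite{Ked1} and \cite{Gac} is the bookkeeping of the norm estimates through these reductions, so the proof should be short once the inductive framework is set up — and indeed the excerpt's proof of Lemma \ref{above2} already carries out essentially this reduction for the analogous boundedness statement, so I would model the argument on it.
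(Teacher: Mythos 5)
The paper's proof does not reduce to the one-dimensional case at all. It constructs $S_{k,A}$ directly in $n$ variables by the averaging formula $S_{k,A}=p^{-nk}\sum_{\zeta\in\Gamma_k^n}E(\zeta)\zeta^{-A}$, verifies conditions (1) and (2) of Definition \ref{exp} by a direct computation from the cocycle relation and the norm bound in Definition \ref{crho}(3), and then — this is the heart of the argument — obtains condition (3) by a delicate \emph{inductive choice of $A$ itself}: one proves the exact identity
\[
\sum_{b_1,\dots,b_n=0}^{p-1}S_{k+1,A+p^k(b_1I,\dots,b_nI)}=S_{k,A},
\]
expands $\det(S_{k,A})$ columnwise as a sum of $\det(S_{k+1,B})$ over lifts $B\equiv A\pmod{p^k}$, and concludes (starting from $S_{0,A}=I$) that at each stage one can choose a lift keeping $|\det(S_{k,A})_0|\geq 1$. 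The exponent $A$ is thus not given in advance and then verified; it is built $p$-adic digit by $p$-adic digit along with the matrices.

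Your proposal diverges from this in two ways that leave genuine gaps. First, you propose to apply the one-dimensional theorem after base-changing to $L_{i,\sigma}$; but that theorem then produces witnessing matrices over $(L_{i,\sigma})_{[\alpha_i,\beta_i],1}$, and there is no mechanism in the proposal for descending these back to $\Kabn$, nor for making the eigenframes chosen in different directions $i$ compatible. You flag this as "the main obstacle" but do not resolve it, and it is not a bookkeeping matter: the $E(\zeta_i')$ for different $i$ only commute up to the twist you quote, so a product of single-direction witnesses will not, without additional work, satisfy the full condition $\zeta^*(\ee\,S_{k,A})=(\ee)S_{k,A}\zeta^A$ for all $\zeta\in\Gamma_k^n$. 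The direct $n$-dimensional average sidesteps this entirely. Second, and more seriously, your step for the lower bound $|\det S_{k,A}|\geq 1$ — "deduce from the corresponding one-variable determinant bounds (using that $\wid$ behaves multiplicatively)" — does not work. The lower bound in the paper is not inherited from any per-direction bound; it depends on choosing the multiset $A$ adaptively as $k$ grows via the summation identity above, and the choices in the $n$ directions are correlated (one lifts $A\bmod p^k$ to $A\bmod p^{k+1}$ in $(\ZZ/p\ZZ)^{mn}$ all at once). Your proposal never produces a candidate $A$ at all; it assumes exponents $A^i$ in each direction coming from 1D theory and hopes they cohere. Without the averaging identity and the inductive lift of $A$, you have no way to guarantee condition (3).

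The germ of your first sentence — "construct $S_{k,A}$ directly by averaging over $\Gamma_k^n$" — is correct and is what the paper does; the subsequent reduction to $n=1$ is a detour that reintroduces exactly the difficulties the averaging was designed to avoid.
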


\begin{proof}
Suppose that $P$ is defined by $P'$ over a closed polysegment $J$ containing $\rho$ in its interior, and fix a basis $\ee$ of $P'$. First, for any multisubset $A$ of $\ZZ_p^n$ with cardinality $m$, we wish to define the matrix $S_{k,A}$ satisfying (1) and (2) in Definition \ref{exp}. To achieve this, we set 
$$S_{k,A}=p^{-nk}\sum_{\zeta\in \Gamma^n_k}E(\zeta)\zeta^{-A}.$$
The property (2) in Definition \ref{crho} of $\mathscr{C}_\rho$ implies that, for $\sigma\in \mathrm{Gal}(K(\Gamma)/K)$ and $\zeta\in\Gamma^n$, $E(\sigma(\zeta))=\sigma(E(\zeta))$. Thus we have $$ \sigma(S_{k,A})=p^{-nk}\sum_{\zeta \in \Gamma^n_k}E(\sigma(\zeta))\sigma(\zeta)^A=S_{k,A}, $$
and this implies that $S_{k,A}$ has entries in $\Kabn$. Also, if we put \break$ (\vv{k}{A})=(\ee )S_{k,A}$, we have 

\begin{align*}
\zeta^*(v_{k,A,j})
= & \sum_{i=1}^m\zeta^*((S_{k,A})_{ij}e_i)=
  \sum_{i=1}^m(p^{-nk}\sum_{\eta\in \Gamma^n_k}\zeta^*(E(\eta)_{ij}\eta_1^{-A^1_j}\dots \eta_n^{-A^n_j}e_i)\\
= & \sum_{i=1}^m(p^{-nk}\sum_{\eta\in \Gamma^n_k}(\zeta^*E(\eta))_{ij}\eta_1^{-A^1_j}\dots \eta_n^{-A^n_j}\sum_{r=1}^m E(\zeta)_{ri}e_r\\
= & p^{-nk}\sum_{\eta\in \Gamma^n_k}\sum_{r=1}^m(E(\zeta\eta)\eta^{-A})_{rj}e_r.
\end{align*}
Thus we have 
\begin{align*}
\zeta^*(\vv{k}{A})
= & (\ee) p^{-nk}\sum_{\zeta\eta\in \Gamma^n_k}E(\zeta\eta)(\zeta\eta)^{-A}\zeta^A\\
= & (\ee) S_{k,A}\zeta^A=(\vv{k}{A})\zeta^A,
\end{align*}
as desired.
(2) is satisfied because by assumption $|E(\zeta)|_J\leq p^{lk}$ for $\zeta \in \Gamma_k^n$, and we have 
$$ |S_{k,A}|_J\leq |p^{-nk}|\max_{\zeta\in\Gamma^n_k}|E(\zeta)|_J\leq p^{(n+l)k}. $$
Then we choose $A$ so that (3) is satisfied. Note that, for $\zeta\in\Gamma_{k+1}$, $\sum_{i=0}^{p-1}\zeta^{-p^k i}$ equals to $p$ if $\zeta\in\Gamma_k$, and equals to $0$ otherwise. By a simple computation, we have
\begin{align*}
&\ \ \ \ \sum_{b_1,\dots,b_n=0}^{p-1}S_{k+1,A+p^k(b_1I,\dots,b_n I)}\\
& = p^{-n(k+1)}\sum_{\zeta\in \Gamma_{k+1}^n} \sum_{b_1,\dots,b_n=0}^{p-1}E(\zeta)\zeta^{-A-p^k(b_1I,\dots,b_nI)}\\
& = p^{-n(k+1)}\sum_{\zeta\in \Gamma_{k+1}^n} E(\zeta)\zeta^{-A}\prod_{i=1}^n\left(\sum_{b_i=0}^{p-1}\zeta_i^{-p^kb_iI}\right)\\
& = p^{-nk}\sum_{\zeta\in \Gamma^n_k} E(\zeta)\zeta^{-A}=S_{k,A}.
\end{align*}
Now if we put $S_{k,A}=(S_{k,A,1},\dots,S_{k,A,m})$, we have 
$$ S_{k,A,i}=\sum_{b_1,\dots,b_n}^{p-1} S_{k+1,A+p^k(b_1I,\dots,b_nI),i}. $$
So
\begin{align*}
\det(S_{k,A})& =\sum_{i=1}^m\sum_{b_{1i},\dots,b_{ni}=0}^{p-1}\det(S_{k+1,A+p^k(b_{1i}I,\dots,b_{ni}I),i})_{1\leq i\leq m}\\
& = \sum_{\overset{b_{ij}=0}{1\leq i,j\leq m}}^{p-1}\det(S_{k+1,A+p^k(\diag(b_{11},\dots,b_{1m}),\dots,\diag(b_{n1},\dots,b_{nm}))}).
\end{align*}
Denote the constant term of $\det(S_{k,A})$ by $\det(S_{k,A})_0$. Then, for any multisubset $A$ of $\ZZ_p^n$, there exists $B\equiv A\mod p^k$ such that $|\det(S_{k+1,B})_0|\geq|\det(S_{k,A})_0|$ by the above equality. Since $S_{0,A}=I$, we can choose a suitable $A$ by induction on $k$ such that $|\det(S_{k,A})_0|\geq 1$ for any $k$. Then, since $|\det(S_{k,A})|_\rho\geq |\det(S_{k,A})_0|$ for any $\rho\in[\alpha,\beta]$, we obtain the assertion.
\end{proof}

\begin{thm}[cf. \cite{Ked1}, Theorem 13.5.6] 
Let $P$ be an object in $\mathscr{C}_\rho$ defined by $P_1$ over $J_1$ and by $P_2$ over $J_2$, where $J_1,J_2$ are closed polysegments containing $\rho$ in its interior. Then the exponents of $P$ defined by $P_1$ and $P_2$ are weakly equivalent. In particular, the exponent of $P$ is uniquely determined up to weak equivalence.
\label{weakeq}
\end{thm}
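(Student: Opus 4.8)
The plan is to follow the strategy of \cite{Ked1}, Theorem 13.5.6, together with the higher-dimensional bookkeeping of \cite{Gac}: to the two systems of matrices $S_{k,A}$ (coming from $P_1$) and $T_{k,B}$ (coming from $P_2$) I attach, for each large $k$, a single transition matrix $N_k$ and read off the weak equivalence of $A$ and $B$ from its growth. First I reduce to a common polysegment. Since every object of $\mathscr{C}_\rho$ is free, fix a basis $\ee$ of $P_1$ and a basis $(f_1,\dots,f_m)$ of $P_2$; both induce bases of $P$ over $R_\rho$, so there is $U\in\GL_m(R_\rho)$ with $(f_1,\dots,f_m)=(\ee)U$. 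Choose a closed polysegment $J\subseteq J_1\cap J_2$ containing $\rho$ in its interior and small enough that $U\in\GL_m(K_{J,n})$; then $S_{k,A}$ and $T_{k,B}$, being invertible over $K_{J_1,n}$ resp.\ $K_{J_2,n}$ for $k$ large, also lie in $\GL_m(K_{J,n})$, and I set $N_k:=S_{k,A}^{-1}UT_{k,B}\in\GL_m(K_{J,n})$, so that $(\vv{k}{A})N_k=(w_{k,B,1},\dots,w_{k,B,m})$, the analogous tuple $(f_1,\dots,f_m)T_{k,B}$.

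Next I establish the transformation law of $N_k$. Using the relation $S_{k,A}(\zeta t)=E(\zeta)^{-1}S_{k,A}(t)\zeta^A$ recalled before Theorem \ref{exexp}, the analogous relation $T_{k,B}(\zeta t)=E'(\zeta)^{-1}T_{k,B}(t)\zeta^B$, and $U(\zeta t)=E(\zeta)^{-1}UE'(\zeta)$ (which follows from $(f_1,\dots,f_m)=(\ee)U$), the change-of-basis matrices cancel and one obtains, for all $\zeta\in\Gamma_k^n$,
$$ N_k(\zeta t)=\zeta^{-A}\,N_k(t)\,\zeta^B. $$
Comparing coefficients monomial by monomial, this forces the entry $(N_k)_{ij}$ to be supported on $\{s\in\ZZ^n: B_j^r-A_i^r-s_r\in p^k\ZP\text{ for all }1\leq r\leq n\}$. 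On the other hand, the growth conditions of Definition \ref{exp}, the bound $|S_{k,A}^{-1}|_{[\alpha,\beta]}\leq p^{(m-1)kl}$ recorded after that definition, and the corresponding bound for $T_{k,B}^{-1}$, yield a constant $c>0$ independent of $k$ with $|N_k|_J\leq p^{ck}$ and $|N_k^{-1}|_J\leq p^{ck}$ for all large $k$.

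Now the core step. Fix a Gauss norm $|\cdot|_\rho$ with $\rho$ in the interior of $J$ and suitably generic, meaning: $\rho$ avoids the countably many (affine) hyperplanes in $\log\rho$-coordinates on which some nonzero entry of $N_k$ or $N_k^{-1}$ fails to have a unique dominant monomial, or on which $\det N_k=\sum_\tau\operatorname{sgn}(\tau)\prod_i(N_k)_{i\tau(i)}$ exhibits cancellation among dominant terms. As $\det N_k$ is a unit of $K_{J,n}$, it has a dominant monomial at $\rho$, so some permutation $\tau_k$ satisfies $\prod_i|(N_k)_{i\tau_k(i)}|_\rho=|\det N_k|_\rho$; expanding the $(i,\tau_k(i))$-cofactor shows $|(N_k)_{i\tau_k(i)}|_\rho=|(N_k^{-1})_{\tau_k(i)i}|_\rho^{-1}$, hence $p^{-ck}\leq|(N_k)_{i\tau_k(i)}|_\rho\leq p^{ck}$ for every $i$. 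Writing the dominant monomial of $(N_k)_{i\tau_k(i)}$ as $c^{(i)}t^{\ell(i)}$ with $\ell(i)\in\ZZ^n$, the same estimate holds after replacing $\rho$ by any Gauss norm in a small box about it (on which $\tau_k$ and the $\ell(i)$ remain constant), and comparing two such norms that differ only in the $r$-th coordinate bounds $|\ell(i)_r|$ by a constant multiple of $k$; thus $\|\ell(i)\|_\infty\leq C'k$ for a constant $C'$ independent of $k$. Since $\ell(i)$ lies in the support of $(N_k)_{i\tau_k(i)}$, it satisfies $B_{\tau_k(i)}^r-A_i^r-\ell(i)_r\in p^k\ZP$, so for $k$ large enough that $C'k<p^k/2$ we get $p^k\langle(A_i^r-B_{\tau_k(i)}^r)/p^k\rangle=|\ell(i)_r|\leq C'k$. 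Putting $\sigma_k:=\tau_k^{-1}$ gives $p^k\langle(A_{\sigma_k(j)}^r-B_j^r)/p^k\rangle\leq C'k$ for all $r,j$, which (after enlarging the constant to cover the finitely many small $k$) is exactly weak equivalence of $A$ and $B$. The final assertion then follows because every object of $\mathscr{C}_\rho$ admits an exponent (Theorem \ref{exexp}) and weak equivalence is an equivalence relation.

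I expect the core step to be the main obstacle. The support condition on $N_k$ by itself carries no numerical information, since every element of $\ZP$ is a $p$-adic limit of integers, so $(N_k)_{ij}\neq 0$ imposes nothing on $B_j-A_i$ in isolation; the quantitative content has to be extracted from the two-sided bound $|N_k^{\pm1}|_J\leq p^{ck}$ through the fact that the diagonal product chosen by $\tau_k$ is genuinely invertible with controlled Gauss norms, which is where the generic Gauss norm and the cofactor identity are essential. A second, more structural point—and the reason one cannot simply invoke the one-dimensional theorem direction by direction—is that a single permutation $\sigma_k$ must witness the estimate in all $n$ coordinate directions simultaneously; this comes for free here only because the support condition $B_j-A_i\equiv s\pmod{p^k}$ constrains the whole vector $B_j-A_i$ at once, which is precisely why the argument is carried out with the matrix $N_k$ over $K_{J,n}$ rather than after restriction to one-dimensional slices.
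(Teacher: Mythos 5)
Your overall strategy is the same as the paper's: form a transition matrix between the two $S$-matrix systems, derive the transformation law $\zeta^*(N_k)=\zeta^{-A}N_k\zeta^B$, constrain supports from it, and turn two-sided Gauss-norm bounds on $N_k^{\pm1}$ into a bound on the dominant exponent vector of a diagonal entry along a permutation $\sigma_k$. Keeping the change-of-basis matrix $U$ instead of reducing at once to a common model $P_J$ over a common $J$, as the paper does, is a cosmetic variant. The algebraic manipulations ($U(\zeta t)=E(\zeta)^{-1}UE'(\zeta)$, the transformation law for $N_k$, the deduction of $A_i^r-B_{\sigma_k(i)}^r\equiv \ell(i)_r\pmod{p^k}$) are all correct.

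Two points, one stylistic and one a genuine gap. First, the genericity-plus-cofactor detour is unnecessary and heavier than the paper's argument. The paper simply takes $\gamma_i=\sqrt{\alpha_i\beta_i}$, uses the ultrametric inequality on the Leibniz expansion to find $\sigma_k$ with $\prod_i|(T_k)_{i,\sigma_k(i)}|_\gamma\geq|\det T_k|_\gamma\geq p^{-mkl}$, and then bounds a single factor from below by dividing by the trivial upper bound $|T_k|_J^{m-1}$ on the others. No genericity is needed, and no cancellation issues in minors arise. Your cofactor identity $|(N_k)_{i\tau_k(i)}|_\rho=|(N_k^{-1})_{\tau_k(i)i}|_\rho^{-1}$ needs genericity not only for the determinant but also for each of the $(i,\tau_k(i))$-minors to avoid cancellation, which you do not mention; the paper's route avoids all of this.

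Second, and more seriously, the ``small box'' step as written does not yield a $k$-linear bound on $\ell(i)_r$. The box on which $\tau_k$ and the $\ell(i)$ remain constant has $k$-dependent size (it shrinks as $N_k$ acquires more closely competing monomials), and if the box has multiplicative radius $\eta_k\to 1$, then comparing two Gauss norms inside it only gives $|\ell(i)_r|\log_p\eta_k\leq 2ck$, i.e.\ $|\ell(i)_r|\leq 2ck/\log_p\eta_k$, which is not $O(k)$. The correct comparison is asymmetric: the lower bound $|c^{(i)}t^{\ell(i)}|_\rho=|(N_k)_{i\tau_k(i)}|_\rho\geq p^{-ck}$ is taken at the single fixed $\rho$ (where the dominant-monomial property holds), while the upper bound $|c^{(i)}t^{\ell(i)}|_\sigma\leq|(N_k)_{i\tau_k(i)}|_\sigma\leq|N_k|_J\leq p^{ck}$ holds at \emph{every} $\sigma\in J$, dominant monomial or not. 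So one should compare $\rho$ with points of $J$ at a fixed multiplicative distance $\eta>1$ independent of $k$ (the paper compares the center $\gamma$ with the endpoints $\alpha'_r,\beta'_r$). With that modification the bound $\|\ell(i)\|_\infty\leq C'k$ follows, and the rest of your argument (the congruence $A_i^r-B_{\tau_k(i)}^r\equiv \ell(i)_r\pmod{p^k}$ giving $p^k\langle(A_i^r-B_{\tau_k(i)}^r)/p^k\rangle\leq C'k$ once $C'k<p^k/2$) is fine. Your closing discussion correctly identifies that the support condition alone is vacuous and that a single $\sigma_k$ must work in all directions; the paper's proof secures both by working with the matrix over $K_{J,n}$, as you note.
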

\begin{proof}
First, we consider the situation where $J_1=J_2:=J$, and $P_1\cong P_2:=P_J$.
Let $A$ and $B$ be two exponents of $P$ defined by $P_J$ and its basis $\ee$, and let $S_{k,A}$ and $S_{k,B}$ be the corresponding sequences of matrices defining each exponent. Set 
$(\vv{k}{A})=(\ee) S_{k,A}$ and $(\vv{k}{B})=(\ee) S_{k,B}$. For a sufficiently large $k_0$, let $T_k=S^{-1}_{k,A}S_{k,B}$ for $k>k_0$ be the change-of-basis matrix from $\vv{k}{A}$ to $\vv{k}{B}$. Then we have 
$$ |T_k|_J\leq |S^{-1}_{k,A}|_J|S_{k,B}|_J\leq p^{kml} .$$
The determinant of $T_k$ is bounded from below as follows:
$$ |\det(T_k)|_\rho\geq|\det(S_{k,A})|_\rho^{-1}\geq |S_{k,A}|^{-m}_\rho\geq p^{-kml},\ \ \ \rho\in J .$$
 Write $J=[\alpha,\beta]$, and put $\gamma_i=\sqrt{\alpha_i\beta_i} $ for $1\leq i\leq n$. Then, by the definition of determinant, there must be a permutation $\sigma_k$ of $\{ 1,2,\dots,m\}$ such that 
$$ \prod_{i=1}^m|(T_k)_{i,\sigma_k(i)}|_\gamma\geq p^{-kml}. $$
Then, for a single factor appearing in the product above, we have 
$$  |(T_k)_{i,\sigma_k(i)}|_\gamma\geq p^{-kml}\prod_{j\neq i} |(T_k)_{j,\sigma_k(j)}|_\gamma^{-1}\geq p^{-m^2kl}. $$
We write $(T_k)_{i,\sigma_k(i)}=\sum_{j\in \ZZ^n}T_{kij}t^j$ with $T_{kij}\in K$. We can then choose $j=j(i,k)=(j_1,\dots,j_n)\in\ZZ^n$ such that $|T_{kij}t^j|_\gamma=|(T_k)_{i,\sigma_k(i)}|_{\gamma}$. Put $\eta_r=\sqrt{\beta_r/\alpha_r} $.  Then, for $1\leq r\leq n$, in the case $j_r\geq 0$ we have 
$$ \eta_r^{j_r}=\left( \frac{\beta_r}{\gamma_r} \right)^{j_r}= \frac{|T_{ijk}t^j|_{\beta'_r}}{|T_{ijk}t^j|_{\gamma}}\leq p^{(m^2+m)lk}, $$
and in the case $j_r<0$, we have
$$ \eta_r^{-j_r}=\left( \frac{\alpha_r}{\gamma_r} \right)^{j_r}= \frac{|T_{ijk}t^j|_{\alpha'_r}}{|T_{ijk}t^j|_{\gamma}}\leq p^{(m^2+m)lk}, $$
where $\beta'_r=(\gamma_1,\dots,\gamma_{r-1},\beta_r,\gamma_{r+1},\dots,\gamma_n)$, and $\alpha'_r=(\gamma_1,\dots,\gamma_{r-1},\alpha_r,\gamma_{r+1},\dots,\gamma_n)$.
In either case we deduce that 
$$ |j_r|\log_p\eta_r\leq {(m^2+m)lk}. $$
Finally, for $\zeta\in\Gamma_k^n$ and sufficiently large $k$, we have
\begin{align*}
\zeta^*(T_k) & =\zeta^*(S_{k,A}^{-1})\zeta^*(S_{k,B})\\
& = \zeta^{-A}S_{k,A}^{-1}E(\zeta)E(\zeta)^{-1}S_{k,B}\zeta^B=\zeta^{-A}T_k\zeta^B.
\end{align*}
By looking at the coefficients of $t^j$ of the $(i,\sigma_{k}(i))$-th entries of the matrices in the above equation, we obtain the equality
$$ \zeta^jT_{kij}=\zeta^{-A_i+B_{\sigma_k(i)}}T_{kij} .$$
Since this equality holds for any $\zeta\in \Gamma_k^n$, we see that 
$$ A_i-B_{\sigma_k(i)}-j(i,k)\in p^k\ZZ_p^n .$$
Thus we conclude that $A$ and $B$ are weakly equivalent.

Now we consider the case where $J_1$ and $J_2$ are not necessarily the same. Then we can find $J\subset J_i$ of positive length containing $\rho$ in its interior for $i=1,2$ such that $(P_1)_J$ and $(P_2)_J$ are isomorphic. By definition, an exponent of $P$ defined by $P_i$ is an exponent of $P$ defined by $(P_i)_J$. Hence we can reduce to the previous case and so we are done.
\end{proof}

\begin{lem}[cf. \cite{Ked2}, Remark 3.4.14, \cite{KS}]
Let $P_1$, $P_2$ and $P$ be three objects in $\mathscr{C}_\rho$  with $P_i$ having exponent $A_i\ (i=1,2)$. Then,
\begin{enumerate}[(1)]
\item if there exists a short exact sequence
$$0\to P_1\to P\to P_2\to 0,$$
then $P$ admits the multiset union $A_1\cup A_2$ as an exponent.

\item the module $P_1\otimes P_2$ is an object in $\mathscr{C}_\rho$, and admits the multiset $A_1+A_2$ as an exponent.

\item the module $P_1^\vee $ is an object in $\mathscr{C}_\rho$, and admits the multiset $-A_1$ as an exponent.
\end{enumerate}
\label{exp123}
\end{lem}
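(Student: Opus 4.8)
The plan is to construct the required sequences of matrices $S_{k,\bullet}$ directly out of the given sequences $S_{k,A_1}$ and $S_{k,A_2}$, checking conditions (1), (2), (3) of Definition \ref{exp} in each case. First I fix closed polysegments $J_1, J_2$ on which $P_1, P_2$ are defined, shrink to a common $J = [\alpha,\beta]$ containing $\rho$ in its interior on which all of $P_1, P_2, P$ (and the maps between them, if any) are defined, and pick compatible bases there; by the remark following Definition \ref{exp} it suffices to work with large $k$, and by Theorem \ref{weakeq} the resulting exponent is well-defined up to weak equivalence, which is all that is being asserted.

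For part (1): choose a basis $e_1,\dots,e_{m_1}$ of $P_1'$ and lift a basis $\bar e_1,\dots,\bar e_{m_2}$ of $P_2'$ to elements $f_1,\dots,f_{m_2}$ of $P'$, so that $e_1,\dots,e_{m_1},f_1,\dots,f_{m_2}$ is a basis of $P'$. With respect to this basis the representation matrix $E(\zeta)$ of $\zeta^*$ is block upper triangular with diagonal blocks $E_1(\zeta), E_2(\zeta)$ (the action on $P_1$ is a sub-object, the action on $P_2$ a quotient), say $E(\zeta) = \begin{pmatrix} E_1(\zeta) & C(\zeta) \\ 0 & E_2(\zeta)\end{pmatrix}$. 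Then set
$$ S_{k,A_1\cup A_2} = \begin{pmatrix} S_{k,A_1} & X_k \\ 0 & S_{k,A_2} \end{pmatrix}, $$
where $X_k$ is defined by the same averaging formula as in the proof of Theorem \ref{exexp}, namely $X_k = p^{-nk}\sum_{\zeta\in\Gamma_k^n} (\text{appropriate block of } E(\zeta)\zeta^{-(A_1\cup A_2)})$; equivalently one simply takes $S_{k,A_1\cup A_2} := p^{-nk}\sum_{\zeta\in\Gamma_k^n}E(\zeta)\zeta^{-(A_1\cup A_2)}$ and observes from the block-triangular shape that the diagonal blocks are exactly $S_{k,A_1}$ and $S_{k,A_2}$. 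Condition (1) is the computation already carried out in Theorem \ref{exexp}; condition (2) follows because $|E(\zeta)|_J$ is still bounded by $p^{lk}$; condition (3) follows since $\det S_{k,A_1\cup A_2} = \det S_{k,A_1}\cdot\det S_{k,A_2}$, whose $\rho$-Gauss norm is $\geq 1$ by multiplicativity (or: the constant term is the product of the constant terms, each of absolute value $\geq 1$).

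For parts (2) and (3): the fact that $P_1\otimes P_2$ and $P_1^\vee$ lie in $\mathscr{C}_\rho$ is essentially Remark \ref{above 4} together with Lemma \ref{proj} (they are finite over $K_{J,n}$ with a semilinear $\Gamma^n$-action, hence projective, hence free since all objects of $\mathscr{C}_\rho$ are free). For the exponent of $P_1\otimes P_2$, take bases $v_{k,A_1,i}$ of $P_1$ and $v_{k,A_2,j}$ of $P_2$ as in Definition \ref{exp}(1); then $v_{k,A_1,i}\otimes v_{k,A_2,j}$ is a basis of $P_1\otimes P_2$ on which $\zeta^*$ (which acts diagonally on the tensor product by Remark \ref{above 4}(1)) acts by $\zeta^{A_1^i + A_2^j}$, so the change-of-basis matrix $S_{k,A_1+A_2}$ from the fixed tensor-product basis to this one is $S_{k,A_1}\otimes S_{k,A_2}$ (Kronecker product). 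The norm bound (2) is multiplicative in the Kronecker product, and $\det(S_{k,A_1}\otimes S_{k,A_2}) = (\det S_{k,A_1})^{m_2}(\det S_{k,A_2})^{m_1}$ gives (3). The dual case is identical: with $\varphi_{k,i}$ the dual basis to $v_{k,A_1,i}$, one computes using $(\zeta^*\varphi)(x) = \zeta^*(\varphi((\zeta^{-1})^*x))$ that $\zeta^*\varphi_{k,i} = \zeta^{-A_1^i}\varphi_{k,i}$, so $S_{k,-A_1} = (S_{k,A_1}^{-1})^T$ up to the fixed change of basis; the bound (2) on $S_{k,A_1}^{-1}$ was already noted after Definition \ref{exp} ($|S_{k,A_1}^{-1}|_J \leq p^{(m-1)kl}$), and $|\det(S_{k,A_1}^{-1})|_\rho = |\det S_{k,A_1}|_\rho^{-1}$ — which could be $<1$, so here one instead argues via weak equivalence: $-A_1$ need only be shown to be \emph{an} exponent, and one may rescale, or more cleanly invoke that the exponent of $P_1^\vee$ is determined up to weak equivalence and pin it down by the duality pairing $P_1\otimes P_1^\vee \to \mathbf{1}$ together with part (2) and the fact that the trivial object has exponent $0$.

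The main obstacle I anticipate is condition (3) for the dual: unlike (2), the lower bound on the determinant is not automatically preserved under inversion, so the cleanest route is probably to combine part (2) with the horizontal pairing $P_1\otimes P_1^\vee\to\mathbf{1}$ (whose exponent is $\{0,\dots,0\}$) rather than to verify (3) for $(S_{k,A_1}^{-1})^T$ by hand — this uses that an exponent of a rank-one trivial-action object is $0$ and that $A_1 + (\text{exponent of }P_1^\vee)$ must be weakly equivalent to $0$, forcing the exponent of $P_1^\vee$ to be weakly equivalent to $-A_1$. Everything else is bookkeeping with block-triangular, Kronecker, and inverse-transpose matrices, reusing verbatim the averaging computation from Theorem \ref{exexp}.
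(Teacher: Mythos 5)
Your overall plan is the right one and matches the paper's at the level of strategy, but two of the three pieces have concrete defects.

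For part (1), the idea of a block upper-triangular $S_{k,A_1\cup A_2}$ with the given $S_{k,A_1}$, $S_{k,A_2}$ on the diagonal is exactly what the paper does, but your formula for the off-diagonal block $X_k$ is wrong, and your ``equivalently'' reformulation is not equivalent. Write $E(\zeta)=\left(\begin{smallmatrix}E_1(\zeta)&C(\zeta)\\0&E_2(\zeta)\end{smallmatrix}\right)$. The block that makes the twisting condition $E(\zeta)\zeta^*(S_{k,A_1\cup A_2})=S_{k,A_1\cup A_2}\zeta^{A_1\cup A_2}$ hold is
$$X_k = p^{-nk}\sum_{\zeta\in\Gamma_k^n} C(\zeta)\,\zeta^*(S_{k,A_2})\,\zeta^{-A_2},$$
which is what the paper's re-averaging of the lifted vectors $v_{k,A_2,j}$ produces; your proposal $X_k=p^{-nk}\sum C(\zeta)\zeta^{-A_2}$ omits the factor $\zeta^*(S_{k,A_2})$. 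If you plug yours into the twisting identity you get a leftover term $C(\zeta)(\zeta^*(\bar S_2)-\zeta^*(S_{k,A_2}))$, where $\bar S_2 := p^{-nk}\sum_\eta E_2(\eta)\eta^{-A_2}$, which vanishes only if the given $S_{k,A_2}$ happens to coincide with the average $\bar S_2$. The same observation invalidates the ``equivalently take $S_{k,A_1\cup A_2}:=p^{-nk}\sum E(\zeta)\zeta^{-(A_1\cup A_2)}$'' step: that produces diagonal blocks $\bar S_1,\bar S_2$, not the given $S_{k,A_1}, S_{k,A_2}$, and there is no a priori reason the $\bar S_i$ should satisfy the determinant bound (3) of Definition \ref{exp} -- that bound was part of the hypothesis on the given sequences, not an automatic feature of the averaging formula.

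Part (2) (Kronecker product, multiplicativity of the determinant) matches the paper verbatim.

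For part (3), the paper simply asserts that $(S_{k,A_1}^T)^{-1}$ satisfies the required properties and stops there. You flag -- reasonably -- that the determinant lower bound is not obviously preserved under inversion, but your proposed repair does not close the gap. Knowing that $P_1\otimes P_1^\vee$ has exponent $A_1+B$ (where $B$ is some exponent of $P_1^\vee$) and contains $\mathbf{1}$, whose exponent is $0$, as a direct summand only tells you that $0$ occurs in the multiset $A_1+B$ up to weak equivalence; that is very far from forcing $B$ to be weakly equivalent to $-A_1$ (already for $A_1=\{0,\tfrac12\}$ in $\ZZ_p$ there are many $B$ with $0\in A_1+B$). If you want to bypass the direct check for $(S_{k,A_1}^T)^{-1}$ you would need a genuinely stronger rigidity input than the pairing alone. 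As it stands, the cleanest thing is to do what the paper does and verify $(S_{k,A_1}^T)^{-1}$ directly, noting that the norm bound on $S_{k,A_1}^{-1}$ was already established after Definition \ref{exp}, and addressing the determinant normalization explicitly if one wants to be scrupulous.
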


\begin{proof}
We may assume that $P_1,P_2,P$ is defined by $P_1',P_2',P'$ respectively over $J$ for some closed polysegment $J$ containing $\rho$ in its interior such that $A_i$ is an exponent of $P_i$ defined by $P'_i$ for $i=1,2$. 

(1) By shrinking $J$, we may suppose moreover that the exact sequence in the statement is induced by an exact sequence
$$ 0\to P'_1\to P'\to P_2'\to 0 $$
which is compatible with $\Gamma^n$-action when tensored with $K(\Gamma)$ over $K$.
Let $\ee$ be a basis of $P'_{1}$ and choose $e'_1,\dots,e'_{m'}\in P'$ lifting a basis of $P'_{2}$. Then, for $\zeta\in\Gamma^n_k$, the representation matrix of $\zeta^*$ on basis $(e_1,\dots,e_m,e'_1,\dots,e'_{m'})$ is a block matrix $\widetilde{E(\zeta)}=$
$
\begin{pmatrix*}
E(\zeta) & *\\
O & E(\zeta)'\\
\end{pmatrix*}
$
with $E(\zeta)$ and $E(\zeta)'$ the representation matrix of the action of $\zeta$ on $P'_{1}$ and $P'_{2}$ respectively, with respect to the chosen basis. This matrix has norm at most $p^{l'k}$ for some $l'$ by condition (3) of Definition \ref{crho}. Then, for each $k>0$, there exist $\vv{k}{A_1}\in P'_{1}$ and $ v_{k,A_2,1},\dots,v_{k,A_2,m'} \in \langle e'_1,\dots,e'_{m'} \rangle\subset P'$  such that for any $\zeta\in \Gamma_k^n$, the following are satisfied:
\begin{align*}
(\ee )S_{k,A_1} =\ & (\vv{k}{A_1}),\\
(e'_1,\dots,e'_{m'})S_{k,A_2} =\ & (v_{k,A_2,1},\dots,v_{k,A_2,m'}),\\
\zeta^*(\vv{k}{A_1})=\ & (\vv{k}{A_1})\zeta^{A_1}, \\
\zeta^*(v_{k,A_2,1},\dots,v_{k,A_2,m'})\equiv\  & (v_{k,A_2,1},\dots,v_{k,A_2,m'})\zeta^{A_2}\ ( \mathrm{mod}\ P'_{1}),\\
|S_{k,A_i}|_J\leq p^{l_ik}, |S^{-1}_{k,A_i}|_J & \leq p^{(m+m'-1)l_ik}, i=1,2.\\
\end{align*}
Now we define 
$$ v'_{k,A_2,j}= p^{-nk}\sum_{\zeta\in\Gamma^n_k}\zeta_1^{-(A_2)^1_j}\dots\zeta_n^{-(A_2)^n_j}\zeta^*(v_{k,A_2,j}). $$
Then we observe that 
\begin{align*}
\zeta^*(v'_{k,A_2,j})
& =  p^{-nk}\sum_{\eta\in\Gamma^n_k}\eta_1^{-(A_2)^1_j}\dots\eta_n^{-(A_2)^n_j}(\zeta\eta)^*(v_{m,A_2,j})\\
& = p^{-nk}\sum_{\zeta\eta\in\Gamma^n_k}\zeta_1^{(A_2)^1_j}\dots\zeta_n^{(A_2)^n_j}(\zeta\eta)_1^{-(A_2)^1_j}\dots(\zeta\eta)_n^{-(A_2)^n_j}(\zeta\eta)^*(v_{m,A_2,j})\\
& = \zeta_1^{(A_2)^1_j}\dots\zeta_n^{(A_2)^n_j}v'_{k,A_2,j}.\\
\end{align*}
Also, the matrix $S_{k,A}$ sending $\ee,e'_1,\dots,e'_{m'}$ to $\vv{k}{A_1},v'_{k,A_2,1},\break \dots,v'_{k,A_2,m'}\in P'_{1}$ is a block matrix 
$
\begin{pmatrix*}
S_{k,A_1}& *\\
O & S_{k,A_2}\\
\end{pmatrix*}
$
. To prove that it has the properties required to define an exponent, it suffices to prove that the norm of the * part of the matrix is bounded by $p^{ks}$ for some $s$. First we write $v_{k,A_2,j}$ as 
$$ v_{k,A_2,j}=\sum_{i=1}^{m'} (S_{k,A_2})_{ij}e'_i. $$
Then we have 
\begin{align*}
\zeta^*(v_{k,A_2,j})& =\sum_{i=1}^{m'}\zeta^*(S_{k,A_2})_{ij}\zeta^*(e'_i)\\
& = \sum_{i=1}^{m'}\zeta^*(S_{k,A_2})_{ij}\sum_{l=1}^{m+m'}(\widetilde{{E(\zeta)}})_{li}e_l,
\end{align*}
where we put $e_{m+i}:=e'_i$ for $1\leq i\leq m'$. Thus
\begin{align*}
v'_{k,A_2,j}& =p^{-nk}\sum_{\zeta\in\Gamma^n_k}\zeta_1^{-(A_2)^1_j}\dots\zeta_n^{-(A_2)^n_j}\zeta^*(v_{m,A_2,j})\\
& = p^{-nk}\sum_{\zeta\in\Gamma^n_k}\zeta_1^{-(A_2)^1_j}\dots\zeta_n^{-(A_2)^n_j}\left( \sum_{i=1}^{m'}\zeta^*(S_{k,A_2})_{ij}\sum_{l=1}^{m+m'}(\widetilde{{E(\zeta)}})_{li}e_l \right)\\
& = \sum_{l=1}^{m+m'}\left( p^{-nk}\sum_{\zeta\in \Gamma_k^n}\sum_{i=1}^{m'}\zeta^*(S_{k,A_2})_{ij} \widetilde{E(\zeta)}_{li} \right)e_l.
\end{align*}
Note that the * part of the matrix consists of the coefficients of $\ee$ in the above equality, and its norm is hence bounded by $p^{ks}$ if we choose $s=n+l_1+l_2$.
So we see that $A_1\cup A_2$ is an exponent of $P$.

(2) Let $e_1,\dots,e_m$ be a basis of $P'_{1}$, $e'_1,\dots,e'_{m'}$ be a basis of $P'_{2}$, and $S_{k,A_1},\ S_{k,A_2}$ be matrices defining the exponents, with $|S_{k,A_1}|_J\leq p^{ks_1}$ and $|S_{k,A_2}|_J\leq p^{ks_2}$. It is easy to see that $e_i\otimes e'_j$ is a basis of $P'_1\otimes P'_2$, for $1\leq i\leq m,\ 1\leq j\leq m'$ and the norm of $S_{k,A_1}\otimes S_{k,A_2} $ is bounded by $p^{k(s_1+s_2)}$. Here the tensor product of matrices is the Kronecker tensor product.
For $\zeta\in\Gamma^n_k$, we have
$$ \zeta^*(v_{k,A_1,i}\otimes v'_{k,A_2,j})_{1\leq i\leq m, 1\leq j \leq m'}=(v_{k,A_1,i}\otimes v'_{k,A_2,j})_{1\leq i\leq m, 1\leq j \leq m'} \zeta^{A_1}\otimes\zeta^{A_2}, $$
and by a quick computation of $\zeta^{A_1}\otimes\zeta^{A_2}$ we see that $A_1+A_2$ is an exponent of $P_1\otimes P_2$.

(3) Let $\ee$ be a basis of $P_1'$, and suppose that $\vv{k}{A_1}$, $S_{k,A_1}$ define the exponent. For sufficiently large $k$ such that $S_{k,A_1}$ is invertible, $\zeta\in \Gamma^n_k$, and $v=l_1v_{k,A_1,1}+\dots+l_mv_{k,A_1,m}\in P_1'$,
\begin{align*}
\zeta^*(v^*_{k,A_1,j})(v)
= & \sum_{i=1}^m l_i\zeta^*( v^*_{k,A_1,j}((\zeta^{-1})^*(v_{k,A_1,i})))\\
= & \zeta_1^{-(A_1)^1_{j}}\dots\zeta_n^{-(A_1)^n_{j}}v^*_{k,A_1,j}(v),\\
\end{align*}
where $v^*_{k,A_1,1},\dots, v^*_{k,A_1,m}$ is the dual basis of $\vv{k}{A_1}$.
The change-of-basis matrix from $e^*_1,\dots,e^*_m$ to $v^*_{k,A_1,1},\dots, v^*_{k,A_1,m}$ is $(S_{k,A_1}^T)^{-1}$, which satisfies desired properties.
\end{proof}

\begin{defn}[cf.\cite{Ked2}, Definition 3.4.18]
Let $P$ be an object in $\mathscr{C}_\rho$. We say that $P$ has $p$-adic non-Liouville exponents if $P$ admits an exponent $A$ over $J$ which is $p$-adic non-Liouville. This is equivalent to say that every exponent of $P$ is $p$-adic non-Liouville by Lemma \ref{above1}. We say that $P$ has $p$-adic non-Liouville exponent differences if $\End(P) $ has $p$-adic non-Liouville exponents.
\end{defn}

Lemma \ref{exp123} implies that, by viewing $\End(P)$ as $P\otimes P^\vee$, if $P$ admits an exponent with non-Liouville difference, then $P$ has $p$-adic non-Liouville exponent differences.

\begin{cor}[cf. \cite{Ked2}, Lemma 3.4.19]
The following statements are equivalent:
\begin{enumerate}[(1)]
\item The module $P$ has $p$-adic non-Liouville exponent differences.

\item Some exponent of $P$ has $p$-adic non-Liouville differences.

\item Every exponent of $P$ has $p$-adic non-Liouville differences.
\end{enumerate}
\label{expdiff}
\end{cor}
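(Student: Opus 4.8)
The plan is to reduce everything to the definition of $\End(P)$ as $P\otimes P^\vee$ together with the behavior of exponents under tensor product and dual (Lemma \ref{exp123}) and under weak equivalence (Lemma \ref{above1}, Corollary \ref{nexp}). The implications (3)$\Rightarrow$(2) and (2)$\Rightarrow$(1) are the easy directions, and the real content is (1)$\Rightarrow$(3).

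First I would note that by Theorem \ref{weakeq}, any two exponents of a given object of $\mathscr{C}_\rho$ are weakly equivalent; hence for $\End(P)$ to ``have $p$-adic non-Liouville exponents'' it suffices, by Lemma \ref{above1}, that \emph{one} exponent of $\End(P)$ be $p$-adic non-Liouville, and then \emph{every} exponent is. So (1) is unambiguous. Next, by Lemma \ref{exp123}(2),(3), if $A$ is an exponent of $P$ then $A\otimes 1 + 1\otimes(-A)$, i.e.\ the multiset $\{A_i - A_j : 1\le i,j\le m\} = A-A$, is an exponent of $P\otimes P^\vee = \End(P)$. Therefore: $A$ has $p$-adic non-Liouville differences (meaning $A-A$ is $p$-adic non-Liouville, in every direction) if and only if this particular exponent $A-A$ of $\End(P)$ is $p$-adic non-Liouville, if and only if (by the previous sentence) $\End(P)$ has $p$-adic non-Liouville exponents, i.e.\ (1) holds. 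This already gives (2)$\Leftrightarrow$(1) once we observe the chosen exponent $A-A$ of $\End(P)$ is as good as any.

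For the full cycle I would argue: (3)$\Rightarrow$(2) is trivial since by Theorem \ref{exexp} an exponent exists. For (2)$\Rightarrow$(1): if some exponent $A$ of $P$ has $p$-adic non-Liouville differences, then $A-A$ is $p$-adic non-Liouville and is an exponent of $\End(P)$ by Lemma \ref{exp123}, so (1) holds. For (1)$\Rightarrow$(3): suppose $\End(P)$ has $p$-adic non-Liouville exponents, and let $A$ be an arbitrary exponent of $P$. Then $A-A$ is an exponent of $\End(P)$ by Lemma \ref{exp123}(2),(3), hence weakly equivalent to a $p$-adic non-Liouville multiset (namely to whichever exponent of $\End(P)$ is $p$-adic non-Liouville, via Theorem \ref{weakeq}), hence by Corollary \ref{nexp} $A$ has $p$-adic non-Liouville differences. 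Since $A$ was arbitrary, (3) follows.

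The only mild subtlety—and the step I would be most careful with—is the bookkeeping that the multiset $A-A$ produced by combining Lemma \ref{exp123}(2) and (3) is genuinely the difference multiset $\{A_i-A_j\}$ with the correct multiplicities (size $m^2$), so that ``$A$ has $p$-adic non-Liouville differences'' in the sense of the definition of $A-A$ matches ``this exponent of $\End(P)$ is $p$-adic non-Liouville.'' Once that identification is made, everything is a formal consequence of weak-equivalence invariance (Lemma \ref{above1}, Corollary \ref{nexp}) and uniqueness up to weak equivalence (Theorem \ref{weakeq}); no new estimates are needed.
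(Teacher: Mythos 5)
Your proposal is correct and follows essentially the same route as the paper: (3)$\Rightarrow$(2) is trivial, (2)$\Rightarrow$(1) uses that $A-A$ is an exponent of $\End(P)=P\otimes P^\vee$ via Lemma \ref{exp123}, and (1)$\Rightarrow$(3) combines Lemma \ref{exp123}, Theorem \ref{weakeq}, and Corollary \ref{nexp} exactly as the paper does. The extra remarks you add (that (1) is unambiguous by Theorem \ref{weakeq} and Lemma \ref{above1}, and the bookkeeping that the exponent of $\End(P)$ produced by Lemma \ref{exp123}(2),(3) is the full difference multiset $\{A_i-A_j\}$ of size $m^2$) are sound and make the argument a little more explicit, but they are not a different method.
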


\begin{proof}
By the argument above (2) implies (1). (3) obviously implies (2).

Suppose now that (1) holds. Let $A$ be any exponent for $P$, and let $B$ be an exponent for $\End(P)$ which is $p$-adic non-Liouville. Then $A-A$ is an exponent for $\End(P)$, and weakly equivalent to $B$ by Theorem \ref{weakeq}. So $A-A$ is also $p$-adic non-Liouville by Corollary \ref{nexp}, implying (3). 
\end{proof}

\subsection{Generalized $p$-adic Fuchs theorem for objects in $\mathscr{C}_\rho$ and $\mathscr{D}_\rho$}

\begin{lem}[cf. \cite{Ked2}, Lemma 3.1.5]
Let $\eta>1 $, and let $\alpha,\ \alpha',\ \beta,\ \beta'\in \RR^n_{>0}$ such that $\alpha'<\beta'$, $\alpha'=\alpha\eta$ and $\beta=\beta'\eta$. Let $f=\sum_{i\in \ZZ}f_i t_r^i$ be an element of $\Kabn$(with each $f_i$ a formal Laurent series in $t_1,\dots,t_{r-1},t_{r+1},\dots,t_n$) such that $f_i=0$ for $|i|<N$. Then 
$$ |f|_{[\alpha',\beta']}\leq \eta^{-N}|f|_{[\alpha,\beta]} $$

\label{simcom}
\end{lem}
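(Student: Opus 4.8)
The statement is a multidimensional Gauss-norm estimate for a Laurent series in one distinguished variable $t_r$ whose coefficients (themselves Laurent series in the other variables) vanish below degree $N$ in $|i|$. The plan is to reduce everything to the behaviour of a single monomial $c\,t^{j}$ (with $c\in K$, $j=(j_1,\dots,j_n)\in\ZZ^n$) under the two Gauss norms, and then take the supremum over the monomials appearing in $f$. Recall that for a monomial the $\rho$-Gauss norm is multiplicative: $|c\,t^{j}|_{\rho}=|c|\,\rho^{j}$, and the supremum norm on a closed polyannulus $[\alpha,\beta]$ is attained at a vertex, i.e.\ $|c\,t^{j}|_{[\alpha,\beta]}=|c|\max_{\sigma}\sigma^{j}$ where $\sigma$ ranges over $\prod_{i}\{\alpha_i,\beta_i\}$; more concretely, for each coordinate $i$ the factor is $\max\{\alpha_i^{j_i},\beta_i^{j_i}\}$, which equals $\beta_i^{j_i}$ if $j_i\geq 0$ and $\alpha_i^{j_i}$ if $j_i<0$.

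The first step is therefore to prove the monomial case: if $c\,t^{j}$ is a term of $f$, so in particular $|j|\geq N$ by hypothesis (actually the hypothesis is on the $t_r$-degree $i$, but since $f_i$ is a Laurent series in the remaining variables I only control $|i|$, hence I should read $|j|$ as $|j_r|$ here; see the obstacle paragraph), then
\[
|c\,t^{j}|_{[\alpha',\beta']}\leq \eta^{-N}\,|c\,t^{j}|_{[\alpha,\beta]}.
\]
Working coordinate by coordinate: since $\alpha'=\alpha\eta$ and $\beta=\beta'\eta$, we have $\alpha'_i=\eta\alpha_i$ and $\beta'_i=\eta^{-1}\beta_i$ for every $i$. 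For a coordinate with $j_i\geq 0$ the relevant factor on the left is $(\beta'_i)^{j_i}=\eta^{-j_i}\beta_i^{j_i}$, which is $\eta^{-j_i}$ times the corresponding factor on the right; for $j_i<0$ the relevant factor on the left is $(\alpha'_i)^{j_i}=\eta^{j_i}\alpha_i^{j_i}=\eta^{-|j_i|}\alpha_i^{j_i}$, again $\eta^{-|j_i|}$ times the right-hand factor. Multiplying over $i$ gives a total gain of $\eta^{-|j|}$ (with $|j|=\sum|j_i|$), and since $|j|\geq N$ and $\eta>1$ this is $\leq\eta^{-N}$, proving the monomial estimate. The second step is to pass from monomials to $f$: both $|\cdot|_{[\alpha',\beta']}$ and $|\cdot|_{[\alpha,\beta]}$ are the suprema of the $\sigma$-Gauss norms over the (finitely many) vertices $\sigma$, each $\sigma$-Gauss norm is the max over monomials of their $\sigma$-Gauss norms, and the inequality for each monomial is uniform; hence $|f|_{[\alpha',\beta']}=\sup_{\sigma'}|f|_{\sigma'}\leq\sup\eta^{-N}|c\,t^j|_{\sigma}\leq\eta^{-N}|f|_{[\alpha,\beta]}$. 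One should be slightly careful that the vertex realizing the sup on the two sides may differ, but the per-monomial bound makes the argument go through: for any monomial and any vertex $\sigma'$ of $[\alpha',\beta']$ we bound $|c\,t^j|_{\sigma'}$ by $\eta^{-N}$ times $|c\,t^j|_{\sigma}$ for the matching vertex $\sigma$ of $[\alpha,\beta]$, then take suprema.

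\textbf{The main obstacle} is bookkeeping about what "$f_i=0$ for $|i|<N$" buys us. Since $f_i$ is itself a Laurent series in the other $n-1$ variables, the hypothesis only controls the exponent of $t_r$, not the full multidegree $j$. So the clean coordinate-by-coordinate gain of $\eta^{-|j|}$ must be split as $\eta^{-|j_r|}\cdot\eta^{-\sum_{i\neq r}|j_i|}$, and only the first factor is $\leq\eta^{-N}$; the remaining factor is $\leq 1$ because $\eta>1$. That is exactly what is needed, so the argument still works, but the write-up has to make clear that the scaling $\alpha'=\alpha\eta$, $\beta=\beta'\eta$ holds in \emph{every} coordinate (shrinking the polyannulus inward in all directions), so that the ``non-distinguished'' coordinates contribute a harmless factor $\leq 1$ rather than a factor $>1$. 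Once that is set up correctly, the rest is the routine supremum-of-monomials manipulation sketched above, and I would present it in essentially that order: monomial estimate coordinatewise, then globalize over the terms of $f$ and over the vertices of the two polyannuli.
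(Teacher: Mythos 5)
Your proof is correct and follows essentially the same route as the paper's: prove the coordinate-by-coordinate gain of $\eta^{-|j_i|}$ per coordinate for a single monomial (using $\alpha'_i=\eta\alpha_i$, $\beta'_i=\eta^{-1}\beta_i$ and the choice of vertex $\gamma_i\in\{\alpha_i,\beta_i\}$ according to the sign of $j_i$), observe that the hypothesis controls $|j_r|\geq N$ while the remaining coordinates contribute harmless factors $\leq 1$ since $\eta>1$, and then pass to $f$ by taking the supremum over its monomials. The subtlety you flag---that the hypothesis bounds only the $t_r$-degree of a monomial, not its full multidegree $|j|$---is handled correctly in your write-up and is in fact made more explicit than in the paper's own proof.
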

\begin{proof}
Put $\alpha=(\alpha_1,\dots,\alpha_n),\alpha'=(\alpha'_1,\dots,\alpha'_n),\beta=(\beta_1,\dots,\beta_n),$ $\beta'=(\beta_1',\dots,\beta_n')$. Then $\alpha'_i=\alpha_i\eta$ and $\beta'_i=\beta_i\eta^{-1}$.
For a monomial $t^s=t_1^{s_1}\cdots t_n^{s_n}$ with $|s_i|\geq N$, let $\gamma_i$ be $\alpha_i$ if $s_i<0$ and $\beta_i$ if $s_i\geq 0$, and let $\gamma'_i$ be $\alpha'_i$ if $s_i<0$ and $\beta'_i$ if $s_i\geq 0$. Then $r_i^{s_i}=(\gamma'_i)^{s_i}\eta^{|s_i|}$. Hence
$$ |t^s|_{[\alpha,\beta]}=\prod_{i=1}^n \gamma_i^{s_i}=\eta^{|s|}\prod_{i=1}^n(\gamma_i')^{s_i}\geq \eta^N|t^s|_{[\alpha',\beta']}, $$
as required.

In general case, we can take $s\in\ZZ^n$ such that $|f|_{[\alpha',\beta']}=|f_st_r^s|_{[\alpha',\beta']}$. Then, by the case of monomials,
$$ |f|_{[\alpha',\beta']}= |f_st_r^s|_{[\alpha',\beta']}\leq \eta^{-N}|f_st_r^s|_{[\alpha,\beta]}\leq \eta^{-N}|f|_{[\alpha,\beta]}. $$
\end{proof}

\begin{lem}[cf. \cite{Ked2}, Theorem 3.4.22 and \cite{KS}]
Let $P$ be an object in $\mathscr{C}_\rho$ admitting an exponent $A$ with nontrivial Liouville partition $A=B\cup C$ in the $r$-th direction. Then $P$ admits a nontrivial decomposition of the form $P=P_1\oplus P_2$.
\label{dec}
\end{lem}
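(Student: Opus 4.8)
The plan is to exhibit a nonzero, non-identity horizontal idempotent $\Pi\in\End_{\cc}(P)$. Since $\cc$ is abelian (Proposition~\ref{cab}) it is idempotent-complete, so such a $\Pi$ splits $P$ as $P=\Im(\Pi)\oplus\Im(1-\Pi)$ with both summands objects of $\cc$ (automatically free), and the decomposition is nontrivial precisely when $\Pi\neq 0,\mathrm{id}$. Fix $\alpha<\rho<\beta$, a finite free $\Kabn$-module $P'$ defining $P$, a basis $\ee$ of $P'$, and the sequence $(S_{k,A})_k$ in $\mathrm{M}_m(\Kabn)$ realizing $A$ as in Definition~\ref{exp}; permuting the basis, assume the first $m_1$ diagonal entries of $A$ form $B$ and the last $m-m_1$ form $C$, so $1\le m_1\le m-1$ by nontriviality of the partition. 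Put $\Theta=\diag(I_{m_1},0_{m-m_1})$, and for $k>k_0$ (where $S_{k,A}$ is invertible over $\Kabn$ with $|S_{k,A}|_{[\alpha,\beta]}\le p^{lk}$ and $|S_{k,A}^{-1}|_{[\alpha,\beta]}\le p^{(m-1)lk}$, by Definition~\ref{exp}(2) and the remarks after it) set
$$\Pi_k:=S_{k,A}\,\Theta\, S_{k,A}^{-1}\in\mathrm{M}_m(\Kabn),$$
the $\Kabn$-linear projector onto $\langle v_{k,A,1},\dots,v_{k,A,m_1}\rangle$ along $\langle v_{k,A,m_1+1},\dots,v_{k,A,m}\rangle$. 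Each $\Pi_k$ is idempotent with trace $m_1$, and since $\Pi_k$ acts by the scalars $1,0$ on the $v_{k,A,j}$ while $\zeta^*$ acts on $v_{k,A,j}$ by the scalar $\zeta^{A_j}$ for $\zeta\in\Gamma_k^n$, one checks directly that $\Pi_k\otimes\mathrm{id}$ commutes with $\zeta^*$ on $P'\otimes_KK(\Gamma)$ for every $\zeta\in\Gamma_k^n$. It therefore suffices to prove that $(\Pi_k)_k$ converges in $\mathrm{M}_m(K_{[\alpha',\beta'],n})$ for some $\alpha<\alpha'<\rho<\beta'<\beta$.

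Convergence follows from a Cauchy estimate for $\Pi_{k+1}-\Pi_k$. As $S_{k+1,A}$ also satisfies condition~(1) of Definition~\ref{exp} at level $k$, the frame-change matrix $U_k:=S_{k,A}^{-1}S_{k+1,A}\in\mathrm{M}_m(\Kabn)$ satisfies $v_{k+1,A,j}=\sum_i(U_k)_{ij}v_{k,A,i}$, and comparing the level-$k$ eigenvector relations gives $(U_k)_{ij}(\zeta t)=\zeta^{A_j-A_i}(U_k)_{ij}(t)$ for all $\zeta\in\Gamma_k^n$; the same holds for $U_k^{-1}$. Hence $(U_k)_{ij}$ and $(U_k^{-1})_{ij}$ are supported on monomials $t^s$ with $s\equiv A_j-A_i\pmod{p^k}$ coordinatewise. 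When $i,j$ lie in different blocks, the $r$-th coordinate of $A_j-A_i$ is $\pm(b-c)$ for some $b\in B^r$, $c\in C^r$, which is non-Liouville and not an integer, so these entries are supported on monomials with $|s_r|\ge N_k$, where $N_k:=\min_{(i,j)\ \mathrm{cross}}p^k\langle(A_j^r-A_i^r)/p^k\rangle$; the defining inequality for non-Liouville numbers, applied to the finitely many differences, forces $N_k/k\to\infty$. Fix $\eta>1$ small enough that $[\alpha',\beta']:=[\eta\alpha,\eta^{-1}\beta]$ satisfies $\alpha<\alpha'<\rho<\beta'<\beta$. Writing $U_k,U_k^{-1}$ in $2\times2$ block form relative to the $B/C$ splitting, Lemma~\ref{simcom} applied in the $r$-th variable bounds each off-diagonal block of $U_k$ and of $U_k^{-1}$ by $\eta^{-N_k}p^{ck}$ on $[\alpha',\beta']$ (here $c$ is a constant independent of $k$ absorbing $|U_k|_{[\alpha,\beta]}\le|S_{k,A}^{-1}|_{[\alpha,\beta]}|S_{k+1,A}|_{[\alpha,\beta]}$ and $|U_k^{-1}|_{[\alpha,\beta]}\le|S_{k+1,A}^{-1}|_{[\alpha,\beta]}|S_{k,A}|_{[\alpha,\beta]}$), while the diagonal blocks are $\le p^{ck}$ on $[\alpha',\beta']$. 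A short block computation using $U_kU_k^{-1}=I$ then gives $|U_k\Theta U_k^{-1}-\Theta|_{[\alpha',\beta']}\le\eta^{-N_k}p^{ck}$ (possibly enlarging $c$). Since $\Pi_{k+1}-\Pi_k=S_{k,A}(U_k\Theta U_k^{-1}-\Theta)S_{k,A}^{-1}$ and $|S_{k,A}|_{[\alpha',\beta']},|S_{k,A}^{-1}|_{[\alpha',\beta']}\le p^{ck}$, we get $|\Pi_{k+1}-\Pi_k|_{[\alpha',\beta']}\le\eta^{-N_k}p^{c'k}$, which is summable because $N_k/k\to\infty$; hence $(\Pi_k)_k$ converges to some $\Pi\in\mathrm{M}_m(K_{[\alpha',\beta'],n})$.

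Finally, $\Pi$ is idempotent (multiplication on $\mathrm{M}_m(K_{[\alpha',\beta'],n})$ is continuous), has trace $m_1$, and commutes with every $\zeta^*$: for $\zeta\in\Gamma_{k_0}^n$ the identity $(\Pi_k\otimes\mathrm{id})\zeta^*=\zeta^*(\Pi_k\otimes\mathrm{id})$ holds for all $k\ge k_0$, and since $\zeta^*$ is a bounded operator and both sides depend continuously on the matrix of $\Pi_k$, the identity passes to the limit; as $\Gamma^n=\bigcup_{k_0}\Gamma_{k_0}^n$ this gives horizontality. Thus $\Pi$ is a nonzero, non-identity horizontal idempotent and $P=\Im(\Pi)\oplus\Im(1-\Pi)$ is the required nontrivial decomposition. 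The main obstacle is the Cauchy estimate of the second paragraph: one must pin down the $t_r$-support of the frame-change matrices $U_k^{\pm1}$ and then verify that the super-linear lower bound $N_k\gg k$ produced by non-Liouvilleness in the $r$-th direction outweighs the at-worst-geometric (in $p^k$) growth of $|S_{k,A}^{\pm1}|_{[\alpha,\beta]}$. This is the higher-dimensional counterpart of Kedlaya's one-dimensional argument (\cite{Ked2}, Theorem~3.4.22); the only genuinely new feature is that the Liouville gap occurs in a single direction $r$, so it is the $t_r$-degree, not the total width, that must be tracked, which is exactly why Lemma~\ref{simcom} applied in the $r$-th variable is the right tool.
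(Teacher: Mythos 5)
Your proof is correct and follows essentially the same route as the paper's: you define $\Pi_k=S_{k,A}\Theta S_{k,A}^{-1}$ (the paper's $N_k$), show the off-diagonal blocks of the transition matrix between consecutive frames are supported in $t_r$-degree at least $\approx p^k\langle(b_r-c_r)/p^k\rangle$ by the $\Gamma_k^n$-transformation rule, invoke non-Liouvilleness in the $r$-th direction together with Lemma~\ref{simcom} to make the sequence Cauchy on a slightly shrunk polysegment, and pass to the limit to obtain a nonzero, non-identity horizontal projector. The only differences from the paper are cosmetic bookkeeping (you conjugate by $U_k=S_{k,A}^{-1}S_{k+1,A}$ and estimate $U_k\Theta U_k^{-1}-\Theta$ directly, whereas the paper works with $E_k=U_k^{-1}$ and the commutator $L_k=E_k\Theta-\Theta E_k$; and you phrase $N_k/k\to\infty$ as the target rather than fixing a constant $d$ in advance), neither of which changes the argument.
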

\begin{proof}
Suppose that $P$ is defined by $P'$ over a closed polysegment $J=[\alpha,\beta]$ with $\alpha<\rho<\beta$ which admits $A$ as an exponent.
Choose $\eta>1$ and $\alpha',\ \beta'$ with $\frac{\alpha'_i}{\alpha_i}=\frac{\beta_i}{\beta'_i}=\eta$ such that $\alpha'<\rho<\beta'$.

Choose a basis $e_1,\dots,e_m$ of $P'$ and let $S_{k,A}(k\geq 1)$ be matrices defining the exponent $A$ which are invertible as elements in $\mathrm{M}_m(K_{[\alpha,\beta],n})$. Choose $\lambda\in(0,1)$ and a sufficiently large constant $d$ such that
$$ p^{5ml}\eta^{-d}\leq \lambda .$$

Since $B\cup C$ form a Liouville partition in the $r$-th direction of $A$, we can reorder $A$ and write \begin{equation*}
A=\left(
\begin{pmatrix}
B_1 &  \\
  &C_1
\end{pmatrix}
,\dots,
\begin{pmatrix}
    B_n &  \\
      &C_n
\end{pmatrix}
\right),
\end{equation*}
where $B=(B_1,\dots,B_n)$ and $C=(C_1,\dots,C_n)$. For $d$ chosen above, there is $k_0>0$ such that $h\equiv b_r-c_r\ (\mathrm{mod}\ p^k)$ forces $|h|\geq dk$ for all $k>k_0$ and $b_r\in B_r$, $c_r\in C_r$.

Let $(v_{k,A,1},\dots,v_{k,A,m})=(e_1,\dots,e_m)S_{k,A},$ and let $\Pi_k$ be the projector onto the submodule of $M_{[\alpha,\beta]}$ generated by $v_{k,A,i}$ for those $i$ for which $A_i\in B$, and we denote the representation matrix of $\Pi_k$ with respect to basis $(e_1,\dots,e_m)$ by $N_k$. Then we have 
\begin{align*}
\Pi_k(\ee)
=\ & \Pi_k(\vv{k}{A})S^{-1}_{k,A}\\
=\ & (\vv{k}{A})
\begin{pmatrix*}
 I & O\\
 O & O\\
\end{pmatrix*}
S_{k,A}^{-1}\\
=\ & (\ee) S_{k,A} 
\begin{pmatrix*}
    I & O\\
    O & O\\
\end{pmatrix*}
S_{k,A}^{-1}.
\end{align*}
Thus $|N_k|_{[\alpha,\beta]}\leq p^{mkl}$. Also notice that

\begin{align*}
(\Pi_k-\Pi_{k+1})(\vv{k}{A})
 =\ & (\vv{k}{A})
\begin{pmatrix*}
    I & O\\
    O & O\\
\end{pmatrix*}\\
 - & \Pi_{k+1}(\vv{k+1}{A})S^{-1}_{k+1,A}S_{k,A}\\
 =\ & (\vv{k+1}{A})S^{-1}_{k+1,A}S_{k,A}
\begin{pmatrix*}
    I & O\\
    O & O\\
\end{pmatrix*}\\
- & (\vv{k+1}{A})
\begin{pmatrix*}
    I & O\\
    O & O\\
\end{pmatrix*}
 S^{-1}_{k+1,A} S_{k,A}\\
 =\ & (\vv{k+1}{A}) L_k \\
\end{align*}
with 
\begin{align*}
L_k=S^{-1}_{k+1,A} S_{k,A}
\begin{pmatrix*}
    I & O\\
    O & O\\
\end{pmatrix*}
-
\begin{pmatrix*}
    I & O\\
    O & O\\
\end{pmatrix*}
S^{-1}_{k+1,A} S_{k,A}.
\end{align*}
Then we have $|L_k|_{[\alpha,\beta]}\leq p^{(k+1)l(m-1)+kl}\leq p^{2mlk}$.
 
Moreover, if we write 
\begin{align*}
S^{-1}_{k+1,A} S_{k,A}=E_k=
\begin{pmatrix*}
E_{11}&E_{12}\\
E_{21}&E_{22}\\
\end{pmatrix*}, 
\end{align*}
we can compute the action of $\zeta$ on $L_k$ as follows:
\begin{align*}
\zeta^*(L_k)
= & \zeta^*\left(S^{-1}_{k+1,A} S_{k,A}
\begin{pmatrix*}
    I & O\\
    O & O\\
\end{pmatrix*}
-
\begin{pmatrix*}
    I & O\\
    O & O\\
\end{pmatrix*}
S^{-1}_{k+1,A} S_{k,A}\right)\\
= & \zeta^{-A}S^{-1}_{k+1,A} S_{k,A}\zeta^A
\begin{pmatrix*}
    I & O\\
    O & O\\
\end{pmatrix*}
-
\begin{pmatrix*}
    I & O\\
    O & O\\
\end{pmatrix*}
\zeta^{-A}S^{-1}_{k+1,A} S_{k,A}\zeta^A\\
= & 
\begin{pmatrix*}
O& -\zeta^{-B}E_{12}\zeta^C\\
-\zeta^{-C}E_{21}\zeta^B& O \\
\end{pmatrix*}.
\end{align*}
This means that for $(i,j)$-th entry in $E_{21}$,
\begin{align*}
\zeta^*(E_{21})_{ij}
& = \zeta_1^{-c_{1i}}\dots\zeta_n^{-c_{ni}}(E_{21})_{ij}\zeta_1^{b_{1j}}\dots\zeta_n^{b_{nj}}\\
& = \zeta_1^{b_{1j}-c_{1i}}\dots\zeta_n^{b_{nj}-c_{ni}}(E_{21})_{ij},
\end{align*}
Where $c_{ki}$(resp.$b_{kj}$) denotes the $(i,i)$-th(resp.$(j,j)$-th) entry of $C_k$(resp.$B_k$). 
Thus the coefficient of $t^h$ in $(E_{12})_{ij}$ can only be non-zero if $h_l\equiv b_{lj}-c_{li}\ (\mathrm{mod}\ p^k)$, and this condition for $l=r$ implies that $|h_r|\geq dk$. This along with Lemma \ref{simcom} implies that, for $k\geq k_0$,
$$|(E_{21})_{ij}|_{[\alpha',\beta']}\leq p^{2mlk}\eta^{-dk}. $$ 
By a similar argument we obtain the same result for $E_{12}$. Thus we have $|L_k|_{[\alpha',\beta']}\leq p^{2mlk}\eta^{-dk} $.
Note that 
$$ (\Pi_k-\Pi_{k+1})(\ee)=(\ee) S_{k+1,A}L_k S_{k,A}^{-1} ,$$
we obtain the inequality $|N_k-N_{k+1}|_{[\alpha',\beta']}\leq p^{4mlk}\leq \lambda^k$. So $\Pi_k$ converges to a projector of $P'_{[\alpha',\beta']}$ which is compatible with the action of $\Gamma^n$, and gives the desired decomposition of $P'_{[\alpha',\beta']}$ and hence the desired decomposition of $P$.
\end{proof}

\begin{lem}[cf. \cite{KS}]
    \label{decom_in_crho}
Let $P$ be an object in $\mathscr{C}_\rho$ with an exponent $A$ admitting a Liouville partition $A=\AA_1\cup\dots\cup\AA_k$ in the $r$-th direction. Then $P$ admits a unique decomposition $P_1\oplus\dots\oplus P_k$ in $\mathscr{C}_\rho$ for which $P_i$ admits an exponent weakly equivalent to $\AA_i$ for $1\leq i\leq k$.
\end{lem}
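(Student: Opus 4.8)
The plan is to prove existence by induction on $k$, using Lemma~\ref{dec} to split off one summand at a time, and then to deduce uniqueness from the vanishing of morphisms between summands indexed by distinct parts.

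For existence, we may discard any empty $\AA_i$ (taking the corresponding $P_i=0$), so assume every part is nonempty; the case $k=1$ is trivial. For $k\geq 2$ set $\BB=\AA_2\cup\cdots\cup\AA_k$. Since the defining inequality of a Liouville partition in the $r$-th direction only involves differences of elements taken from distinct parts, $A=\AA_1\cup\BB$ is again a nontrivial Liouville partition in the $r$-th direction, and $\AA_2,\dots,\AA_k$ is a Liouville partition of $\BB$. By Lemma~\ref{dec} we obtain a decomposition $P=Q_1\oplus Q_2$ in $\cc$ (which is abelian by Proposition~\ref{cab}). The key step is to identify the exponents of $Q_1$ and $Q_2$ by examining the proof of Lemma~\ref{dec}: there $Q_1$ is realized as the image of the limit $\Pi$ of the projectors $\Pi_k$ onto the span of those distinguished vectors from $(e_1,\dots,e_m)S_{k,A}$ whose index lies in $\AA_1$, on which $\zeta^*$ acts diagonally through $\zeta^A$. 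Since $\Pi$ is a morphism in $\cc$, for $k$ large the images under $\Pi$ of these vectors form a basis of $Q_1$ on which $\Gamma^n_k$ still acts through $\zeta^A$; writing them in a fixed basis of a model of $Q_1$ over a suitable closed subpolyannulus, one gets matrices satisfying conditions (1) and (2) of Definition~\ref{exp} with candidate exponent $\AA_1$, and condition (3) is recovered after rescaling each matrix by a suitable power of $p$, using that $\det S_{k,A}$ is a unit whose norm stays bounded below by $p^{-ck}$ on that subpolyannulus and that the off-diagonal corrections coming from $\Pi-\Pi_k$ are negligible there. Hence $\AA_1$ is an exponent of $Q_1$, and symmetrically $\BB$ is an exponent of $Q_2$. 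Applying the induction hypothesis to $Q_2$ with the partition $\AA_2,\dots,\AA_k$, and setting $P_1=Q_1$, completes the construction. (Alternatively, one can iterate the projectors of Lemma~\ref{dec} simultaneously over all parts $\AA_1,\dots,\AA_k$ to obtain the full decomposition in one step.)

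For uniqueness, let $P=\bigoplus_i P_i=\bigoplus_i P_i'$ be two such decompositions. For $i\neq j$, consider the $\cc$-morphism $\varphi\colon P_i\hookrightarrow P\twoheadrightarrow P_j'$ and put $N=\mathrm{im}(\varphi)\in\cc$. Applying Lemma~\ref{exp123}(1) to $0\to\ker\varphi\to P_i\to N\to 0$ and to $0\to N\to P_j'\to P_j'/N\to 0$, together with Theorem~\ref{weakeq}, we see that the $r$-th component of an exponent of $N$ is weakly equivalent to a submultiset of $\AA_i^r$ and also of $\AA_j^r$. If $N\neq 0$ this component is nonempty; choosing an element $m$ in it and unwinding weak equivalence (pigeonhole over the index $h$ among the finitely many permutation values) yields $a_i\in\AA_i^r$ and $a_j\in\AA_j^r$ such that $p^h\langle(a_i-m)/p^h\rangle$ and $p^h\langle(a_j-m)/p^h\rangle$ are both bounded by a fixed multiple of $h$ for infinitely many $h$, hence $\liminf_h (p^h/h)\langle(a_i-a_j)/p^h\rangle<\infty$. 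Then $a_i-a_j$ is an integer or a $p$-adic Liouville number, contradicting the definition of a Liouville partition in the $r$-th direction. So $\varphi=0$ for every $j\neq i$, whence $P_i\subseteq\bigcap_{j\neq i}\ker(P\to P_j')=P_i'$, and by symmetry $P_i=P_i'$.

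The main obstacle is precisely the identification of the exponent $\AA_1$ of $Q_1$ inside the proof of Lemma~\ref{dec}. Abstract input alone does not suffice: a Liouville partition of a fixed weak-equivalence class need not be unique even up to relabeling its parts, so knowing only that $\cc$ is abelian, that exponents are multiplicative (Lemma~\ref{exp123}) and determined up to weak equivalence (Theorem~\ref{weakeq}) cannot pin down which summand carries $\AA_1$; one must genuinely use that $Q_1$ is built from the distinguished vectors indexed by $\AA_1$ and verify conditions (1)--(3) for the resulting matrices. The remaining arguments are routine manipulations of the estimates already available in Lemma~\ref{dec} and Section~1.
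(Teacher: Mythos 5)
Your uniqueness argument is essentially the same as the paper's: in both cases one shows the cross-morphism $P_i\to P'_j$ ($i\neq j$) vanishes by looking at the exponent of its image, which would have to be a common ``piece'' of two parts of a Liouville partition and hence be empty. The paper packages this cleanly via Proposition~\ref{weexp}(1) (both $\AA'_1$ is weakly equivalent to $C_1\cup C_2$ and $\AA''_2$ weakly equivalent to $C_2\cup C_3$, so $(C_1\cup C_2,C_2\cup C_3)$ is a Liouville partition, forcing $C_2=\emptyset$), whereas you re-derive a special case by hand via pigeonhole on the permutation indices; your version is correct but less economical.

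Your existence argument, however, has a real gap, and your diagnosis of what is needed to close it is backwards. You reduce to showing that the summand $Q_1=\mathrm{im}(\Pi)$ produced by Lemma~\ref{dec} admits $\AA_1$ itself as an exponent, and you assert that this can be read off by ``examining the proof of Lemma~\ref{dec}.'' But Lemma~\ref{dec} only asserts a nontrivial decomposition; it makes no claim about which exponent the summand $\mathrm{im}(\Pi)$ carries, and your sketch of how to produce matrices $S_{k,\AA_1}$ for $Q_1$ satisfying all three conditions of Definition~\ref{exp} --- in particular the lower bound (3) on $|\det|$ --- is not substantiated. The claim that ``$\det S_{k,A}$ is a unit whose norm stays bounded below'' is about the full matrix, not about the block indexed by $\AA_1$, and the lower bound for the block does not follow from the lower bound on the whole determinant. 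You then conclude that ``abstract input alone does not suffice.'' This is precisely what the paper's argument refutes: the paper never determines which exponent Lemma~\ref{dec} hands to which summand. Instead it inducts on the \emph{rank} of $P$ (not on $k$: the paper first reduces to $k=2$), takes arbitrary exponents $\AA'_1,\AA'_2$ of the summands $P'_1,P'_2$, uses Proposition~\ref{weexp}(2) to produce a Liouville partition $\AA''_1\cup\AA''_2$ of $\AA'_1\cup\AA'_2$ aligned with $\AA_1,\AA_2$, then sorts each $\AA'_i$ into $\AA''_{ij}:=\{x\in\AA'_i : x_r\in(\AA''_j)^r\}$, inducts on rank to split $P'_i=P'''_{i1}\oplus P'''_{i2}$ accordingly, and finally \emph{recombines} $P_j:=P'''_{1j}\oplus P'''_{2j}$ so that $P_j$ has exponent $\AA''_{1j}\cup\AA''_{2j}=\AA''_j$, which is weakly equivalent to $\AA_j$. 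This recombination trick is exactly the abstract argument you declared unavailable, and it is what lets the proof avoid the unjustified identification of $Q_1$'s exponent on which your existence step relies. To repair your proof you would either have to supply the missing verification of conditions (1)--(3) for the block of $S_{k,A}$ attached to $Q_1$ (nontrivial, and not what Lemma~\ref{dec} gives you), or switch to the paper's rank induction and recombination.
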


\begin{proof}
We may reduce to the case $k=2$.
We firstly prove that the existence of such a decomposition guarantees the uniqueness. Suppose that $P$ decomposes as $P'_1\oplus P'_2$ and $P''_1\oplus P''_2$ with $P'_i,\ P''_i$ having exponents $\AA'_i,\ \AA''_i$ weakly equivalent to $\AA_i$. Then, by Proposition \ref{weexp}, $\AA'_1$ and $\AA''_2$ forms a Liouville partition of $\AA'_1\cup \AA''_2$ in the $r$-th direction while $\AA'_2$ and $\AA''_1$ forms a Liouville partition of $\AA'_2\cup \AA''_1$ in the $r$-th direction.  Let $f$ be the composition of the inclusion $P'_1\to P$ with the projection $P\to P''_2$. Since $\mathscr{C}_\rho$ is an Abelian category, we have exact sequences:
$$0\to\ker (f)\to P'_1\to \Im(f)\to 0,$$
$$0\to \Im(f)\to P''_2\to \coker(f)\to 0.$$
Choose exponents $C_1, C_2, C_3$ for $\ker(f)$,$\Im(f)$ and $\coker(f)$, respectively. Then, by Lemma \ref{exp123}, $\AA'_1$ is weakly equivalent to $C_1\cup C_2$ and $\AA''_2$ is weakly equivalent to $C_2\cup C_3$. Since $\AA'_1\cup \AA''_2$ is a Liouville partition in the $r$-the direction, $(C_1\cup C_2)\cup(C_2\cup C_3)$ is a Liouville partition in the $r$-th direction. Thus $C_2=\emptyset$, and  $f=0$. If we define $g$ to be the composition of the inclusion $P'_2\to P$ with the projection $P\to P''_1$, by a similar argument we may conclude that $g=0$. So we have $P'_1=P''_1$ and $P'_2=P''_2$.

Now we verify the existence of such a decomposition by induction on the rank of $P$. Without loss of generality, we may assume that both $\AA_1$ and $\AA_2$ are non-empty. Then by Lemma \ref{dec}, there exists a non-trivial decomposition $P=P_1'\oplus P_2'$ in $\mathscr{C}_\rho$. Choose exponents $\AA'_1$ and $\AA'_2$ for $P'_1$ and $P'_2$, respectively. Then by Lemma \ref{exp123} $\AA'_1\cup \AA'_2$ is also an exponent for $P$, and hence weakly equivalent to $\AA_1\cup \AA_2$. By Proposition \ref{weexp}, $\AA'_1\cup \AA'_2$ admits a Liouville partition $\AA''_1\cup \AA''_2$ in the $r$-th direction with $\AA_i''$ weakly equivalent to $\AA_i$, for $i=1,2$. 

Define $\AA''_{ij}$ for $i,j=1,2$ as follows:
$$\AA''_{ij}:=\{ x=(x_1,\dots,x_n)\in \AA'_i: x_r\in (\AA''_j)^r \}.$$
Since $\AA''_1\cup \AA''_2$ is a Liouville partition of $\AA'_1\cup \AA'_2$ in the $r$-th direction, $(\AA''_1)^r\cup (\AA''_2)^r$ is a Liouville partition of  $(\AA'_1)^r\cup (\AA'_2)^r$. Thus $((\AA''_1)^r\cap (\AA'_i)^r)\cup((\AA''_2)^r\cap (\AA'_i)^r)$ is a Liouville partition of $(\AA'_i)^r$, from which we conclude that $\AA''_{i1}\cup \AA''_{i2}$ is a Liouville partition of $\AA'_i$ in the $r$-th direction for $i=1,2$. On the other hand, since
$$ (\AA''_{1j})^r\cup (\AA''_{2j})^r= (\AA''_j)^r, $$
we conclude that $\AA''_{1j}\cup \AA''_{2j}=\AA''_j$ for $j=1,2$.

By induction hypothesis we may decompose $P'_i$ as $P'''_{i1}\oplus P'''_{i2}$ with $P'''_{ij}$ admitting an exponent $\AA'''_{ij}$ weakly equivalent to $\AA''_{ij}$. Now we define $P_i=P'''_{1i}\oplus P'''_{2i}$ so that $P \simeq P_1\oplus P_2$, and by construction $P_i$ admits the exponent $\AA'''_{1i}\cup \AA'''_{2i}$ which is weakly equivalent to $\AA''_{1i}\cup \AA''_{2i}=\AA''_i$, hence weakly equivalent to $\AA_i$ as desired.
\end{proof}

\begin{thm}[cf.\cite{KS}]\label{decomposition_of_D_mod}
Let $P$ be an object in $\mathscr{D}_\rho$ having an exponent $A$ with Liouville partition $\AA_1,\dots,\AA_k$ in the $r$-th direction. Then there exists a unique direct sum decomposition $P=P_1\oplus\dots\oplus P_k$ in $\mathscr{D}_\rho$ with each $P_i$ having exponent weakly equivalent to $\mathscr{A}_i$ for $1\leq i\leq k$.
\end{thm}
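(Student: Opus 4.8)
The plan is to obtain the decomposition first inside the ambient abelian category $\mathscr{C}_\rho$, where Lemma~\ref{decom_in_crho} is available, and then to argue that the resulting decomposition is automatically horizontal, so that it descends to $\mathscr{D}_\rho$. Viewing $P$ as an object of $\mathscr{C}_\rho$ through the functor $\mathscr{D}_\rho\to\mathscr{C}_\rho$, and noting that $P$ has exponent $A$ with the given Liouville partition $\AA_1,\dots,\AA_k$ in the $r$-th direction, Lemma~\ref{decom_in_crho} yields the unique decomposition $P=Q_1\oplus\dots\oplus Q_k$ in $\mathscr{C}_\rho$ with $Q_i$ admitting an exponent weakly equivalent to $\AA_i$. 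I would then fix a closed polysegment $J$ containing $\rho$ in its interior, small enough that $P$ is defined over $J$ by a differential module $P'$ (whose $\Gamma^n$-action is the one induced by $\nabla$) and that the decomposition above is defined over $J$ by $K_{J,n}$-submodules $Q_1',\dots,Q_k'$ of $P'$ which are direct summands and whose base changes to $K(\Gamma)$ are $\Gamma^n$-stable.

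The heart of the matter is to show that each $Q_i'$ is a differential submodule of $P'$. Let $\pi_j\colon P'\to P'$ be the projector onto $Q_j'$ with kernel $\bigoplus_{j'\neq j}Q_{j'}'$. For fixed $1\le l\le n$ and $i\neq j$, I would look at the composite $g=\pi_j\circ\nabla(t_l\partial_{t_l})|_{Q_i'}\colon Q_i'\to Q_j'$. First, $g$ is $K_{J,n}$-linear: the Leibniz rule gives $\nabla(t_l\partial_{t_l})(fx)=(t_l\partial_{t_l}f)\,x+f\,\nabla(t_l\partial_{t_l})(x)$ for $f\in K_{J,n}$ and $x\in Q_i'$, and $\pi_j$ annihilates the first term since $\pi_j(x)=0$. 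Second, $g$ is $\Gamma^n$-equivariant: by integrability of $\nabla$ the operators $\nabla(t_1\partial_{t_1}),\dots,\nabla(t_n\partial_{t_n})$ pairwise commute, so $\nabla(t_l\partial_{t_l})$ commutes with the power series in these operators which defines each $\zeta^*$ ($\zeta\in\Gamma^n$); combining this with the $\Gamma^n$-equivariance of $\pi_j$ and with $\zeta^*(Q_i')\subset Q_i'\otimes_KK(\Gamma)$ gives $\zeta^*\circ g=g\circ\zeta^*$. Hence $g$ is a morphism $Q_i\to Q_j$ in $\mathscr{C}_\rho$.

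Next I would show $\mathrm{Hom}_{\mathscr{C}_\rho}(Q_i,Q_j)=0$ for $i\neq j$, which forces $g=0$; this is the same mechanism used in the uniqueness part of the proof of Lemma~\ref{decom_in_crho}. For a morphism $g\colon Q_i\to Q_j$, choose an exponent $E$ of $\Im(g)$ and apply Lemma~\ref{exp123}(1) to the exact sequences $0\to\ker g\to Q_i\to\Im(g)\to 0$ and $0\to\Im(g)\to Q_j\to\coker g\to 0$; together with Theorem~\ref{weakeq} this produces a multisubset weakly equivalent to $\AA_i$ and a multisubset weakly equivalent to $\AA_j$, each containing $E$ as a sub-multiset. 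By Proposition~\ref{weexp}(1) these two multisubsets still belong to a Liouville partition in the $r$-th direction, so their $r$-th components are Liouville-separated; if $E$ were nonempty, any $e\in E^r$ would then witness $e-e=0\in\ZZ$, contradicting the required non-integrality. Thus $E=\emptyset$, $\Im(g)=0$, $g=0$. Therefore $\nabla(t_l\partial_{t_l})(Q_i')$ is killed by every $\pi_j$ with $j\neq i$, i.e.\ $\nabla(t_l\partial_{t_l})(Q_i')\subset Q_i'$; as $t_l$ is a unit, $\nabla(\partial_{t_l})(Q_i')\subset Q_i'$ too, and since $i,l$ are arbitrary, $P'=\bigoplus_iQ_i'$ as differential modules over $K_{J,n}$.

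To finish, each $Q_i'$ is a finite projective differential module over $K_{J,n}$ (a direct summand of the finite free $P'$), and it satisfies the Robba condition: for $\rho'\in J$ the differential submodule $Q_i'\otimes F_{\rho'}$ of $P'\otimes F_{\rho'}$ has spectral norm of $\nabla(\partial_{t_l})$ bounded by that of $P'\otimes F_{\rho'}$, so $IR(Q_i'\otimes F_{\rho'})\ge IR(P'\otimes F_{\rho'})=1$. Extending scalars to $R_\rho$---over which finite projective modules are free, by Theorem~\ref{QS}---gives objects $P_1,\dots,P_k$ of $\mathscr{D}_\rho$ with $P=P_1\oplus\dots\oplus P_k$ in $\mathscr{D}_\rho$, and since the image of $P_i$ in $\mathscr{C}_\rho$ is $Q_i$, the module $P_i$ has exponent weakly equivalent to $\AA_i$. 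Uniqueness follows from Lemma~\ref{decom_in_crho} and the faithfulness of $\mathscr{D}_\rho\to\mathscr{C}_\rho$: two decompositions of $P$ in $\mathscr{D}_\rho$ with the prescribed exponents map to two decompositions of $P$ in $\mathscr{C}_\rho$ with the prescribed exponents, which coincide by Lemma~\ref{decom_in_crho}, so the corresponding horizontal projectors coincide. I expect the middle step---forcing the $\mathscr{C}_\rho$-decomposition to respect $\nabla$---to be the only real obstacle, and the point that unlocks it is that post-composing $\nabla(t_l\partial_{t_l})$ with $\pi_j$ and restricting to $Q_i'$ cancels the Leibniz term, turning a connection-type operator into a genuine $\mathscr{C}_\rho$-morphism between summands whose exponents are Liouville-separated, hence zero.
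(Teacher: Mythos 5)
Your proof is correct, and it takes a genuinely different route from the paper's at the crucial step, namely showing that the $\mathscr{C}_\rho$-decomposition furnished by Lemma~\ref{decom_in_crho} is horizontal. You argue algebraically: for $i\neq j$ the composite $\pi_j\circ\nabla(t_l\partial_{t_l})|_{Q_i'}$ is $K_{J,n}$-linear (the Leibniz term dies because $\pi_j$ kills $Q_i'$) and $\Gamma^n$-equivariant (since by integrability the operators $\nabla(t_1\partial_{t_1}),\dots,\nabla(t_n\partial_{t_n})$ commute, hence $\nabla(t_l\partial_{t_l})$ commutes with the power series defining $\zeta^*$, and $\pi_j$ is a $\mathscr{C}_\rho$-morphism), so it lies in $\mathrm{Hom}_{\mathscr{C}_\rho}(Q_i,Q_j)$, which vanishes by the exponent-separation argument already present in the uniqueness part of Lemma~\ref{decom_in_crho}. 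The paper instead argues analytically: it uses the rescaling $\lambda^*$ for $\lambda$ close to $1$, observes that the pullbacks $P_{i,\lambda}$ have the same exponents, invokes uniqueness of the $\mathscr{C}_\rho$-decomposition to conclude that each $\lambda^*$ preserves the summands, and then recovers $D_i$ as the limit of the difference quotients $(\lambda^*-1)/(\lambda-1)$. Your version is more self-contained (no convergence statement to justify, and the only nontrivial input is a Hom-vanishing you have already, in effect, proved), while the paper's trades that verification for a slick limiting argument that reuses the uniqueness black-box. Both are sound; yours makes more explicit where the Leibniz defect disappears and why integrability is needed, which is arguably clearer. The remainder of your write-up (descent of the Robba condition to the summands, base change to $R_\rho$, and deducing uniqueness from Lemma~\ref{decom_in_crho}) is fine and essentially matches what the paper does implicitly.
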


\begin{proof}
We may reduce to the case $k=2$. Let $\AA_1\cup \AA_2$ be a Liouville partition in the $r$-th direction. Then by Lemma \ref{decom_in_crho}, $P$ admits a  unique decomposition $P_1\oplus P_2$ in $\mathscr{C}_\rho$ with exponent of $P_i$ weakly equivalent to $\AA_i$. For $\lambda\in (K^\times)^n$ with $|1-\lambda_i|\leq 1$ and for $1\leq i\leq n$, the substitution $t\mapsto \zeta t$ and $t\mapsto \lambda t$ commute. 
Consequently, the pullback $P_{i,\lambda}$ of $P_i$ along $\lambda^*$ are again objects in $\mathscr{C}_\rho$ with the same exponent as that of $P_i$ for $i=1,2$. So, by the uniqueness of decomposition in Lemma \ref{decom_in_crho}, $P_1\oplus P_2$ and $P_{1,\lambda}\oplus P_{2,\lambda}$ must coincide, which implies that $\lambda^*$ preserves $P_1$
and $P_2$. Now for $1\leq i\leq n$, take a sequence of $\lambda=(1,\dots,1,\lambda_i,1,\dots,1)$ with $\lambda_i$ converges to $1$, and consider the action of $\frac{\lambda^*-1}{\lambda-1}$ on $P_1$ and $P_2$. As $\lambda_i$ goes to $1$, this operator converges to $D_i$, from which we conclude that this decomposition in $\mathscr{C}_\rho$ is also a decomposition in $\mathscr{D}_\rho$.
\end{proof}

\begin{rmk}\label{above4}
    Let the notations be as in Theorem \ref{decomposition_of_D_mod}. Then, by definition of the category $\mathscr{D}_\rho$, we have the following: If $P$ is defined by $P'$ over an open polysegment $I$ containing $\rho$, we have a decomposition of $P'=P'_1\oplus\cdots\oplus P'_k$ of $P'$ by finite free differential modules inducing the decomposition $P=P_1\oplus\cdots\oplus P_k$ of the theorem, after shrinking $I$ suitably.

    Moreover, such a decomposition is unique: Indeed, we can check the uniqueness after restricing to closed subpolysegments $J$ of $I$ containing $\rho$ in its interior because $P'_i=\varprojlim_J P_{i,J}',P_i''=\varprojlim_J P''_{i,J}$. Then the cokernels of the inclusions $P_i'\hookrightarrow P'_i+P_i'',P''_i\hookrightarrow P_i'+P_i''$ are finite projective by Lemma \ref{proj}, and they are zero after tensoring with $R_\rho$ by the unique assertion of Theorem \ref{decomposition_of_D_mod}. Thus the cokernels are zero and so $P'_i=P''_i$, as required.
\end{rmk}

\subsection{Generalized $p$-adic Fuchs theorem over polyannuli}
Now we consider a finite projective differential module $P$ over an open polysegment $I$ of $\RR_{>0}^n$ satisfying the Robba condition. Then, for $\rho\in I$, $P_\rho:=P\otimes_{K_{I,n}}R_\rho$ is an object of $\mathscr{D}_\rho$ and so an exponent $A_\rho$ of $P_\rho$ is defined.

Then we have the following independence result:

\begin{lem}
\label{glue_up}
Let the notations be as above. Then, for any $\rho,\rho'\in I$, $A_\rho$ and $A_{\rho'}$ are weakly equivalent.
\end{lem}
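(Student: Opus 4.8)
The plan is to reduce the statement to the case where $\rho$ and $\rho'$ lie in a single closed subpolysegment $J\subset I$ on which $P$ is ``good'', and then to exploit the fact that the exponent, when computed from a single defining module $P_J$, is already intrinsic up to weak equivalence. More precisely, $I$ is connected (it is a product of intervals, hence path-connected), so it suffices to prove the assertion locally: every point of $I$ has a neighbourhood on which $A_\rho$ is constant up to weak equivalence, and then a chaining argument along a path from $\rho$ to $\rho'$ gives the global statement, since weak equivalence is an equivalence relation (this is noted right after Definition \ref{equivalence}). Thus I may assume from the start that there is a single closed polysegment $J$ with $\rho,\rho'$ both in its interior and such that $P_{J}$ is a finite free differential module over $K_{J,n}$ (possible by Theorem \ref{QS}, after shrinking) satisfying the Robba condition, inducing $P_\rho$ and $P_{\rho'}$.

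Now fix such a $J$ and a basis $e_1,\dots,e_m$ of $P_J$. The key point is that the matrices $S_{k,A}$ occurring in Definition \ref{exp} are defined over $K_{J,n}$ and the conditions (2), (3) there are stated in terms of the supremum norm $|\cdot|_{J}$, which dominates every $\rho''$-Gauss norm for $\rho''\in J$. Hence a sequence $(S_{k,A})$ that witnesses $A$ as an exponent of $P_\rho$ (which is $P_\rho$ = $P_J\otimes_{K_{J,n}} R_\rho$, so ``$A$ is an exponent of $P$ defined by $P_J$ over $J$'' in the sense of Definition \ref{exp}) witnesses the \emph{same} $A$ as an exponent of $P_{\rho'}$, because conditions (1)--(3) are all verified on all of $J$ at once and do not see which interior point we singled out. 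Concretely, condition (1) is the identity $\zeta^*(v_{k,A,1},\dots,v_{k,A,m})=(v_{k,A,1},\dots,v_{k,A,m})\zeta^{A}$ in $P_J\otimes_K K(\Gamma)$, which is independent of $\rho$; condition (2) is $|S_{k,A}|_{J}\le p^{lk}$, independent of $\rho$; condition (3) is $|\det S_{k,A}|_{J}\ge 1$, and since in fact the proof of Theorem \ref{exexp} produces $S_{k,A}$ with $|\det(S_{k,A})_0|\ge 1$ (the constant term), we have $|\det S_{k,A}|_{\rho''}\ge 1$ for \emph{every} $\rho''\in J$. Therefore $A$ is an exponent of $P_{\rho''}$ for every $\rho''$ in the interior of $J$; in particular $A$ is an exponent of both $P_\rho$ and $P_{\rho'}$. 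By Theorem \ref{weakeq}, any exponent of $P_\rho$ is weakly equivalent to $A$, and likewise for $P_{\rho'}$, so $A_\rho$ and $A_{\rho'}$ are both weakly equivalent to $A$, hence to each other.

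The main obstacle, and the one point requiring a little care, is the passage back from ``an exponent defined by $P_J$'' to ``the exponent $A_\rho$'' and the use of Theorem \ref{weakeq}: I must make sure that the uniqueness-up-to-weak-equivalence statement of Theorem \ref{weakeq} applies across the two different base points. It does, because Theorem \ref{weakeq} is exactly the statement that any two exponents of a fixed object of $\mathscr{C}_{\rho}$ (a fortiori of $\mathscr{D}_\rho$) — defined by possibly different modules over possibly different polysegments — are weakly equivalent; applying it with the common exponent $A$ produced above yields $A_\rho\sim A$ and $A_{\rho'}\sim A$. The only other thing to double-check is that the covering/chaining step is legitimate: since each $\rho''\in I$ has a closed neighbourhood $J''\subset I$ over which $P$ is finite free by Theorem \ref{QS}, and since $I$ is path-connected, a compactness argument along a path produces a finite chain $J^{(0)},\dots,J^{(N)}$ of such polysegments with consecutive ones having overlapping interiors and with $\rho\in J^{(0)}$, $\rho'\in J^{(N)}$; applying the local result on each overlap and transitivity of weak equivalence finishes the proof.
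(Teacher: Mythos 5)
Your proposal is correct and follows essentially the same strategy as the paper: localize to a closed subpolysegment $J\subset I$ over which $P_J$ is free, observe that the conditions (1)--(3) in Definition \ref{exp} are stated purely in terms of $J$ and $P_J$ (hence the exponent ``admitted by $P_J$'' is shared by $P_{\rho''}$ for every $\rho''\in\mathrm{int}J$), invoke Theorem \ref{weakeq}, and then chain along a path using connectedness of $I$ and transitivity of weak equivalence. The only cosmetic difference is that the paper compares both $\wt{A_\rho}$ and $\wt{A_{\rho'}}$ to an auxiliary $A_\sigma$ for a third point $\sigma$ in the overlap, whereas you note directly that an exponent defined by the common $P_J$ is literally the same for $P_\rho$ and $P_{\rho'}$ --- a slightly cleaner phrasing of the same local step.
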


\begin{proof}
For any $\rho$, there exists a closed subpolysegment $I_\rho$ of $I$ containing $\rho$ in its interior such that $P_{I_\rho}$ admits an exponent $\wt{A_\rho}$ weakly equivalent to $A_\rho$, by definition of $A_\rho$ and Theorem \ref{weakeq}. Then it suffices to show that $\wt{A_\rho}$ and $\wt{A_{\rho'}}$ are weakly equivalent. If $\mathrm{int}I_\rho\cap \mathrm{int}I_{\rho'}\neq \emptyset$, $\wt{A_\rho},\wt{A_{\rho'}}$ are both weakly equivalent to $A_\sigma$ for $\sigma\in \mathrm{int}I_\rho\cap \mathrm{int}I_{\rho'}$ by Threorem \ref{weakeq} and so they are weakly equivalent. In general case, Since $I$ is connected, we can find a sequence $\rho=\rho_0,\rho_1,\dots,\rho_l=\rho'$ such that $\mathrm{int}I_{\rho_{i-1}}\cap \mathrm{int}I_{\rho_i}\neq \emptyset$ for $1\leq i\leq l$. So we can reduce to the previous case.
\end{proof}

\begin{defn}
Let $P$ be a finite projective differential module over an open polysegment $I$ in $\RR^n_{\geq 0}$ satisfying the Robba condition. We say that $A$ is an exponent of $P$ if it is an exponent of $P_\rho:=P\otimes_{K_{I,n}} R_\rho$ for some $\rho\in I$.
\end{defn}

By Lemma \ref{glue_up}, the above definition of exponent of $P$ does not depend on the choice of $\rho\in I$ up to weak equivalence.

\begin{thm}
\label{decom_int}
Let $P$ be a finite projective differential module over some open polysegment $I$ satisfying the Robba condition, admitting an exponent $A$ with Liouville partition $\AA_1,\dots,\AA_k$ in the $r$-th direction. Then there exists a unique decomposition $P=P_{1}\oplus\dots\oplus P_{k}$ where each $P_{i}$ is a finite projective differential module admitting an exponent weakly equivalent to $\AA_i$ for $1\leq i\leq k$.
\end{thm}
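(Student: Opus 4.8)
\noindent\textit{Proof plan.} The plan is to build the decomposition locally on $I$ by invoking Theorem~\ref{decomposition_of_D_mod} and to glue these along an exhaustion of $I$ by closed subpolysegments; the point is that $P$ need not be free over $K_{I,n}$, so the one-dimensional matrix arguments are unavailable and one must glue idempotent endomorphisms instead. Fix $\rho_0\in I$ with $A$ an exponent of $P_{\rho_0}$. For each $\rho\in I$, Lemma~\ref{glue_up} shows that the exponent $A_\rho$ of $P_\rho$ is weakly equivalent to $A$, and Proposition~\ref{weexp}(2) gives a Liouville partition $A_\rho=\AA_{1,\rho}\cup\dots\cup\AA_{k,\rho}$ in the $r$-th direction with $\AA_{i,\rho}$ weakly equivalent to $\AA_i$. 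Theorem~\ref{decomposition_of_D_mod} then yields a decomposition $P_\rho=\bigoplus_{i=1}^{k}P_{i,\rho}$ in $\mathscr{D}_\rho$ with each $P_{i,\rho}$ of exponent weakly equivalent to $\AA_{i,\rho}$, hence to $\AA_i$, and by Remark~\ref{above4}, after shrinking, this is induced by a decomposition $P_{I_\rho}=\bigoplus_{i=1}^{k}P'_{i,\rho}$ by finite projective differential modules over an open polysegment $I_\rho\ni\rho$; let $\pi_{i,\rho}\in\End_{K_{I_\rho,n}}(P_{I_\rho})$ be the associated horizontal idempotents.

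Next I would check that these local data agree on overlaps. If $I_\rho\cap I_{\rho'}\neq\emptyset$, pick $\sigma$ in it and a closed polysegment $J\subset I_\rho\cap I_{\rho'}$ with $\sigma$ in its interior; restricting the two decompositions to $J$ and base-changing to $R_\sigma$ yields two decompositions of $P_\sigma$ in $\mathscr{D}_\sigma$ whose $i$-th summands have exponent weakly equivalent to $\AA_i$, hence to $\AA_{i,\sigma}$, so by the uniqueness part of Theorem~\ref{decomposition_of_D_mod} they coincide in $\mathscr{D}_\sigma$. The uniqueness part of Remark~\ref{above4}, applied over the open polysegment $I_\rho\cap I_{\rho'}$, then forces the two decompositions to agree there; in particular $\pi_{i,\rho}$ and $\pi_{i,\rho'}$ restrict to the same idempotent over $K_{I_\rho\cap I_{\rho'},n}$.

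To globalize, write $I=\bigcup_{m\geq 1}J_m$ with each $J_m$ a closed polysegment contained in the interior of $J_{m+1}$. For fixed $m$, cover $J_m$ by finitely many closed polysegments, each contained in some $I_\rho$; these are affinoid subdomains of the affinoid polyannulus $K_{J_m,n}$, and the corresponding local idempotents agree on overlaps by the previous paragraph, so by the sheaf property of the coherent module $P_{J_m}$ for a finite affinoid cover (Tate acyclicity) they glue to a horizontal idempotent $\pi_i^{(m)}\in\End_{K_{J_m,n}}(P_{J_m})$, with $\pi_i^{(m+1)}$ restricting to $\pi_i^{(m)}$ by uniqueness of the gluing. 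Since $K_{I,n}=\varprojlim_m K_{J_m,n}$ and $\End(P)$ is finite projective, $\End_{K_{I,n}}(P)=\varprojlim_m\End_{K_{J_m,n}}(P_{J_m})$, so the compatible system $(\pi_i^{(m)})_m$ defines a horizontal idempotent $\pi_i\in\End_{K_{I,n}}(P)$; the identities $\sum_i\pi_i=\mathrm{id}$ and $\pi_i\pi_j=0$ for $i\neq j$ hold because they hold over each $J_m$. Setting $P_i:=\pi_i(P)$, a finite projective differential submodule of $P$ with $P=\bigoplus_i P_i$, and noting that $\pi_i$ restricts over each $K_{I_\rho,n}$ to $\pi_{i,\rho}$ so that $(P_i)_{I_\rho}=P'_{i,\rho}$, we conclude that $P_i$ satisfies the Robba condition and admits an exponent weakly equivalent to $\AA_i$.

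Finally, for uniqueness, if $P=\bigoplus_i Q_i$ is another such decomposition, then for each $\rho$ the modules $(Q_i)_\rho$ form a decomposition of $P_\rho$ in $\mathscr{D}_\rho$ whose $i$-th summand has exponent weakly equivalent to $\AA_i$, hence to $\AA_{i,\rho}$, so $(Q_i)_\rho=(P_i)_\rho$ by the uniqueness in Theorem~\ref{decomposition_of_D_mod}; applying the uniqueness in Remark~\ref{above4} over each $J_m$ and passing to the limit gives $Q_i=P_i$. I expect the gluing in the third paragraph to be the main obstacle: since Quillen--Suslin is not available for a general open polyannulus one cannot trivialize $P$ and argue with explicit matrices, so the decomposition must be assembled from local idempotents along the affinoid exhaustion, which is exactly what makes the overlap compatibility and the inverse-limit identification of $\End(P)$ necessary.
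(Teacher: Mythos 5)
Your proposal is correct and follows essentially the same strategy as the paper: obtain local decompositions from Theorem~\ref{decomposition_of_D_mod} and Remark~\ref{above4}, check compatibility on overlaps via the uniqueness statements, and glue. The only difference is packaging: the paper defines each $P_i$ directly as the equalizer $\ker\bigl(\prod_{\rho}P_{I_\rho,i}\rightrightarrows\prod_{\rho,\rho'}P_{I_\rho,i}|_{I_\rho\cap I_{\rho'}}\bigr)$ using the sheaf condition for the cover $\{I_\rho\}$ of $I$, whereas you glue the horizontal idempotents $\pi_{i,\rho}$ via Tate acyclicity over an affinoid exhaustion and pass to the inverse limit of $\End(P_{J_m})$ -- both are valid, and your route makes the projectivity of $P_i$ and the identity $\bigoplus_i P_i=P$ slightly more transparent.
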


\begin{proof}
For $\rho\in I$, put $P_\rho:=P\otimes_{K_{I,n}} R_\rho$ and let $A_\rho$ be an exponent of $P_\rho$. Then $A_\rho$ is weakly equivalent to $A$ and so it admits a Liouville partition $\AA_{\rho,1},\dots,\AA_{\rho,k}$ in the $r$-th direction such that $A_{\rho,i}$ is weakly equivalent to $\AA_i$ for $1\leq i\leq k$. By Theorem \ref{decomposition_of_D_mod} and Remark \ref{above4}, there exists an open subpolysegment $I_\rho$ of $I$ containing $\rho$ such that $P_{I_\rho}$ is free and admits the unique decomposition
$$ P_{I_\rho}=P_{I_\rho,1}\oplus\cdots\oplus P_{I_\rho, k} $$
by finite free differential modules over $I_\rho$ such that each $P_{I_\rho,i}$ admits an exponent weakly equivalent to $\AA_{\rho,i}$, thus weakly equivalent to $\AA_i$. By the uniqueness assertion in Remark \ref{above4}, we have the equality
$$ P_{I_\rho,i}|_{I_\rho\cap I_{\rho'}}= P_{I_{\rho'},i}|_{I_\rho\cap I_{\rho'}}$$
for $\rho,\rho'\in I$.
Then since we have the exact sequence
$$0\to P\to \prod_{\rho\in I}P_{I_\rho}\rightrightarrows\prod_{\rho,\rho'\in I}P_{I_\rho\cap I_{\rho'}},$$
if we define
$$ P_i:=\ker\left( \prod_{\rho\in I}P_{I_\rho,i}\rightrightarrows\prod_{\rho,\rho'\in I}P_{I_\rho,i}|_{I_\rho\cap I_{\rho'}}\right), $$
we obtain the desired decomposition $P=P_1\oplus\cdots\oplus P_k$. The uniqueness follows from the argument in Remark \ref{above4}.
\end{proof}

By Theorem \ref{decom_int}, we decompose inductively in various directions, and obtain the following:

\begin{cor}\label{decom_all}
Let $P$ be a finite projective differential module over some open polysegment $I$ satisfying the Robba condition, admitting an exponent $A$ with Liouville partition $\AA_1,\dots,\AA_k$. Then there exists a unique decomposition $P=P_{1}\oplus\dots\oplus P_{k}$, where each $P_{i}$ is a finite projective differential module and admits an exponent weakly equivalent to $\AA_i$ for $1\leq i\leq k$.
\end{cor}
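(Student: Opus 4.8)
The plan is to induct on $k$, the number of parts of the Liouville partition, and at each step to reduce to the one-directional case already established in Theorem \ref{decom_int}. When $k=1$ there is nothing to prove. For $k\geq 2$ I would unwind the inductive definition of a Liouville partition: there is a partition $\{1,\dots,k\}=\bigcup_{i=1}^{l}I_i$ into nonempty subsets with $l\geq 2$, and an index $r$ with $1\leq r\leq n$, such that, writing $\BB_i:=\bigcup_{j\in I_i}\AA_j$, the family $\BB_1,\dots,\BB_l$ is a Liouville partition of $A$ in the $r$-th direction, while for each $i$ the family $(\AA_j)_{j\in I_i}$ is a Liouville partition of $\BB_i$. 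Applying Theorem \ref{decom_int} to $\BB_1,\dots,\BB_l$ gives a unique decomposition $P=Q_1\oplus\dots\oplus Q_l$ in which $Q_i$ is a finite projective differential module satisfying the Robba condition and admits an exponent weakly equivalent to $\BB_i$.

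Next I would descend into each $Q_i$. A preliminary step is to extend Proposition \ref{weexp}(2) from one-directional to arbitrary Liouville partitions: if a multiset $A'$ admits a Liouville partition $\AA'_1,\dots,\AA'_k$ and $B'$ is weakly equivalent to $A'$, then $B'$ admits a Liouville partition $\BB'_1,\dots,\BB'_k$ with $\BB'_j$ weakly equivalent to $\AA'_j$; this follows by the same induction on $k$, using Proposition \ref{weexp}(2) for the top-level $r$-directional split and the induction hypothesis on each block. Applying this to $\BB_i$ and an exponent of $Q_i$, we see that $Q_i$ admits an exponent with a Liouville partition $(\BB_{i,j})_{j\in I_i}$, with $\BB_{i,j}$ weakly equivalent to $\AA_j$. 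Since $l\geq 2$ and each $I_{i'}$ is nonempty, $|I_i|<k$, so the induction hypothesis of the corollary applies to $Q_i$ and yields a unique decomposition $Q_i=\bigoplus_{j\in I_i}P_j$ with $P_j$ admitting an exponent weakly equivalent to $\BB_{i,j}$, hence, by transitivity of weak equivalence, to $\AA_j$. Collecting these over $i$ produces the decomposition $P=\bigoplus_{j=1}^{k}P_j$.

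For uniqueness, suppose $P=\bigoplus_{j=1}^{k}P_j=\bigoplus_{j=1}^{k}P_j'$ with $P_j$ and $P_j'$ each admitting an exponent weakly equivalent to $\AA_j$. Put $Q_i:=\bigoplus_{j\in I_i}P_j$ and $Q_i':=\bigoplus_{j\in I_i}P_j'$. Since the exponent of a direct sum is the multiset union of the exponents of the summands (Lemma \ref{exp123}(1), applied after restricting to $R_\rho$), both $Q_i$ and $Q_i'$ admit an exponent weakly equivalent to $\bigcup_{j\in I_i}\AA_j=\BB_i$, so the uniqueness clause of Theorem \ref{decom_int} forces $Q_i=Q_i'$ as submodules of $P$. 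Applying the induction hypothesis to $Q_i$ with the Liouville partition $(\AA_j)_{j\in I_i}$ then gives $P_j=P_j'$ for every $j\in I_i$, hence for every $j$.

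All the genuinely analytic work — convergence of the projectors, the passage between $\mathscr{C}_\rho$ and $\mathscr{D}_\rho$, and the gluing of local decompositions over the polysegment $I$ — has already been carried out in Lemmas \ref{dec} and \ref{decom_in_crho} and Theorems \ref{decomposition_of_D_mod} and \ref{decom_int}, so I do not expect any new obstacle of that nature. The step that needs the most care is the purely combinatorial one: establishing and correctly applying the extension of Proposition \ref{weexp}(2) to arbitrary Liouville partitions, so that the induction hypothesis literally applies to each $Q_i$, together with invoking the uniqueness of the one-directional decomposition before descending so that the intermediate summands $Q_i$ are intrinsically determined. That bookkeeping is where essentially all of the (modest) remaining effort lies.
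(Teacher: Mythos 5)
Your proposal is correct and is precisely the unwinding the paper leaves implicit; the paper's entire proof is the one-sentence remark ``By Theorem \ref{decom_int}, we decompose inductively in various directions,'' and your induction on $k$ via the recursive definition of a Liouville partition, with the auxiliary extension of Proposition \ref{weexp}(2) to general Liouville partitions, is exactly the bookkeeping needed to make that remark rigorous.
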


\begin{eg}
Temporary in this example, let $D_i=\nabla(\partial_{t_i})$. For each $\lambda\in K^n$, the differential module $M_\lambda$ over a polysegment $I$ in $\RR^n_{>0}$ generated by a single element $v\neq 0$ with $D_i (v)=\lambda_i t_i^{-1}v$ satisfies the Robba condition if and only if $\lambda\in \ZZ_p^n$. Moreover, if $\lambda\in \ZZ_p^n$, $\{\lambda\}$ is an exponent of $M_\lambda$.
\end{eg}
\begin{proof}
By simple computation, we have $D_i^s(v)=s!\binom{\lambda_i}{s}t_i^{-s} v$. Thus the representation matrix of $D_i^s$(which is an element of $K_{I,n}$ because we are considering the rank $1$ case) is $s!\binom{\lambda_i}{s}t_i^{-s} $. So, for $\rho\in I$, if we define $F_\rho$ as in Definition \ref{Robba},  
\begin{align*}
|D_i|_{\mathrm{sp},M_\lambda\otimes F_\rho} & = \max\left(|\partial_{t_i}|_{\mathrm{sp},F_\rho},\limsup_{s\to \infty}\left|s!\binom{\lambda_i}{s}t_i^{-s}\right|_\rho^{\frac{1}{s}}\right)\\
& = \max\left( \omega\rho^{-1},\omega\rho^{-1}\limsup_{s\to\infty}\left|\binom{\lambda_i}{s}\right|^{\frac{1}{s}} \right)
\end{align*}
by \cite{Ked1}, Lemma 6.2.5 and Definition 9.4.1. Thus $M_\lambda$ satisfies the Robba condition if and only if $\limsup_{s\to\infty}|\binom{\lambda_i}{s}|^{\frac{1}{s}}\leq 1$ for all $1\leq i\leq n$. This is equivalent to the claim that the series $\sum_{s=0}^\infty\binom{\lambda_i}{s}X^s$ converges on the $p$-adic open unit disc for all $1\leq i\leq n$, and by \cite{ItoG} Chapter IV Proposition 7.3, this condition is satisfied if and only if $\lambda_i\in\ZZ_p$ for all $1\leq i\leq n$.

When $\lambda\in \ZZ_p^n$, the action of $\zeta\in \Gamma^n$ on the basis $v$ of $M_\lambda\otimes_KK(\Gamma)$ is given by
\begin{align*}
\zeta^*(v) & = \sum_{\alpha\in\ZZ_{\geq 0}^n} (\zeta-1)^\alpha  \binom{tD}{\alpha}(v)\\
& = \sum_{\alpha\in\ZZ_{\geq 0}^n}(\zeta-1)^\alpha \binom{\lambda}{\alpha}(v)=\zeta^\lambda v. 
\end{align*}
Thus $\{\lambda\}$ is an exponent of $M_\lambda$.
\end{proof}

\begin{lem}
\label{hellokitty}
Let $A\in \GL_m(\Kabn)$ such that $|I_m-A|_{[\alpha,\beta]}\leq \lambda<1$. Then $|I_m-A^{-1}|_{[\alpha,\beta]}\leq \lambda$. In particular, $|A|_{[\alpha,\beta]}=|A^{-1}|_{[\alpha,\beta]}=1$.
\end{lem}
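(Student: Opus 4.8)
The plan is to exploit the ultrametric (non-archimedean) nature of the supremum norm $|\cdot|_{[\alpha,\beta]}$ together with the multiplicativity of the Gauss norms. Write $A = I_m - N$ with $|N|_{[\alpha,\beta]} \leq \lambda < 1$. Since $A$ is invertible and $|N|_{[\alpha,\beta]} < 1$, the Neumann series $A^{-1} = (I_m - N)^{-1} = \sum_{j=0}^\infty N^j$ converges with respect to every $\rho$-Gauss norm for $\rho \in [\alpha,\beta]$, hence converges in $\mathrm{M}_m(\Kabn)$ with respect to $|\cdot|_{[\alpha,\beta]}$; this uses that $\Kabn$ is complete for the supremum norm on the closed polyannulus and that the matrix norm is submultiplicative, so $|N^j|_{[\alpha,\beta]} \leq \lambda^j \to 0$. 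Then $I_m - A^{-1} = -\sum_{j=1}^\infty N^j = -N\bigl(\sum_{j=0}^\infty N^j\bigr)$, and by the ultrametric inequality $|I_m - A^{-1}|_{[\alpha,\beta]} = |N \cdot A^{-1}|_{[\alpha,\beta]} \leq |N|_{[\alpha,\beta]} \cdot |A^{-1}|_{[\alpha,\beta]}$. Since $|A^{-1}|_{[\alpha,\beta]} = |\sum_{j\geq 0} N^j|_{[\alpha,\beta]} \leq \max_{j\geq 0} \lambda^j = 1$ (the $j=0$ term $I_m$ dominates), we get $|I_m - A^{-1}|_{[\alpha,\beta]} \leq |N|_{[\alpha,\beta]} \leq \lambda$, which is the first assertion.

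For the final assertion: from $|I_m - A|_{[\alpha,\beta]} \leq \lambda < 1$ and the ultrametric inequality applied to $A = I_m - (I_m - A)$, we get $|A|_{[\alpha,\beta]} \leq \max\{|I_m|_{[\alpha,\beta]}, |I_m - A|_{[\alpha,\beta]}\} = \max\{1,\lambda\} = 1$, and moreover equality holds: if we had $|A|_{[\alpha,\beta]} < 1$, then $|I_m|_{[\alpha,\beta]} = |A + (I_m - A)|_{[\alpha,\beta]} \leq \max\{|A|_{[\alpha,\beta]}, \lambda\} < 1$, contradicting $|I_m|_{[\alpha,\beta]} = 1$. The same argument applied to $A^{-1}$, using $|I_m - A^{-1}|_{[\alpha,\beta]} \leq \lambda < 1$ from the first part, gives $|A^{-1}|_{[\alpha,\beta]} = 1$.

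I do not expect any serious obstacle here; the one point that deserves a line of care is the convergence of the Neumann series with respect to the supremum norm rather than merely each individual $\rho$-Gauss norm. This is fine because $|\cdot|_{[\alpha,\beta]} = \max_{\rho\in[\alpha,\beta]}|\cdot|_\rho$ is itself a complete norm on the affinoid algebra $\Kabn$ (it is the supremum norm of a $K$-affinoid algebra, as recalled after the first definition of the paper), the matrix norm built from it on $\mathrm{M}_m(\Kabn)$ is submultiplicative and complete, and $|N^j|_{[\alpha,\beta]} \leq |N|_{[\alpha,\beta]}^j \leq \lambda^j$ forces the partial sums to be Cauchy. Everything else is a direct application of the strong triangle inequality.
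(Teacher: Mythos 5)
Your proof is correct and follows essentially the same route as the paper's: write $A = I_m - N$, expand $A^{-1}$ as the Neumann series $\sum_{j\geq 0} N^j$, and observe that $I_m - A^{-1} = -\sum_{j\geq 1} N^j$ has norm at most $\lambda$ by the ultrametric inequality. Your version is a bit more explicit about the completeness of $\mathrm{M}_m(\Kabn)$ under $|\cdot|_{[\alpha,\beta]}$ and about the final "in particular" assertion, but the substance is the same.
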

\begin{proof}
We can write $A=I_m-(I_m-A)$. Since $|I_m-A|_{[\alpha,\beta]}\leq \lambda<1$, we have $A^{-1}=\sum_{i=0}^\infty (I_m-A)^i$. Thus $|I_m-A^{-1}|_{[\alpha,\beta]}=|\sum_{i=1}^\infty (I_m-A)^i|_{[\alpha,\beta]}\leq \lambda$.
\end{proof}

The following proposition is a special case of \cite{Gac}, Lemme 1 on page 206, but we give a proof based on the argument in \cite{Ked2} for the completeness of the paper.

\begin{prop}
Let $P$ be a free differential module over an open polysegment $I$ in $ \RR^n_{>0}$ satisfying the Robba condition, and having an exponent identically equal to some $\lambda=(\lambda_1,\dots,\lambda_n)\in\ZZ_p^n$. Then for any closed subpolysegment $J$, $M_J$ admits a basis on which $D_i$ acts via a matrix over $K$ whose eigenvalues are all equal to $\lambda_i$ for each $1\leq i\leq n$.
\end{prop}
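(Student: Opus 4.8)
The plan is to adapt the one–dimensional argument of \cite{Ked2} (cf. \cite{Ked1}, §13.6) to the present setting. First I would reduce to $\lambda=0$ by twisting with the rank one module $M_{-\lambda}$ of the preceding Example: it is free over $K_{I,n}$, satisfies the Robba condition, and has exponent $\{-\lambda\}$, so $P\otimes M_{-\lambda}$ satisfies the Robba condition (Lemma 6.2.8(b) of \cite{Ked1}) and, by Lemma \ref{exp123}(2), has exponent identically $0$; a basis of $(P\otimes M_{-\lambda})_J$ on which every $D_i$ acts by a nilpotent matrix over $K$ gives, after tensoring back with $M_\lambda$, a basis of $P_J$ on which $D_i$ acts by that matrix $+\,\lambda_iI$, whose unique eigenvalue is $\lambda_i$. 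I also record that the $D_i$ commute pairwise by integrability. So assume $\lambda=0$, and fix a closed polysegment $[\alpha'',\beta'']\subseteq I$ with some $\rho$ in its interior over which the zero exponent is admitted by $P':=P_{[\alpha'',\beta'']}$.

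Let $S_k$ $(k\ge1)$ be the matrices of Definition \ref{exp} ($|S_k|_{[\alpha'',\beta'']}\le p^{lk}$, $|\det S_k|_{[\alpha'',\beta'']}\ge1$, invertible with $|S_k^{-1}|_{[\alpha'',\beta'']}\le p^{(m-1)lk}$ for $k\gg0$), put $v_k=(v_{k,1},\dots,v_{k,m})=(e_1,\dots,e_m)S_k$, so that $\zeta^*v_{k,j}=v_{k,j}$ for all $\zeta\in\Gamma_k^n$, and let $N_{i,k}\in\mathrm{M}_m(K_{[\alpha'',\beta''],n})$ be the matrix of $D_i$ on $v_k$. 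Since $\zeta^*=\sum_\alpha(\zeta-1)^\alpha\binom{D_1}{\alpha_1}\cdots\binom{D_n}{\alpha_n}$ is a convergent series in the pairwise commuting operators $D_j$, it commutes with each $D_i$; hence $D_iv_{k,j}$ is again $\Gamma_k^n$–invariant, so the entries of $N_{i,k}$ are $\Gamma_k^n$–invariant, i.e. $N_{i,k}=C_{i,k}+R_{i,k}$ with $C_{i,k}\in\mathrm{M}_m(K)$ the constant term and $R_{i,k}$ supported on monomials $t^s$ with $s\in(p^k\ZZ)^n\setminus\{0\}$, in particular $|s|\ge p^k$. From $|t_i\partial_{t_i}(-)|_\sigma\le|(-)|_\sigma$ and the bounds on $S_k,S_k^{-1}$ one gets $|N_{i,k}|_{[\alpha'',\beta'']}\le C_0p^{mlk}$; choosing $[\alpha',\beta']$ with $[\alpha',\beta']\subset\mathrm{int}\,[\alpha'',\beta'']$ and $\alpha'_i/\alpha''_i=\beta''_i/\beta'_i=\eta>1$, splitting $R_{i,k}$ according to which coordinate of $s$ is largest, and applying Lemma \ref{simcom} in that coordinate gives $|R_{i,k}|_{[\alpha',\beta']}\le\eta^{-p^k}C_0p^{mlk}$, tending to $0$ extremely fast; the same applied to the $\Gamma_k^n$–invariant transition matrices $T_k=S_k^{-1}S_{k+1}$ gives $T_k=T_k^{(0)}+R_k'$ with $T_k^{(0)}\in\GL_m(K)$ and $|R_k'|_{[\alpha',\beta']}$ equally tiny. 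Now set $P_k=(T_{k-1}^{(0)}\cdots T_0^{(0)})^{-1}$ and $w_k=v_kP_k$; using $v_{k+1}=v_kT_k$ and $T_k^{(0)}P_{k+1}=P_k$ one computes $w_{k+1}-w_k=v_kR_k'P_{k+1}$, of norm $\le\eta^{-p^k}p^{O(k^2)}\to0$ on $[\alpha',\beta']$, so $w_k$ converges on $[\alpha',\beta']$ to a basis $w_\infty$ of $P_{[\alpha',\beta']}$ (the change of basis from $w_k$ to $w_\infty$ is $I$ plus something of norm $<1$, hence invertible by Lemma \ref{hellokitty}); expanding $N_{i,k+1}=T_k^{-1}(N_{i,k}T_k+t_i\partial_{t_i}T_k)$ and taking constant parts gives $C_{i,k+1}=(T_k^{(0)})^{-1}C_{i,k}T_k^{(0)}$ up to a term tiny on $[\alpha',\beta']$, so $P_k^{-1}C_{i,k}P_k$ is Cauchy while $P_k^{-1}R_{i,k}P_k\to0$ there, and $D_i$ acts on $w_\infty$ by a constant matrix $C_i=\lim_k P_k^{-1}C_{i,k}P_k\in\mathrm{M}_m(K)$.

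It remains to see $C_i$ is nilpotent, and then to pass to an arbitrary closed $J$. On one hand the eigenvalues of $C_i$ are integers: by Lemma \ref{exp123}(1) applied to a full flag of $w_\infty$, together with the Example, the exponent of $P$ in the $i$–th direction is weakly equivalent to the multiset of eigenvalues of $C_i$, and it is identically $0$, so each such eigenvalue is weakly equivalent to $0$ as a singleton, hence an integer by Lemma \ref{above6}. On the other hand these eigenvalues are limits of eigenvalues of $C_{i,k}$, which are divisible by arbitrarily high powers of $p$: this is extracted from $\zeta^*|_{\langle v_k\rangle}=\mathrm{id}$ for $\zeta$ a primitive $p^k$–th root of unity in the $i$–th coordinate, by comparing $\zeta^*$ with the binomial series $\sum_a(\zeta_i-1)^a\binom{C_{i,k}}{a}$ modulo the $R_{i,k}$–terms, exactly as in \cite{Ked1}, Theorem 13.5.5 and §13.6. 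An integer divisible by $p^k$ for all large $k$ is $0$, so $C_i$ is nilpotent and $w_\infty$ is the desired basis of $P_{[\alpha',\beta']}$. For globalization I would use the following rigidity: if two bases of a differential module over $K_{[\alpha',\beta'],n}$ have $D_i$ acting by nilpotent constant matrices $C_i^{(a)},C_i^{(b)}$, the change of basis $U=\sum_sU_st^s$ satisfies $t_i\partial_{t_i}U=UC_i^{(b)}-C_i^{(a)}U$, so for $s\neq0$ with $s_i\neq0$ the matrix $U_s$ is an eigenvector of the nilpotent operator $M\mapsto MC_i^{(b)}-C_i^{(a)}M$ for the nonzero eigenvalue $s_i$, forcing $U_s=0$; hence $U\in\GL_m(K)$. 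Running the above about every point of $I$, the resulting closed polysegments have interiors covering $I$, so a compact $J\subseteq I$ is covered by finitely many of them with polysegment intersections; the changes of basis on overlaps are constant by rigidity, hence a \v{C}ech $1$–cocycle for the constant sheaf $\underline{\GL_m(K)}$ over a cover of a neighbourhood of $J$ whose finite intersections are polysegments (hence connected), which is therefore a coboundary, and after adjusting by constant matrices the local bases glue to a basis of $P_J$ on which every $D_i$ acts by one common nilpotent constant matrix; undoing the twist of the first paragraph finishes the proof.

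The step I expect to be hardest is the $p$–divisibility of the eigenvalues of $C_{i,k}$ used above: since the series defining $\zeta^*$ converges only slowly when $\zeta$ has large order while $C_{i,k}$ may a priori have norm as large as $p^{mlk}$, making the comparison $\zeta^*|_{\langle v_k\rangle}\approx\zeta_i^{C_{i,k}}$ effective requires the sharper $F_\rho$– and spectral–norm estimates of \cite{Ked1} (the Robba condition forces $|D_i|_{\mathrm{sp}}=\omega$ on $\langle v_k\rangle\otimes F_\rho$ for all $\rho$), and this bound is essentially the arithmetic heart of the Fuchs–type statement; the remaining pieces — the $\Gamma_k^n$–invariance bookkeeping, the decay estimates via Lemma \ref{simcom}, the renormalization, and the $\mathrm{ad}$–nilpotent gluing — are comparatively routine.
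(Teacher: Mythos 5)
Your construction of the limiting basis matches the paper's quite closely. Reducing to $\lambda=0$ by twisting with $M_{\lambda}^\vee$, observing that the transition matrices $T_k=S_k^{-1}S_{k+1}$ are $\Gamma_k^n$-invariant and so split as a constant part plus a tail supported on $(p^k\ZZ)^n\setminus\{0\}$, renormalizing $v_k$ by the accumulated constants, and proving convergence on an $\eta$-shrinking via Lemma~\ref{simcom}: this is exactly what the paper does. Unwinding the inductive definitions, your $P_k^{-1}$ is the paper's $R_k$ and your $w_\infty$ is the paper's new basis $e_1,\dots,e_m$. Two places, though, where you diverge.

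First, the \v{C}ech gluing at the end is unnecessary overhead. Since the Robba condition holds on the whole open $I$, the $\Gamma^n$-action (and hence the matrices $S_{k,A}$) exists over \emph{every} closed $[\alpha,\beta]\subset I$; together with Lemma~\ref{above5} this gives that the exponent $0$ is admitted over $[\alpha,\beta]$ for any such $[\alpha,\beta]$. The paper therefore just picks $[\alpha,\beta]$ so that the shrunken $[\alpha',\beta']$ already contains $J$, and the construction is carried out once and for all over $[\alpha',\beta']$. Your rigidity observation (that a change of basis between two nilpotent constant presentations of $D_i$ is necessarily constant) is correct and would make the gluing work, but the globalization problem does not arise.

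Second, and more seriously, the step you flag as the arithmetic heart — the $p$-divisibility of the eigenvalues of the partial-sum matrices $C_{i,k}$ via a comparison of $\zeta^*$ with the binomial series in $C_{i,k}$ modulo the $R_{i,k}$-tail — is not what the paper does, and the paper's route is substantially simpler. The key point you miss is that the limit basis $e_1,\dots,e_m=\lim v_kR_k^{-1}$ is itself fixed by $\Gamma^n$: each $v_kR_k^{-1}$ is $\Gamma_k^n$-fixed ($R_k$ being constant), and this property passes to the limit. Given this, the matrix $N^r$ of $D_r$ on $e_1,\dots,e_m$ is forced to be constant by comparing $D_r(e_j)$ with $D_r(\zeta^*e_j)$ coefficientwise, and then for an eigenvalue $b$ of $N^r$ one first reduces to the one-variable module $(L_i)_{J_i,1}\otimes_{K_{J,n}}P$ and cites \cite{Ked1}, Proposition 13.3.2 to get $b\in\ZZ_p$, and then simply observes that a corresponding eigenvector $v=\sum_jc_je_j$ with $c_j$ constants satisfies
$$\zeta_r^b\,v=\zeta^*(v)=\textstyle\sum_j\zeta^*(c_je_j)=\sum_jc_je_j=v$$
for every primitive $p$-power root of unity $\zeta_r$ in the $r$-th coordinate, forcing $b=0$. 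No fresh estimate on the operators $\zeta^*$ versus $\zeta_i^{C_{i,k}}$ is needed, and the auxiliary ``eigenvalues are integers via a flag and Lemma~\ref{above6}'' argument becomes superfluous. As written, your proposal leaves its hardest step unproved; I would replace that whole paragraph with the $\Gamma^n$-invariance of the limit basis plus the eigenvector computation.
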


\begin{proof}
Since $P=(P\otimes M_\lambda^\vee)\otimes M_\lambda$, and $P\otimes M_\lambda^\vee$ has exponent identically equal to $0$ by Lemma \ref{exp123}, we may reduce the theorem to the case $\lambda=0$.
For any closed subpolysegment $J$ of $I$, choose $\eta>1$ and $\alpha,\beta,\alpha',\beta'\in I$ with $\frac{\alpha'_i}{\alpha_i}=\frac{\beta_i}{\beta'_i}=\eta$ for $1\leq i\leq n$, and such that $J\subset [\alpha',\beta']$. Note that, since $P_J$ admits an exponent weakly equivalent to $0$, which is in fact equivalent to $0$ by Lemma \ref{above5}, $P_J$ admits $0$ as an exponent.

Let $S_{k,A}$ be the matrices defining the exponent $A=0$. Choose $\tau\in (0,1)$ and sufficiently large $c>0$ such that 
$$p^{8m^2l}\eta^{-c}\leq \tau.$$
Then for $c$ chosen above, there is a positive integer $k_0$ such that $p^k>ck$ and that $S_{k,A}$ belongs to $\GL_m(\Kabn)$ for all $k\geq k_0$. We will construct matrices $R_k$ in $\GL_m(K)$ for $k\geq k_0$ such that $R_{k_0}=I_m$ and 
\begin{align}
| I_m-R_kS_{k_,A}^{-1}S_{k+1,A}R_{k+1}^{-1} |_{[\alpha',\beta']}\leq\tau^k\ \ (k\geq k_0),    
\end{align}
by induction.
First, note that this assumption implies the following:
\begin{align}
&\ \ \ \  |S_{k_0,A}^{-1}S_{k,A}R_k^{-1}-S_{k_0,A}^{-1}S_{k+1,A}R_{k+1}^{-1}|_{[\alpha',\beta']}\notag\\
& =|S_{k_0,A}^{-1}S_{k,A}R_k^{-1}(I_m-R_kS_{k_,A}^{-1}S_{k+1,A}R_{k+1}^{-1})|_{[\alpha',\beta']}\leq \tau^k|S_{k_0,A}^{-1}S_{k,A}R_k^{-1}|_{[\alpha',\beta']}.
\end{align}
Also (1) implies that $|R_kS_{k_,A}^{-1}S_{k+1,A}R_{k+1}^{-1}|_{[\alpha',\beta']}\leq 1$ by Lemma \ref{hellokitty}, and we have
\begin{align*}
|S_{k_0,A}^{-1}S_{k,A}R_k^{-1}|_{[\alpha',\beta']}& =|(R_{k_0}S_{k_0,A}^{-1}S_{k_0+1,A}R_{k_0+1}^{-1})(R_{k_0+1}S_{k_0+1,A}^{-1}S_{k_0+2,A}R_{k_0+2}^{-1})\\
& \dots(R_{k-1}S_{k-1,A}^{-1}S_{k,A}R_k^{-1})|_{[\alpha',\beta']}\leq 1.
\end{align*}
Thus $ |S^{-1}_{k_0,A}S_{k,A}R_k^{-1}-S^{-1}_{k_0,A}S_{k+1,A}R_{k+1}^{-1}|_{[\alpha',\beta']} \leq \tau^k$, and 
\begin{align*}
& |I_m-S_{k_0,A}^{-1}S_{k,A}R_k^{-1}|_{[\alpha',\beta']}\leq \max\{ |I_m-S_{k_0,A}^{-1}S_{k_0,A}R_{k_0}^{-1}|_{[\alpha',\beta']},|S_{k_0,A}^{-1}S_{k_0,A}R_{k_0}^{-1}\\
& -S_{k_0,A}^{-1}S_{k_0+1,A}R_{k_0+1}^{-1}|_{[\alpha',\beta']},\dots,| S_{k_0,A}^{-1}S_{k-1,A}R_{k-1}^{-1}-S_{k_0,A}^{-1}S_{k,A}R_{k}^{-1} |_{[\alpha',\beta']} \} <1.
\end{align*}

Now we begin to construct $R_{k+1}$. First, we prove that the existence of the sequence up to $R_k$ implies that  
$$|R_k|,\ |R_k^{-1}|\leq p^{mlk}.$$
This is obvious for $k=k_0$. Now assume that $k>k_0$. Then the inequality $|I_m-S_{k_0,A}^{-1}S_{k,A}R_k^{-1}|_{[\alpha',\beta']}<1$ implies that 
$$ |S_{k_0,A}^{-1}S_{k,A}R_k^{-1}|_{[\alpha',\beta']}=| R_k S_{k,A}^{-1}S_{k_0,A} |_{[\alpha',\beta']}=1, $$
by Lemma \ref{hellokitty}. Thus 
$$ |R_k|=|R_k S_{k,A}^{-1}S_{k_0,A}S_{k_0,A}^{-1}S_{k,A}|_{[\alpha',\beta']}\leq p^{k_0l(m-1)+kl}\leq p^{mlk} ,$$
and 
$$ |R_k^{-1}|=|S_{k,A}^{-1}S_{k_0,A}S_{k_0,A}^{-1}S_{k,A} R_k^{-1}|_{[\alpha',\beta']}\leq p^{k_0l+(m-1)lk}\leq p^{mlk},$$
as required. 
Then we define $R_k$ inductively: Given $R_{k_0},\dots,R_k$, put $T_k=R_kS_{k,A}^{-1}S_{k+1,A}$. Then we have 
$$ |T_k|_{[\alpha,\beta]} \leq p^{4m^2kl},\ |T_k^{-1}|_{[\alpha,\beta]}\leq p^{4m^2kl}.$$
Let $T_{k,0}$ be the constant coefficient of $T_k$. For $\zeta\in \Gamma_k^n$, we have 
$$ \zeta^*(T_k)=\zeta^*(R_kS_{k,A}^{-1}S_{k+1,A})=R_k\zeta^{-A}S_{k,A}^{-1}S_{k+1,A}\zeta^A= T_k. $$
This implies that entries of $T_k$ are series in $t^{p^k}$, and by Lemma \ref{simcom}, $|T_k-T_{k,0}|_{[\alpha',\beta']}\leq p^{4m^2kl}\eta^{-p^k} .$
Now we take $R_{k+1}=T_{k,0}$. Because
\begin{align*}
|I_m-R_{k+1}T_k^{-1}|_{[\alpha',\beta']} & \leq |T_k^{-1}|_{[\alpha',\beta']}|T_k-T_{k,0}|_{[\alpha',\beta']}\\
& \leq p^{8m^2kl}\eta^{-p^k}\leq p^{8m^2kl}\eta^{-ck} \leq \tau^k <1,
\end{align*}
$ |I_m-T_kR_{k+1}^{-1}|_{[\alpha',\beta']}\leq \tau^k $. This completes the construction of $R_{k+1}$.
Note that (2) implies that the sequence $S_{k_0,A}^{-1}S_{k,A}R_k^{-1}$ converges to a matrix $U\in \GL_m(K_{[\alpha',\beta'],n})$. Now we prove that $S_{k_0,A}U$ is the change-of-basis matrix to a basis $\ee$ of $P_{[\alpha',\beta']}$ of the desired form. To do so, let $N^r$ denote the matrix of action of $D_r$ on $\ee$. Writting $N^r_{ij}=\sum_{s\in \ZZ^n} N^r_{ijs}t^s$, we have 
\begin{align*}
\sum_{i=1}^m(\sum_{s\in\ZZ^n}N^r_{ijs}t^s)e_i&=D_r( e_j)\\
& = (D_r\circ\zeta)(e_j)=(\zeta\circ D_r)(e_j)\\
& = \sum_{i=1}^m(\sum_{s\in \ZZ^n}\zeta^s N^r_{ijs}t^s) e_i
\end{align*}
for any $\zeta\in\Gamma^n$. Thus $N^r$ is a constant matrix. Let $b$ be an eigenvalue of $N^r$. To use a result in one dimensional case, set $L_i$ to be the completion of $K(t_1,\dots,t_{i-1},t_{i+1},\dots,t_n)$ with respect to the $\hat{\rho}=(\rho_1,\dots,\rho_{i-1},\rho_{i+1},\dots,\rho_n)$-Gauss norm and denote the $i$-th factor of $J$ by $J_i$. Then $P_i:=(L_i)_{J_i,1}\otimes_{K_{J,n}} P$ is a differential module of rank $m$ over $(L_i)_{J_i,1}$ for the derivation $t_i\partial_{t_i}$.
Then by \cite{Ked1}, Proposition 13.3.2(which is a proposition in one dimensional case) for $P_i$,  we have $b\in \ZZ_p$. Let us go back to the original situation and let $v=c_1e_1+\dots+c_me_m\ (c_i\in K)$ be a corresponding eigenvector. Let $\zeta\in\Gamma^n_k$ be such that all components of $\zeta$ are $0$ except the $r$-th component $\zeta_r$. Then we have 
$$ \zeta_r^b v=\zeta^*(v)=\zeta^*(c_1e_1+\dots+c_me_m)=c_1e_1+\dots+c_me_m=v. $$
Thus $b=0$, which complete the proof.
\end{proof}

\begin{prop}\label{freeness}
Let $P$ be a finite projective differential module of rank $m$ over an open polysegment $I$ in $\RR_{>0}^n$ satisfying the Robba condition with exponent indentically equal to $0$. Then  $P$ is free.
\end{prop}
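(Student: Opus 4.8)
The plan is to build a global basis of $P$ over $K_{I,n}$ by gluing together local bases that are as constant as possible, exploiting a rigidity phenomenon special to the Robba/exponent-$0$ setting. First I would use Theorem~\ref{QS}: for each $\rho\in I$ it provides a closed subpolysegment $J_\rho\subset I$ with $\rho$ in its interior on which $P$ is free; then $P$ restricted to the open polysegment $\mathrm{int}(J_\rho)$ is a free differential module satisfying the Robba condition with exponent identically $0$, so the preceding proposition applies and yields, for every closed subpolysegment $J\subset\mathrm{int}(J_\rho)$, a basis of $P_J$ on which each $D_i=\nabla(t_i\partial_{t_i})$ acts through a constant matrix over $K$ all of whose eigenvalues vanish, i.e. a nilpotent constant matrix. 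Call such a basis \emph{good}. Thus $P_J$ has a good basis for every sufficiently small closed box $J\subset I$.

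The key point will be the rigidity of transitions between good bases. If $J\subset J'$ are closed subpolysegments of $I$ and $\mathbf e$, $\mathbf e'$ are good bases of $P_J$, $P_{J'}$ with nilpotent constant connection matrices $N_i$, $N_i'$, and if $\mathbf e'|_J=\mathbf e\,U$ with $U\in\GL_m(K_{J,n})$, then comparing connection matrices gives
$$t_i\partial_{t_i}U=UN_i-N_i'U\qquad(1\le i\le n).$$
Expanding $U=\sum_s U_st^s$ with $U_s\in\mathrm{M}_m(K)$, the coefficient of $t^s$ satisfies $(L_{N_i'}-R_{N_i}+s_i\,\mathrm{id})(U_s)=0$ on $\mathrm{M}_m(K)$, where $L,R$ denote left and right multiplication; since $L_{N_i'}$ and $R_{N_i}$ are commuting nilpotent operators, $L_{N_i'}-R_{N_i}$ is nilpotent, so for $s\neq0$ one picks $i$ with $s_i\neq0$ (invertible in $K$) and concludes $U_s=0$. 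Hence $U$ is a constant matrix in $\GL_m(K)$; good bases are unique up to $\GL_m(K)$, and this is exactly what makes the gluing work.

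Finally I would glue. Using the small-box good bases, the rigidity shows that over each connected overlap the transition matrix is locally constant, hence constant; subdividing an arbitrary closed box $J\subset I$ into a finite grid of small boxes and attaching them in an order for which each box meets the union of its predecessors in a nonempty connected set, one assembles a good basis of $P_J$, so $P_J$ is free for every closed subpolysegment $J$. Then, choosing an increasing exhaustion $I=\bigcup_k J_k$ by closed subpolysegments and good bases $\mathbf f^{(k)}$ of $P_{J_k}$, the constancy of the transition matrices lets me adjust the $\mathbf f^{(k)}$ by elements of $\GL_m(K)$ so that $\mathbf f^{(k+1)}|_{J_k}=\mathbf f^{(k)}$; since $K_{I,n}=\varprojlim_k K_{J_k,n}$ and $P$, being a direct summand of a free $K_{I,n}$-module, satisfies $P\cong\varprojlim_k P_{J_k}$ (cf. Remark~\ref{above4}), these compatible families glue to a $K_{I,n}$-basis of $P$. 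I expect the main obstacle to be this last gluing step over a general box: in dimension one the covering boxes are linearly ordered with interval overlaps and the argument is immediate, whereas for $n\ge 2$ one must organize the grid and the order of attachment so that overlaps remain connected (equivalently, trivialize a locally constant $\GL_m(K)$-torsor over the contractible box $J$); the rigidity computation and the inverse-limit bookkeeping are otherwise routine.
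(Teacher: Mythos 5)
Your proof is correct and it does ultimately hinge on the same topological fact as the paper's: the contractibility of the open polysegment $I$, which forces a locally constant object (your $\GL_m(K)$-torsor of good bases, the paper's local system) to trivialize. But the packaging is genuinely different, and the paper's packaging dissolves the obstacle you flag at the end. Where you fix good bases and prove a rigidity lemma for transition matrices (your $s_iU_s=(L_{N'_i}-R_{N_i})U_s$ computation, which is fine modulo a harmless sign/convention choice), the paper instead introduces the \emph{intrinsic} $K$-vector space $V_J:=\bigcap_{j=1}^n\ker(D_j^m)\subset P_J$ and shows, using the preceding proposition, that $V_J$ is exactly the $K$-span of any good basis, hence $m$-dimensional and independent of all choices. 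Your rigidity lemma is literally the statement that $V_J$ is basis-independent, but by characterizing $V_J$ without reference to a basis, the paper never has to chase transition matrices at all: the restriction maps $V_{I_\rho\cap I_{\rho'}}\to V_{I_\rho}$ are automatically isomorphisms, the $V_{I_\rho}$ glue into a local system $\mathscr{V}$ on $I$, and $V_I=\mathrm{H}^0(I,\mathscr{V})\cong K^m$ because $\mathscr{V}$ is constant on the contractible space $I$. That one step replaces your entire grid-subdivision, attach-in-good-order, and exhaustion-with-readjustment bookkeeping, which is exactly the part you correctly identify as the delicate point for $n\geq 2$. So: same idea underneath, but the paper's intrinsic formulation of the space of ``constant'' sections makes the gluing automatic instead of something to be organized by hand. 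If you want to keep your formulation, I would suggest at least extracting the statement that the $K$-span of a good basis of $P_J$ equals $\bigcap_j\ker(D_j^m)$, since this gives you the invariance for free and lets you invoke the local system argument directly rather than constructing a compatible chain of bases over a filtration.
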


\begin{proof}
For a subpolysegment $J\subset I$, we define $V_J\subset P_J$ by $V_J:=\bigcap_{j=1}^n \ker(D_j^m)$.

For $\rho\in I$, there exists an open subpolysegment $I_\rho\subset I$ containing $\rho$ such that $P_{I_\rho}$ is free. Then it has an exponent identically equal to $0$ and so, for any closed subpolysegment $J\subset I$, $P_J$ admits a basis $\ee$ on which each $D_i$ acts via an upper triangular matrix $N_i$ with zero diagonals. Then, if we write an element $x$ of $V_J$ by $x=f_1e_1+\cdots+f_me_m$ with $f_i\in K_{J,n}$ and put $f:=(f_1,\dots,f_m)^T$, we have
$$ 0= D_i^m((e_{1},\dots,e_{m})f)=(e_{1},\dots,e_{m})\sum_{r=0}^m \binom{m}{r}N_{i}^r(t_i\partial_{t_i})^{m-r} f  $$
for any $1\leq i\leq n$, and this forces $f_1,\dots,f_m$ to be elements in $K$. So we see that $V_J$ is the $m$-dimensional $K$-vector space generated by $\ee$. Then, since $V_{I_\rho}=\varprojlim_{J\subset I_\rho} V_J$, $V_{I_\rho}$ is also an $m$-dimensional $K$-vector space. If $I_{\rho_1}\cap\cdots\cap I_{\rho_l}$ is non empty, the same is true for $V_{I_{\rho_1}\cap\cdots\cap I_{\rho_l}}$, and the restriction maps $V_{I_{\rho_1}\cap\cdots\cap I_{\rho_{l'}}}\to V_{I_{\rho_1}\cap\cdots\cap I_{\rho_l}}$ are isomorphisms. By definition, 
$$ V_I=\ker\left( \prod_{\rho\in I}V_{I_\rho}\rightrightarrows \prod_{\rho,\rho'\in I} V_{I_\rho\cap I_{\rho'}} \right). $$

On the other hand, Let $\mathscr{V}$ be the local system of $K$-vector spaces on the topological space $I\subset \RR^n_{>0}$ such that $\mathscr{V}_{I_\rho}$ is the constant local system $V_{I_\rho}$ and that the gluing is given by 
$$ (\mathscr{V}|_{I_\rho})|_{I_\rho\cap I_{\rho'}}= V_{I_\rho}|_{I_\rho\cap I_{\rho'}}\cong V_{I_\rho\cap I_{\rho'}}\cong V_{I_{\rho'}}|_{I_\rho\cap I_{\rho'}}= (\mathscr{V}|_{I_{\rho'}})|_{I_\rho\cap I_{\rho'}}. $$
Then, $V_I=\mathrm{H}^0(I,\mathscr{V})$. On the other hand, since $I$ is contractible, $\mathscr{V}$ is necessarily isomorphic to the constant local system $K^m$ and so $V_I=\mathrm{H}^0(I,\mathscr{V})\cong\mathrm{H}^0(I,K^m)=K^m$ is a $m$-dimensional $K$-vector space such that the restriction maps $V_I\to V_{I_\rho}$ are isomorphisms. So a $K$-basis of $V_I$ gives rise to a basis of $P_{I_\rho}$ for any $\rho$, and so it gives a basis of $P$. So we are done.

\end{proof}

Then we can reprove Gachet's $p$-adic Fuchs theorem as follows:

\begin{cor}\label{padicfuchs}
Let $P$ be a finite projective differential module over an open polysegment $I$ in $\RR_{>0}^n$ satisfying the Robba condition. Furthermore we assume that $P$ has $p$-adic non-Liouville exponent differences. Then $P$ admits a basis on which the matrix of action of $D_i$ for $1\leq i\leq n$ has entries in $K$ whose eigenvalues represents an exponent of $P$. Consequently, $P$ admits a canonical decomposition
$$ P=\bigoplus_{\lambda\in (\ZP/\ZZ)^n} P_\lambda, $$
where $P_\lambda$ is free with exponent identically equal to a representative in $\ZZ_p^n$ of $\lambda$. In particular, $P$ is free, and is extended from some finite differential module over a polydisc for the derivations $t_i\partial_{t_i}$, $1\leq i\leq n$.
\end{cor}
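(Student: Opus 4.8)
The plan is to decompose $P$ along the partition of (one of) its exponents into cosets of $\ZZ^n$ inside $\ZZ_p^n$, and then to identify each piece with a module admitting a \emph{constant} exponent, so that the structure results already proved apply.

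First, fix an exponent $A=(A_1,\dots,A_m)$ of $P$; it exists because $P_\rho\in\mathscr{D}_\rho$ and Theorem \ref{exexp} applies. By hypothesis $P$ has $p$-adic non-Liouville exponent differences, so by Corollary \ref{expdiff} the multiset $A$ has $p$-adic non-Liouville differences; in particular $A-A$, and every $A^r-A^r$, is $p$-adic non-Liouville. For $\lambda\in(\ZP/\ZZ)^n$ let $A_\lambda$ be the sub-multiset of $A$ consisting of those $A_j$ whose class in $(\ZP/\ZZ)^n$ equals $\lambda$, so that $A=\bigcup_\lambda A_\lambda$. Exactly as in the opening lines of the proof of Lemma \ref{above5}, this is a Liouville partition of $A$: it is the iteration, over the directions $r=1,\dots,n$, of the partition of the $r$-th coordinate multiset into $\ZZ$-cosets, and each such refinement is a Liouville partition in the $r$-th direction because $A^r$ has $p$-adic non-Liouville differences. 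Here we fix once and for all a representative $\bar\lambda\in\ZZ_p^n$ of each class $\lambda$, and set $m_\lambda=\#A_\lambda$.

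Now apply Corollary \ref{decom_all} to this Liouville partition: it yields a decomposition $P=\bigoplus_\lambda P_\lambda$ into finite projective differential modules over $I$ satisfying the Robba condition, with $P_\lambda$ admitting an exponent weakly equivalent to $A_\lambda$. Since $A_\lambda-A_\lambda\subseteq A-A$ is $p$-adic non-Liouville, Lemma \ref{above5} upgrades this: the exponent of $P_\lambda$ is \emph{equivalent} to $A_\lambda$, hence (as all entries of $A_\lambda$ lie in the single coset $\lambda$) equivalent to the constant multiset $\{\bar\lambda\}^{\oplus m_\lambda}$. As in the proof of the Proposition preceding Proposition \ref{freeness}, such an exponent can be renormalised to the constant itself: given exponent data $v_{k}=(v_{k,1},\dots,v_{k,m_\lambda})$ with $\zeta^*(v_k)=v_k\,\zeta^{A'_\lambda}$ for the equivalent exponent $A'_\lambda$ and a permutation matching $A'_{\lambda,j}$ to $\bar\lambda$ with difference $c_j\in\ZZ^n$, replace $v_{k,j}$ by $t^{-c_j}u_j v_{k,j}$, where $u_j\in K^\times$ are scalars; condition (1) of Definition \ref{exp} then holds with exponent $\{\bar\lambda\}^{\oplus m_\lambda}$, condition (2) holds up to enlarging $l$ (the extra factors are independent of $k$), and the $u_j$ are chosen so that $|\det|_{[\alpha,\beta]}\ge 1$ is restored in condition (3). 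Thus $P_\lambda$ admits $\{\bar\lambda\}^{\oplus m_\lambda}$ as an exponent. Using the rank-one differential modules $M_{\bar\lambda}$ of the Example above (which are free and have exponent $\{\bar\lambda\}$), Lemma \ref{exp123}(2),(3) shows $P_\lambda\otimes M_{\bar\lambda}^\vee$ has exponent identically $0$; by Proposition \ref{freeness} it is free, and its proof produces a basis of it over all of $I$ on which each $D_i=\nabla(t_i\partial_{t_i})$ acts via a nilpotent matrix over $K$. Twisting back by $M_{\bar\lambda}$ (which is free of rank $1$) shows $P_\lambda=(P_\lambda\otimes M_{\bar\lambda}^\vee)\otimes M_{\bar\lambda}$ is free with a basis over $I$ on which $D_i$ acts via a constant matrix $N_i^{(\lambda)}$ over $K$ all of whose eigenvalues equal $\bar\lambda_i$ (consistently with the Proposition preceding Proposition \ref{freeness}), and with exponent identically $\bar\lambda$ by Lemma \ref{exp123}(2).

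Finally, assemble. Concatenating the bases of the $P_\lambda$ gives a basis of $P=\bigoplus_\lambda P_\lambda$ over $I$ on which each $D_i$ acts via the block-diagonal constant matrix $\bigoplus_\lambda N_i^{(\lambda)}$ over $K$; these matrices commute by integrability ($[D_i,D_j]=\nabla([t_i\partial_{t_i},t_j\partial_{t_j}])=0$), and for each $i$ their eigenvalue multiset is $\bigcup_\lambda\{\bar\lambda_i\}^{\oplus m_\lambda}$, which is the $i$-th coordinate multiset of $\bigcup_\lambda\{\bar\lambda\}^{\oplus m_\lambda}$, an exponent of $P$ by (iterating) Lemma \ref{exp123}(1); so the eigenvalues represent an exponent of $P$. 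In particular $P$ is free. The decomposition $P=\bigoplus_{\lambda\in(\ZP/\ZZ)^n}P_\lambda$ is canonical: the $\ZZ^n$-coset partition of an exponent is intrinsic, and the decomposition attached to a Liouville partition is unique by Corollary \ref{decom_all}; each $P_\lambda$ is free with exponent identically $\bar\lambda$. Lastly, since on the chosen basis $\nabla(t_i\partial_{t_i})$ is the constant commuting matrix $\bigoplus_\lambda N_i^{(\lambda)}$, the same formulas define a finite free differential module over a polydisc $K\langle t_1/r_1,\dots,t_n/r_n\rangle$ for the derivations $t_i\partial_{t_i}$ (for any $r$ dominating $I$), and $P$ is its restriction to $I$. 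The one step not already packaged in the cited results is the descent from "exponent weakly equivalent to $A_\lambda$" to "free with exponent identically $\bar\lambda$"; within it, checking condition (3) of Definition \ref{exp} after the renormalisation — which is why the scalars $u_j$ are introduced — is the only place needing a little care.
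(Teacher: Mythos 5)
Your proof takes essentially the same route as the paper's: partition an exponent of $P$ into $\ZZ^n$-cosets (a Liouville partition by the non-Liouville-differences hypothesis), apply Corollary \ref{decom_all}, and then twist each summand by $M_{\bar\lambda}^\vee$ so that Proposition \ref{freeness} gives freeness and a constant-matrix basis. The renormalisation step you flag as needing care --- replacing the exponent data $v_{k,j}$ by $t^{-c_j}u_jv_{k,j}$ to pass from an exponent \emph{equivalent} to $\{\bar\lambda\}^{\oplus m_\lambda}$ to that multiset itself --- is precisely what the paper invokes implicitly when it asserts that ``any exponent equivalent to $A$ is an exponent of $P$'' (replacing $A$ by the coset-constant multiset $B$ before decomposing rather than after), and your explicit check of conditions (1)--(3) of Definition \ref{exp}, including the scalars $u_j$ used to rescue condition (3), is sound.
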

\begin{proof}
Let $\AA_1,\dots,\AA_k$ be the partition of an exponent $A$ of $P$ into $\ZZ^n$-cosets. Then it is a Liouville partition. Take an element $a_i\in\AA_i$ and let $\BB_i$ be the multiset whose cardinality is equal to that of $\AA_i$ and whose elements are identically equal to $a_i$. Then we can easily see that each $\BB_i$ is equivalent to $\AA_i$, $A$ is equivalent to $B=\bigcup_{i=1}^k\BB_i$ and that $\BB_1,\dots,\BB_k$ is a Liouville partition of $B$. Since any exponent equivalent to $A$ is an exponent of $P$, $B$ is an exponent of $P$. Then by Corollary \ref{decom_all}, $P$ admits a decomposition $P=\bigoplus_{\lambda\in (\ZP/\ZZ)^n} P_\lambda$ satisfying the required condition except that each $P_\lambda$ is projective. For each $P_\lambda$, we have $P_\lambda=(P_\lambda\otimes M_\lambda^\vee)\otimes M_\lambda$, and $P_\lambda\otimes M_\lambda^\vee$ has exponent identically equal to $0$ by Lemma \ref{exp123}, and hence free by Proposition \ref{freeness}. Consequently $P_\lambda$ is free.
\end{proof}

Corollary \ref{padicfuchs} is possibly slightly stronger than the result of \cite{Gac}, because we do not know yet if any finite projective differential module on polyannuli is free, and \cite{Gac} only treated finite free differential modules.

\nocite{*}
\normalem
\bibliographystyle{alpha}

\bibliography{references}

\end{document}